\definecolor{darkblue}{rgb}{0.0,0.0,0.65}
\definecolor{darkred}{rgb}{0.68,0.05,0.0}
\definecolor{darkgreen}{rgb}{0.0,0.29,0.29}
\definecolor{darkpurple}{rgb}{0.47,0.09,0.29}
\numberwithin{equation}{section}
\newtheorem{theorem}{Theorem}
\newtheorem{lemma}{Lemma}
\newtheorem{assumption}{Assumption}
\theoremstyle{definition}
\newtheorem{definition}{Definition}
\newtheorem{remark}{Remark}
\newcommand*\at[2]{\left.#1\right|_{#2}}
\newcommand*\del[0]{\partial}
\newcommand*\Z[0]{\mathbb{Z}}
\newcommand*\ddt[0]{\frac{d}{d t}}
\newcommand*\lin[1]{\bm{\left\langle} #1 \bm{\right\rangle}}
\newcommand*\linp[1]{\bm{\langle} #1 \bm{\rangle}}
\newcommand*\E[1]{\mathbb{E}[{#1}]}
\newcommand*\Ep[2]{\mathbb{E}_{{#1}}\left[{#2}\right]}
\renewcommand*\t[1]{\tilde{#1}}
\newcommand*\N[0]{\mathcal{N}}
\newcommand\numberthis{\addtocounter{equation}{1}\tag{\theequation}}
\newcommand*\lrb[1]{{\left[#1\right]}}
\newcommand*\lrbb[1]{\left\{#1\right\}}
\newcommand*\lrp[1]{{(#1)}}
\newcommand*\lrn[1]{{\left\|#1\right\|}}
\newcommand*\lrabs[1]{{\left|#1\right|}}
\newcommand*\cvec[2]{\begin{bmatrix} #1\\#2\end{bmatrix}}
\newcommand*\bmat[1]{\begin{bmatrix} #1 \end{bmatrix}}
\newcommand*\ind[1]{{\mathbbm{1}\lrbb{#1}}}
\renewcommand*{\Re}{\mathbb{R}}
\newcommand*{\R}{\mathcal{R}}
\newcommand*\circled[1]{\tikz[baseline=(char.base)]{
\node[shape=circle,draw,inner sep=1pt] (char) {#1};}}
\newcommand*{\twocase}[4]{\left\{\begin{array}{ll}
        #1 & \text{for } #2\\
        #3 & \text{for } #4
        \end{array}\right.}
\newcommand*{\threecase}[6]{\left\{\begin{array}{ll}
        #1, & \text{for } #2\\
        #3, & \text{for } #4\\
        #5, & \text{for } #6
        \end{array}\right.}
\renewcommand*{\Pr}[1]{\mathbb{P}\lrp{{#1}}}
\renewcommand{\L}{\mathcal{L}}
\renewcommand{\epsilon}{\varepsilon}
\newcommand{\JJ}{\mathbf{J}}
\newcommand{\KK}{\mathbf{K}}
\newcommand{\RR}{\mathbf{R}}
\newcommand{\MM}{\mathbf{M}}
\newcommand{\NN}{\mathbf{N}}
\newcommand{\xx}{\mathbf{x}}
\newcommand{\yy}{\mathbf{y}}
\newcommand{\nnu}{\bm{\nu}}
\renewcommand{\AA}{\mathbf{A}}
\newcommand{\BB}{\mathbf{B}}
\newcommand{\WW}{\mathbf{W}}
\newcommand{\CC}{\mathbf{C}}
\newcommand{\DD}{\mathbf{D}}
\renewcommand{\aa}{\mathbf{a}}
\newcommand{\vv}{\mathbf{v}}
\newcommand{\zz}{\mathbf{z}}
\newcommand{\xib}{{\bm\xi}}
\newcommand{\zzeta}{{\bm\zeta}}
\newcommand{\F}{{\mathcal{F}}}
\newcommand{\C}{{\mathcal{C}}}
\newcommand{\elb}[1]{\numberthis \label{#1}}
\DeclareMathOperator{\tc}{{\zeta}}
\newcommand{\dist}{\mathrm{d}}
\DeclareMathOperator{\Exp}{Exp}
\DeclareMathOperator{\Ric}{Ric}
\DeclareMathOperator{\vol}{vol}
\DeclareMathOperator{\Law}{Law}
\DeclareMathOperator{\emat}{\mathbf{exp_{mat}}}
\newcommand{\cheng}[1]{\noindent{\textcolor{red}{\{{\textbf{Cheng:}} \em #1\}}}}
\newcommand*{\party}[2]{\Gamma_{#1}^{#2}}
\definecolor{cdarkred}{rgb}{0.55,0.0,0.0}
\definecolor{darkblue}{rgb}{0.0,0.0,0.55}
\definecolor{cgray}{gray}{0.55}
\definecolor{cdarkblue}{RGB}{30,30,200}
\definecolor{black}{rgb}{0,0,0}
\title{Efficient Sampling on Riemannian Manifolds via Langevin MCMC}
\author{%
  Xiang Cheng \\
  Massachusetts Institute of Technology\\
  \texttt{x.cheng@berkeley.edu} \\
  \And
  Jingzhao Zhang \\
  Tsinghua University
  \texttt{jzhzhang@mit.edu}\\
  \And
  Suvrit Sra \\
  Massachusetts Institute of Technology\\
  \texttt{suvrit@mit.edu}\\
}
\begin{document}

\maketitle

\begin{abstract}
  We study the task of efficiently sampling from a Gibbs distribution $d \pi^* = e^{-h} d {\vol}_g$ over a Riemannian manifold $M$ via (geometric) Langevin MCMC; this algorithm involves computing exponential maps in random Gaussian directions and is efficiently implementable in practice. The key to our analysis of Langevin MCMC is a bound on the discretization error of the geometric Euler-Murayama scheme, assuming $\nabla h$ is Lipschitz and $M$ has bounded sectional curvature.  Our error bound matches the error of Euclidean Euler-Murayama in terms of its stepsize dependence.  Combined with a contraction guarantee for the geometric Langevin Diffusion under Kendall-Cranston coupling, we prove that the Langevin MCMC iterates lie within $\epsilon$-Wasserstein distance of $\pi^*$ after $\tilde{O}(\epsilon^{-2})$ steps, which matches the iteration complexity for Euclidean Langevin MCMC. Our results apply in general settings where $h$ can be nonconvex and $M$ can have negative Ricci curvature. Under additional assumptions that the Riemannian curvature tensor has bounded derivatives, and that $\pi^*$ satisfies a $CD(\cdot,\infty)$ condition, we analyze the stochastic gradient version of Langevin MCMC, and bound its iteration complexity by $\tilde{O}(\epsilon^{-2})$ as well.
  
\end{abstract}
\vspace*{-5pt}
\section{Introduction}
Stochastic differential equations (SDEs) offer a powerful formalism for studying diffusion processes, Brownian motion, and algorithms for sampling and optimization. We study in particular the following \emph{geometric stochastic differential equation}:
\begin{equation}
    d x(t) = \beta(x(t)) dt + dB^g_t,
    \label{e:intro_sde}
\end{equation}
that evolves on a $d$-dimensional Riemannian manifold $(M, g)$. Analogous to the Euclidean setting, the map $\beta: x \to T_xM$ denotes a drift ($T_xM$ is the tangent space at $x\in M$), and $dB^g_t$ denotes the standard Brownian motion on $M$~\citep[Ch.~3]{hsu2002stochastic}. The notation in~\eqref{e:intro_sde} is a shorthand; we can define~\eqref{e:intro_sde} more precisely as the unique diffusion process whose generator is the operator $L f = \lin{\nabla f, \beta} + \frac{1}{2} \Delta f$, where $\Delta$ is the Laplace-Beltrami operator (see~\citep[Proposition.~3.2.1]{hsu2002stochastic}). 

When the drift $\beta(x) = - \frac{1}{2} \nabla h(x)$ (the gradient is taken w.r.t.\ the manifold metric), the SDE~\eqref{e:intro_sde} has an invariant distribution $\pi^*$ with density $e^{-h(x)}$, with respect to the Riemannian volume measure ${\vol}_g(x)$; for $M= \Re^d$ with metric $g(x) = I$, this reduces to the familiar result for Euclidean Langevin diffusion. Using the \emph{Kendall-Cranston Coupling} technique (Chapter 6.5, \cite{hsu2002stochastic}) together with a carefully constructed Lyapunov function by \cite{eberle2016reflection}, we can quantify the mixing rate for \eqref{e:intro_sde} in the 1-Wasserstein distance (w.r.t.\ the manifold distance) under suitable regularity conditions. 

However, the exact SDE is not implementable in practice, so we consider an MCMC algorithm using the \emph{Geometric Euler-Murayama}~\citep{piggott2016geometric,muniz2021higher} discretization of~\eqref{e:intro_sde}:
\begin{equation}
  x_{k+1} = \Exp_{x_k}\bigl(\delta \beta(x_k) + \sqrt{\delta} \zeta_k\bigr),
  \label{e:intro_euler_murayama}
\end{equation}
where $\Exp_x$ denotes the exponential map for $M$, and $\zeta_k$ is a standard Gaussian with respect to \emph{any} orthonormal basis of $T_{x_k}M$. It has long been known that the discrete-time process~\eqref{e:intro_euler_murayama} converges to the SDE \eqref{e:intro_sde} in the limit as $\delta \to 0$~\citep{gangolli1964construction,jorgensen1975central,hsu2002stochastic}.

\subsection*{Motivation for studying the SDE~\eqref{e:intro_sde}}
Geometric SDEs such as~\eqref{e:intro_sde} play a crucial role in the design and analysis of MCMC algorithms \citep{girolami2011riemann,patterson2013stochastic} that have had much success in solving Bayesian problems on statistical manifolds. Recently, it has been shown that the \emph{mirror Langevin Algorithm} on the Hessian manifold of a function can be significantly faster than its Euclidean counterpart \citep{zhang2020wasserstein,chewi2020exponential,li2022mirror,gatmiry2022convergence}. 

These SDEs also directly relate to the tasks of sampling and optimization on manifolds, where often a Lie group structure helps capture symmetries (e.g., the Grassmann manifold, $\text{SO}(n)$, $O(n)$, etc.)~\citep{moitra2020fast,piggott2016geometric,muniz2021higher}. Furthermore, in \cite{lee2017geodesic,lee2018convergence}, the authors propose fast algorithms for constrained sampling and volume estimation on polytopes by sampling from the Hessian manifold of a barrier function.

\section{Overview of Main Contributions}
\label{s:contributions}
\textbf{Our first contribution} in this paper is to provide a quantitative, non-asymptotic bound on the discretization error between \eqref{e:intro_sde} and \eqref{e:intro_euler_murayama}. We present this bound in Lemma \ref{l:informal_discretization-approximation-lipschitz-derivative} in Section \ref{Brownian Motion Section}. In our bound, the expected-squared-distance between a single step of \eqref{e:intro_euler_murayama} and \eqref{e:intro_sde} over $\delta$ time is bounded by $O\lrp{\delta^3}$. This $\delta^3$ error scaling matches that of Euler Murayama in Euclidean space \citep{durmus2017nonasymptotic}. We highlight that our bound is entirely explicit, and depends polynomially on dimension, sectional curvature of $M$, and the Lipschitz parameter for $\beta$ -- quantities which are intrisic to the manifold, and  invariant to the choice of coordinate system. 

Two recent papers, \citep{wang2020fast,li2022mirror} also look into a similar problem of bounding the discretization error of \eqref{e:intro_euler_murayama}. \citet{wang2020fast} analyze the error of \eqref{e:intro_euler_murayama}, but rely on uniformly bounding certain derivatives of the densities along the sample path (see Assumption 3~\citep{wang2020fast}); even in Euclidean space, it is not clear whether such a bound exists, and whether it depends only polynomially on parameters such as dimension. \citet{li2022mirror} provide a quantitative bound on the bias of \eqref{e:intro_euler_murayama} on Hessian manifolds; they assume a bound on a ``modified self-concordance'' parameter that is not affine invariant and can be made arbitrarily large. 

We discuss Lemma \ref{l:informal_discretization-approximation-lipschitz-derivative} in more detail , and sketch its proof in Section \ref{Brownian Motion Section}. The proof relies on a careful construction of geometric Langevin Diffusion as the limit of \eqref{e:intro_euler_murayama}, and may be of independent interest.

\textbf{Our second contribution} is to show that after $\t{O}(\epsilon^{-2})$ steps, the distribution of \eqref{e:intro_euler_murayama} is within $\epsilon$ 1-Wasserstein distance from the stationary distribution of \eqref{e:intro_sde}. We present this result in Theorem \ref{t:langevin_mcmc}. This $\epsilon$ dependence matches that of Euclidean Langevin MCMC~\citep{durmus2017nonasymptotic}. 

The $\t{O}$ in our iteration complexity hides polynomial dependency on dimension, sectional curvature, Lipschitz-parameter of $\beta$, and $1/\alpha$, where $\alpha$ can be viewed as ``mixing rate of the exact SDE.'' Theorem \ref{t:langevin_mcmc} requires that $\beta$ satisfy a manifold analog of the distant-dissipativity assumption (Assumption~\ref{ass:distant-dissipativity} with $m > L_{Ric}/2$, where $-L_{Ric}$ is a lower bound on the Ricci curvature of $M$). Assumption \ref{ass:distant-dissipativity} is general enough to include cases when $\beta = -\nabla h$  for some nonconvex $h$ or when $M$ has negative Ricci curvature. The catch is that in such cases, $1/\alpha$ can become very large; this is generally unavoidable, even on Euclidean space. Distant-dissipativity assumption has often been used in the analysis of Euclidean Langevin diffusion for non-log-concave distributions \citep{eberle2016reflection,bou2020coupling,gorham2019measuring,cheng2020stochastic}.

\textbf{We highlight a computational sub-contribution that} \eqref{e:intro_euler_murayama} only requires computing exponential maps in the direction of $\beta$ plus a uniform Gaussian direction. In many cases, exponential maps are efficiently computable, and this Theorem \ref{t:langevin_mcmc} leads to a computationally efficient sampling algorithm. Our analysis extends naturally if one replaces \eqref{e:intro_euler_murayama} by a retraction step; if the retraction is of order $3/2$ or higher, the one-step error still scales as $O(\delta^3)$. The question of how to construct good retraction maps is well studied in literature, and is somewhat orthogonal to our main objective, so we do not provide details here, and instead refer readers to \citep{absil2012projection,absil2009optimization}. 

Recently, \cite{ahn2021efficient,gatmiry2022convergence,li2020riemannian} have considered a different discretization of \eqref{e:intro_sde}, where one assumes that \emph{the endpoint of the geometric Brownian motion $dB^g_t$ can be sampled exactly} (so that only the drift $\beta$ is discretized). Sampling the exact geometric Brownian motion can be done efficiently in special settings such as the sphere \citep{li2020riemannian}, but on a general manifold, this can be much more expensive than computing an exponential map; in such settings the Langevin MCMC algorithm based on \eqref{e:intro_euler_murayama} may be prefereable. We do highlight, however, that a number of the above results provide error bounds in KL-divergence, which is tighter than the Wasserstein bound in Theorem \ref{t:langevin_mcmc}.

We discuss assumptions and consequences of Theorem \ref{t:langevin_mcmc} in more detail and sketch its proof in Section \ref{s:langevin MCMC}. The proof essentially combines our discretization analysis with a mixing result for \eqref{e:intro_sde} based on the \textit{Kendall-Cranston} coupling and a Lyapunov function from \citep{eberle2016reflection}.

\textbf{For our third contribution,} we analyze the manifold analog of SGLD \citep{welling2011bayesian}:
\begin{equation}
  x_{k+1} = \Exp_{x_k}\bigl(\delta \t{\beta}_k(x_k) + \sqrt{\delta} \zeta_k\bigr),
  \label{e:intro_sgld}
\end{equation}
The the difference between \eqref{e:intro_sgld} and \eqref{e:intro_euler_murayama}, is that at each step, $\beta$ is replaced by a random vector field $\t{\beta}_k$ which satisfies: (i) $\mathbb{E}[\t{\beta}_k] = \beta$; and (ii) $\|\t{\beta}_k(x) - \beta(x)\|\leq \sigma$ almost surely. We show that after $\t{O}\lrp{\epsilon^{-2}}$ steps, $\eqref{e:intro_sgld}$ is within $\epsilon$-2-Wasserstein distance from the stationary distribution of \eqref{e:intro_sde}; thus the $\epsilon$ dependency does not degrade when replacing $\beta$ by its stochastic estimate. We present this result in Theorem \ref{t:SGLD}. For this analysis, we require Assumption \ref{ass:distant-dissipativity}, with $m > L_{\Ric}/2$ and $\R = 0$; this is a more restrictive condition than Theorem \ref{t:langevin_mcmc}; when $\beta = -\frac{1}{2} \nabla h$, this restriction is equivalent to the $CD(\cdot,\infty)$ condition \citep{bakry2014analysis}: $\nabla^2 h + \Ric \succ 0$, where $\Ric$ is the Ricci curvature tensor\footnote{note that $\beta=-\nicefrac{1}{2}\nabla h$, therefore $m$ is $\nicefrac{1}{2}$ times strong-convexity parameter of $h$.}.

Though restrictive, the $CD(\cdot,\infty)$ class of distributions is nonetheless interesting; it is the manifold analog of the class of log-concave densities on Euclidean space, and sampling from this class has been a topic of much recent interest. In machine learning, the stochastic estimate of $\beta$ can often be computed much more quickly than the exact $\beta$, e.g., when $\beta = -2\nabla f$, where $f$ is the empirical average of some loss over a large number of observations. In such cases, \eqref{e:intro_sgld} can be much faster than \eqref{e:intro_euler_murayama}. In Section \ref{s:SGLD} we discuss the assumptions and consequences of Theorem \ref{t:SGLD} in greater detail.

Though Theorem \ref{t:SGLD} requires more restrictive assumptions than Theorem \ref{t:langevin_mcmc}, its proof differs from the proof of Theorem \ref{t:langevin_mcmc} in a significant way: we show mixing of the discrete-time process directly (instead of relying on mixing of the exact SDE). This approach is necessary in order to take advantage of the fact that $\t{\beta}_k$ is an unbiased estimate $\beta$. The key intermediate result for showing mixing under \eqref{e:intro_sgld} is Lemma \ref{l:discrete-approximate-synchronous-coupling-ricci}. We believe this lemma to be of independent interest as it quantifies the distance evolution between two arbitrary discrete-time stochastic processes (that may be unrelated to Gaussian noise and Brownian motion). As an example, \citet{mangoubi2018rapid} showed that ball walk mixes quickly on manifolds with positive sectional curvature. Using Lemma \ref{l:discrete-approximate-synchronous-coupling-ricci}, one can show the more general statement ``ball walk mixes quickly on compact manifolds with positive Ricci curvature.'' 

\section{Preliminaries: Key Assumptions and Notation}
\label{s:preliminaries}
We state in this section the four key assumptions of this paper. Our first assumption involves lower bounding Ricci curvature of the manifold $M$. Let $\Ric$ denote the Ricci curvature tensor:
\begin{assumption}\label{ass:ricci_curvature_regularity}
  We assume that for all $x\in M$, $u,u\in T_x M$, $\Ric(u,u) \geq - L_{\Ric}$, for some $L_{\Ric} \in \Re$.
\end{assumption}
Intuitively, the more positive the Ricci curvature (i.e. the smaller the value of $L_{\Ric}$), the faster geometric Brownian motion mixes.

Our second assumption is a natural generalization of the distant-dissipativity condition in the Euclidean setting. It helps ensure that the drift traps the variable within a bounded region. For any $x,y\in M$, let $\dist(x,y)$ denote their Riemannian distance.
\begin{assumption}
    \label{ass:distant-dissipativity}
    We call a vector field $\beta$ $(m,q,\R)$-\emph{distant-dissipative} if there exist constants $m>0$, $\R \geq 0$, and $q\in \Re$ such that, for all $x,y$ satisfying $\dist\lrp{x,y} \geq \R$, there exists a minimizing geodesic $\gamma: [0,1] \to M$ with $\gamma(0) = x$ and $\gamma(1) = y$, such that the inequality
    \begin{alignat*}{1}
      \linp{\party{}{}\lrp{\beta(y);{y}\to{x}}-\beta(x), \gamma'(0)} \leq -m \dist\lrp{x,y}^2,
    \end{alignat*}
    holds, where $\party{}{}\lrp{\cdot; {y}\to{x}}$ denotes parallel transport from $T_yM$ to $T_xM$ along $\gamma$. In addition, for all $x,y$ satisfying $\dist(x,y) \leq \R$, there exists a minimizing geodesic $\gamma: [0,1] \to M$ with $\gamma(0) = x$ and $\gamma(1) = y$, such that we have instead the inequality
    \begin{alignat*}{1}
        \linp{\party{}{}\lrp{\beta(y);{y}\to {x}}-\beta(x), \gamma'(0)} \leq q \dist\lrp{x,y}^2.
    \end{alignat*}
\end{assumption}
For some intuition: a strictly convex function will have $m > 0$, $\R = 0$, and $q$ arbitrary. Note that we do not require a unique geodesic between $x,y$.

For some intuition about Assumption \ref{ass:distant-dissipativity}: in the Euclidean setting, the first condition simplifies to $\lin{\beta(y) - \beta(x), y-x} \leq - m \lrn{y-x}_2^2$, and the second condition simplifies to $\lin{\beta(y) - \beta(x), y-x} \leq q \lrn{y-x}_2^2$.

We need our third and fourth assumptions for bounding the discretization error of \eqref{e:intro_euler_murayama}. Assumption \ref{ass:beta_lipschitz} upper bounds the Lipschitz constant of $\beta$; Assumption \ref{ass:sectional_curvature_regularity} lower bounds sectional curvature of $M$.

\begin{assumption}\label{ass:beta_lipschitz}
  A vector field $\beta$ is $L_{\beta}'$-Lipschitz if, for all $x\in M$ and all $v\in T_x M$, $\lrn{\nabla_v \beta(x)} \leq L_\beta'\lrn{v}$.
\end{assumption}

In the (flat) Euclidean setting, Assumption \ref{ass:beta_lipschitz} is equivalent to saying that $\beta(x)$ being a $L_\beta'$ Lipschitz vector field.

\begin{assumption}\label{ass:sectional_curvature_regularity}
  Let $R$ be the Riemannian curvature tensor of the manifold $M$. We assume that there exists $L_R \in \Re^+$ such that for all $x\in M$, and for all $u,v,w,z\in T_x M$, $\lin{R(u,v)v,u} \leq L_R\lrn{u}^2\lrn{v}^2$.
\end{assumption}
We now introduce additional notation that will be used throughout this paper. We assume some background in Riemannian geometry, and freely use standard notation; we refer the reader to~\citep{jost2008riemannian,lee2006riemannian,petersen2006riemannian} for an in depth treatment. Readers may also find some works on Riemannian optimization useful as additional context: \citep{bacak2014convex,udriste2013convex,absil2009optimization,zhang2016first,boumal2022intro}.

We use $\nabla$ to denote the Levi Civita connection. Given $x,y\in M$ and $v\in T_x M$, we use $\party{}{}(v;x\to y)$ to denote parallel transport of $v$ from $x$ to $y$ along their minimizing geodesic (if such a choice is not unique we will specify); we sometimes also use the more concise alternative notation $\party{x}{y} v := \party{}{}(v;x\to y)$. Given a general curve (possibly non-geodesic) $\gamma:[0,1] \to M$, we will also use $\party{\gamma(t)}{} v$ to denote the parallel transport of $v$ from $\gamma(0)$ to $\gamma(1)$, along $\gamma$.

A basis $F$ of the tangent space $T_x M$ at some point $x\in M$ is an ordered tuple  $(F^1,\ldots,F^d)$ of vectors that span $T_x M$. We use $\party{}{}\lrp{F;x\to y} := \lrp{\party{}{}\lrp{F^1;x\to y}...\party{}{}\lrp{F^d;x\to y}}$ to denote the ordered tuple of parallel transport of the each of the basis vectors in $F$. Given $v\in \Re^d$ and basis $F$ of some $T_x M$, we use $v \circ F$ as shorthand for $\sum_{i=1}^d v_i F^i$. A distribution that we will see frequently in this paper is the one given by $\xib \circ E^x$, where $\xib \sim \N(0,I)$ is a random vector in $\Re^d$, and $x \in M$ and $E^x$ is an orthonormal basis of $T_x M$. We use $\N_x\lrp{0,I}$ to denote the distribution of $\xib \circ E^x$. One can verify that $\N_x\lrp{0,I}$ does not depend on the choice of basis $E^x$. 


\subsection{An illustrative example: sampling from a sphere}
To give some intuition about Assumptions \ref{ass:ricci_curvature_regularity} - \ref{ass:sectional_curvature_regularity}, we present a simple example for sampling from $S^{d-1}$, the unit d-sphere in $\Re^d$, which is a positively curved Riemannian manifold.\footnote{The sphere is one of the special cases when Brownian motion \emph{can be efficiently sampled exactly}, and so if one's actual goal is to sample from a sphere, prior work such as \cite{li2020riemannian} provide a better algorithm and analysis.} The subsequent bounds for the $S^d$ case also generalize with minor modifications to other closely-related manifolds such as SO$(d)$.

Let $U(x) : \Re^d \to Re$ be a potential function, and suppose that we wish to sample from $d p(x) \propto e^{-U(x)} d vol_g(x)$ defined over $S^{d-1}$. It is known that for any $x\in S^{d-1}, v\in T_x M$, $\Ric(v,v)>0$, so that Assumption \ref{ass:ricci_curvature_regularity} is satisfied with $L_{Ric}  = 0$. The sectional curvature is bounded by $1$, so that Assumption \ref{ass:sectional_curvature_regularity} holds with $L_R = 1$. For the remainder of this example, for $x\in S^{d-1}$ we will identify $T_x M$ with $\lrbb{v\in \Re^d : v^T x = 0}$. Since the sphere is a submanifold of $\Re^d$, the metric is simply given by the Euclidean dot product, i.e. $g(u,v) = u^T v$.


Next, we will verify Assumption \ref{ass:beta_lipschitz}. Let us assume that for all $x$ on the sphere, $U(x)$ satisfies $\lrn{\bm{\nabla} U(x)} \leq L_1$ and $\lrn{\bm{\nabla^2} U(x)}_2 \leq L_2$, where $\bm{\nabla}$ and $\bm{\nabla^2}$ are the usual Euclidean gradient and Hessian respectively (bolded to distinguish from $\nabla$, the covariant derivative). In order to sample from $dp(x)$, $\beta(x) = (I - x x^T)\bm{\nabla} U (x)$. It is known that a geodesic $\gamma(t)$ corresponds to a great arc on the sphere, and that parallel transport of a vector $v$ involves a rotation of the tangential component of $v$. To be precise, for any $x \in S^d$ and $u,v \in T_x M$ such that $\lrn{u}=1$, we verify that, in Cartesian coordinates, $\nabla_u v = (u u^T) v$. Thus $\nabla \beta(x) = (uu^T) (I - xx^T) \bm{\nabla} U(x) + \bm{\nabla^2} U(x) u \leq L_1 + L_2$. Thus Assumption \ref{ass:beta_lipschitz} holds with $L_{\beta}' = L_1 + L_2$.

Finally, we will verify Assumption \ref{ass:distant-dissipativity}. Since $S^{d-1}$ has diameter $\pi$, the first part of Assumption \ref{ass:distant-dissipativity} is satisfied with arbitrary $m$ and $R = \pi$. The second part of Assumption \ref{ass:distant-dissipativity} is satisfied with $q = L_\beta'$. This is because $\frac{d}{dt} \lin{\party{}{}\lrp{\beta(\gamma(t)) ; \gamma(t)\to \gamma(0)} - \beta(\gamma(0)), \gamma'(0)} = \frac{d}{dt} \lin{\beta(\gamma(t)), \gamma'(t)} = \lin{\nabla_{\gamma'(t)}\beta(\gamma(t)),\gamma'(t)} \leq L_\beta' \lrn{\gamma'(t)}^2$.

\section{Bounding the Discretization Error in Geometric Euler-Murayama}
\label{Brownian Motion Section}

One of our main technical contributions is an error bound for the Euler-Murayama discretization. We provide an informal statement of this error bound as Lemma \ref{l:informal_discretization-approximation-lipschitz-derivative} below, and provide the formal statement as Lemma \ref{l:discretization-approximation-lipschitz-derivative} in Appendix \ref{ss:langevin_mcmc_on_manifold}.
\begin{lemma}[Informal version of Lemma \ref{l:discretization-approximation-lipschitz-derivative}]
  \label{l:informal_discretization-approximation-lipschitz-derivative}
  Let $x(t)$ denote the solution to \eqref{e:intro_sde} initialized at some $x(0)$. Let $x^0(t)$ denote one step geometric Euler Murayama discretization: $x^0(t) := \Exp_{x(0)}\lrp{t \beta(x(0)) + \sqrt{t} \zeta}$, where $\zeta \sim \N_{x(0)}(0,I)$. Under Assumptions \ref{ass:beta_lipschitz} and \ref{ass:sectional_curvature_regularity}, for sufficiently small $t$, there exists a coupling between $x(t)$ and $x^0(t)$ such that 
  \begin{alignat*}{1}
    \E{\dist\lrp{x(t),x^0(t)}^2} \leq O\lrp{t^3},
  \end{alignat*}
  where $O()$ hides polynomial dependence on the Lipschitz constant $L_\beta'$, the sectional curvature of $M$, and dimension $d$.
\end{lemma}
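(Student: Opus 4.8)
The plan is to compare the one-step Euler--Murayama point $x^0(t)$ with the true diffusion $x(t)$ by constructing a common probability space on which both can be realized and then estimating the gap in $\dist(\cdot,\cdot)^2$ via a second-order Taylor expansion of the exponential map. Concretely, I would first freeze the starting point $x(0)=:x_0$ and write the diffusion, in a small neighborhood, in terms of a stochastically parallel-transported frame: $dx(t)=\beta(x(t))\,dt + \party{}{}(E^{x_0};x_0\to x(t)) \circ dW_t$ where $W_t$ is a standard $\Re^d$-valued Brownian motion. This is the Eells--Elworthy--Malliavin realization of $dB^g_t$, and it is exactly the object whose discretization is $x^0(t)$: indeed $x^0(t)=\Exp_{x_0}(t\beta(x_0)+\sqrt t\,\zeta)$ with $\zeta=W_t\circ E^{x_0}/\sqrt t$ (using the same driving Brownian motion). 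So the coupling is: drive both processes by the \emph{same} $W$.

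With that coupling fixed, the argument splits the error into two pieces. The \emph{drift discretization error}: $\beta(x(s))$ is replaced by the frozen $\beta(x_0)$, together with parallel transport of the noise frame along $x(\cdot)$ being replaced by no transport at all (equivalently, evaluation at $x_0$). By Assumption \ref{ass:beta_lipschitz} (Lipschitzness of $\beta$, i.e. $\lrn{\nabla_v\beta}\le L_\beta'\lrn v$), the drift difference over $[0,s]$ is $O(L_\beta')$ times the distance already travelled, and since the diffusion travels $O(\sqrt s)$ in time $s$, integrating gives a contribution of order $\int_0^t O(\sqrt s)\,ds\cdot(\text{another }\sqrt t) = O(t^2)$ in expected distance, hence $O(t^3)$ in expected squared distance after being careful about which terms are martingales (zero mean) and which are genuine bias. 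The \emph{curvature/frame error}: the true noise increment is parallel-transported along the random path $x(\cdot)$ whereas $x^0$ uses the fixed frame $E^{x_0}$ and a single geodesic; the discrepancy between ``parallel transport along the Brownian path then exponential'' and ``one exponential map in the summed direction'' is governed by the Riemannian curvature tensor, and Assumption \ref{ass:sectional_curvature_regularity} ($\lin{R(u,v)v,u}\le L_R\lrn u^2\lrn v^2$) bounds it: the holonomy around a loop of diameter $\sqrt s$ is $O(L_R s)$, and the failure of $\Exp$ to be additive (BCH-type correction) over increments of size $\sqrt s$ is likewise $O(L_R s^{3/2})$ per step, which sums/integrates to the claimed order. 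Dimension $d$ enters through $\E\lrn{\zeta}^2=d$ and through the trace terms appearing when one takes expectations of the quadratic curvature corrections.

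Putting it together, I would: (i) set up the stochastic-development coupling and write $\dist(x(t),x^0(t))$ using normal coordinates centered at $x_0$ (or, equivalently, Jacobi-field estimates comparing two geodesics from $x_0$ with nearby initial velocities), so that $\dist(x(t),x^0(t))\le \lrn{v(t)-v^0(t)} + (\text{curvature correction})$ where $v(t),v^0(t)\in T_{x_0}M$ are the ``logarithm-map pullbacks'' of the two processes; (ii) derive an SDE / integral equation for $v(t)$ (this is where stochastic calculus on the frame bundle is used — controlling the rotation of the transported frame, which the excerpt flags as the technical heart), and observe $v^0(t)=t\beta(x_0)+\sqrt t\,\zeta$ exactly; (iii) subtract, take expected squared norm, and use Itô's isometry plus Assumptions \ref{ass:beta_lipschitz}, \ref{ass:sectional_curvature_regularity} and Gronwall to close the estimate at $O(t^3)$; (iv) track constants to confirm they are polynomial in $L_\beta'$, $L_R$ (and curvature bounds) and $d$, and intrinsic.

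The main obstacle is step (ii): rigorously constructing the true diffusion near $x_0$ and its pullback $v(t)$ to $T_{x_0}M$ while controlling \emph{the rotation of the parallel-transported orthonormal frame} along the random path. On a curved manifold the ``anti-development'' $v(t)$ is not simply $t\beta(x_0)+\int_0^t(\text{frame})\,dW$ with a fixed frame; the frame itself solves an SDE on the orthonormal frame bundle $O(M)$ whose drift involves the connection, and the quadratic variation of this frame-rotation is exactly what produces the curvature-dependent correction terms. The excerpt itself says the proof ``relies on a careful construction of geometric Langevin Diffusion as the limit of \eqref{e:intro_euler_murayama}''; I expect that construction — realizing $x(t)$ as an $L^2$-limit of finer Euler--Murayama polygons, each coupled to the previous via shared noise, with the frame rotations telescoping controllably — to be the delicate part, replacing the naive Itô-on-$O(M)$ argument with something that only ever invokes intrinsic quantities (sectional curvature, $L_\beta'$, $d$) and never Christoffel symbols of a chosen chart. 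Once $v(t)$ is under control, the remaining estimates are routine second-moment bookkeeping.
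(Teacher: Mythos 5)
Your error decomposition (drift discretization vs.\ curvature/frame discrepancy), the predicted orders of each piece, and the identification of the shared-Brownian-motion coupling are all consistent with what the paper does, and you correctly guess that the paper builds the SDE as a limit of coupled Euler--Murayama polygons. However, there is a genuine gap: the step you label as ``the delicate part'' and defer --- constructing the true diffusion near $x_0$, controlling the rotation of the transported frame, and turning a bound on the anti-development $v(t)$ into a bound on $\dist(x(t),x^0(t))$ --- is not a technical footnote; it is essentially the entire proof. Two specific problems with the route you sketch. First, the anti-development of the diffusion is \emph{not} the logarithm-map pullback $\Exp_{x_0}^{-1}(x(t))$: the development map from $\Re^d$-paths to $M$ depends on the whole path, not its endpoint, so $\lrn{v(t)-v^0(t)}$ being small does not by itself control $\dist(x(t),x^0(t))$; the discrepancy between ``develop the whole path'' and ``shoot one geodesic toward the endpoint'' is exactly the curvature term you need to quantify, and asserting it is ``$O(L_R s^{3/2})$ per step'' is the conclusion of a lemma (a Karcher-type estimate), not an input. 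Second, writing the SDE for $v(t)$ in normal coordinates, or doing It\^o calculus on the frame bundle, reintroduces chart-dependent quantities (Christoffel symbols of the normal chart, derivatives of the metric) into the constants unless one works considerably harder --- which is precisely what the paper is structured to avoid.

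For contrast, the paper never forms the anti-development or any coordinate SDE. It defines a dyadic family $x^i(t)$ of Euler--Murayama interpolations with stepsizes $\delta^i = 2^{-i}T$, couples level $i$ to level $i+1$ by parallel-transporting the noise frames (``synchronous coupling'' across levels, ``rolling without slipping'' within a level), and proves the per-level bound $\E{\dist\lrp{x^i(T),x^{i+1}(T)}^2} = O\lrp{T^2\delta^i}$ using only two discrete, purely geometric comparison lemmas: a Jacobi-field estimate for $\dist\lrp{\Exp_x(u),\Exp_y(v)}$ when $v$ is the parallel transport of $u$, and the triangle-distortion bound for $\Exp_x(a+b)$ versus two successive exponential steps. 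Summing the resulting geometric series over $i$ gives $O(T^3)$. All stochastic calculus is replaced by conditional-expectation bookkeeping on the discrete increments, which is how the constants stay polynomial in $L_R$, $L_\beta'$, $d$ and intrinsic. Separately, your drift accounting is internally inconsistent: the drift bias is $O(L_\beta'\sqrt{d}\,t^{3/2})$ in $L^2$, whose square is the claimed $O(t^3)$; your stated ``$O(t^2)$ in expected distance, hence $O(t^3)$ in expected squared distance'' does not follow (an $L^1$ bound of $t^2$ gives no such $L^2$ control), even though the final order happens to be right.
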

\subsection*{Proof Sketch for Lemma \ref{l:informal_discretization-approximation-lipschitz-derivative}}
Given a time interval $T$, we construct a sequence of processes $\{x^i(t)\}_{i \ge 0}$ indexed by $i$ (exact definitinion given in \eqref{d:x^i(t):0} below). Marginally, each $x^i(t)$ corresponds to the linear interpolation of a sequence of Euler-Murayama steps with stepsize $\delta^i = 2^{-i} T$. More specifically, for $k\in \Z^+$,
\begin{alignat*}{1}
  & x^i_{k+1} = \Exp_{x^i_k}\lrp{\delta^i \beta(x^i_k) + \sqrt{\delta^i} \zeta^i_k} \qquad \qquad \zeta^i_k \sim \N_{x^i_k} (0,I)\\
  & x^i(t) = \Exp_{x^i_k} \lrp{\frac{t-k\delta^i}{\delta^i} \lrp{\delta^i\beta(x^i_k) + \sqrt{\delta^i} \zeta^i_k}} \qquad \qquad \text{for }t\in[k\delta^i, (k+1)\delta^i]
\end{alignat*}
Notice that for $i=0$, $x^0(T)$ is a single step of \eqref{e:intro_euler_murayama} with stepsize $T$. On the other hand, $x(t):=\lim_{i\to\infty} x^i(t)$ is exactly the SDE in \eqref{e:intro_sde} (see Lemma \ref{l:Phi_is_diffusion} at the end of this section). We will soon see that $\E{\dist\lrp{x^i(T), x^{i+1}(T)}^2} = O\lrp{T^2 \delta^i}$. We can then bound $\E{\dist\lrp{x^0(T),x(T)}^2} = O\lrp{T^3}$ by summing over the pairwise distance between adjacent $x^i$ and $x^{i+1}$ for $i\in \Z^+$ (this diminishes geometrically). This proves Lemma \ref{l:informal_discretization-approximation-lipschitz-derivative}.

To bound the distance between $x^i$ and $x^{i+1}$, we introduce a crucial additional structure: adjacent pairs of processes $(x^i_k, x^{i+1}_k)$ are coupled using the \textbf{manifold analog of synchronous coupling}, together with the \textbf{discrete-time analog of "rolling without slipping"}. (see Remark \ref{remark:sde_construction}),

The exact formula for how $x^{i+1}_k$ is to be constructed from $x^i_k$ is given in \eqref{d:x^i_k} below; it is a little dense, so we provide a pictorial illustration in Figure \ref{fig:euler-murayama-coupling}. For simplicity, we assume that $\beta = 0$ (the dominating error in Lemma \ref{l:informal_discretization-approximation-lipschitz-derivative} is due to Brownian motion). Figure \ref{fig:euler-murayama-coupling} takes place over a period of time from $k\delta^i$  to $(k+1)\delta^i$ (equivalently, $2k\delta^{i+1}$  to $(2k+2)\delta^{i+1}$). The black squiggle denotes $\lrp{\BB(t)-\BB(k\delta^i)}\circ E^i_k$ for $t\in [k\delta^i, (k+1)\delta^i]$, where $\BB(t)$ is a standard $d$-dimensional Brownian motion, and $E^i_k$ is some basis at $x^i_k$. Let $b^i_k := \lrp{\BB((k+1)\delta^i)-\BB(k\delta^i)}\circ E^i_k$ (black solid arrow). Bounding the distance between $x^i$ and $x^{i+1}$ consists of two steps:

\vspace*{-3pt}
\setlength{\abovecaptionskip}{-40pt}
\setlength{\belowcaptionskip}{-5pt}
\begin{figure}[h]
  \includegraphics[width=8cm]{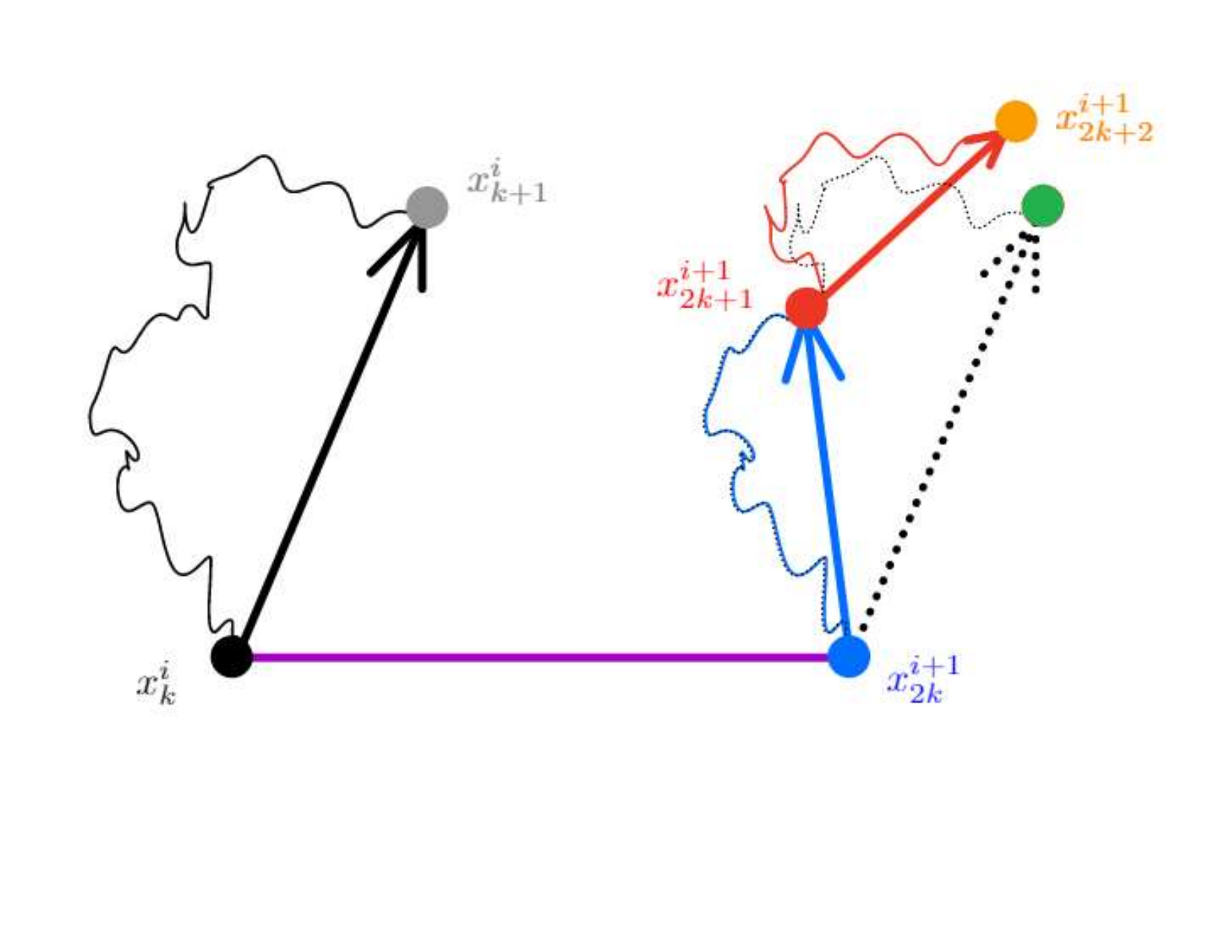}
  \centering
  \caption{An illustration of the coupling between paths for $x^i$ and $x^{i+1}$ when $\beta = 0$ over the time interval $[k\delta^i, (k+1)\delta^i]$.} 
  \label{fig:euler-murayama-coupling}
\end{figure}
\textbf{Step 1: Bounding $\dist\lrp{x^i_{k+1}, \Exp_{x^{i+1}_{2k}}(\party{}{}\lrp{b^i_k; {x^i_k}\to {x^{i+1}_{2k}}})}$} [Solid black arrow vs dotted black arrow]\\
The purple line represents the geodesic from $x^i_k$ to $x^{i+1}_{2k}$; by construction, the basis $E^{i+1}_{2k}$ is the parallel transport of $E^i_k$ along the purple geodesic. The dotted black arrow denotes the $\lrp{\BB((k+1)\delta^i)-\BB(k\delta^i)}\circ E^{i+1}_{2k}$; we verify that it is the paralle transport of $b^i_k$ (black arrow) along the purple geodesic. Using Lemma \ref{l:discrete-approximate-synchronous-coupling}, we can bound the distance between $x^i_{k+1} = \Exp_{x^i_k}(b^i_k)$ (gray point) and $\Exp_{x^{i+1}_{2k}}(\party{}{}\lrp{b^i_k;{x^i_k}\to{x^{i+1}_{2k}}})$ (green point) as
\begin{alignat*}{1}
  \E{\dist\lrp{\Exp_{x^i_k}(b^i_k), \Exp_{x^{i+1}_{2k}}(\party{}{}\lrp{b^i_k;{x^i_k}\to{x^{i+1}_{2k}}})}^2} \leq \lrp{1 + O(L_R d \delta^i)} \E{\dist\lrp{x^i_k, x^{i+1}_{2k}}^2}
\end{alignat*}

\textbf{Step 2: Bounding $\dist\lrp{\Exp_{x^{i+1}_{2k}}(\party{}{}\lrp{b^i_k;{x^i_k}\to{x^{i+1}_{2k}}}),x^{i+1}_{2k+2}}$} [Dotted black arrow vs blue + red arrow]\\
Let 
\begin{alignat*}{1}
  & b^{i+1}_{2k} := \lrp{\BB((2k+1)\delta^{i+1})-\BB(2k\delta^{i+1})}\circ E^{i+1}_{2k}\\
  & b^{i+1}_{2k+1} := \lrp{\BB((2k+2)\delta^{i+1})-\BB((2k+1)\delta^{i+1})}\circ E^{i+1}_{2k}
\end{alignat*}
We verify that $\party{}{}\lrp{b^i_k;{x^i_k}\to{x^{i+1}_{2k}}}$ (black dotted arrow) is equal to $b^{i+1}_{2k} + b^{i+1}_{2k+1}$ (note that $\delta^{i+1} = 1/2 \delta^i$). On the other hand, $x^{i+1}_{2k+2}$ (orange point) is obtained from taking a step in $b^{i+1}_{2k}$ direction (blue arrow) followed by a step in the $\party{}{} \lrp{b^{i+1}_{2k+1};{x^{i+1}_{2k}}\to{x^{i+1}_{2k+1}}}$ direction (red arrow). Due to curvature, this is not the same as taking a single step in the $b^i_k = b^{i+1}_{2k} + b^{i+1}_{2k+1}$ direction (green point). However, we can bound the distance between the orange point and the green point using Lemma 3 of \cite{sun2019escaping} (restated as Lemma \ref{l:triangle_distortion} for ease of reference):
\begin{alignat*}{1}
  \E{\dist\lrp{\Exp_{x^{i+1}_{2k}}(\party{}{}\lrp{b^i_k;{x^i_k}\to{x^{i+1}_{2k}}}),x^{i+1}_{2k+2}}^2} \leq O\lrp{\lrn{b^{i+1}_{2k}}^2\lrn{b^{i+1}_{2k+1}}^2\lrp{\lrn{b^{i+1}_{2k}}+\lrn{b^{i+1}_{2k+1}}}^2} = O\lrp{(\delta^i)^{3}}
\end{alignat*}
Summing the bounds of Step 1 and Step 2 gives $\E{\dist\lrp{x^i_{k+1}, x^{i+1}_{2k+2}}^2} \leq e^{O\lrp{L_R d \delta^i}} \E{\dist\lrp{x^i_k, x^{i+1}_{2k}}^2} + O\lrp{(\delta^i)^3}$. Recursing over $k=0...2^i$, and assuming that $T\leq \frac{1}{L_R d}$, we get $\E{\dist\lrp{x^i_{2^i}, x^{i+1}_{2^{i+1}}}^2} = \E{\dist\lrp{x^i(T), x^{i+1}(T)}^2} \leq O\lrp{T^2 \delta^i}$. Summing over $i=0...\infty$ gives $\E{\dist\lrp{x^i(T), x(T)}^2} = O\lrp{T^{3}}$.



\subsection*{Details for Construction of Brownian Motion}
\label{ss:discrete_gaussian_walk_construction}
To bound the discretization error, we first characterize the manifold SDE~\eqref{e:intro_sde} as the limit of a family of random processes. Let $x_0 \in M$ be an initial point and $E = \lrbb{E^1,\ldots,E^d}$ be an orthonormal basis of $T_{x^0}$. Let $\BB(t)$ denote a standard Brownian Motion in $\Re^d$. Let $T\in \Re^+$. Define
\begin{alignat*}{1}
    & x^0_0 = x_0, \qquad E^{0}_0 = E,\\
    & x^0_1 = \Exp_{x^0_0}(T \beta(x^0_0) + {\lrp{\BB\lrp{T} - \BB\lrp{0}}} \circ E^{0}_0).
\end{alignat*}
For any $i\in \Z^+$, let $\delta^i := 2^{-i}T$. We will now define points $x^i_k \in M$ and orthonormal basis $E^{i}_k$ of $T_{x^i_k}$ for all $i$ and all $k\in \{0,\ldots,\nicefrac{T}{\delta^i}\}$. Our construction is inductive: Suppose we have already defined $x^i_k$ and $E^{i}_k$ for some $i$ and for all $k\in \{0,\ldots,\nicefrac{T}{\delta^i}\}$. Then, we construct $x^{i+1}_k$, for all $k=\{0,\ldots,\nicefrac{T}{\delta^{i+1}}\}$, as follows: 
\begin{alignat*}{1}
    & x^{i+1}_0 := x_0, \qquad E^{i+1}_{0} := E,\\
    & x^{i+1}_{2k+1} := \Exp_{x^{i+1}_{2k}}\lrp{\delta^{i+1}\beta\lrp{x^{i+1}_{2k}} + {\lrp{\BB\lrp{\lrp{2k+1}\delta^{i+1}} - \BB\lrp{2k\delta^{i+1}}}} \circ E^{i+1}_{2k}},\\
    & E^{i+1}_{2k+1} := \party{}{}\lrp{E^{i+1}_{2k}; {x^{i+1}_{2k}} \to {x^{i+1}_{2k+1}} },\\
    & x^{i+1}_{2k+2} := \Exp_{x^{i+1}_{2k+1}}\lrp{\delta^{i+1}\beta\lrp{x^{i+1}_{2k+1}} + {\lrp{\BB\lrp{\lrp{2k+2}\delta^{i+1}} - \BB\lrp{\lrp{2k+1}\delta^{i+1}}}} \circ E^{i+1}_{2k+1}},\\
    & E^{i+1}_{2k+2} := \party{}{} \lrp{E^{i}_{k+1}; {x^{i}_{k+1}}\to{x^{i+1}_{2k+2}}}
    \elb{d:x^i_k}.
\end{alignat*}

The above display defines points $x^{i+1}_k$ for all $k = \lrbb{0,\ldots,\nicefrac{T}{\delta^{i+1}}}$. For parallel transport, if the minimizing geodesic is not unique, any arbitrary choice will do. We verify that for any $i$, for all $k$, $x^i_k$ is indeed of the form \eqref{e:intro_euler_murayama}, i.e. $x^i_{k+1} = \Exp_{x^i_k} \lrp{\delta^i \beta(x^i_k) + \sqrt{\delta^i} \zeta^i_k}$, where $\zeta^i_k := \frac{1}{\sqrt{\delta^i}} \lrp{\BB((k+1)\delta^i) - \BB(k\delta^i) \circ E^i_k} \sim \N_{x^i_k}(0,I)$. Finally, for any $i$, any $k$, and any $t\in [k\delta^i, (k+1)\delta^i)$, we define $x^i(t)$ to be the ``linear interpolation'' of $x^i_k$ and $x^i_{k+1}$, i.e.,
\begin{alignat*}{1}
  x^{i}(t) := \Exp_{x^{i}_{k}}\bigl(\tfrac{t-k\delta^i}{\delta^i}\lrp{\delta^i \beta\lrp{x^i_k} + \lrp{\BB\lrp{(k+1)\delta^i} - \BB\lrp{k\delta^{i}}}} \circ E^{i}_{k}\bigr).
  \elb{d:x^i(t):0}
\end{alignat*}
\begin{remark}\label{remark:sde_construction}
  The choice of basis $E^i_k$ in \eqref{d:x^i_k} can be seen as a combination of ``synchronous coupling'' and ``rolling without slipping'' (see Chapter 2 of \citep{hsu2002stochastic}). In particular, $E^{i+1}_{2k} := \party{}{}\lrp{E^{i}_{k};{x^{i}_{k}}\to {x^{i+1}_{2k}} }$ corresponds to ``synchronous coupling''---the step from $x^{i+1}_{2k}$ to $x^{i+1}_{2k+1}$ is (roughly) parallel to the step from $x^i_{k}$ to $x^i_{k+1}$. On the other hand, $E^{i+1}_{2k+1} := \party{}{} \lrp{E^{i+1}_{2k};{x^{i+1}_{2k}}\to{x^{i+1}_{2k+1}}}$ corresponds to ``rolling without slipping''---the step from $x^{i+1}_{2k+1}$ to $x^{i+1}_{2k+2}$ is with respect to an orthonormal basis that is parallel-transported from $x^{i+1}_{2k}$ to $x^{i+1}_{2k+1}$.
\end{remark}

We verify in the following Lemma that the limit, $i\to \infty$, of $x^i(t)$ is the SDE \eqref{e:intro_sde}:

\begin{lemma}\label{l:Phi_is_diffusion}
  For any $T$, for $t\in [0,T]$, let $x(t) := \lim_{i \to \infty} x^i(t)$. This limit exists uniformly almost-surely, and $x(t)$ is a diffusion process generated by the operator $L$ whose action on any smooth function $f$ is given by $L f = \lin{\nabla f, \beta} + \frac{1}{2} \Delta(f)$, where $\Delta$ denotes the Laplace Beltrami operator. Thus $x(t)$ is equal to \eqref{e:intro_sde} in distribution.

\end{lemma}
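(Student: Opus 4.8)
The plan is to establish the two assertions separately: first, that the coupled sequence $\{x^i(\cdot)\}_{i\ge 0}$ converges uniformly on $[0,T]$, almost surely, to a continuous process $x(\cdot)$ adapted to the filtration $(\mathcal F_t)$ generated by $\BB$; and second, that this limit solves the martingale problem for $L$, which by the (known) well-posedness of that martingale problem identifies $x(t)$ in distribution with the diffusion \eqref{e:intro_sde}.

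For the first part I would reuse the one-step coupling estimates underlying Lemma~\ref{l:informal_discretization-approximation-lipschitz-derivative}. Step~1 and Step~2 of that argument yield, at the coarse dyadic times $t=k\delta^i$, the recursion $\E{\dist\lrp{x^i_{k+1},x^{i+1}_{2k+2}}^2}\le e^{O(L_R d\,\delta^i)}\,\E{\dist\lrp{x^i_k,x^{i+1}_{2k}}^2}+O\lrp{(\delta^i)^3}$, so that after a Gr\"onwall-type unrolling over $k$ (cutting $[0,T]$ into $O(L_R dT)$ blocks if $T>1/(L_Rd)$) one gets $\max_k\E{\dist\lrp{x^i(k\delta^i),x^{i+1}(k\delta^i)}^2}=O\lrp{T^2\delta^i}$. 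Since $\sum_i\sqrt{T^2\delta^i}=\sum_i O\lrp{T^{3/2}2^{-i/2}}<\infty$, at any fixed dyadic $t$ the nonnegative series $\sum_i\dist\lrp{x^i(t),x^{i+1}(t)}$ has finite expectation, hence converges almost surely. To upgrade this to uniform-in-$t$ convergence I would combine a maximal inequality over the $2^i$ dyadic times (the cross-level distance increments have stretched-exponential tails, coming from products of Gaussian increment norms, which suffices) with the Brownian modulus of continuity, which bounds $\sup_{t\in[k\delta^i,(k+1)\delta^i]}\dist\lrp{x^i(t),x^i(k\delta^i)}$ by $O\lrp{\sqrt{d\delta^i\log(1/\delta^i)}}$, a summable quantity. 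The resulting uniform limit $x(t)$ is a.s.\ continuous, and it is $(\mathcal F_t)$-adapted because every frame update in \eqref{d:x^i_k}, including $E^{i+1}_{2k+2}=\party{}{}\lrp{E^i_{k+1};x^i_{k+1}\to x^{i+1}_{2k+2}}$, references only Brownian data up to the current time.

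For the second part it suffices to verify the martingale property for $f\in C_c^\infty(M)$. Taylor-expanding $f\circ\Exp_{x^i_k}$ to third order along $v=\delta^i\beta(x^i_k)+\sqrt{\delta^i}\,\zeta^i_k$ with $\zeta^i_k\sim\N_{x^i_k}(0,I)$, and using that (given $\mathcal F_{k\delta^i}$) every term odd in $\zeta^i_k$ has zero conditional mean while $\E{\hess f(x)[\zeta,\zeta]}=\tr\hess f(x)=\Delta f(x)$ for $\zeta\sim\N_x(0,I)$, one obtains the one-step estimate
\begin{equation*}
  \E{f(x^i_{k+1})\mid\mathcal F_{k\delta^i}}\;=\;f(x^i_k)+\delta^i\lrp{\lin{\nabla f(x^i_k),\beta(x^i_k)}+\tfrac12\Delta f(x^i_k)}+O\lrp{(\delta^i)^2}\;=\;f(x^i_k)+\delta^i\,Lf(x^i_k)+O\lrp{(\delta^i)^2},
\end{equation*}
where the $O\lrp{(\delta^i)^2}$ is uniform (it depends only on $\sup\lrn{\beta}$, bounds on the derivatives of $f$ up to order four, the $d$-dependent moments of a standard Gaussian, and the geometry of $M$ restricted to the compact support of $f$). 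Telescoping over the steps of any subinterval $[s,t]\subseteq[0,T]$ via the tower property, then sending $i\to\infty$ using the first part together with the continuity of $f,\nabla f,\hess f$ and dominated convergence, gives $\E{f(x(t))\mid\mathcal F_s}=f(x(s))+\E{\int_s^t Lf(x(u))\,du\mid\mathcal F_s}$; that is, $f(x(t))-\int_0^t Lf(x(u))\,du$ is an $(\mathcal F_t)$-martingale for every $f\in C_c^\infty(M)$. Hence $x(t)$ solves the martingale problem for $L$, and by well-posedness of that problem under our regularity assumptions (cf.\ \citep[Prop.~3.2.1]{hsu2002stochastic}), $x(t)$ coincides in distribution with \eqref{e:intro_sde}.

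The main obstacle is the first part: converting the endpoint $L^2$ coupling bounds into genuine uniform-in-$t$ almost-sure convergence. The delicate points are (i) the non-dyadic interpolation times, handled via the Brownian modulus of continuity; (ii) the accumulated rotation of the orthonormal frames $E^i_k$ along the path---exactly the difficulty flagged in Remark~\ref{remark:sde_construction}---which is what lets the cross-level recursion close with only an $e^{O(L_R d\delta^i)}$ multiplicative factor; and (iii) the maximal inequality needed to control the supremum over the $2^i$ dyadic times, for which a second-moment bound alone is insufficient and one must exploit the (super)martingale structure of $k\mapsto\dist\lrp{x^i(k\delta^i),x^{i+1}(k\delta^i)}^2$ or obtain higher-moment tail bounds. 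A secondary, more routine point is ensuring the per-step $O\lrp{(\delta^i)^2}$ errors in the generator computation are uniform along the random path, which the restriction to compactly supported test functions (equivalently, a localization by exit times of large geodesic balls) takes care of.
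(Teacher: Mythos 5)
Your overall route is the same as the paper's: summable $L^2$ bounds on the cross-level distances $\dist\lrp{x^i(\cdot),x^{i+1}(\cdot)}$ followed by Borel--Cantelli to get an almost-sure uniform limit (the paper's Lemma \ref{l:x(t)_is_brownian_motion}, built on Lemma \ref{l:key_brownian_limit_lemma}), and then a Taylor expansion of $f$ along each Euler--Murayama step, killing the odd Gaussian terms and identifying the trace of the Hessian with $\Delta f$, telescoped and passed to the limit to verify the martingale problem for $L$ and invoke \citep[Prop.~3.2.1]{hsu2002stochastic}. The generator half of your argument is fine, modulo the routine point (which the paper also handles) that the telescoping is first done for $s,t$ dyadic and then extended by density.

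There is, however, one concrete gap in the existence half: your cross-level recursion $\E{\dist\lrp{x^i_{k+1},x^{i+1}_{2k+2}}^2}\leq e^{O(L_R d\,\delta^i)}\E{\dist\lrp{x^i_k,x^{i+1}_{2k}}^2}+O\lrp{(\delta^i)^3}$ is only valid with constants depending on a \emph{global} bound $L_\beta\geq\sup_x\lrn{\beta(x)}$ (the error term in Lemma \ref{l:key_brownian_limit_lemma} carries factors of $L_\beta^6$ and the contraction factor carries $e^{cK(\delta^i)^2L_RL_\beta^2}$). Under Assumption \ref{ass:beta_lipschitz} alone, $\beta$ is merely Lipschitz and hence can grow linearly in $\dist(\cdot,x_0)$, so these constants are infinite and your summability claim $\sum_i O(T^{3/2}2^{-i/2})<\infty$ has no uniform meaning. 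The paper closes this by a truncation argument: it replaces $\beta$ by $\beta^j$, truncated at norm growing like $2^{j/2}$, ties the truncation level to the discretization level ($j=i$) so that the halving of $\delta^i$ exactly offsets the growth of $\lrn{\beta^i}$, and then uses the tail bound of Lemma \ref{l:near_tail_bound_L2} to show the truncated and untruncated paths coincide except on an event of probability $O(2^{-i})$, which is itself Borel--Cantelli summable. Some device of this kind (truncation, or a localization by exit times of balls of radius growing with $i$ together with a non-explosion estimate) is needed before your first part goes through; without it the argument fails already for $\beta$ linear in the distance to a base point.
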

Lemma~\ref{l:Phi_is_diffusion} follows immediately from \citep{gangolli1964construction,jorgensen1975central}, but we provide a proof in Appendix \ref{ss:sde_construction_appendix} for completeness.

\section{Langevin MCMC on Riemannian Manifolds}
\label{s:langevin MCMC}
With the one-step discretization error in Lemma \ref{l:informal_discretization-approximation-lipschitz-derivative}, we can bound the iteration complexity of Langevin MCMC \eqref{e:intro_euler_murayama}:
\begin{theorem}[Convergence of Langevin MCMC on Riemannian Manifold]
    \label{t:langevin_mcmc}
    Assume the manifold $M$ satisfies Assumptions~\ref{ass:ricci_curvature_regularity} and~\ref{ass:sectional_curvature_regularity}. Assume in addition that there exists a constant $L_R'$ such that for all $x\in M$, $u,v,w,z,a\in T_x M$, $\lin{(\nabla_a R)(u,v)w,z} \leq L_R'\lrn{u}\lrn{v}\lrn{w}\lrn{z}\lrn{a}$ (this last assumption is for analytical convenience; $L_R'$ does not show up in the quantitative bounds).  Let $\beta$ be a vector field satisfying Assumptions \ref{ass:distant-dissipativity} and \ref{ass:beta_lipschitz}; assume in addition that $m > L_{\Ric}/2$ and that $q+ L_{\Ric}/2 \geq 0$. Let $x^*$ be some point with $\beta(x^*) = 0$.
    
    Let $y(t)$ denote the exact geometric SDE given in \eqref{e:intro_sde} initialized at some $y(0)$. Let $K \in \Z^+$ be some iteration number and $\delta$ be some stepsize. Let $x_k$ denote the Euler Murayama discretization of \eqref{e:intro_sde}, defined by $x_{k+1} = \Exp_{x_k}\lrp{\delta \beta(x_k) + \sqrt{\delta} \zeta_k}$, where $\zeta_k \sim \N_{x_k}\lrp{0,I}$, initialized at some $x_0$ satisfying $\dist\lrp{x_0,x^*} \leq 2\R$. 
    
    Then, there exists a constant $\C_0 = poly\lrp{L_\beta', d, L_R, \R, \frac{1}{m-L_{\Ric}/2}, \log K}$, such that if $\delta \leq \frac{1}{\C_0}$, then there is a coupling between $x_K$ and $y(K\delta)$ satisfying the distance bound
    \begin{alignat*}{1}
        \E{\dist\lrp{y(K\delta),x_{K}}} 
        \leq e^{-\alpha K \delta + \lrp{q + L_{\Ric}/2}\R^2/2}\E{\dist\lrp{y(0),x_{0}}} + \exp\lrp{\lrp{q + L_{\Ric}/2}\R^2} \cdot \t{O}\lrp{\delta^{1/2}}.
    \end{alignat*}
    $\t{O}$ hides polynomial dependence on $L_\beta', d, L_R, \R, \frac{1}{m-L_{\Ric}/2}, \log K, \log\frac{1}{\delta}$, and\\
    $\alpha := \min\lrbb{\frac{m-L_{\Ric}/2}{16}, \frac{1}{2 \R^2}} \cdot e^{{- \frac{1}{2}\lrp{q + L_{\Ric}/2} \R^2}}$.
\end{theorem}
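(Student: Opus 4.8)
The plan follows the standard ``geometric contraction of the diffusion plus one-step discretization error'' template for Langevin MCMC, adapted to the manifold setting, and rests on two ingredients. \emph{(i) Contraction of the exact diffusion.} Under Assumptions \ref{ass:ricci_curvature_regularity} and \ref{ass:distant-dissipativity} with $m > L_{\Ric}/2$ and $q + L_{\Ric}/2 \ge 0$, for any two solutions $u(t),v(t)$ of \eqref{e:intro_sde} there is a coupling --- the Kendall--Cranston (reflection) coupling, steered by an Eberle-type concave distortion $f$ and, following \citep{eberle2016reflection}, an auxiliary Lyapunov function --- for which $\E{f(\dist(u(t),v(t)))} \le e^{-\alpha t}\,\E{f(\dist(u(0),v(0)))}$ with $\alpha$ as in the statement, where $f$ is increasing, concave, $f(0)=0$, and satisfies $e^{-(q+L_{\Ric}/2)\R^2/2}\,r \le f(r) \le r$. \emph{(ii) One-step discretization error.} By Jensen's inequality, Lemma \ref{l:informal_discretization-approximation-lipschitz-derivative} says that a single Euler-Murayama step of size $\delta \le 1/\C_0$ from \emph{any} point $x$ can be coupled with the exact diffusion run for time $\delta$ from $x$ so that the expected Riemannian distance between the two is $\t{O}(\delta^{3/2})$, with a constant depending only on $L_\beta', L_R, d$ and logarithmic factors --- in particular \emph{not} on the location $x$.

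Given (i) and (ii), I would run the telescoping recursion. Let $y(t)$ solve \eqref{e:intro_sde} from $y(0)$, and for each $k$ let $\h{x}_{k+1}$ be the exact diffusion started at $x_k$ and evolved for time $\delta$. Couple $(x_{k+1}, \h{x}_{k+1})$ by Lemma \ref{l:informal_discretization-approximation-lipschitz-derivative}; couple $(y((k+1)\delta), \h{x}_{k+1})$ by the Kendall--Cranston contraction over $[k\delta,(k+1)\delta]$ (both are solutions of \eqref{e:intro_sde} there, from $y(k\delta)$ and from $x_k$); and glue these two couplings along the shared marginal $\h{x}_{k+1}$. Using subadditivity of the concave $f$, the triangle inequality for $\dist$, $f(r)\le r$, and monotonicity of $f$,
\begin{alignat*}{1}
  \E{f(\dist(y((k+1)\delta),x_{k+1}))} \;\le\; e^{-\alpha\delta}\,\E{f(\dist(y(k\delta),x_k))} + \t{O}(\delta^{3/2}).
\end{alignat*}
Unrolling over $k=0,\dots,K-1$ and summing the geometric series $\sum_{j\ge 0} e^{-\alpha\delta j}\,\t{O}(\delta^{3/2}) = \t{O}(\delta^{1/2}/\alpha)$ --- legitimate because $\delta\le 1/\C_0$ forces $1-e^{-\alpha\delta}\ge \alpha\delta/2$ --- and then converting back via $\dist \le e^{(q+L_{\Ric}/2)\R^2/2}\,f(\dist)$ on the left and $f(\dist)\le\dist$ on the right yields
\begin{alignat*}{1}
  \E{\dist(y(K\delta),x_K)} \;\le\; e^{(q+L_{\Ric}/2)\R^2/2}\Bigl(e^{-\alpha K\delta}\,\E{\dist(y(0),x_0)} + \t{O}(\delta^{1/2}/\alpha)\Bigr).
\end{alignat*}
Because $1/\alpha$ itself carries a factor $e^{(q+L_{\Ric}/2)\R^2/2}$, the error term becomes $e^{(q+L_{\Ric}/2)\R^2}\t{O}(\delta^{1/2})$ and the first term matches $e^{-\alpha K\delta+(q+L_{\Ric}/2)\R^2/2}$, as claimed; the hypothesis $\dist(x_0,x^*)\le 2\R$ is used only to keep $\E{\dist(y(0),x_0)}$ finite once $y(0)$ is taken near $\pi^*$ (which is also where the Lyapunov function enters, via a moment bound). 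The $\log K$ (equivalently $\log\tfrac1\delta$) dependence in $\C_0$ and $\t{O}$ comes from (a) the curvature comparison underlying Lemma \ref{l:informal_discretization-approximation-lipschitz-derivative} being valid only for step-vectors of length $\lesssim L_R^{-1/2}$, forcing the Gaussian increments to be truncated at scale $\sqrt{\delta d\log K}$ so the estimate survives a union bound over all $K$ steps, and (b) the $\log\tfrac1\delta$ already present in that lemma.

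The main obstacle is ingredient (i): the contraction estimate for the exact diffusion. It requires the manifold analogue of Eberle's argument --- running the Kendall--Cranston coupling (reflecting the driving Brownian motion across the parallel transport of the minimizing geodesic joining the two copies), writing an It\^o formula for $f(\dist(u_t,v_t))$, and verifying that its drift is $\le -\alpha f(\dist(u_t,v_t))$. The second-order (curvature) contribution is controlled precisely by the Ricci lower bound $-L_{\Ric}$, which is why the dissipativity must satisfy $m > L_{\Ric}/2$; inside radius $\R$ the possibly-adverse constant $q$ is absorbed at the cost of the $e^{(q+L_{\Ric}/2)\R^2}$ factors, which dictates how to shape $f$ (and the Lyapunov function) so the comparison ODE closes with rate $\alpha$. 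One must also treat the cut locus, where $\dist(\cdot,\cdot)$ is only semiconcave: the standard remedy is to run the coupling up to the coupling time and use the concavity of $f$ together with the fact that the singular set carries no local time. By contrast, once the uniform-in-location bound of Lemma \ref{l:informal_discretization-approximation-lipschitz-derivative} is in hand, the gluing of couplings, the recursion, and the metric conversion are routine.
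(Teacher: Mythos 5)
Your overall architecture is exactly the paper's: combine the Kendall--Cranston/Eberle contraction of the exact diffusion (Lemma \ref{l:g_contraction_without_gradient_lipschitz}) with the one-step Euler--Murayama error bound (Lemma \ref{l:discretization-approximation-lipschitz-derivative}), glue the two couplings along the intermediate process $\h{x}_{k+1}$, telescope, and convert back through the two-sided bounds on $f$. That part is correct and matches the paper's proof in Appendix \ref{ss:proof_of_t:langevin_mcmc}.

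There is, however, one genuine gap: you assert that the one-step discretization error is $\t{O}(\delta^{3/2})$ with a constant that does ``\emph{not} depend on the location $x$.'' This is false under the stated hypotheses. The formal Lemma \ref{l:discretization-approximation-lipschitz-derivative} bounds the one-step error in terms of $L_1 \geq \lrn{\beta(x(0))}$, and since $\beta$ is only Lipschitz (Assumption \ref{ass:beta_lipschitz}), $\lrn{\beta(x_k)}$ has no global upper bound; moreover the admissible stepsize itself shrinks like $1/(\sqrt{L_R}L_1)$. Your recursion therefore does not close as written: the $\t{O}(\delta^{3/2})$ term at step $k$ secretly depends on $\dist(x_k,x^*)$. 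The paper repairs this with a confinement argument that your proposal omits: under distant dissipativity one proves a sub-Gaussian tail bound (Lemma \ref{l:far-tail-bound-truncated-sgld} with $\sigma=0$) showing that with high probability all $K$ iterates remain in a ball of radius $r = \t{O}\bigl(\sqrt{(d/m)\log(K/\delta)}\bigr)$ around $x^*$; the recursion is then run on the indicator of this good event $A_K$ (where $\lrn{\beta(x_k)} \leq L_\beta' r$), and the complementary event is handled separately via fourth-moment bounds on $\dist(x_K,x^*)$ and $\dist(y(K\delta),x^*)$ together with Cauchy--Schwarz. This is also the actual source of the $\log K$ and $\log(1/\delta)$ factors in $\C_0$ and in the $\t{O}$ --- not the truncation of Gaussian increments you describe. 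The fix is standard, but it is a necessary piece of the proof, and your identification of (i) as ``the main obstacle'' while calling the recursion ``routine'' misplaces where the remaining work lies.
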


We defer the proof of Theorem \ref{t:langevin_mcmc} to Appendix \ref{ss:proof_of_t:langevin_mcmc}, where we state the explicit expressions for $\C_0$.

\subsection*{Discussion of Theorem \ref{t:langevin_mcmc}}
\label{ss:theorem_1_discussion}
\textbf{To sample from $d\pi^*(x) = e^{-h(x)} d{\vol}_g(x)$}, we let $\beta(x) := - \frac{1}{2} \nabla h(x)$; under this choice of $\beta$, $\pi^*(x)$ is invariant under the SDE for $y(t)$. Picking $y(0) \sim \pi^*(x)$, we thus ensure that ${\Law}(y(t)) = \pi^*$ for all $t$. \textbf{Therefore, the $W_1$ distance between ${\Law}(x_K)$ and $\pi^*$ is upper bounded by $\E{\dist\lrp{y(K\delta), x_K}}$}, which is in turn upper bounded in Theorem \ref{t:langevin_mcmc}.

The distance bound consists of two terms: the first term is exponentially small in $K\delta$, so it goes to $0$ as the number of steps tends to infinity; the second term is proportional to $\delta^{1/2}$. We first pick $\delta = \t{O}\lrp{\epsilon^{2}}$ so that the second term is bounded by $\epsilon$. It then suffices to let $K = \frac{1}{\alpha \delta} \log\lrp{\frac{\E{\dist\lrp{y(0),x_0}}}{\epsilon}} + \lrp{q + L_{\Ric}/2}\R^2 = \t{O}\lrp{\epsilon^{-2}}$ in order for the first term to be bounded by $\epsilon$. Note that by our assumptions and by Lemma \ref{l:far-tail-bound-l2-brownian} $\E{\dist\lrp{y(0),x_0}}\leq poly\lrp{L_R, L_\beta', d, \R, \frac{1}{m}}$.

\textbf{We now discuss a few specific cases for Assumption \ref{ass:distant-dissipativity}.} The easiest setting is when $m > L_{\Ric}/2$ and $\R = 0$ (in this case the value of $q$ does not matter); this occurs, for example, if $\beta = - \frac{1}{2} \nabla h$ for some $c$-strongly-convex $h$, and $M$ has positive Ricci curvature. The mixing rate is $\alpha = \frac{c - L_{\Ric}}{32}$, and this closely relates to a well known result by Bakry and Emery on the Log-Sobolev Inequality of $CD(\cdot,\infty)$ distributions:

\emph{\textbf{Theorem} (\citep{bakry2014analysis}) Let $Ric$ denote the Ricci curvature tensor. If $d \pi^*(x) = e^{-h(x)} d {\vol}_g(x)$, and $\nabla^2 h + \Ric \succ \rho I$, then $\pi^*$ satisfies the Log-Sobolev Inequality (LSI) with parameter $\rho$.}

When $\pi^*$ satisfies the $LSI(\rho)$, it has been shown in \citep{bakry2014analysis} that for any initialization, the KL divergence of ${\Law}(x(t))$ of \eqref{e:intro_sde} with respect to its stationary distribution converges to $0$ with rate $\rho$. In our example in the preceding paragraph, $d\pi^* = e^{-h}d{\vol}_g$ satisfies LSI with $\rho = c-L_{\Ric}$, which is, up to a constant factor, equal to our mixing rate.

The requirement $q+L_{\Ric}/2 \geq 0$ is without loss of generality. If $q + L_{\Ric}/2 < 0$, we can take $q' = -L_{\Ric}/2$ and verify that $\beta$ satisfies Assumption \ref{ass:distant-dissipativity} with $(m,q',\R)$. Since $q'+L_{\Ric}/2 = 0$, the mixing rate is then $\alpha = \min\lrbb{\frac{m-L_{\Ric}/2}{16}, \frac{1}{2 \R^2}} $, which corresponds to the easy ``$CD(0,\infty)$'' setting.

\textbf{A harder, but more general setting} is when $\beta = \frac{1}{2} \nabla h$ for some non-convex $h$, and $L_{\Ric} > 0$, i.e. the manifold can have negative Ricci curvature. We will still need to assume that $\nabla h(x)$ is contractive for points which are further away than some radius $\R$; otherwise, the SDE \eqref{e:intro_sde} may drift off to infinity and $e^{-h}$ may not be integrable. Under Assumption \ref{ass:beta_lipschitz}, $\beta$ satisfies Assumption \ref{ass:distant-dissipativity} with $m, L_\beta', \R$. The mixing rate $\alpha$ is then proportional to $e^{-(L_\beta' + L_{\Ric}/2)\R^2}$; this can be very small if $h$ is nonconvex and highly nonsmooth, and the manifold $M$ has large negative Ricci curvature, leading to very slow mixing. This is generally unavoidable, even when $M$ is the flat Euclidean space.

Readers familiar with the Holly-Stroock perturbation may find the $\exp\lrp{(q + L_{Ric}/2) \R^2}$ term familiar: Let $M$ have diameter $\R$ and let $U$ have $2q$-Lipschitz gradient,, so that $\beta = \frac{1}{2} \nabla U$ satisfies Assumption \ref{ass:distant-dissipativity} with $q$. Then we can decompose $U = U_1 + U_2$, where $U_1$ is $-L_{\Ric}$-strongly-convex and $U_2$ is a "perturbation" with magnitude $(2q + L_{Ric}) \R^2$. $e^{2 U_1} d vol_g$ satisfies $CD(0,\infty)$ and thus $e^{2U} d vol_g$ has Log-Sobolev constant scaling with $\exp\lrp{(2q + L_{Ric}) \R^2}$. 

\textbf{Finally, on compact manifolds without boundary,} one may take $\R$ to be the diameter of the manifold and upper bound the mixing rate $\alpha$ by $\alpha = \frac{1}{2\R^2} e^{- \frac{1}{2} \lrp{q + L_{\Ric}/2}\R^2}$.



\vspace*{-5pt}
\subsection{Proof Sketch of Theorem \ref{t:langevin_mcmc}}
The proof of Theorem \ref{t:langevin_mcmc} consists of two steps. The first step is bounding the discretization error of \eqref{e:intro_euler_murayama}; we have already done this in Lemma \ref{l:informal_discretization-approximation-lipschitz-derivative} in the previous section. 

The second step is showing that two paths of \eqref{e:intro_sde} converge. To do so, we consider $f\lrp{\dist\lrp{x(t),y(t)}}$, where $f$ is a Lyapunov function taken from \citep{eberle2016reflection} (its exact form is given in Definition \ref{d:f} in the Appendix). Two key property of $f$ are that (i) $\frac{1}{2}\exp\lrp{- \lrp{q + L_{\Ric}/2} \R^2/2} r \leq f(r) \leq r$; and (ii) $f'(r) \leq 1$, so that convergence in $f(\dist\lrp{x(t),y(t)})$ implies convergence in $\dist\lrp{x(t),y(t)}$.

Under the \emph{Kendall-Cranston Coupling} \footnote{This is the manifold analog of reflection coupling of Euclidean Brownian motions; see~\citep[Ch.~6.5]{hsu2002stochastic}} of $x(t)$ and $y(t)$, one can show that $f\lrp{\dist\lrp{x(t),y(t)}}$ contracts with rate $\alpha$. We quantify the contraction rate in Lemma \ref{l:g_contraction_without_gradient_lipschitz} below. We stress that Lemma \ref{l:g_contraction_without_gradient_lipschitz} is not new; it was first presented by \cite{eberle2016reflection} (with minor variation), and its proof combines Theorem 6.6.2 of~\citep{hsu2002stochastic} with the Lyapunov function analysis of~\citep{eberle2016reflection}. For completeness, we provide a proof of Lemma \ref{l:g_contraction_without_gradient_lipschitz} in Appendix~\ref{ss:Evolution_of_Lyapunov_Function_under_Kendall_Cranston_Coupling}.

\begin{lemma}
  \label{l:g_contraction_without_gradient_lipschitz}
  Assume $\beta$ is $(m,q,\R)$-distant dissipative as per Assumption~\ref{ass:distant-dissipativity}, and that is also satisfies Assumption~\ref{ass:beta_lipschitz}. Further assume that $m > L_{\Ric}/2$ and $q + L_{\Ric}/2 \geq 0$. Let $x(t)$ and $y(t)$ denote solutions to \eqref{e:intro_sde}. Then there exists a Lyapunov function $f$ satisfying 1. $f(r) \geq \frac{1}{2}\exp\lrp{- \lrp{q + L_{\Ric}/2} \R^2/2} r$ and 2. $\lrabs{f'(r)} \leq 1$, and a coupling between $x(t)$ and $y(t)$, such that for all time $T$, 
  \begin{alignat*}{1}
    \E{f\lrp{\dist\lrp{x(T),y(T)}}}
    \leq& \exp\lrp{-\alpha T}f\lrp{\dist\lrp{x_0,y_0}},
  \end{alignat*}
  where $\alpha := \min\lrbb{\frac{m-L_{\Ric}/2}{16}, \frac{1}{2 \R^2}} \cdot \exp\lrp{- \frac{1}{2}\lrp{q + L_{\Ric}/2} \R^2}$.
\end{lemma}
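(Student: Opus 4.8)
This lemma is the manifold transcription of Eberle's reflection-coupling contraction estimate, with the Ricci lower bound of Assumption~\ref{ass:ricci_curvature_regularity} acting as a uniform expansive shift of the drift. First I would run $x(t)$ and $y(t)$ under the Kendall--Cranston (mirror) coupling of their driving Brownian motions, so that each marginal solves \eqref{e:intro_sde}: this coupling keeps the perpendicular components of the two noises parallel and anti-reflects the component along the (unit-speed) minimizing geodesic $\gamma$ joining $x(t)$ to $y(t)$. Invoking Theorem~6.6.2 of \cite{hsu2002stochastic}, the radial process $r_t := \dist(x(t),y(t))$ is then a semimartingale satisfying, away from the cut locus,
\begin{alignat*}{1}
  dr_t \;\le\; 2\,dW_t \;+\; \Bigl[\, \langle \party{}{}\lrp{\beta(y(t));y(t)\to x(t)} - \beta(x(t)),\, \gamma'(0)\rangle \;+\; \tfrac12\textstyle\sum_{i=1}^{d-1} \mathcal{I}_\gamma(J_i,J_i) \Bigr]\,dt \;-\; dL_t,
\end{alignat*}
where $W_t$ is a one-dimensional Brownian motion, $L_t$ is nondecreasing and supported on the set of times where $x(t),y(t)$ are cut/conjugate, and $\mathcal{I}_\gamma$ is the index form (the bracket is the first plus second variation of arc length). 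Bounding $\mathcal{I}_\gamma(J_i,J_i)$ by the index form of the parallel perpendicular Jacobi fields and using $\Ric \ge -L_{\Ric}$ gives $\tfrac12\sum_i \mathcal{I}_\gamma(J_i,J_i) \le \tfrac{L_{\Ric}}{2}\,r_t$; rescaling Assumption~\ref{ass:distant-dissipativity} to unit speed bounds the first-variation term by $-m\,r_t$ when $r_t \ge \R$ and by $q\,r_t$ when $r_t < \R$. Hence $dr_t \le 2\,dW_t + \bar\eta(r_t)\,dt - dL_t$ with $\bar\eta(r) \le -(m - L_{\Ric}/2)\,r$ for $r \ge \R$ and $\bar\eta(r) \le (q + L_{\Ric}/2)\,r$ for $r < \R$. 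This is precisely where $m > L_{\Ric}/2$ and $q + L_{\Ric}/2 \ge 0$ enter; Assumption~\ref{ass:beta_lipschitz} is used to ensure the coupled diffusion is well defined and that $\bar\eta(r)=O(r)$ as $r\to 0$.

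Next I would take $f$ to be Eberle's concave function built from the coefficients $-(m-L_{\Ric}/2)$, $(q+L_{\Ric}/2)$ and the threshold $\R$ (the explicit integral form is Definition~\ref{d:f}). Its three relevant properties are: (i) $f(0)=0$, with $f$ increasing and concave and $f'(0)=1$, hence $0 \le f'(r)\le 1$ --- this is property~2 of the statement; (ii) a lower bound $f(r) \ge \tfrac12\,\phi(\R)\,r$, where $\phi(r)=\exp(-\tfrac12\int_0^r \bar\eta^+(s)\,ds)$ is the Gr\"onwall factor and $\phi(\R)\ge \exp(-(q+L_{\Ric}/2)\R^2/2)$, yielding $f(r) \ge \tfrac12\exp(-(q+L_{\Ric}/2)\R^2/2)\,r$, i.e.\ property~1; and (iii) the pointwise differential inequality $2f''(r) + f'(r)\bar\eta(r) \le -\alpha f(r)$ for all $r>0$, with $\alpha = \min\{(m-L_{\Ric}/2)/16,\ 1/(2\R^2)\}\exp(-(q+L_{\Ric}/2)\R^2/2)$. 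Property (iii) is exactly the content of the two-region construction in \cite{eberle2016reflection}: on $(0,\R]$ the concave ``patch'' in $f$ dominates the mildly expansive drift, while on $[\R,\infty)$ the genuinely dissipative drift $-(m-L_{\Ric}/2)r$ provides the contraction; I would import this verification essentially verbatim, checking only that the numerical constants survive the substitution of the two coefficients above.

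Finally I would apply It\^o's formula to $f(r_t)$. Using $f''\le 0$ against the diffusion term, $f'\ge 0$ against both the drift (replacing it by its upper bound $\bar\eta$) and the nonpositive increment $-dL_t$, and then property (iii), one gets $\frac{d}{dt}\mathbb{E}[f(r_t)] \le -\alpha\,\mathbb{E}[f(r_t)]$; Gr\"onwall's inequality then gives $\mathbb{E}[f(\dist(x(T),y(T)))] \le e^{-\alpha T} f(\dist(x_0,y_0))$ for every $T$, which is the claimed bound. Combined with property (ii), this also controls $\mathbb{E}[\dist(x(T),y(T))]$ itself, as used later.

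The main obstacle is making the radial comparison rigorous at the cut locus, where $\dist(\cdot,\cdot)$ fails to be smooth: this needs either the classical fact that the coupled process spends zero Lebesgue time on the cut locus, or an approximation of $\dist$ from above by smooth functions with a controlled defect --- which is exactly what Theorem~6.6.2 of \cite{hsu2002stochastic} provides --- together with care when the minimizing geodesic realizing $\dist(x(t),y(t))$ is non-unique, in which case the coupling should be set up to select the geodesic along which Assumption~\ref{ass:distant-dissipativity} holds (possible on the measure-zero exceptional set). A secondary, purely bookkeeping point is to propagate the factor $\tfrac12$ in the curvature term (coming from the $\tfrac12\Delta$ normalization of the generator) and the noise coefficient $2$ through Eberle's construction so that the constants in $\alpha$ and in the lower bound on $f$ come out as stated. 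Beyond these points the argument is a faithful transcription of \cite{eberle2016reflection}, which is why the lemma is attributed there.
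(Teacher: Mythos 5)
Your proposal is correct and realizes the strategy the paper announces in the main text (Kendall--Cranston coupling plus Eberle's Lyapunov function), but the technical route is genuinely different from the one the paper actually executes. You work directly in continuous time: you invoke Theorem~6.6.2 of \cite{hsu2002stochastic} to get the radial semimartingale inequality $dr_t \le 2\,dW_t + \bar\eta(r_t)\,dt - dL_t$ (with the index form bounded via $\Ric \ge -L_{\Ric}$ and the first variation via Assumption~\ref{ass:distant-dissipativity}), and then apply It\^o's formula to $f(r_t)$ together with the differential inequality $2f'' + f'\bar\eta \le -\alpha f$. The paper instead never writes an SDE for $r_t$: it builds the mirror coupling at the level of the Euler--Murayama sequences $x^i_k, y^i_k$ (Lemma~\ref{l:kendall-cranston}), replaces the index-form computation by the discrete second-order Jacobi-field expansion of Lemma~\ref{l:discrete-approximate-synchronous-coupling-ricci}, applies a doubly regularized Lyapunov function $g_\epsilon(s)=f_\epsilon(\sqrt{s+\epsilon})$ to the \emph{squared} distance, truncates $\beta$ at growing radii, and recovers the continuous-time statement by three successive limits ($i\to\infty$ in the stepsize, $s^j\to\infty$ in the truncation, $\epsilon\to 0$ in the smoothing). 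Your route is shorter and classical, but it delegates to Hsu's theorem exactly the points you flag as obstacles (the cut locus, the generalized It\^o formula for the merely $C^1$, piecewise-$C^2$ function $f$ at $r=\R$ and $r=0$, and well-posedness of the coupled diffusion for an unbounded Lipschitz drift); the paper's discrete route handles all three by hand — the smoothing $\epsilon$ replaces It\^o--Tanaka, the truncation replaces a non-explosion argument, and the cut locus is a non-issue for the discrete steps — at the cost of considerably more bookkeeping, while also reusing the Jacobi-field machinery already built for the discretization error bound. Your constant accounting (the factor $2$ on $f''$ from the quadratic variation $4\,dt$, the $\tfrac12$ on the Ricci term from the $\tfrac12\Delta$ normalization, and the unit-speed rescaling of the dissipativity inequality) is consistent with the paper's; note only that since $f''\le 0$ the paper in fact verifies the weaker inequality $f'' + \L r f' \le -cf$ (Lemma~\ref{l:fproperties}, item 4) and uses $2f''\le f''$, so your stronger form is not needed.
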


\textbf{Given these two steps}, we can now sketch the proof of Theorem \ref{t:langevin_mcmc}. Consider an arbitrary step $k$ of \eqref{e:intro_euler_murayama}. For $t\in[k\delta, (k+1)\delta)$, let $\bar{x}(t)$ denote the solution to \eqref{e:intro_sde}, initialized at $\bar{x}(k\delta)$. Then by Lemma \ref{l:g_contraction_without_gradient_lipschitz}, $\E{f\lrp{\dist\lrp{y((k+1)\delta), \bar{x}((k+1)\delta)}}} \leq e^{-\alpha \delta } \E{f\lrp{\dist\lrp{y(k\delta), x_k}}}$. On the other hand, by Lemma \ref{l:informal_discretization-approximation-lipschitz-derivative} $\E{\dist\lrp{\bar{x}((k+1)\delta), {x}_{k+1}}} \leq O\lrp{\delta^{3/2}}$. Summing these two bounds, applying triangle inequality, and using the fact that $f'(r) \leq 1$, we get
\begin{alignat*}{1}
  \E{f\lrp{\dist\lrp{y((k+1)\delta),x_{k+1}}}} \leq e^{-\alpha \delta } \E{f\lrp{\dist\lrp{y(k\delta), x_k}}} + O\lrp{\delta^{3/2}}
  \elb{e:t:one-step-langevin-mcmc-sketch}
\end{alignat*}
Applying the above recursively for $k=0...K$, we get
\begin{alignat*}{1}
  \E{f(\dist\lrp{x_K, y(K\delta)})} \leq e^{-\alpha K\delta} \E{f(\dist\lrp{x_0, y(0)})} + \frac{1}{\alpha} O\lrp{\delta^{1/2}}
\end{alignat*}
Theorem \ref{t:langevin_mcmc} thus follows from the bounds $\frac{1}{2}\exp\lrp{- \lrp{q + L_{\Ric}/2} \R^2/2} r \leq f(r) \leq r$.

Lastly, we briefly mention a complication in the full proof that we omitted from the above sketch: the discretization error between $\bar{x}((k+1)\delta)$ and $x_{k+1}$ grows with $\lrn{\beta(x_k)}$, which does not have a global upper bound. In practice, we can verify that $\dist\lrp{x_k,x^*}$ is sub-Gaussian, so that with high probability $\dist\lrp{x_k,x^*} \leq O\lrp{\log(k\delta)}$; this in turn allows us to bound $\lrn{\beta(x_k)}$ via Assumption \ref{ass:beta_lipschitz}. This is the reason for the dependency on $\log K$ and $\log(1/\delta)$ in the iteration complexity of Theorem \ref{t:langevin_mcmc}. 

\section{Stochastic Gradient Langevin MCMC}
Finally, we bound the iteration complexity of process~\eqref{e:intro_sgld}, which takes the Euler-Murayama scheme \eqref{e:intro_euler_murayama}, and replaces $\beta(x_k)$ at step $k$ by a stochastic estimate $\t{\beta}_k (x_k)$.
\label{s:SGLD}
\begin{theorem}[Convergence of SGLD on Riemannian Manifold]
  \label{t:SGLD}
  Assume the manifold $M$ satisfies Assumptions~\ref{ass:ricci_curvature_regularity} and~\ref{ass:sectional_curvature_regularity}. Assume in addition that there exists a constant $L_R'$ such that for all $x\in M$, $u,v,w,z,a\in T_x M$, $\lin{(\nabla_a R)(u,v)w,z} \leq L_R'\lrn{u}\lrn{v}\lrn{w}\lrn{z}\lrn{a}$.  Let $\beta$ be a vector field satisfying Assumptions \ref{ass:distant-dissipativity} and \ref{ass:beta_lipschitz}; assume in addition that $m > L_{\Ric}/2$ and that $\R=0$.
  
  Let $y(t)$ denote the exact geometric SDE given in \eqref{e:intro_sde}. Let $x_k$ denote the stochastic Euler Murayama discretization \eqref{e:intro_sgld}, where $\t{\beta}_k$ denote independent random vector fields, with $\E{\t{\beta}(x)} = \beta(x)$ for all $x$ and $\|\beta(x) - \t{\beta}_k(x)\| \leq \sigma$ with probability 1. 
  
  Let $K \in \Z^+$ be some iteration number and $\delta$ be some stepsize. There exists a constant $\C_1 = poly\lrp{L_\beta', d, L_R, L_R', \frac{1}{m}, \log K}$, such that if $\delta \leq \frac{1}{\C_1}$, then there is a coupling between $x_K$ and $y(K\delta)$ satisfying the distance bound
  \begin{alignat*}{1}
      \E{\dist\lrp{y(K\delta),x_{K}}^2} 
      \leq e^{-\frac{1}{8}(m-L_{\Ric}/2) K \delta}\E{\dist\lrp{y(0),x_{0}}^2} + \t{O}\lrp{\delta}.
  \end{alignat*}
  $\t{O}$ hides polynomial dependence on $L_\beta', d, L_R, L_R', \sigma, \frac{1}{m-L_{\Ric}/2}, \log K, \log\frac{1}{\delta}$.
\end{theorem}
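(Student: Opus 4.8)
The plan is to abandon the two-stage strategy of Theorem~\ref{t:langevin_mcmc} (mix the exact SDE, then add discretization error) and instead prove a \emph{direct} one-step contraction for the coupled pair (SGLD iterate $x_k$, exact diffusion $y(k\delta)$), then iterate and sum a geometric series. Going direct is essential here because the error injected by the stochastic drift $\t{\beta}_k$ is harmless only on account of its being unbiased, and this can be exploited solely inside a step-by-step argument. Since we are in the regime $\R=0$, $m>L_{\Ric}/2$ (the manifold $CD(\cdot,\infty)$ setting), a plain synchronous coupling already contracts globally — there is no need for reflection/Kendall--Cranston coupling — so the only toll curvature exacts is the Ricci correction $+L_{\Ric}$ that synchronously coupled Gaussian/Brownian increments contribute, which the dissipativity constant $2m$ absorbs.

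First I would couple $x_k$ and $y(t)$ by the discrete synchronous coupling of Remark~\ref{remark:sde_construction}: a single stream of standard $\Re^d$ Gaussians read off against orthonormal bases related by parallel transport along the geodesics joining the processes (and the intermediate Euler--Murayama iterates appearing in the proof of Lemma~\ref{l:informal_discretization-approximation-lipschitz-derivative}). Writing $\mathcal{F}_k$ for the $\sigma$-algebra of the first $k$ steps, within the $k$-th step I would bridge through $\hat x_{k+1}:=\Exp_{x_k}\lrp{\delta\beta(x_k)+\sqrt{\delta}\zeta_k}$ — a \emph{full}-gradient Euler--Murayama step from $x_k$, so that $x_{k+1}$ is a perturbation of $\hat x_{k+1}$ of size $\lrn{\delta(\t{\beta}_k(x_k)-\beta(x_k))}\le\delta\sigma$ in the exponential chart — and $\hat y_{k+1}:=\Exp_{y(k\delta)}\lrp{\delta\beta(y(k\delta))+\sqrt{\delta}\zeta_k}$, the full-gradient step from $y(k\delta)$ with the same (synchronously coupled) noise. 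Both $\hat x_{k+1}$ and $\hat y_{k+1}$ are measurable with respect to $\mathcal{F}_k$ together with $\zeta_k$.

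The core step is to invoke Lemma~\ref{l:discrete-approximate-synchronous-coupling-ricci} to obtain the one-step estimate
\[
\mathbb{E}\lrb{\dist\lrp{x_{k+1},y((k+1)\delta)}^2 \,\big|\, \mathcal{F}_k}\ \le\ \lrp{1-\tfrac18\lrp{m-L_{\Ric}/2}\delta}\dist\lrp{x_k,y(k\delta)}^2 + \t{O}\lrp{\delta^2},
\]
whose proof combines three ingredients. (i) For $\hat y_{k+1}$ versus $\hat x_{k+1}$ — two full-gradient Euler--Murayama steps from nearby points with synchronously coupled noise — Lemma~\ref{l:discrete-approximate-synchronous-coupling-ricci} contracts at a rate close to $2(m-L_{\Ric}/2)$ up to distortions of order $L_R d\,\delta^2\dist^2$ and frame-rotation terms (the latter controlled by the $L_R'$ hypothesis and Lemma~\ref{l:triangle_distortion}); here the drift contributes $-2m\delta\dist^2$ through Assumption~\ref{ass:distant-dissipativity} with $\R=0$ and Assumption~\ref{ass:beta_lipschitz}, and the coupled noise contributes $+L_{\Ric}\delta\dist^2$. (ii) For $y((k+1)\delta)$ versus $\hat y_{k+1}$, the one-step geometric Euler--Murayama error, Lemma~\ref{l:informal_discretization-approximation-lipschitz-derivative}, gives $O(\delta^3)$ in squared distance, lower order after a Young split. (iii) For $\hat x_{k+1}$ versus $x_{k+1}$: a triangle inequality is \emph{forbidden}, since it would add a term of order $\delta^{-1}(\delta\sigma)^2=\delta\sigma^2$ per step, which accumulates (even with contraction) to $O(\sigma^2)$ rather than $\t{O}(\delta)$. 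Instead I would expand $\dist\lrp{\hat y_{k+1},x_{k+1}}^2$ about $\dist\lrp{\hat y_{k+1},\hat x_{k+1}}^2$ in the correction vector (to leading order $\delta$ times the parallel transport of $\t{\beta}_k(x_k)-\beta(x_k)$), condition further on $\zeta_k$ so that $\hat x_{k+1}$, $\hat y_{k+1}$, the gradient of $\dist^2$ at $(\hat y_{k+1},\hat x_{k+1})$, and the parallel-transport map all become deterministic while $\t{\beta}_k$ stays independent with mean $\beta$ — so the first-order cross term vanishes in conditional expectation — and bound the remaining second-order term by $O(\delta^2\sigma^2)$ using the bound on the Hessian of $\dist^2$ implied by Assumption~\ref{ass:sectional_curvature_regularity}. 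Combining (i)--(iii) with Young's inequality and absorbing the $\delta\cdot(\text{curvature})\cdot\dist^2$ terms into the contraction yields the display; $\C_1=poly\lrp{L_\beta',d,L_R,L_R',1/m,\log K}$ is chosen so that $\delta\le1/\C_1$ keeps contraction ahead of the curvature-expansion and keeps the $\Exp$-map distortion lemmas valid.

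A side ingredient is a sub-Gaussian tail bound for $\dist(x_k,x^*)$ (with $x^*$ the zero of $\beta$, unique since $\R=0$) — the SGLD analog of Lemma~\ref{l:far-tail-bound-l2-brownian}, now also involving $\sigma$ — giving $\dist(x_k,x^*)\le\t{O}(\log k+\log(1/\delta))$ with high probability and hence control of $\lrn{\beta(x_k)}$ via Assumption~\ref{ass:beta_lipschitz}; this is what turns the various $poly\lrp{\lrn{\beta(x_k)}}\cdot\delta^2$ error terms (and the $O(\delta^3)$ above) into a genuine $\t{O}(\delta^2)$, and is the source of the $\log K,\log(1/\delta)$ factors. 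Iterating the one-step bound over $k=0,\dots,K-1$, taking total expectations, and summing $\sum_{j\ge0}\lrp{1-\tfrac18(m-L_{\Ric}/2)\delta}^j\t{O}(\delta^2)=\t{O}(\delta)/(m-L_{\Ric}/2)$ gives
\[
\mathbb{E}\lrb{\dist\lrp{y(K\delta),x_K}^2}\ \le\ e^{-\frac18(m-L_{\Ric}/2)K\delta}\,\mathbb{E}\lrb{\dist\lrp{y(0),x_0}^2} + \t{O}(\delta),
\]
as claimed. The main obstacle is the one-step estimate, and within it the interplay of pieces (i) and (iii): establishing Lemma~\ref{l:discrete-approximate-synchronous-coupling-ricci} (pushing a discrete synchronous coupling through a curved space while tracking frame rotation, $\Exp$-map nonlinearity, and the Ricci term from coupled increments), and threading the conditioning so that unbiasedness of $\t{\beta}_k$ annihilates the order-$\delta$ cross term — without which the stochastic-gradient error would be $O(1)$ rather than $\t{O}(\delta)$.
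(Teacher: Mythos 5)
Your proposal is correct and follows essentially the same route as the paper: bridge through the full-gradient Euler--Murayama step $\bar y_{k+1}$ from $y(k\delta)$, prove a direct discrete one-step contraction at rate $m-L_{\Ric}/2$ in which the unbiasedness of $\t{\beta}_k$ kills the order-$\delta$ cross term and leaves only an $O(\delta^2\sigma^2)$ remainder (this is exactly Lemma~\ref{l:sgld-lemma}, built on Lemma~\ref{l:discrete-approximate-synchronous-coupling-ricci}), add the $O(\delta^3)$ one-step discretization error from Lemma~\ref{l:discretization-approximation-lipschitz-derivative}, control $\lrn{\beta(x_k)}$ via tail bounds, and sum the geometric series. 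The only cosmetic difference is that you introduce the extra intermediate point $\hat x_{k+1}$ and handle the stochastic-drift perturbation as a separate conditional Taylor expansion, whereas the paper absorbs the stochastic drift directly into the single one-step comparison inside Lemma~\ref{l:sgld-lemma}.
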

We defer the proof of Theorem \ref{t:SGLD} to Appendix \ref{ss:proof_of_t:sgld}, where we state the explicit expressions for $\C_1$.
\subsection*{Discussion of Theorem \ref{t:SGLD}}
\textbf{To sample from $d\pi^*(x) = e^{-h(x)} d{\vol}_g(x)$}, we let $\beta(x) := - \frac{1}{2} \nabla h(x)$ and let $y(0) \sim \pi^*(x)$. The $W_2$ distance between ${\Law}(x_K)$ and $\pi^*$ is upper bounded by $\sqrt{\mathbb{E}[\dist\lrp{y(K\delta), x_K}^2]}$. To achieve $\epsilon$ error in $W_2$, Theorem \ref{t:SGLD} requires $\t{O}\lrp{\epsilon^{-2}}$ steps. The reasoning is very similar to Theorem \ref{t:langevin_mcmc}. 

As already discussed in Section \ref{s:contributions}, \textbf{the Assumption on Theorem \ref{t:SGLD} is considerably more restrictive than Theorem \ref{t:langevin_mcmc}.} We also highlight that the error bound in Theorem \ref{t:SGLD} \emph{does depend on $L_R'$, the derivative of the Riemannain curvature tensor}. This is in contrast to Theorem \ref{t:langevin_mcmc}, where $L_R'$ does not appear in any of the bounds.

Though the iteration complexity for Theorem \ref{t:SGLD} can be larger than Theorem \ref{t:langevin_mcmc} as it depends on additional parameters $\sigma$ and $L_R'$, \textbf{\eqref{e:intro_sgld} may nonetheless be faster than \eqref{e:intro_euler_murayama}}. For example, let $h(x) = \frac{1}{N} \sum_{i=1}^N h_i(x)$ and assume $\lrn{\nabla h_i - \nabla h_j} \leq \sigma$ for all $i,j$, one step of \eqref{e:intro_sgld} can be performed with a single gradient computation for a single uniformly sampled $h_i$, whereas \eqref{e:intro_euler_murayama} would require $N$ gradient computations.

\subsection*{Proof Sketch of Theorem \ref{t:SGLD} and Theoretical Highlights}
For any step $k$, let us define $\bar{y}_{k+1} := \Exp_{y(k\delta)}\lrp{\delta {\beta}(y(k\delta)) + \sqrt{\delta} \bar{\zeta}_k}$, where $\bar{\zeta}_k \sim \N_{y_k}(0, I)$. We show, in Lemma \ref{l:sgld-lemma}, that $\E{\dist\lrp{x_{k+1}, \bar{y}_{k+1}}^2} \leq \lrp{1  - \frac{\delta}{4} (m-L_{\Ric}/2)} \E{\dist\lrp{x_k,y(k\delta)}^2}  + 16 \delta^2 \sigma^2$.
On the other hand, once again applying Lemma \ref{l:informal_discretization-approximation-lipschitz-derivative}, we can verify that $\E{\dist\lrp{y((k+1)\delta), \bar{y}_{k+1}}^2} \leq O\lrp{\delta^3}$. By Young's inequality and triangle Inequality,
\begin{alignat*}{1}
  \E{\dist\lrp{x_{k+1}, y((k+1)\delta)}^2} \leq \lrp{1  - \frac{\delta}{8} (m-L_{\Ric}/2)} \E{\dist\lrp{x_k,y(k\delta)}^2}  + O\lrp{\delta^2}
\end{alignat*}
The bound in Theorem \ref{t:SGLD} follows immediately from applying the above recursively for $k=1...K$. We note that The contraction in Lemma \ref{l:sgld-lemma} is in turn derived from Lemma \ref{l:discrete-approximate-synchronous-coupling-ricci}, which may be of independent interest as it quantifies the distance evolution between two general discrete-time stochastic processes (which do not have to be diffusions or related to the Gaussian noise). 

It is elucidative to compare the proof structure of Theorem \ref{t:SGLD} with that of Theorem \ref{t:langevin_mcmc} above. On a high level, at step $k$, Theorem \ref{t:langevin_mcmc} first approximates the discrete MCMC step ($x_{k+1}$) by an exact SDE initialized at $x_k$ ($\bar{x}((k+1)\delta)$). It then uses two facts: 1. $f(\dist\lrp{\bar{x}(t),y(t)})$ \emph{contracts under the \textbf{exact SDE}}, and 2. the approximation error between $x_{k+1}$ and $\t{x}((k+1)\delta)$ is small. In contrast, at step $k$, Theorem 2 first approximates the \emph{exact SDE} ($y((k+1)\delta)$) by a Euler-Murayama step ($\bar{y}_{k+1}$). It then uses two facts: 1. $\dist\lrp{x_{k+1}, \bar{y}((k+1)\delta)}$ \emph{contracts under the \textbf{stochastic Euler-Murayama step}}, and 2. the approximation error between $\bar{y}_{k+1}$ and $y((k+1)\delta)$ is small. The reason for this change is to make use of the fact that $\E{\t{\beta}_k} = \beta$, \emph{conditioned on the randomness up to time $k\delta$}. Suppose we had followed the proof of Theorem \ref{t:langevin_mcmc} and defined $\bar{x}(t)$ to be the solution to the exact SDE, with drift $\t{\beta}_k$, then showing contraction of the exact SDE becomes very tricky as $\E{\t{\beta}_k}$ \emph{is not equal to $\beta$} when conditioned on $\bar{x}(t)$ for any $t > k\delta$.


\section{Acknowledgements}
Xiang Cheng acknowledge support from NSF BIGDATA grant (1741341) and NSF CCF-2112665 (TILOS AI Research Institute). Jingzhao Zhang acknowledges support by Tsinghua University Initiative Scientific Research Program.

\bibliographystyle{plainnat}
\setlength{\bibsep}{3pt} 
\bibliography{references} 

\newpage

\newpage
\appendix

\appendix
 \begin{center}
   \Large\bfseries Contents of the Appendices
 \end{center}

\startcontents[sections]
\printcontents[sections]{l}{1}{\setcounter{tocdepth}{2}}
\newpage

\renewcommand*\E[1]{\mathbb{E}\left[{#1}\right]}
\section{Manifold SDE}
\label{appendix:manifold_sde}
An outline of this section is as follows:
\begin{enumerate}
    \item In Section \ref{ss:sde_construction_appendix}, we prove Lemma \ref{l:Phi_is_diffusion}, which guarantees that $x^i(t)$ defined in \eqref{d:x^i(t):0-appendix} has a limit $x(t)$ that equals the solution of the exact Langevin Diffusion in \eqref{e:intro_sde}.
    \item In Section \ref{ss:langevin_mcmc_on_manifold}, we prove Lemma \ref{l:discretization-approximation-lipschitz-derivative}, which bounds the distance between $x^0(t)$ from \eqref{d:x^i(t):0-appendix} and the limit $x(t)$. This is equivalent to bounding the distance between the Euler Murayama discretization \eqref{e:intro_euler_murayama} and the exact Langevin Diffusion \eqref{e:intro_sde}.
    \item In Section \ref{ss:proof_of_t:langevin_mcmc}, we prove Theorem \ref{t:langevin_mcmc}. 
    \item In Section \ref{ss:proof_of_t:sgld}, we prove Theorem \ref{t:SGLD}. 
\end{enumerate}

We also list below the key lemmas which are used to prove the results above.
\begin{enumerate}
    \item Theorem \ref{t:langevin_mcmc} relies on Lemma \ref{l:g_contraction_without_gradient_lipschitz} (contraction of Lyapunov function under exact SDE) and Lemma \ref{l:discretization-approximation-lipschitz-derivative} (bound on Euler Murayama discretization error).
    \begin{enumerate}
        \item Lemma \ref{l:discretization-approximation-lipschitz-derivative} essentially sums the bound from Lemma \ref{l:key_brownian_limit_lemma}. 
        \item Lemma \ref{l:key_brownian_limit_lemma} relies on Lemma \ref{l:discrete-approximate-synchronous-coupling} and Lemma \ref{l:triangle_distortion}.
        \item Lemma \ref{l:g_contraction_without_gradient_lipschitz} relies on Lemma \ref{l:discrete-approximate-synchronous-coupling-ricci}
    \end{enumerate}
    \item Theorem \ref{t:SGLD} relies on Lemma \ref{l:sgld-lemma} (contraction of Lyapunov function under stochastic gradient Euler-Murayama step) and Lemma \ref{l:discretization-approximation-lipschitz-derivative} (bound on Euler Murayama discretization error).
    \begin{enumerate}
        \item Lemma \ref{l:sgld-lemma} relies on Lemma \ref{l:discrete-approximate-synchronous-coupling-ricci}.
    \end{enumerate}
\end{enumerate}

\subsection{SDE Construction}
\label{ss:sde_construction_appendix}
In this section, we state and prove key lemmas related to our construction in Section \ref{ss:discrete_gaussian_walk_construction}, which we reproduce below for ease of reference:
Let $x_0 \in M$ be an initial point and $E = \lrbb{E^1,\ldots,E^d}$ be an orthonormal basis of $T_{x^0}$. Let $\BB(t)$ denote a standard Brownian Motion in $\Re^d$. Let $T\in \Re^+$. Define
\begin{alignat*}{1}
    & x^0_0 = x_0, \qquad E^{0}_0 = E,\\
    & x^0_1 = \Exp_{x^0_0}(T \beta(x^0_0) + {\lrp{\BB\lrp{T} - \BB\lrp{0}}} \circ E^{0}_0).
\end{alignat*}
For any $i\in \Z^+$, let $\delta^i := 2^{-i}T$. We will now define points $x^i_k \in M$ and orthonormal basis $E^{i}_k$ of $T_{x^i_k}$ for all $i$ and all $k\in \{0,\ldots,\nicefrac{T}{\delta^i}\}$. Our construction is inductive: Suppose we have already defined $x^i_k$ and $E^{i}_k$ for some $i$ and for all $k\in \{0,\ldots,\nicefrac{T}{\delta^i}\}$. Then, we construct $x^{i+1}_k$, for all $k=\{0,\ldots,\nicefrac{T}{\delta^{i+1}}\}$, as follows: 
\begin{alignat*}{1}
    & x^{i+1}_0 := x_0, \qquad E^{i+1}_{0} := E,\\
    & x^{i+1}_{2k+1} := \Exp_{x^{i+1}_{2k}}\lrp{\delta^{i+1}\beta\lrp{x^{i+1}_{2k}} + {\lrp{\BB\lrp{\lrp{2k+1}\delta^{i+1}} - \BB\lrp{2k\delta^{i+1}}}} \circ E^{i+1}_{2k}},\\
    & E^{i+1}_{2k+1} := \party{}{}\lrp{E^{i+1}_{2k}; {x^{i+1}_{2k}} \to {x^{i+1}_{2k+1}} },\\
    & x^{i+1}_{2k+2} := \Exp_{x^{i+1}_{2k+1}}\lrp{\delta^{i+1}\beta\lrp{x^{i+1}_{2k+1}} + {\lrp{\BB\lrp{\lrp{2k+2}\delta^{i+1}} - \BB\lrp{\lrp{2k+1}\delta^{i+1}}}} \circ E^{i+1}_{2k+1}},\\
    & E^{i+1}_{2k+2} := \party{}{} \lrp{E^{i}_{k+1}; {x^{i}_{k+1}}\to{x^{i+1}_{2k+2}}}
    \elb{d:x^i_k-appendix}.
\end{alignat*}

The above display defines points $x^{i+1}_k$ for all $k = \lrbb{0,\ldots,\nicefrac{T}{\delta^{i+1}}}$. For any $i$, any $k$, and any $t\in [k\delta^i, (k+1)\delta^i)$, we define $x^i(t)$ to be the ``linear interpolation'' of $x^i_k$ and $x^i_{k+1}$, i.e.,
\begin{alignat*}{1}
  x^{i}(t) := \Exp_{x^{i}_{k}}\bigl(\tfrac{t-k\delta^i}{\delta^i}\lrp{\delta^i \beta\lrp{x^i_k} + \lrp{\BB\lrp{(k+1)\delta^i} - \BB\lrp{k\delta^{i}}} \circ E^{i}_{k}}\bigr).
  \elb{d:x^i(t):0-appendix}
\end{alignat*}

Let us define two convenient notation that we will use throughout our proofs in this Appendix. First, let
\begin{alignat*}{1}
  \overline{\Phi}(t; x, E, \beta, \BB, i)
  \elb{d:x^i(t)}
\end{alignat*}
denote the solution to the interpolated process in \eqref{d:x^i(t):0} (reproduced in \eqref{d:x^i(t):0-appendix}), initialized at $x_0 = x$, i.e. $\overline{\Phi}(t; x_0, E, \beta, \BB, i) = x^i(t)$ as defined in \eqref{d:x^i(t):0-appendix}. (This notation becomes convenient later on when we need to refer to \eqref{d:x^i(t):0} but with different initial points, or with drift vector fields other than $\beta$, or with specific choices of $\BB$.)

We also let
\begin{alignat*}{1}
  \Phi(t; x, E, \beta, \BB) := \lim_{i \to \infty} \overline{\Phi}(t; x, E, \beta, \BB, i)
  \elb{d:x(t)}.
\end{alignat*}
Below, we prove Lemma \ref{l:Phi_is_diffusion} (stated at the end of Section \ref{Brownian Motion Section}), which guarantees that $x(t) = \Phi(t; x, E, \beta, \BB)$ exists, and that $x(t)$ is a solution to the exact Langevin diffusion SDE in \eqref{e:intro_sde}.

\begin{proof}[Proof of Lemma \ref{l:Phi_is_diffusion}]
    The existence of the almost-sure, uniform limit $x(t)$ is proven in Lemma~\ref{l:x(t)_is_brownian_motion}. For the rest of this proof, we verify that $x(t)$ has the generator $L f = \lin{\nabla f, \beta} + \frac{1}{2} \Delta(f)$, where $\Delta$ denotes the Laplace Beltrami operator. 
    By Proposition~3.2.1 of~\citep{hsu2002stochastic}, this implies that $x(t)$ is the solution to~\eqref{e:intro_sde}.
    

    Let $\F_t$ denote the sigma field generated by $\BB(s) : s\in[0,t]$.

    Consider any $f: M \to \Re$ with $\lrn{f'}\leq C$, $\lrn{f''}\leq C$, $\lrn{f'''}\leq C$ globally. Let $x^i(t)$ be as defined in \eqref{d:x^i(t):0}. We will verify that $f(x(t)) - f(x(0)) - \int_0^t L f(x(t)) dt$ is a martingale.
  
    To begin, let $s,t\in [0,T]$ be such that $s = j \delta^a$ and $t = j' \delta^a$ for some positive integers $j\leq j'$, and $a$, (recall that $\delta^i = T/2^i$). We will show that conditioned on $x(s)$, $f(x(t)) - f(x(s)) - \int_s^t L f(x(t)) dt$ is a martingale. Let us define
    \begin{alignat*}{1}
      u^i_k = \delta^i \beta(x^i_k) + \lrp{\BB((k+1)\delta^i) -\BB(k\delta^i)} \circ E^i_k
    \end{alignat*}
    so that $x^i(t) = \Exp_{x^i_k}\lrp{\frac{t-k\delta^i}{\delta^i}u^i_k}$, where $t\in[k\delta^i,(k+1)\delta^i]$.
    
    Consider an arbitrary $\ell \geq a$. Consider the sum
    \begin{alignat*}{1}
      \sum_{k=s/\delta^i}^{t/\delta^i-1} f(x^{\ell}((k+1)\delta^\ell))
      - f(x^{\ell}(k\delta^\ell)) - \lin{u^\ell_k, \nabla f(x^{\ell}(k\delta^\ell))} - \nabla^2 f(x^\ell_k)[u^\ell_k,u^\ell_k].
      \elb{e:t:alkdalksm:1}
    \end{alignat*}
    By Taylor's theorem, 
    \begin{alignat*}{1}
      & \lrabs{f(x^{\ell}((k+1)\delta^\ell))
      - f(x^{\ell}(k\delta^\ell)) - \lin{u^\ell_k, \nabla f(x^{\ell}(k\delta^\ell))} - \nabla^2 f(x^\ell_k)[u^\ell_k,u^\ell_k]}\\
      \leq& C\lrn{u^\ell_k}^3 \\
      \leq& 8 C^3{\delta^\ell}^3 \lrn{\beta(x^\ell_k)}^3 + 8C^3 \lrn{\BB((k+1)\delta^\ell)-\BB(k\delta^\ell)}_2^3,
    \end{alignat*}
    where $C\lrn{u^\ell_k}^3$ captures the third-and-higher order Taylor terms.

    The first order Taylor term can be decomposed as
    \begin{alignat*}{1}
      \lin{u^\ell_k, \nabla f(x^{\ell}(k\delta^\ell))}
      =& \delta^{\ell} \underbrace{\lin{\beta(x^\ell), \nabla f(x^{\ell}(k\delta^\ell))}}_{\lrn{\cdot}^2 \leq {\delta^{\ell}}^2 C^2 \lrn{\beta(x^\ell(k\delta^{\ell}))}} + \underbrace{\lin{\nabla f(x^{\ell}(k\delta^\ell)), \lrp{\BB((k+1)\delta^\ell)-\BB(k\delta^\ell)}\circ E^\ell_k}}_{\Ep{\F_{k\delta^\ell}}{\cdot}=0}.
    \end{alignat*}
    We now simplify the second order Taylor term. Let $v:= \lrp{\BB((k+1)\delta^i) -\BB(k\delta^i)} \circ E^i_k$. We verify that  $\Ep{\F_     {k\delta^\ell}                                                                                                                                       }{\nabla^2 f(x^{\ell}(k\delta^\ell)) [v,v] - \delta^\ell \Delta f(x^{\ell}(k\delta^\ell))}=0$, because $\lrp{\BB((k+1)\delta^i) -\BB(k\delta^i)} \circ E^i_k$ has identity covariance, and the Laplace Beltrami operator is the trace of the Hessian. We can also bound, using Young;s inequality,
    \begin{alignat*}{1}
      & \E{\lrabs{\nabla^2 f(x^{\ell}(k\delta^\ell)) [u^\ell_k,u^\ell_k] - \delta^\ell \Delta f(x^{\ell}(k\delta^\ell))}^2} \\
      \leq& {\delta^\ell}^4 C^2 \lrn{\beta(x^\ell(k\delta^\ell))}^4 + {\delta^\ell}^2 \lrn{\BB((k+1)\delta^\ell)-\BB(k\delta^\ell)}_2^4 + {\delta^{\ell}}^2 C^2 d
    \end{alignat*}
  
    Finally, note that there exists a constant $C'$, which depends on $T,d,L_\beta'\lrn{\beta(x_0)}$, such that for all $\ell$, for all $t\in [0,T]$, $\E{\lrn{\beta(x^\ell(t)}^6} \leq C'$. The proof is similar to Lemma \ref{l:near_tail_bound_L4} and we omit it here.
  
    Plugging into \eqref{e:t:alkdalksm:1} and taking expectation conditioned on the Brownian motion $\BB(t) : t\in[0,s]$, we get that
    \begin{alignat*}{1}
      &\Ep{\F_s}{\lrabs{f(x^\ell(t)) - f(x^\ell(s)) + \sum_{k=s/\delta^\ell}^{t/\delta^\ell-1} - \delta^{\ell} \lin{\beta(x^{\ell}(k\delta^\ell)), \nabla f(x^{\ell}(k\delta^\ell))} - \frac{\delta^{\ell}}{2} \Delta f(x^{\ell}(k\delta^\ell))}^2} \\
      =& \Ep{\F_s}{\lrabs{\sum_{k=s/\delta^\ell}^{t/\delta^\ell-1} f(x^{\ell}((k+1)\delta^\ell)) - f(x^{\ell}(k\delta^\ell)) - \delta^{\ell} \lin{\beta(x^{\ell}(k\delta^\ell)), \nabla f(x^{\ell}(k\delta^\ell))} - \frac{\delta^{\ell}}{2} \Delta f(x^{\ell}(k\delta^\ell))}^2} \\
      \leq& \text{poly}(C,C',d) \sum_{k=s/\delta^\ell}^{t/\delta^\ell-1} {\delta^{\ell}}^{2}\\
      \leq& \text{poly}(C,C',d,T) {\delta^{\ell}}.
    \end{alignat*}
    where the first line is because $f(x^\ell(t)) - f(x^\ell(s)) = \sum_{k=s/\delta^\ell}^{t/\delta^\ell-1} f(x^{\ell}((k+1)\delta^\ell)) - f(x^{\ell}(k\delta^\ell))$, noting that $t,s$ are multiples of $\delta^\ell$ by definition, and the second line uses our first and second Taylor approximation bounds above. 

    Next, define $g^\ell_k := \delta^{\ell} \lin{\beta(x^{\ell}(k\delta^\ell)), \nabla f(x^{\ell}(k\delta^\ell))} + \frac{\delta^{\ell}}{2} \Delta f(x^{\ell}(k\delta^\ell))$. Using the smoothness of $\beta$ and $f$, and the fact that $\dist\lrp{x^\ell(t),x(k\delta^\ell)} \leq \delta^{\ell}\lrn{\beta(x(k\delta^\ell))} + \lrn{\BB((k+1)\delta^\ell)-\BB(k\delta^\ell)}_2$, we verify that for any $k$,
    \begin{alignat*}{1}
      \Ep{\F_s}{\lrabs{g^\ell_k - \int_{k\delta^\ell}^{(k+1)\delta^\ell} \lin{\beta(x(r)), \nabla f(x(r))} + \frac{1}{2} \Delta f(x(r)) dr}} \leq \text{poly} \lrp{C,C',d} {\delta^{\ell}}^{3/2}
    \end{alignat*}
    Putting everything together, we obtain the bound
    \begin{alignat*}{1}
      \Ep{\F_s}{\lrabs{f(x^\ell(t)) - f(x^\ell(s)) + \int_s^t - \lin{\beta(x^{\ell}(r)), \nabla f(x^{\ell}(r))} - \frac{1}{2} \Delta f(x^{\ell}(r)) dr}} \leq  \text{poly}(C,C',d,T) {\delta^{\ell}}^{1/2}.
    \end{alignat*}
    By Lemma \ref{l:x(t)_is_brownian_motion}, $\sup_{t\in[0,T]} \dist\lrp{x^\ell(t), x(t)}$ converges to $0$ almost surely as $\ell \to \infty$. By Dominated Convergence Theorem, and by smoothness of $f$ and $\beta$, 
    
    \begin{alignat*}{1}
        & \Ep{\F_s}{\lrabs{f(x(t)) - f(x(s)) + \int_s^t - \lin{\beta(x(r)), \nabla f(x(r))} - \frac{1}{2} \Delta f(x(r)) dr}}\\
        = & \lim_{\ell \to \infty} \Ep{\F_s}{\lrabs{f(x^\ell(t)) - f(x^\ell(s)) + \int_s^t - \lin{\beta(x^{\ell}(r)), \nabla f(x^{\ell}(r))} - \frac{1}{2} \Delta f(x^{\ell}(r)) dr}}\\
        = & 0.
    \end{alignat*}
    The last equality holds because $\lim_{\ell\to \infty} \text{poly}(C,C',d,T) {\delta^{\ell}}^{1/2} = 0$.
  
    Recall that we assumed that $s$ and $t$ are integral multiples of $T/2^a$ for some positive integer $a$. To extend to general $s,t$, we note that the set of dyadic points (i.e. multiples of $T/2^a$, for some integer $a$) is uniformly dense on the real line. 
    
  \end{proof}

\subsection{Existence of Limit}
\label{ss:Existence}
We present below Lemma \ref{l:key_brownian_limit_lemma}, which bounds the distance between two adjacent trajectories $x^i(t)$ and $x^{i+1}(t)$ as defined in \eqref{d:x^i(t):0} (or equivalently \eqref{d:x^i(t):0-appendix}). The proof of Lemma \ref{l:key_brownian_limit_lemma} works by combining Lemma \ref{l:discrete-approximate-synchronous-coupling} (which bounds distance evolution under "synchronous coupling"), and Lemma \ref{l:triangle_distortion} (which bounds distance evolution under "rolling without slipping"; Lemma \ref{l:triangle_distortion} is taken from \citep{sun2019escaping}). The proof of Lemma \ref{l:key_brownian_limit_lemma} corresponds to Step 1 and Step 2 of the proof sketch of Lemma \ref{l:informal_discretization-approximation-lipschitz-derivative} in Section \ref{Brownian Motion Section}.

Lemma \ref{l:key_brownian_limit_lemma} plays a key role bounding the Euler Murayama discretization error in Lemma \ref{l:discretization-approximation-lipschitz-derivative} in Section \ref{ss:langevin_mcmc_on_manifold}. The proof of Lemma \ref{l:discretization-approximation-lipschitz-derivative} essentially involves summing the bound from Lemma \ref{l:key_brownian_limit_lemma}, for $i=0...\infty$.

Another application of Lemma \ref{l:key_brownian_limit_lemma} is to verify the existence of $x(t) = \lim_{i\to\infty} x^i(t)$ as defined in \eqref{d:x(t)} in Lemma \ref{l:x(t)_is_brownian_motion}. 

\begin{lemma}\label{l:key_brownian_limit_lemma}
  Let $T$ be any positive constant. Let $x^i(t)$ be the (interpolation) of the Euler Murayama discretization with stepsize $\delta^i = T/2^i$ as defined in \eqref{d:x^i(t):0} (or equivalently \eqref{d:x^i(t):0-appendix}). Let $K:= 2^i$ so that $T = K\delta^i$.
  
  Assume that there is are constants $L_\beta, L_\beta'$ such that for all $x,y\in M$, $\lrn{\beta(x)} \leq L_\beta$ and \\
  $\lrn{\beta(x) - \party{y}{x} \beta(y)} \leq L_\beta' \lrn{x-y}$. Then
  \begin{alignat*}{1}
    & \E{\sup_{t\in [0,K\delta^i]} \dist\lrp{x^i(t), x^{i+1}(t)}^2}\\
        \leq& 2^{10} \cdot e^{40K{\delta^i}^2 L_R L_\beta^2 + 2K\delta^i L_R d + K\delta^i L_\beta'} \lrp{K {\delta^i}}^2 \lrp{{\delta^i}^4 L_R^2 L_\beta^6 + {\delta^i} L_R^2 d^3 + {\delta^i}^2 {L_\beta'}^2 L_\beta^2 + {\delta^i} {L_\beta'}^2d},
  \end{alignat*}
  and
  \begin{alignat*}{1}
      & \Pr{\sup_{t \in [0,T]} \dist\lrp{x^i(t), x^{i+1}(t)} \geq 2^{- \frac{i}{4} - 2}} \\
      & \leq e^{\lrp{2^{6-i}T {L_R} L_\beta^2 + 2 L_R d + L_\beta'}T}  T^2 \cdot \lrp{{\delta^i}^3 L_R^2 L_\beta^6 + L_R^2 d^3 + {\delta^i} {L_\beta'}^2 L_\beta^2 +{L_\beta'}^2d} \cdot 2^{-i/2 + 14}.
  \end{alignat*}
\end{lemma}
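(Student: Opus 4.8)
The plan is to follow the two-step coupling structure described in the proof sketch of Lemma \ref{l:informal_discretization-approximation-lipschitz-derivative}, tracking the distance $D_k := \dist(x^i_k, x^{i+1}_{2k})$ between the coarse process at grid time $k\delta^i$ and the fine process at the corresponding time $2k\delta^{i+1}$. First I would set up the one-step recursion. Fix a block $[k\delta^i,(k+1)\delta^i]$; the coarse step lands at $x^i_{k+1} = \Exp_{x^i_k}(b^i_k + \delta^i\beta(x^i_k))$ where $b^i_k := (\BB((k+1)\delta^i)-\BB(k\delta^i))\circ E^i_k$, while the fine process takes two substeps with increments $b^{i+1}_{2k}, b^{i+1}_{2k+1}$ satisfying (after parallel transport along the purple geodesic from $x^i_k$ to $x^{i+1}_{2k}$) $\party{}{}(b^i_k; x^i_k\to x^{i+1}_{2k}) = b^{i+1}_{2k}+b^{i+1}_{2k+1}$, by our choice of frames in \eqref{d:x^i_k}. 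I would introduce the intermediate point $\Exp_{x^{i+1}_{2k}}(\party{}{}(b^i_k+\delta^i\beta(x^i_k); x^i_k\to x^{i+1}_{2k}))$ and split the one-step error into: Step 1 (synchronous coupling of endpoints of the same tangent vector from two nearby basepoints), controlled by Lemma \ref{l:discrete-approximate-synchronous-coupling} as $(1+O(L_R d\delta^i + L_R(\delta^i)^2 L_\beta^2 + \delta^i L_\beta'))D_k^2$; and Step 2 (comparing a single $\Exp$ step against two successive $\Exp$ steps from the same basepoint), controlled by Lemma \ref{l:triangle_distortion} (Lemma 3 of \cite{sun2019escaping}) by $O(\lrn{b^{i+1}_{2k}}^2\lrn{b^{i+1}_{2k+1}}^2(\lrn{b^{i+1}_{2k}}+\lrn{b^{i+1}_{2k+1}})^2)$, plus lower-order drift terms $O((\delta^i)^3 L_\beta^2(\cdots))$ coming from the $\delta\beta$ contributions. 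Taking expectations and using that each $b^{i+1}$ increment has the law of $\sqrt{\delta^{i+1}}\,\N(0,I)$, its $p$-th moments are $O((\delta^i d)^{p/2})$, so $\E[\text{Step 2}] = O((\delta^i)^4 L_R^2 d^3) + O((\delta^i)^5 L_R^2 L_\beta^6)$ once all terms are expanded; I would be somewhat generous with constants to land on the stated polynomial.

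Next I would iterate the recursion $\E D_{k+1}^2 \le (1+ c_k)\E D_k^2 + e_k$ with $c_k = O(L_R d\delta^i + L_R(\delta^i)^2L_\beta^2 + \delta^i L_\beta')$ and $e_k = O((\delta^i)^4 L_R^2 d^3 + (\delta^i)^5 L_R^2 L_\beta^6 + (\delta^i)^3 (L_\beta')^2 L_\beta^2 + (\delta^i)^2 (L_\beta')^2 d)$ (the last two being the drift-induced errors). Since $D_0 = 0$, unrolling over $k=0,\dots,K-1$ with $K=2^i$ gives $\E D_K^2 \le e^{Kc}\cdot K \cdot (\text{per-step error})$; substituting $Kc = 40K(\delta^i)^2 L_R L_\beta^2 + 2K\delta^i L_R d + K\delta^i L_\beta'$ (matching the exponent in the statement) and $K(\text{per-step}) = K(\delta^i)\cdot(K\delta^i)\cdot(\cdots)$ reproduces the claimed bound $2^{10} e^{\cdots}(K\delta^i)^2((\delta^i)^4 L_R^2 L_\beta^6 + \delta^i L_R^2 d^3 + (\delta^i)^2(L_\beta')^2 L_\beta^2 + \delta^i(L_\beta')^2 d)$. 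Passing from $\E D_K^2$ to $\E\sup_{t\in[0,T]}\dist(x^i(t),x^{i+1}(t))^2$ requires one more ingredient: the interpolants between grid points only move by a geometric Brownian increment of size $O(\sqrt{\delta^i})$ plus a drift of size $O(\delta^i L_\beta)$, so the within-block fluctuation is dominated by a maximal inequality (Doob) on the driving Brownian motion; this inflates the bound only by a universal constant, absorbed into the $2^{10}$.

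For the tail bound I would run the same recursion but track $D_k$ pathwise rather than in expectation. The multiplicative factors $(1+c_k)$ accumulate deterministically to $e^{Kc}$; the additive per-step errors are random, bounded in terms of the fourth and sixth moments of the Brownian increments and of $\lrn{\beta(x^{i+1}_\cdot)}$, which under the global bound $\lrn{\beta}\le L_\beta$ are themselves controlled. Applying Markov's inequality to $\E\sup_t \dist(x^i(t),x^{i+1}(t))^2$ at threshold $2^{-i/4-2}$, i.e. dividing the $L^2$ bound by $(2^{-i/4-2})^2 = 2^{-i/2-4}$, converts the $(K\delta^i)^2 = T^2$ prefactor and the bracket $({\delta^i}^4 L_R^2 L_\beta^6 + \delta^i L_R^2 d^3 + \cdots)$ directly into the stated $T^2\cdot({\delta^i}^3 L_R^2 L_\beta^6 + L_R^2 d^3 + \delta^i(L_\beta')^2 L_\beta^2 + (L_\beta')^2 d)\cdot 2^{-i/2+14}$ (noting one factor of $\delta^i = T 2^{-i}$ gets pulled out of each bracket term and the $2^{14}$ collects $2^{10}\cdot 2^4$), and rewriting $K(\delta^i)^2 L_R L_\beta^2 = 2^{6-i}T\cdot(\text{stuff})$ in the exponent.

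The main obstacle I anticipate is Step 1 — getting the synchronous-coupling contraction/expansion constant right. The subtlety is that $x^i_k$ and $x^{i+1}_{2k}$ are joined by a geodesic that is itself random (built from accumulated Brownian increments), the frames $E^i_k, E^{i+1}_{2k}$ are parallel transports along it, and we are comparing $\Exp$ of the \emph{same} $\Re^d$-vector read in these two frames. Bounding how much curvature distorts this over a tangent displacement of typical size $\sqrt{\delta^i d}$, uniformly in the (unbounded) realization of the Brownian increment, is exactly what Lemma \ref{l:discrete-approximate-synchronous-coupling} is built to deliver, but invoking it cleanly requires checking its hypotheses on the sizes of $D_k$ and of the increments hold with the right probability, and that the $(\delta^i)^2 L_\beta^2$ drift correction interacts correctly with the curvature term; handling the bad event where a Brownian increment is atypically large (which is where the sixth-moment terms $L_R^2 L_\beta^6$ and the exponential factor originate) is the delicate bookkeeping step.
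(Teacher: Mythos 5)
Your proposal matches the paper's proof in all essentials: the same per-block split into a synchronous-coupling step (Lemma \ref{l:discrete-approximate-synchronous-coupling}, exploiting the exact cancellation of the Brownian increments under the matched frames so that only the $\delta^i L_\beta'\dist(x^i_k,x^{i+1}_{2k})$ drift mismatch survives) and a rolling-without-slipping step (Lemma \ref{l:triangle_distortion} plus the drift change between the two fine substeps), followed by unrolling the recursion from $D_0=0$ and applying Markov's inequality for the tail. The only cosmetic differences are that the paper controls the within-block supremum by extending the per-step bounds directly to the geodesic interpolants via a monotone dominating sequence $r_k$ rather than a Doob maximal inequality, and that the extra factor of $K$ in the additive error (which you fold loosely into "generous constants") arises explicitly from the $(1+\tfrac{1}{2K})/K$ Young's-inequality weighting used to combine the two steps.
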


\textbf{Remark:} Lemma \ref{l:key_brownian_limit_lemma} is usually applied with $T$ being the step-size of a a single Euler-Murayama discretization step (i.e. $\delta$ in \eqref{e:intro_euler_murayama}). Therefore, by taking $T$ to be sufficiently small, the exponential term can be made small, e.g. $\leq 2$. 

\begin{proof}
    Recall that $x^i(t)$ is the linear interpolation of $x^i_k$ as defined in \eqref{d:x^i_k}. 

  Let us define
  \begin{alignat*}{1}
      & a_k := \delta^{i+1} \beta\lrp{x^{i+1}_{2k}} + {\lrp{\BB\lrp{\lrp{2k+1}\delta^{i+1}} - \BB\lrp{2k\delta^{i+1}}}} \circ E^{i+1}_{2k}\\
      & b_k := \delta^{i+1} \beta\lrp{x^{i+1}_{2k}} + {\lrp{\BB\lrp{\lrp{2k+2}\delta^{i+1}} - \BB\lrp{\lrp{2k+1}\delta^{i+1}}}} \circ E^{i+1}_{2k}
      \elb{e:t:rjlqkwn:0}
  \end{alignat*}

  Our proof breaks down the bound of $\dist\lrp{x^i_k,x^{i+1}_{2k+2}}$ into two parts: by Young's inequality,
  \begin{alignat*}{1}
      \dist\lrp{x^i_{k+1}, x^{i+1}_{2k+2}}^2
      \leq& \lrp{\dist\lrp{x^i_{k+1}, \Exp_{x^{i+1}_{2k}}\lrp{a_k + b_k}} + \dist\lrp{\Exp_{x^{i+1}_{2k}}\lrp{a_k + b_k},x^{i+1}_{2k+2}}}^2\\
      \leq& \lrp{1 + \frac{1}{2K}}\dist\lrp{x^i_{k+1}, \Exp_{x^{i+1}_{2k}}\lrp{a_k + b_k}}^2 + K\dist\lrp{\Exp_{x^{i+1}_{2k}}\lrp{a_k + b_k},x^{i+1}_{2k+2}}^2
      \elb{e:t:rjlqkwn:1}
  \end{alignat*}
  We now bound the first term of \eqref{e:t:rjlqkwn:1}. From definition in \eqref{d:x^i_k} and \eqref{e:t:rjlqkwn:0},
  \begin{alignat*}{1}
      x^i_{k+1} =& \Exp_{x^i_k}\lrp{\delta^i \beta\lrp{x^i_k} + {\lrp{\BB\lrp{\lrp{k+1}\delta^{i}} - \BB\lrp{k\delta^{i}}}} \circ E^{i}_{k}}\\
      \Exp_{x^{i+1}_{2k}}\lrp{a_k + b_k} 
      =& \Exp_{x^i_k}\lrp{\delta^i \beta\lrp{x^{i+1}_{2k}} + {\lrp{\BB\lrp{\lrp{2k+2}\delta^{i+1}} - \BB\lrp{2k\delta^{i+1}}}} \circ E^{i+1}_{2k}}
  \end{alignat*}

  We thus apply Lemma \ref{l:discrete-approximate-synchronous-coupling}, with $x := x^i_k$, $y := x^{i+1}_{2k}$, $u := \delta^{i} \beta\lrp{x^i_k} + {\lrp{\BB\lrp{\lrp{k+1}\delta^{i}} - \BB\lrp{k\delta^{i}}}} \circ E^{i}_{k}$, $v := \delta^{i} \beta\lrp{x^{i+1}_{2k}} + {\lrp{\BB\lrp{\lrp{2k+2}\delta^{i+1}} - \BB\lrp{2k\delta^{i+1}}}} \circ E^{i+1}_{k}$. Let $\gamma(t), u(t), v(t)$ be as defined in Lemma \ref{l:discrete-approximate-synchronous-coupling}. Then Lemma \ref{l:discrete-approximate-synchronous-coupling} bounds
  \begin{alignat*}{1}
      \dist\lrp{\Exp_{x}(u), \Exp_y(v)}^2 \leq& \lrp{1+ 4 \C_k^2 e^{4\C_k}} \dist\lrp{x,y}^2 + 32 e^{\C_k} \lrn{v(0) - u(0)}^2 + 2\lin{\gamma'(0), v(0) - u(0)} 
      \elb{e:t:pmcqwd:1}
  \end{alignat*}
  where $\C_k := \sqrt{L_R} \lrp{\lrn{u} + \lrn{v}} \leq 2\sqrt{L_R}\lrp{\delta^i L_\beta + \lrn{\BB((k+1)\delta^i) - \BB(k\delta^i)}_2}$. 
  
  Some of the terms above can be simplified. We begin by bounding the $\lrn{u(0) - v(0)}$ term. By assumption that $\beta$ is Lipschitz, $\lrn{\delta^i \beta(x^i_k) - \party{x^{i+1}_{2k}}{x^i_k}\delta^i \beta(x^{i+1}_{2k})} \leq \delta^i L_\beta' \dist\lrp{x^i_k, x^{i+1}_{2k}}$. By definition of $E^{i+1}_{2k}$ from \eqref{d:x^i_k}, 
  \begin{alignat*}{1}
      & \party{x^{i+1}_{2k}}{x^i_k} \lrp{{\lrp{\BB\lrp{\lrp{2k+2}\delta^{i+1}} - \BB\lrp{2k\delta^{i+1}}}} \circ E^{i+1}_{2k}}\\
      =& {\lrp{\BB\lrp{\lrp{2k+2}\delta^{i+1}} - \BB\lrp{2k\delta^{i+1}}}} \circ \lrp{\party{x^{i+1}_{2k}}{x^i_k}E^{i+1}_{2k}}\\
      =& {\lrp{\BB\lrp{\lrp{k+1}\delta^{i}} - \BB\lrp{k\delta^{i}}}} \circ E^{i}_{k}
  \end{alignat*}
  where the last line is because $\delta^i = 2 \delta^{i+1}$ and because $E^{i+1}_{2k} := \party{x^{i}_{k}}{x^{i+1}_{2k}} \lrp{E^{i}_{k}}$ from \eqref{d:x^i_k}.

  Thus
  \begin{alignat*}{1}
      & {\lrp{\BB\lrp{\lrp{k+1}\delta^{i}} - \BB\lrp{k\delta^{i}}}} \circ E^{i}_{k} - \party{x^{i+1}_{2k}}{x^i_k} \lrp{{\lrp{\BB\lrp{\lrp{2k+2}\delta^{i+1}} - \BB\lrp{2k\delta^{i+1}}}} \circ E^{i+1}_{2k}}\\
      =& {\lrp{\BB\lrp{\lrp{k+1}\delta^{i}} - \BB\lrp{k\delta^{i}}}} \circ E^{i}_{k} -{\lrp{\BB\lrp{\lrp{k+1}\delta^{i}} - \BB\lrp{k\delta^{i}}}} \circ \party{x^{i+1}_{2k}}{x^i_k}E^{i}_{k}\\
      =& 0
  \end{alignat*}

  We can thus bound via Young's Inequality:
  \begin{alignat*}{1}
      \lrn{u(0) - v(0)}^2 \leq 2 {\delta^i}^2 {L_\beta'}^2 \dist\lrp{x^i_k,x^{i+1}_{2k}}^2
  \end{alignat*}
  Finally, noting that $\lrn{\gamma'(0)}=i\dist\lrp{x^i_k,x^{i+1}_{2k}}$,
  \begin{alignat*}{1}
      & 2\lin{\gamma'(0), v(0) - u(0)} 
      = 2\lin{\gamma'(0), \party{x^{i+1}_{2k}}{x^i_k}\delta^i \beta(x^{i+1}_{2k}) - \delta^i \beta(x^i_k) }
      \leq 2\delta^i L_\beta' \dist\lrp{x^i_k, x^{i+1}_{2k}}^2 
  \end{alignat*}
  Plugging into \ref{e:t:pmcqwd:1}
  \begin{alignat*}{1}
      & \dist\lrp{x^i_{k+1}, \Exp_{x^{i+1}_{2k}}\lrp{a_k + b_k}}^2 
      \leq \lrp{1+ 4 \C_k^2 e^{4\C_k} + 64e^{\C_k}\delta^iL_\beta'}\dist\lrp{x^i_k,x^{i+1}_{2k}}^2
  \end{alignat*}

  We now bound the second term of \eqref{e:t:rjlqkwn:1}. Let us introduce two more convenient definitions:
  \begin{alignat*}{1}
      & b'_k := \delta^i \beta\lrp{x^{i+1}_{2k+1}} + {\lrp{\BB\lrp{\lrp{2k+2}\delta^{i+1}} - \BB\lrp{\lrp{2k+1}\delta^{i+1}}}} \circ E^{i+1}_{2k+1}\\
      & z := \Exp_{x^{i+1}_{2k}}\lrp{a_k}
  \end{alignat*}
  It follows from definition that 
  \begin{alignat*}{1}
      x^{i+1}_{2k+2} = \Exp_{z}\lrp{b'_k}
  \end{alignat*}

  We break the bound on $\dist\lrp{\Exp_{x^{i+1}_{2k}}\lrp{a_k + b_k},x^{i+1}_{2k+2}}$ into two terms:
  \begin{alignat*}{1}
      \dist\lrp{\Exp_{x^{i+1}_{2k}}\lrp{a_k + b_k},x^{i+1}_{2k+2}}
      \leq& \dist\lrp{\Exp_{x^{i+1}_{2k}}\lrp{a_k + b_k},\Exp_{z}\lrp{ \party{x^{i+1}_{2k}}{x^{i+1}_{2k+1}}b_k}} + \dist\lrp{\Exp_{z}\lrp{ \party{x^{i+1}_{2k}}{x^{i+1}_{2k+1}}b_k},x^{i+1}_{2k+2}}\\
      =& \dist\lrp{\Exp_{x^{i+1}_{2k}}\lrp{a_k + b_k},\Exp_{z}\lrp{ \party{x^{i+1}_{2k}}{x^{i+1}_{2k+1}}b_k}} + \dist\lrp{\Exp_z\lrp{ \party{x^{i+1}_{2k}}{x^{i+1}_{2k+1}}b_k}, \Exp_z\lrp{b'_k}}
  \end{alignat*}
  To bound the first term, we apply Lemma \ref{l:triangle_distortion} (from \citep{sun2019escaping}) with $x = x^{i}_{2k}$, $a = a_k$, $y = b_k$, to get
  \begin{alignat*}{1}
      & \dist\lrp{\Exp_{x^{i+1}_{2k}}\lrp{a_k + b_k},\Exp_{z}\lrp{ \party{x^{i+1}_{2k}}{x^{i+1}_{2k+1}}b_k}}\\
      \leq& L_R \lrn{a_k}\lrn{b_k}\lrp{\lrn{a_k} + \lrn{b_k}} e^{\sqrt{L_R} \lrp{\lrn{a_k}+\lrn{b_k}}}
  \end{alignat*}

  To bound the second term, we apply Lemma \ref{l:discrete-approximate-synchronous-coupling} (with $x=y=z$), so that
  \begin{alignat*}{1}
      & \dist\lrp{\Exp_z\lrp{ \party{x^{i+1}_{2k}}{x^{i+1}_{2k+1}}b_k}, \Exp_z\lrp{b'_k}}^2 \\
      \leq& 32 e^{\C_k'} \lrn{\party{x^{i+1}_{2k}}{x^{i+1}_{2k+1}}b_k - b'_k}^2\\
      \leq& 64 e^{\C_k'} {\delta^{i+1}}^2 {L_\beta'}^2 \dist\lrp{x^{i+1}_{2k}, x^{i+1}_{2k+1}}^2 \\
      \leq& 128 e^{\C_k'} {\delta^{i+1}}^2 {L_\beta'}^2 \lrn{a_k}_2^2
  \end{alignat*}
  where we define $\C_k':= \sqrt{L_R} \lrp{\lrn{b_k} + \lrn{b_k'}}$.

  Plugging everything into \eqref{e:t:rjlqkwn:1},
  \begin{alignat*}{1}
      \dist\lrp{x^i_{k+1}, x^{i+1}_{2k+2}}^2
      \leq& \lrp{1 + \frac{1}{2K}}\lrp{1+ 4 \C_k^2 e^{4\C_k} + 64e^{\C_k}\delta^iL_\beta'}\dist\lrp{x^i_k,x^{i+1}_{2k}}^2\\
      &\qquad + 32K L_R^2 \lrp{\lrn{a_k}^6 + \lrn{b_k}^6} e^{2\sqrt{L_R} \lrp{\lrn{a_k}+\lrn{b_k}}} + 256K e^{\C_k'} {\delta^{i+1}}^2 {L_\beta'}^2 \lrn{a_k}^2
  \end{alignat*}

  In fact, if we consider any $t\in [k\delta^i, (k+1)\delta^i)$, and using the definition of $x^i(t)$ from \eqref{d:x^i(t)} as the linear interpolation between $x^i_k$ and $x^i_{k+1}$, we can extend the bound to
  \begin{alignat*}{1}
      & \sup_{t\in [k\delta^i, (k+1)\delta^i)} \dist\lrp{x^i(t), x^{i+1}(t)}^2 \\\leq& \lrp{1 + \frac{1}{2K}}\lrp{1+ 4 \C_k^2 e^{4\C_k} + 64e^{\C_k}\delta^iL_\beta'}\dist\lrp{x^i_k,x^{i+1}_{2k}}^2\\
      &\qquad + 32K L_R^2 \lrp{\lrn{a_k}^6 + \lrn{b_k}^6} e^{2\sqrt{L_R} \lrp{\lrn{a_k}+\lrn{b_k}}} + 256K e^{\C_k'} {\delta^{i+1}}^2 {L_\beta'}^2 \lrn{a_k}^2
      \elb{e:t:qomlkqm}
  \end{alignat*}

  Let us define
  \begin{alignat*}{1}
      r_0 =& 0\\
      r_{k+1}^2 
      :=& \lrp{1 + \frac{1}{2K}}\lrp{1+ 4 \C_k^2 e^{4\C_k} + 64e^{\C_k}\delta^iL_\beta'}r_k^2\\
      &\qquad + 32K L_R^2 \lrp{\lrn{a_k}^6 + \lrn{b_k}^6} e^{2\sqrt{L_R} \lrp{\lrn{a_k}+\lrn{b_k}}} + 256K e^{\C_k'} {\delta^{i+1}}^2 {L_\beta'}^2 \lrn{a_k}^2
  \end{alignat*}
  It follows from \eqref{e:t:qomlkqm} that $r_k \geq \sup_{t\in [(k-1)\delta^i, k\delta^i)} \dist\lrp{x^i(t), x^{i+1}(t)}$ and that $r_{k+1} \geq r_k$ with probability 1, for all $k$, so that $\sup_{t\leq T} \dist\lrp{x^i(t), x^{i+1}(t)} \leq r_K$. We will now bound $\E{r_K^2}$, and then apply Markov's Inequality. Let us define $\F_k$ to be the $\sigma$-field generated by $\BB(t)$ for $t\in [0, k\delta^i)$. Then
  \begin{alignat*}{1}
      \Ep{\F_k}{r_{k+1}^2} 
      \leq& \Ep{\F_k}{\lrp{1 + \frac{1}{2K}}\lrp{1+ 4 \C_k^2 e^{4\C_k} + 64e^{\C_k}\delta^iL_\beta'}} r_k^2 \\
      &\qquad + \Ep{\F_k}{32K L_R^2 \lrp{\lrn{a_k}^6 + \lrn{b_k}^6} e^{2\sqrt{L_R} \lrp{\lrn{a_k}+\lrn{b_k}}} + 256K e^{\C_k'} {\delta^{i+1}}^2 {L_\beta'}^2 \lrn{a_k}^2}
  \end{alignat*}
  We will bound the terms above one by one. First, note from definition that \\
  $\C_k \leq \sqrt{L_R}\lrp{2 \delta^i L_\beta + 2\lrn{\BB((k+1)\delta^i)-\BB(k\delta^i)}_2}$. Let $\eta^i_k:=\BB((k+1)\delta^i)-\BB(k\delta^i)$.
  
  For sufficiently large $i$, $\delta^i \leq {\sqrt{L_R} L_\beta/8}$. Simplifying,
  \begin{alignat*}{1}
      & \Ep{\F_k}{\lrp{1 + \frac{1}{2K}}\lrp{1+ 4 \C_k^2 e^{4\C_k} + 64e^{\C_k}\delta^iL_\beta'}}\\
      \leq& 1 + \frac{1}{2K} + 16{\delta^i}^2 L_R L_\beta^2 + 16 L_R \E{\lrn{\eta^i_k}^2} + 16{\delta^i}^2 L_R L_\beta^2 \E{e^{2\sqrt{L_R} \lrn{\eta^i_k}}}\\
      &\quad + \frac{8L_R}{\delta^i d} \E{\lrn{\eta^i_k}^4} + 8L_R \delta^i d \E{e^{4\sqrt{L_R} \lrn{\eta^i_k}}} + 128\delta^i L_\beta' \E{e^{2\sqrt{L_R} \lrn{\eta^i_k}}}\\
      \leq& 1 + \frac{1}{2K} + 16 L_R \lrp{{\delta^i}^2 L_\beta^2 + \E{\lrn{\eta^i_k}^2} + \frac{1}{\delta^i d}\E{\lrn{\eta^i_k}^4}} + 8\delta^i \lrp{\delta^i L_R L_\beta^2 + L_R d + 16 L_\beta'}\E{e^{4\sqrt{L_R} \lrn{\eta^i_k}}}\\
      \leq& 1 + \frac{1}{2K} + 40 \lrp{{\delta^{i}}^2 L_R L_\beta^2 + 2\delta^i L_R d + \delta^i L_\beta'}
  \end{alignat*}
  where we use
  \begin{alignat*}{1}
      & \E{\lrn{\eta^i_k}^2} = \delta^i d\\
      & \E{\lrn{\eta^i_k}^2} \leq 2 {\delta^i}^2 d^2\\
      & \E{e^{4\sqrt{L_R} \lrn{\eta^i_k}}} \leq 2\E{e^{8L_R \lrn{\eta^i_k}^2}} \leq 2e^{16L_R \delta^i d} \leq 4 
  \end{alignat*}
  where we use Lemma \ref{l:subexponential-chi-square}, and the fact that $\delta^i \leq \frac{1}{32L_R d}$ for sufficiently large $i$. 

  Next, we bound $\Ep{\F_k}{32K L_R^2 \lrp{\lrn{a_k}^6 + \lrn{b_k}^6} e^{2\sqrt{L_R} \lrp{\lrn{a_k}+\lrn{b_k}}}}$. Note that $\lrn{a_k}\leq \frac{\delta^i}{2} L_\beta + \lrn{\eta^{i+1}_{2k}}$ and $\lrn{b_k}\leq \frac{\delta^i}{2} L_\beta + \lrn{\eta^{i+1}_{2k+1}}$. By similar argument as above, 
  \begin{alignat*}{1}
      & \Ep{\F_k}{32K L_R^2 \lrp{\lrn{a_k}^6 + \lrn{b_k}^6} e^{2\sqrt{L_R} \lrp{\lrn{a_k}+\lrn{b_k}}}}\\
      \leq& 2048KL_R^2 e^{2\sqrt{L_R}\delta^i L_\beta} \Ep{\F_k}{\lrp{2^{-5} {\delta^i}^6 L_\beta^6 + \lrn{\eta^{i+1}_{2k}}^6 + \lrn{\eta^{i+1}_{2k}}^6} \cdot e^{2\sqrt{L_R} \lrp{\lrn{\eta^{i+1}_{2k}} + \lrn{\eta^{i+1}_{2k+1}}}}}\\
      \leq& K L_R^2 \lrp{512 {\delta^i}^6 L_\beta^6 + 2048 {\delta^i}^3 d^3 }
  \end{alignat*}

  Finally, note that $\C_k' \leq 2\sqrt{L_R} \lrp{L_\beta + \lrn{\eta^{i+1}_{2k+1}}}$, so that
  \begin{alignat*}{1}
      \Ep{\F_k}{256K e^{\C_k'} {\delta^{i+1}}^2 {L_\beta'}^2 \lrn{a_k}^2}
      \leq& 256K{\delta^i}^2 {L_\beta'}^2 \lrp{{\delta^i}^2 L_\beta^2 + \delta^i d}
  \end{alignat*}

  Put together,
  \begin{alignat*}{1}
      &\Ep{\F_k}{r_{k+1}^2} \\
      \leq& \lrp{1 + \frac{1}{2K} + 40 \lrp{{\delta^{i}}^2 L_R L_\beta^2 + 2\delta^i L_R d + \delta^i L_\beta'}} \lrp{K L_R^2 \lrp{512 {\delta^i}^6 L_\beta^6 + 2048 {\delta^i}^3 d^3 } + 256K{\delta^i}^2 {L_\beta'}^2 \lrp{{\delta^i}^2 L_\beta^2 + \delta^i d}}
      \elb{e:t:lqmf:0}
  \end{alignat*}
  Applying the above recursively and simplifying,
  \begin{alignat*}{1}
      \E{r_{K}^2} 
      \leq e^{40K{\delta^i}^2 L_R L_\beta^2 + 2K\delta^i L_R d + K\delta^i L_\beta'} \lrp{K {\delta^i}}^2 \lrp{{\delta^i}^4 L_R^2 L_\beta^6 + {\delta^i} L_R^2 d^3 + {\delta^i}^2 {L_\beta'}^2 L_\beta^2 + {\delta^i} {L_\beta'}^2d} \cdot 2^{10}.
      \elb{e:t:lqmf}
  \end{alignat*}
  Recall that $r_k \geq \sup_{t\in [(k-1)\delta^i, k\delta^i)} \dist\lrp{x^i(t), x^{i+1}(t)}$ (see \eqref{e:t:qomlkqm}), and that $r_{k+1} \geq r_k$ with probability 1, for all $k$, so that $\sup_{t\leq T} \dist\lrp{x^i(t), x^{i+1}(t)} \leq r_K$. This proves the first claim of the lemma.

  By Markov's Inequality, and recalling that $r_k$ is w.p. 1 non-decreasing and \\ 
  $\sup_{t\leq T} \dist\lrp{x^i(t), x^{i+1}(t)} \leq R_K$,
  \begin{alignat*}{1}
      & \Pr{\sup_{t\in[0,T]} \dist\lrp{x^i(t), x^{i+1}(t)}^2 \geq 2^{- i/2 -4}}\\
      \leq& \E{r_{K}^2} \cdot 2^{i/2 + 4}\\
      \leq& e^{\lrp{40\delta^i {L_R} L_\beta^2 + 2 L_R d + L_\beta'}T} T^2 \cdot \lrp{{\delta^i}^3 L_R^2 L_\beta^6 + L_R^2 d^3 + {\delta^i} {L_\beta'}^2 L_\beta^2 +{L_\beta'}^2d} \cdot 2^{-i/2 + 14}
      \elb{e:t:rjlqkwn:3}
  \end{alignat*}
  This proves the second claim of the lemma.
\end{proof}

\begin{lemma}
    \label{l:x(t)_is_brownian_motion}
    Let $x\in M$ be some initial point and $E$ an orthonormal basis of $T_xM$. Let $\BB(t)$ be a Brownian motion in $\Re^d$; and $\beta(x)$ a vector field satisfying Assumption~\ref{ass:beta_lipschitz}. Let $T\in \Re^+$, for $t\in[0,T]$ and let $x^i(t)$ be constructed as per~\eqref{d:x^i(t):0}.
    Then with probability 1, there is a limit $x(t)$ such that for all $\epsilon$, there exists an integer $N$ such that for all $i \geq N$,
    \begin{alignat*}{1}
      \sup_{t\in[0,T]} \dist\lrp{x^i(t), x(t)} \leq \epsilon.
    \end{alignat*}
  \end{lemma}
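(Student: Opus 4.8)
The plan is to upgrade the second (tail-probability) estimate of Lemma~\ref{l:key_brownian_limit_lemma} into almost-sure uniform convergence via a Borel--Cantelli argument: I will show that the sequence of paths $\{x^i(\cdot)\}_{i\ge0}$ is, with probability one, uniformly Cauchy on $[0,T]$ in the distance $\dist$, and then invoke completeness of $(M,\dist)$ — which holds because $\Exp$ is globally defined, hence $M$ is geodesically complete and so complete as a metric space (Hopf--Rinow) — to produce the limit $x(\cdot)$ and conclude uniform convergence by letting the far endpoint of the Cauchy estimate go to infinity.

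Before that, I need to reconcile the hypotheses: Lemma~\ref{l:key_brownian_limit_lemma} assumes a global bound $\lrn{\beta}\le L_\beta$, whereas here $\beta$ only satisfies the Lipschitz condition of Assumption~\ref{ass:beta_lipschitz}. To handle this I localize: for $\rho>0$ let $A_\rho:=\{\sup_{i\ge0}\sup_{t\in[0,T]}\dist(x^i(t),x_0)\le\rho\}$. On $A_\rho$ every $x^i_k$ lies in $B(x_0,\rho)$, and Assumption~\ref{ass:beta_lipschitz} gives $\lrn{\beta(x^i_k)}\le\lrn{\beta(x_0)}+L_\beta'\rho=:L_\beta(\rho)$, so all bounds of Lemma~\ref{l:key_brownian_limit_lemma} hold on $A_\rho$ with $L_\beta=L_\beta(\rho)$. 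One verifies $\Pr{A_\rho}\to1$ as $\rho\to\infty$ by a maximal inequality for the discrete walks that is uniform in the discretization level $i$, obtained from the recursive structure of \eqref{d:x^i_k} together with sub-Gaussian / $\chi^2$ tail bounds for the Brownian increments — the same ingredients used to prove Lemma~\ref{l:key_brownian_limit_lemma}, in the spirit of the moment estimate referenced there as Lemma~\ref{l:near_tail_bound_L4}. It then suffices to prove the conclusion on each $A_\rho$ and pass $\rho\to\infty$; the limits obtained on $A_\rho$ and $A_{\rho'}$ agree since they are determined by the same path sequence.

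On $A_\rho$, for $i$ sufficiently large (so that Lemma~\ref{l:key_brownian_limit_lemma} applies and its exponential prefactor, whose only $i$-dependent part is the $2^{6-i}TL_RL_\beta(\rho)^2$ term, is bounded by an absolute constant), the tail bound reads
\[
  \Pr{\sup_{t\in[0,T]}\dist\lrp{x^i(t),x^{i+1}(t)}\ge2^{-i/4-2}}\le C_\rho\,2^{-i/2},
\]
with $C_\rho$ depending only on $T,d,L_R,L_\beta',L_\beta(\rho)$. Since $\sum_i C_\rho\,2^{-i/2}<\infty$, Borel--Cantelli produces a finite random index $N$ with $\sup_{t\in[0,T]}\dist(x^i(t),x^{i+1}(t))<2^{-i/4-2}$ for all $i\ge N$. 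For $N\le i\le j$, applying the triangle inequality pointwise in $t$ and then taking suprema,
\[
  \sup_{t\in[0,T]}\dist\lrp{x^i(t),x^j(t)}\le\sum_{\ell=i}^{j-1}\sup_{t\in[0,T]}\dist\lrp{x^\ell(t),x^{\ell+1}(t)}\le\sum_{\ell=i}^{\infty}2^{-\ell/4-2}=\frac{2^{-i/4-2}}{1-2^{-1/4}},
\]
which tends to $0$ as $i\to\infty$. Hence $\{x^i(\cdot)\}$ is uniformly Cauchy on $[0,T]$; by completeness the pointwise limit $x(t):=\lim_{i}x^i(t)$ exists for each $t$, and sending $j\to\infty$ above gives $\sup_{t\in[0,T]}\dist(x^i(t),x(t))\le 2^{-i/4-2}/(1-2^{-1/4})$ for all $i\ge N$. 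Given $\epsilon>0$, any $i\ge N$ with $2^{-i/4-2}/(1-2^{-1/4})\le\epsilon$ works, which is the claimed statement.

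I expect the main obstacle to be the localization step — establishing $\Pr{A_\rho}\to1$, i.e.\ that the \emph{whole family} $\{x^i\}_{i\ge0}$ (not just one path) stays in a common bounded region on $[0,T]$ with probability approaching $1$, with control that is uniform in the discretization level $i$. This requires a moment / maximal bound for the discrete geometric walks that does not degrade as $i\to\infty$; once that is in place, the Borel--Cantelli argument, the telescoping estimate, and the passage to the uniform limit are all routine.
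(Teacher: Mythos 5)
Your overall architecture is exactly the paper's: use the tail estimate of Lemma~\ref{l:key_brownian_limit_lemma} to get summable probabilities for $\sup_{t\in[0,T]}\dist\lrp{x^i(t),x^{i+1}(t)}\geq 2^{-i/4-2}$, apply Borel--Cantelli, telescope to get a uniformly Cauchy sequence, and conclude by completeness. The one place where your argument has a genuine gap is the localization step. You define $A_\rho$ as the event that \emph{every} path $x^i$, over \emph{all} discretization levels $i\geq 0$ simultaneously, stays in $B(x_0,\rho)$, and you assert $\Pr{A_\rho}\to 1$ as "routine." It is not. The available maximal inequalities (Lemma~\ref{l:near_tail_bound_L2} / \ref{l:near_tail_bound_L4}) give, for each fixed $i$, a bound of the form $\Pr{\sup_t\dist(x^i(t),x_0)>\rho}\leq C/\rho^4$ with $C$ independent of $i$; a union bound over the infinitely many levels $i$ then diverges for every fixed $\rho$. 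Controlling the whole family at once with a single radius essentially requires already knowing the paths are uniformly close to each other, which is the statement you are trying to prove.

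The paper's device, which you should adopt, is to let the localization radius grow with the level: truncate $\beta$ to $\beta^j$ with $\lrn{\beta^j}\leq 2^{j/2}$ and compare $x^i,x^{i+1}$ to the truncated processes $\t{x}^{i,i},\t{x}^{i+1,i}$ (truncation level matched to discretization level). Two things then work in tandem: (i) Lemma~\ref{l:key_brownian_limit_lemma} applied with $L_\beta=2^{i/2}$ still yields a $2^{-i/2}$ tail bound, because the blow-up of $L_\beta$ is exactly compensated by the shrinking stepsize ($\delta^i L_{\beta^i}^2 = T$); and (ii) the event that truncation is ever active for level $i$ is contained in $\{\sup_k\dist(x^i_k,x_0)\geq (2^{i/2}-L_0)/L_\beta'\}$, whose probability is $O(2^{-i})$ by Lemma~\ref{l:near_tail_bound_L2} precisely because the exit radius grows like $2^{i/2}$. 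Both error sources are then summable in $i$ and Borel--Cantelli goes through. With a fixed $\rho$ neither summability is available, so your proof as written does not close; everything after the localization (Borel--Cantelli, telescoping, completeness, passage to the limit) is fine and matches the paper.
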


\begin{proof}[Proof of Lemma \ref{l:x(t)_is_brownian_motion}]
  Let us define $L_0 := \lrn{\beta(x(0))}$.

  \textbf{Step 1: Bounding the probability of deviation between $x^i$ and $x^{i+1}$}\\

  We would like to apply Lemma \ref{l:key_brownian_limit_lemma}. However, note that Lemma \ref{l:key_brownian_limit_lemma} assumes that $\lrn{\beta(x)} \leq L_\beta$ globally, which we do not assume here. We must therefore approximate $\beta$ by a sequence of Lipschitz vector fields. 

  Let us define 
  \begin{alignat*}{1}
      \beta^j(x):= \twocase{\beta(x)}{\lrn{\beta(x)} \leq 2^{j/2}}{\beta(x)\cdot\frac{2^{j/2}}{\lrn{\beta(x)}}}{\lrn{\beta(x)} > 2^{j/2}}
  \end{alignat*}

  Let us denote by $L_{\beta^j} := 2^{j/2}$. We verify that for all $x,y\in M$, $\lrn{\beta^j(x)} \leq L_{\beta^j}$ and \\
  $\lrn{\beta^j(x) - \party{y}{x} \beta^j(y)} \leq L_\beta' \lrn{x-y}$.

  Finally, for any let $\t{x}^{i,j}(t)$ be as defined in \eqref{d:x^i(t)}, with $\beta$ replaced by $\beta^j$. Lemma \ref{l:key_brownian_limit_lemma} immediately implies that, for all $i \geq \C$ (where $\C$ is some constant depending on $L_R, T, d$),
  \begin{alignat*}{1}
      & \Pr{\sup_{t \in [0,T]} \dist\lrp{\t{x}^{i,i}(t), \t{x}^{i+1,i}(t)} \geq 2^{- \frac{i}{4} - 2}} \\
      &\quad \leq e^{\lrp{40T {L_R} + 2 L_R d + L_\beta'}T}  T^2 \cdot \lrp{T^3L_R^2 + L_R^2 d^3 + T {L_\beta'}^2  +{L_\beta'}^2d} \cdot 2^{-i/2 + 14}
  \end{alignat*}
  where we use the fact that $\delta^i {L_\beta^i}^2 = T$ by definition.

  Recalling that $\beta^j(x) = \beta(x)$ unless $\lrn{\beta(x)} \geq 2^{j/2}$,
  \begin{alignat*}{1}
      \Pr{\exists_{t\in[0,T]} x^i(t) \neq \t{x}^{i,i}(t)} = \Pr{\exists_{k\in \lrbb{0...2^i}} x^i_k \neq  \t{x}^{i,i}_k} \leq \Pr{\sup_{k\in \lrbb{0...2^i}} \lrn{\beta(x^i_k)} \geq 2^{i/2}}
  \end{alignat*}

  We can bound $\lrn{\beta(x)} \leq L_0 + L_\beta' \dist\lrp{x,x_0}$, so that
  \begin{alignat*}{1}
      & \Pr{\sup_{k\in \lrbb{0...2^i}} \lrn{\beta(x^i_k)} \geq 2^{i/2}}\\
      \leq& \Pr{\sup_{k\in \lrbb{0...2^i}} \dist\lrp{x^i_k} \geq \frac{2^{i/2} - L_0}{L_\beta'}}\\
      \leq& \exp\lrp{2 + 8T L_\beta' +T L_R d + TL_R L_0^2} \cdot \lrp{2T d + 4T^2 L_0^2} \cdot {L_\beta'}^2 \cdot 2^{-i+2}
  \end{alignat*}
  where we use Lemma \ref{l:near_tail_bound_L2}, with $K=2^i$, and assume that $i$ satisfies $2^{i/2} \geq L_0$ and $2^i \geq T$.

  Using identical steps, we can also bound
  \begin{alignat*}{1}
      \Pr{\exists_{t\in[0,T]} x^{i+1}(t) \neq \t{x}^{i+1,i}(t)}
      \leq \exp\lrp{2 + 8T L_\beta' +T L_R d + T L_R L_0^2} \cdot \lrp{2T d + 4T^2 L_0^2} \cdot {L_\beta'}^2 \cdot 2^{-i+2}
  \end{alignat*}

  Put together,
  \begin{alignat*}{1}
      & \Pr{\sup_{t \in [0,T]} \dist\lrp{{x}^{i}(t), {x}^{i+1}(t)} \geq 2^{- \frac{i}{4} - 2}} \\
      \leq& \Pr{\sup_{t \in [0,T]} \dist\lrp{\t{x}^{i,i}(t), \t{x}^{i+1,i}(t)} \geq 2^{- \frac{i}{4} - 2}}  + \Pr{\exists_{t\in[0,T]} x^{i}(t) \neq \t{x}^{i,i}(t)} + \Pr{\exists_{t\in[0,T]} x^{i+1}(t) \neq \t{x}^{i+1,i}(t)}\\
      \leq& \C_2 \cdot 2^{-i/2}
  \end{alignat*}
  where $\C_2$ is a constant that depends on $T, L_R, L_\beta', L_0, d$, but does not depend on $i$.

  \textbf{Step 2: Apply Borel-Cantelli to show uniformly-Cauchy sequence with probability 1}\\
  Thus
  \begin{alignat*}{1}
      \sum_{i=\C_1}^{\infty} \Pr{\sup_{t}\dist\lrp{x^i(t), x^{i+1}(t)} \geq 2^{- \frac{i}{4}}} < \infty
  \end{alignat*}
  By the Borel-Cantelli Lemma,
  \begin{alignat*}{1}
      \Pr{\sup_{t}\dist\lrp{x^i(t), x^{i+1}(t)} \geq 2^{- \frac{i}{4}} \text{ for infinitely many $i$}} = 0
  \end{alignat*}
  Equivalently, with probability $1$, for all $\epsilon$, there exists a $N$ such that for all $i\geq N$,\\
  $\sup_{t}\dist\lrp{x^i(t), x^{i+1}(t)} \leq 2^{- \frac{i}{4}}$. For any $j \geq i \geq N$, it then follows that
  \begin{alignat*}{1}
      \sup_t \dist\lrp{x^i(t), x^j(t)}
      \leq& \sum_{\ell=i}^j \dist\lrp{x^\ell(t), x^{\ell+1}(t)}\\
      \leq& \sum_{\ell=i}^j 2^{- \frac{\ell}{4}}\\
      \leq& 6 \cdot 2^{- i/4}
  \end{alignat*}

  \textbf{Step 3: Uniform-Cauchy sequence implies uniform convergence to limit using standard arguments}
  Therefore, with probability 1, $x^i(t)$ is a uniformly Cauchy sequence. Let $x(t)$ be the point-wise limit of $x^i(t)$, as $i\to \infty$. It follows \footnote{A nice clean proof can be seen at \url{https://math.stackexchange.com/questions/1287669/uniformly-cauchy-sequences}} that with probability 1, for any $\epsilon$, there exists a $N$ such that for all $i \geq N$, 
  \begin{alignat*}{1}
      \sup_{t\in[0,T]} \dist\lrp{x^i(t), x(t)} \leq \epsilon
  \end{alignat*}
\end{proof}
  \begin{lemma}
    \label{l:discretization-approximation-lipschitz}
    Let $\beta(\cdot)$ be a vector field satisfying Assumption \ref{ass:beta_lipschitz}. Assume also that there exists $L_\beta$ such that $\lrn{\beta(x)}\leq L_\beta$ for all $x$. Consider arbitrary $x_0\in M$ and let $E$ be an orthonormal basis of $T_{x_0} M$. Let $\BB$ be a standard Brownian motion in $\Re^d$. Let $x^i(t)= \overline{\Phi}(t;x,E,\beta,\BB,i)$ and $x(t)= \Phi(t;x,E,\beta,\BB)$ as defined in \eqref{d:x^i(t)} and \eqref{d:x(t)} respectively. (Existence of $x(t)$ follows from Lemma \ref{l:x(t)_is_brownian_motion}).
    
    Then for any non-negative integer $\ell$,
    \begin{alignat*}{1}
        \E{\sup_{t\in[0,T]}\dist\lrp{x^\ell(t),x(t)}^2} 
        \leq& 2^{14}e^{40T\delta^\ell L_R L_\beta^2 + 2T L_R d + T L_\beta'} T^3\lrp{{\delta^\ell}^3 L_R^2 L_\beta^6 + L_R^2 d^3 + {\delta^\ell} {L_\beta'}^2 L_\beta^2 + {L_\beta'}^2d} \cdot 2^{-\ell}
    \end{alignat*}
    where $\delta^i := 2^{-i} T$
\end{lemma}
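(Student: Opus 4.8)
The plan is to bound $\dist(x^\ell(t), x(t))$ by telescoping through the intermediate processes $x^i(t)$ for $i \geq \ell$, using the fact that $x(t) = \lim_{i\to\infty} x^i(t)$ (Lemma \ref{l:x(t)_is_brownian_motion} guarantees this limit exists as an almost-sure uniform limit). Since $x(t) = \lim_{j\to\infty} x^j(t)$, we can write, for each fixed $t$, $\dist(x^\ell(t), x(t)) \leq \sum_{i=\ell}^{\infty} \dist(x^i(t), x^{i+1}(t))$, and therefore $\sup_{t\in[0,T]} \dist(x^\ell(t), x(t)) \leq \sum_{i=\ell}^{\infty} \sup_{t\in[0,T]}\dist(x^i(t), x^{i+1}(t))$. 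The summand on the right is exactly what Lemma \ref{l:key_brownian_limit_lemma} controls, so the proof reduces to summing that bound over $i = \ell, \ell+1, \ldots$.

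First I would apply the triangle inequality and Cauchy–Schwarz (or Minkowski in $L^2$) to pass from $\E{\sup_t \dist(x^\ell(t),x(t))^2}$ to a square of a sum of $L^2$-norms: $\left(\E{\sup_t \dist(x^\ell(t), x(t))^2}\right)^{1/2} \leq \sum_{i=\ell}^\infty \left(\E{\sup_t \dist(x^i(t), x^{i+1}(t))^2}\right)^{1/2}$. Then I would substitute the first bound of Lemma \ref{l:key_brownian_limit_lemma}, recalling $K = 2^i$ and $\delta^i = 2^{-i}T$ so that $K\delta^i = T$ and $K(\delta^i)^2 = T\delta^i = T^2 2^{-i}$. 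The exponential prefactor $e^{40K(\delta^i)^2 L_R L_\beta^2 + 2K\delta^i L_R d + K\delta^i L_\beta'} = e^{40 T \delta^i L_R L_\beta^2 + 2T L_R d + T L_\beta'}$ is increasing in $\delta^i$, hence maximized at $i = \ell$; I would bound it uniformly in $i \geq \ell$ by its value at $\delta^\ell$. The remaining factor inside the parentheses of Lemma \ref{l:key_brownian_limit_lemma} is $(K\delta^i)^2 \bigl((\delta^i)^4 L_R^2 L_\beta^6 + \delta^i L_R^2 d^3 + (\delta^i)^2 {L_\beta'}^2 L_\beta^2 + \delta^i {L_\beta'}^2 d\bigr) = T^2\bigl((\delta^i)^4 L_R^2 L_\beta^6 + \delta^i L_R^2 d^3 + (\delta^i)^2 {L_\beta'}^2 L_\beta^2 + \delta^i {L_\beta'}^2 d\bigr)$; each term here is a positive power of $\delta^i = 2^{-i}T$, so after taking square roots each term carries a factor $2^{-i/2}$ (or smaller), and the geometric series $\sum_{i=\ell}^\infty 2^{-i/2}$ converges with sum $O(2^{-\ell/2})$.

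The bookkeeping step I would carry out carefully is tracking the powers: taking the square root of $2^{10} \cdot T^2 \cdot \bigl((\delta^i)^4 L_R^2 L_\beta^6 + \delta^i L_R^2 d^3 + (\delta^i)^2 {L_\beta'}^2 L_\beta^2 + \delta^i {L_\beta'}^2 d\bigr)$ gives $2^5 T \cdot \bigl((\delta^i)^2 L_R L_\beta^3 + (\delta^i)^{1/2} L_R d^{3/2} + \delta^i L_\beta' L_\beta + (\delta^i)^{1/2} L_\beta' d^{1/2}\bigr)$ up to constants; the dominant scaling is $(\delta^i)^{1/2} = 2^{-i/2}T^{1/2}$. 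Summing over $i\geq\ell$, the geometric factor $\sum_{i\geq\ell}2^{-i/2} \le 4\cdot 2^{-\ell/2}$ produces the $2^{-\ell}$ after squaring, and bounding every $\delta^i \le \delta^\ell$ inside the non-dominant terms and re-squaring, one recovers $2^{14} e^{40T\delta^\ell L_R L_\beta^2 + 2TL_R d + TL_\beta'} T^3 \bigl((\delta^\ell)^3 L_R^2 L_\beta^6 + L_R^2 d^3 + \delta^\ell {L_\beta'}^2 L_\beta^2 + {L_\beta'}^2 d\bigr) 2^{-\ell}$ — note the extra $T$ (total $T^3$) comes from $T^2 \cdot \delta^\ell \cdot 2^{\ell}$-type rearrangements where $\delta^\ell = 2^{-\ell}T$, i.e. $\delta^\ell 2^\ell = T$, and from the $2^{-\ell}$-vs-$2^{-\ell/2}$ accounting. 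The only genuine subtlety — not really an obstacle — is making sure the swap of $\sup_t$ with the infinite sum is legitimate, which follows because Lemma \ref{l:x(t)_is_brownian_motion} gives uniform (in $t$) almost-sure convergence $x^i(t)\to x(t)$, so the partial sums of $\dist(x^i,x^{i+1})$ dominate $\sup_t\dist(x^\ell,x)$ pathwise, and then monotone/dominated convergence justifies passing the expectation through.
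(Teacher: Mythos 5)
Your proposal is correct and follows essentially the same route as the paper: both telescope through the intermediate processes $x^i$, invoke the first claim of Lemma \ref{l:key_brownian_limit_lemma} with $K\delta^i = T$ to extract the $2^{-i}$ decay, sum the resulting geometric series, and pass to the limit using the almost-sure uniform convergence from Lemma \ref{l:x(t)_is_brownian_motion}. The only cosmetic difference is that you combine the pairwise bounds via Minkowski's inequality in $L^2$ (sum of square roots, then square) whereas the paper uses repeated Young's inequality directly on the squared distances, producing the weights $3(3/2)^{i-\ell}$; both yield constants fitting within the stated $2^{14}$.
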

\begin{proof}
    Consider any fixed $i$, let $\delta^i := T/2^i$ and let $K:= T/\delta^i = 2^i$ as in \eqref{d:x^i_k}.

    By the first claim of Lemma \ref{l:key_brownian_limit_lemma}, we can bound
    \begin{alignat*}{1}
        &\E{\sup_{t\in[0,T]}\dist\lrp{x^i(t),x^{i+1}(t)}^2} \\
        \leq& 2^{10} \cdot e^{40K{\delta^i}^2 L_R L_\beta^2 + 2K\delta^i L_R d + K\delta^i L_\beta'} \lrp{K {\delta^i}}^2 \lrp{{\delta^i}^4 L_R^2 L_\beta^6 + {\delta^i} L_R^2 d^3 + {\delta^i}^2 {L_\beta'}^2 L_\beta^2 + {\delta^i} {L_\beta'}^2d}\\
        =& \underbrace{2^{10}e^{40T\delta^i L_R L_\beta^2 + 2T L_R d + T L_\beta'} T^3\lrp{{\delta^i}^3 L_R^2 L_\beta^6 + L_R^2 d^3 + {\delta^i} {L_\beta'}^2 L_\beta^2 + {L_\beta'}^2d}}_{:=s_i} \cdot 2^{-i}
    \end{alignat*}
    where we use the fact that $K \delta^i =T$ by definition.

    By repeated application of Young's Inequality, we can bound, for any $\ell$ and any $j\geq \ell$,
    \begin{alignat*}{1}
        \E{\sup_{t\in[0,T]}\dist\lrp{x^{\ell}(t),x^{j}(t)}^2} 
        \leq& \sum_{i=\ell}^{j-1} 3 \lrp{\frac{3}{2}}^{i-\ell}\E{\sup_{t\in[0,T]}\dist\lrp{x^i(t), x^{i+1}(t)}^2}\\
        \leq& \sum_{i=\ell}^{j-1} 3 \lrp{\frac{3}{2}}^{i-\ell}\cdot 2^{-i} s_i \\
        \leq& 12 \cdot 2^{-\ell} \cdot s_\ell
    \end{alignat*}

    Since the above holds for any $j$, we can take the limit of $j\to\infty$ and 
    \begin{alignat*}{1}
        \E{\sup_{t\in[0,T]}\dist\lrp{x^\ell(t),x(t)}^2} 
        \leq& 12 \cdot 2^{-\ell} \cdot s_\ell
    \end{alignat*}
    where we use the fact that $\dist\lrp{x^j_{2^j},x(T)}$ converges almost surely to $0$, from Lemma \ref{l:x(t)_is_brownian_motion}.
\end{proof}

\subsection{Discretization Error of Euler Murayama}
\label{ss:langevin_mcmc_on_manifold}
Given the results of the previous section, we are now ready to prove Lemma \ref{l:discretization-approximation-lipschitz-derivative}, which is informally stated as Lemma \ref{l:informal_discretization-approximation-lipschitz-derivative} in the Section \ref{Brownian Motion Section}. The proof of Lemma \ref{l:discretization-approximation-lipschitz-derivative} works by summing, for all $i$, the distance between $x^i(t)$ and $x^{i+1}(t)$ (which is bounded in Lemma \ref{l:key_brownian_limit_lemma}). Extra care must be taken to ensure that iterates do not stray too far from the initial error.

The crucial analysis corresponding to Step 1 (synchronous coupling) and Step 2 (rolling without slipping) discussed in the proof sketch of Lemma \ref{l:informal_discretization-approximation-lipschitz-derivative} in Section \ref{Brownian Motion Section} can be found in the proof of Lemma \ref{l:key_brownian_limit_lemma} in Section \ref{ss:Existence} above.

\begin{lemma}
    \label{l:discretization-approximation-lipschitz-derivative}
    Let $M$ satisfy Assumption \ref{ass:sectional_curvature_regularity}. Let $\beta(\cdot)$ be a vector field satisfying Assumption \ref{ass:beta_lipschitz}. Consider arbitrary $x(0) \in M$. Let $L_1$ be any constant such that $L_1 \geq \lrn{\beta(x(0))}$ and let $T$ be a step-size satisfying $T\leq \min\lrbb{\frac{1}{16{L_\beta'}},  \frac{1}{16L_R d}, \frac{1}{16\sqrt{L_R} L_1}}$.
    
    Let $x(t)$ denote the solution to \eqref{e:intro_euler_murayama}, initialized at $x(0)$. Let $x^0(t) := \Exp_{x(0)}\lrp{t \beta(x(0)) + \sqrt{t} \zeta}$, where $\zeta \sim \N_{x(0)}(0,I)$. Then there exists a coupling between $x^0(T)$ and $x(T)$ such that
    \begin{alignat*}{1}
      \E{\dist\lrp{x^{0}(T),x(T)}^2} \leq 2^{20}\lrp{T^4 L_1^4 \lrp{1+L_R}+ T^4 {L_\beta'}^4 + T^{3} \lrp{d^{3}\lrp{L_R + {L_\beta'}^2/{L_1^2}} + {L_\beta'}^2 d}}
    \end{alignat*}
  \end{lemma}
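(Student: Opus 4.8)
The plan is to reduce this to Lemma~\ref{l:discretization-approximation-lipschitz}, which already bounds $\E{\sup_{t\in[0,T]}\dist\lrp{x^0(t),x(t)}^2}$ by essentially the target quantity, but under the stronger hypothesis that $\lrn{\beta}$ is bounded globally. Here we are only told $\lrn{\beta(x(0))}\le L_1$, so the one missing ingredient is to control how large $\lrn{\beta}$ can become along the trajectory over the short horizon $T$. Since $\beta$ is $L_\beta'$-Lipschitz, $\lrn{\beta(y)}\le L_1+L_\beta'\dist\lrp{y,x(0)}\le 2L_1$ as soon as $\dist\lrp{y,x(0)}\le L_1/L_\beta'$, and over time $T\le 1/(16L_\beta')$ the walk stays in that ball with overwhelming probability. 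So first I would introduce $\t{\beta}$, the vector field $\beta$ clipped in norm at level $2L_1$ (the $\beta^j$ device from the proof of Lemma~\ref{l:x(t)_is_brownian_motion}): it remains $L_\beta'$-Lipschitz, has $\lrn{\t{\beta}}\le 2L_1$ everywhere, and agrees with $\beta$ on $B\lrp{x(0),L_1/L_\beta'}$; in particular $\t{\beta}(x(0))=\beta(x(0))$, so the one-step scheme built from $\t{\beta}$ equals $x^0(t)$ identically. Coupling $x^0$ and $x$ via the shared Brownian motion $\BB$ (which furnishes $\zeta=\BB(T)/\sqrt T$ and the limiting SDE $x(t)$), and writing $\t{x}(t)$ for the limiting SDE driven by $\BB$ with drift $\t{\beta}$, the triangle inequality and $\lrp{a+b}^2\le 2a^2+2b^2$ give
\begin{alignat*}{1}
  \E{\dist\lrp{x^0(T),x(T)}^2}\le 2\,\E{\dist\lrp{x^0(T),\t{x}(T)}^2}+2\,\E{\dist\lrp{\t{x}(T),x(T)}^2}.
\end{alignat*}

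For the first, main term I would apply Lemma~\ref{l:discretization-approximation-lipschitz} with $\ell=0$ (so $\delta^0=T$), global bound $L_\beta=2L_1$, and the given $L_\beta'$: since $x^0(T)$ equals the truncated one-step scheme and $\dist\lrp{x^0(T),\t{x}(T)}\le\sup_{t}\dist\lrp{x^0(t),\t{x}(t)}$, this yields a bound of the shape $2^{14}e^{E}\,T^3\lrp{T^3L_R^2L_1^6+L_R^2d^3+TL_\beta'^2L_1^2+L_\beta'^2d}$ with exponent $E=O\lrp{T^2L_RL_1^2+TL_Rd+TL_\beta'}$. The three hypotheses $T\le\min\lrbb{1/(16\sqrt{L_R}L_1),\,1/(16L_Rd),\,1/(16L_\beta')}$ push $E$ below an absolute constant, so $e^{E}$ is absorbed into the leading $2^{20}$, and the same hypotheses let me fold every monomial into the claimed polynomial: for instance $T^3\cdot T^3L_R^2L_1^6=\lrp{T^2L_RL_1^2}^2\,T^2L_1^2\le T^4L_1^4L_R$ using $T^2L_RL_1^2\le 1/256$; $T^3\cdot TL_\beta'^2L_1^2\le\tfrac12 T^4\lrp{L_\beta'^4+L_1^4}$ by AM--GM; $T^3L_R^2d^3$ is reorganized using $TL_Rd\le 1/16$; and $T^3L_\beta'^2d$ lands directly. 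This produces the stated $T^4L_1^4(1+L_R)+T^4L_\beta'^4+T^3\lrp{d^3(L_R+L_\beta'^2/L_1^2)+L_\beta'^2d}$ shape with room left in the constant $2^{20}$.

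For the second, truncation-error term, note that on the event $G:=\lrbb{\sup_{t\in[0,T]}\dist\lrp{\t{x}(t),x(0)}\le L_1/L_\beta'}$ the truncated SDE never reaches a point where $\t{\beta}\neq\beta$, so $\t{x}$ solves the untruncated construction as well, whence $\t{x}(t)=x(t)$ for $t\in[0,T]$ on $G$. Therefore $\E{\dist\lrp{\t{x}(T),x(T)}^2}=\E{\dist\lrp{\t{x}(T),x(T)}^2\ind{G^c}}\le\sqrt{\E{\dist\lrp{\t{x}(T),x(T)}^4}}\,\sqrt{\Pr{G^c}}$ by Cauchy--Schwarz. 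The fourth moment is $poly\lrp{T,d,L_1}$ by a near-tail estimate for the diffusion (of the type of Lemma~\ref{l:near_tail_bound_L4}), whereas $\dist\lrp{\t{x}(t),x(0)}$ is stochastically dominated by $2L_1T+\sup_{s\le T}\lrn{\BB(s)}$ with $2L_1T\le L_1/(8L_\beta')$, so a sub-Gaussian far-tail estimate (cf.\ Lemma~\ref{l:far-tail-bound-l2-brownian}) gives $\Pr{G^c}\le 2d\exp\lrp{-cL_1^2/(L_\beta'^2Td)}$; being super-polynomially small in $1/T$, this is dominated by the first term.

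The main obstacle I anticipate is the bookkeeping of the two middle paragraphs: collapsing the deliberately loose output of Lemma~\ref{l:discretization-approximation-lipschitz} into \emph{exactly} the stated explicit polynomial using only the three bounds on $T$ — in particular reconciling the $L_R^2$ that arises naturally (from squaring the curvature factor in the ``rolling-without-slipping'' term of Lemma~\ref{l:triangle_distortion}) against the $L_R$ in the target via $TL_Rd\le 1/16$, and verifying that $2^{14}e^{E}$ together with all absorbed numerical factors is genuinely dominated by $2^{20}$. A secondary point is ensuring the truncation error is negligible for \emph{every} admissible $T$, not just for $T$ small: this works precisely because the far-tail exponent carries a factor $1/T$ whereas the discretization error is only polynomially small in $T$.
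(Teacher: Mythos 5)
Your overall architecture --- truncate the drift at a norm level tied to $L_1$, invoke Lemma~\ref{l:discretization-approximation-lipschitz} on the truncated field, and control the truncation error by Cauchy--Schwarz against an exit probability --- is essentially the paper's (the paper merely performs the truncation separately at each dyadic level $i$, at the growing radius $2^{i/2+1}L_1/L_\beta'$, so the per-level truncation errors decay geometrically; your single fixed-level truncation of the discretization part is a legitimate repackaging). The genuine gap is in the last step. You bound $\Pr{G^c}\le 2d\exp\lrp{-cL_1^2/(L_\beta'^2Td)}$ and declare it ``super-polynomially small in $1/T$, hence dominated by the first term.'' That is only true if $L_1,L_\beta',d$ are treated as constants. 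The lemma allows $L_1$ to be as small as $\lrn{\beta(x(0))}$, and none of the three hypotheses on $T$ forces $L_1^2\gg L_\beta'^2 Td$. When $L_1/L_\beta'\lesssim\sqrt{Td}$, the ball $B\lrp{x(0),L_1/L_\beta'}$ is exited with probability $\Theta(1)$, your exponential is not small, and the Cauchy--Schwarz bound degenerates to $\sqrt{\E{\dist\lrp{\t{x}(T),x(T)}^4}}\approx Td+T^2L_1^2$, which is not dominated by your first term (that term carries no compensating $L_\beta'^2/L_1^2$ factor). A telling sanity check: if your claim were correct, the lemma would hold with the $d^{3}L_\beta'^2/L_1^2$ term deleted from the right-hand side --- but that term is in the statement precisely because it \emph{is} the truncation error.

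The repair is to use a polynomial rather than exponential tail: apply the fourth-moment Markov bound of Lemma~\ref{l:near_tail_bound_L4} to get $\sqrt{\Pr{G^c}}\lesssim\lrp{Td+T^2L_1^2}L_\beta'^2/L_1^2$, whence a truncation error of order $\lrp{Td+T^2L_1^2}^2L_\beta'^2/L_1^2$; this is exactly the mechanism by which the paper's Step 1.2 produces the $d^{3}\lrp{L_\beta'^2/L_1^2}$ contribution to the stated bound, and it is the only route that generates the required $1/L_1^2$ dependence. A secondary issue: the estimate you cite for the exponential tail, Lemma~\ref{l:far-tail-bound-l2-brownian}, is an $L^4$ moment bound requiring dissipativity, and the paper's genuinely sub-Gaussian bounds (Lemmas~\ref{l:far-tail-bound-gaussian-adaptive} and~\ref{l:far-tail-bound-truncated-sgld}) also require dissipativity; under Lipschitz continuity alone the paper only supplies the $L^2$/$L^4$ near-tail bounds of Lemmas~\ref{l:near_tail_bound_L2} and~\ref{l:near_tail_bound_L4}, so the exponential tail you invoke would itself need a new proof --- and, per the above, would not suffice even if proved.
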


\begin{proof}[Proof of Lemma \ref{l:discretization-approximation-lipschitz-derivative}]
    Let $E$ be an orthonormal basis of $T_{x(0)} M$. Following the definition of $\overline{\Phi}$ in \eqref{d:x^i(t)}, we verify that $x^0(T)= \overline{\Phi}(T;x(0),E,\beta,\BB,0)$, where $\BB$ is some Brownian motion, and equality is in the sense of distribution. On the other hand, by Lemma \ref{l:Phi_is_diffusion}, $x(t) = \Phi(t;x(0),E,\beta,\BB)$, where $\Phi$ is the limit of $\overline{\Phi}$ as defined in \eqref{d:x(t)}. 
    
    \textbf{Step 1: Bounding the distance between $x^i(T)$ and $x^{i+1}(T)$}\\
    Let us consider some fixed $i$. Let $\delta^i = T/2^i$ denote the stepsize, and let $K := 2^i$ so that $T= K\delta^i$. Let $x^i(t)= \overline{\Phi}(T;x(0),E,\beta,\BB,i)$ be as defined in \eqref{d:x^i(t)}. Recall that $x^i(t)$ is by definition the linear interpolation of $x^i_k$ (defined in \eqref{d:x^i_k} or equivalently \eqref{d:x^i(t):0-appendix}), which are marginally Euler-Murayama sequences with stepsize $\delta^i$. 
    
    Our goal is to bound $\E{\dist\lrp{x^i(T),x^{i+1}(T)}^2}$. Lemma \ref{l:key_brownian_limit_lemma} almost gives us what we want; the problem is that Lemma \ref{l:key_brownian_limit_lemma} assumes that for all $x\in M$, $\lrn{\beta(x)}\leq L_\beta$, but we do not make that assumption in this lemma. In order to get around this issue, \textbf{we use an argument based on truncating $\beta$ at larger and larger norms}.

    Let us define $L_0 := \lrn{\beta(x(0))}$ and
    \begin{alignat*}{1}
        \beta^j(y):= \twocase{\beta(y)}{\lrn{\beta(y)} \leq L_1 2^{j/2+1}}{\beta(y)\cdot\frac{L_1 2^{j/2+1}}{\lrn{\beta(y)}}}{\lrn{\beta(y)} > L_1 2^{j/2+1}},
        \elb{e:t:oqmflwkf:10}
    \end{alignat*}
    i.e. $\beta^j$ is the truncated version of $\beta$, so that the norm of $\beta^j$ is globally upper bounded by $L_1 2^{j/2+1}$.

    Given this definition of $\beta^j$, we now define, for all $j\in \Z^+$, $\t{x}^{i,j}(t):= \overline{\Phi}(t;x(0),E,\beta^j,\BB,i)$, which is the (interpolated) Euler-Murayama discretization from \eqref{d:x^i(t)}, with step-size $\delta^i$, and drift $\beta^j$. In other words, $x^i(t)$ and $\t{x}^{i,j}(t)$ are both Euler Murayama discretizations with stepsize $\delta^i$, but the former has drift $\beta$ whereas the latter has drift $\beta^j$. We also let $\t{x}^{\cdot,j}(t):= {\Phi}(t;x(0),E,\beta^i,\BB)$ denote the limit, as $i\to \infty$, of $\t{x}^{i,j}(t)$ (see definition in \eqref{d:x(t)}).
    
    By Young's Inequality,
    \begin{alignat*}{1}
        \dist\lrp{x^i(t),x^{i+1}(t)}^2
        \leq& 8\dist\lrp{x^i(t),\t{x}^{i,i}(t)}^2 + 8\dist\lrp{x^{i+1}(t),\t{x}^{i+1,i}(t)}^2 + 8\dist\lrp{\t{x}^{i,i}(t),\t{x}^{i+1,i}(t)}^2
        \elb{e:t:oqmflwkf:1}
    \end{alignat*}
    Before proceeding, we briefly explain the intuition behind the decomposition in \eqref{e:t:oqmflwkf:1}. Notice that we let $j=i$ in $\t{x}^{i,i}$, i.e. as $i$ increases, two this happen: $\delta^i$ becomes smaller, and $\beta^i$ becomes truncated at a larger norm (see \eqref{e:t:oqmflwkf:10}), and is thus closer to the true un-truncated $\beta$.

    The first and second term on the right hand side of \eqref{e:t:oqmflwkf:1} correspond to error due to truncating $\beta$ to $\beta^i$. These two terms would equal $0$ if $\sup_t \dist\lrp{x^i(t), \t{x}^{i,i}(t)} \leq \frac{L_1 2^i/2}{L_\beta'}$, as that implies (via Assumption \ref{ass:beta_lipschitz}) that $\sup_t \lrn{\beta(x^i(t))} \leq L_1 2^{i/2+1}$ (so along the entire path, $\beta$ was never large enough to require truncation). As $i$ increases, $\frac{L_1 2^i/2}{L_\beta'} \to \infty$, and these two truncation errors are $0$ with increasing probability.
    
    The last term in \eqref{e:t:oqmflwkf:1} corresponds to error due to different discretization stepsize ($\delta^i$ vs $\delta^{i+1}$). 

    \textbf{Step 1.1: Bounding distance of truncated-drift sequences $\t{x}^{i,i}$ and $\t{x}^{i+1,i}$}\\
    We first bound the last term of \eqref{e:t:oqmflwkf:1}. By the first claim of Lemma \ref{l:key_brownian_limit_lemma}, we can bound
    \begin{alignat*}{1}
        &\E{\dist\lrp{\t{x}^{i,i}(T),\t{x}^{i+1,i}(T)}^2}\\
        \leq& 2^{10}e^{40T^2 L_R L_1^2 + 2T L_R d + T L_\beta'} T^3\lrp{T^3 L_R^2 L_1^6 + L_R^2 d^3 + T {L_\beta'}^2 L_1^2 + {L_\beta'}^2d} \cdot 2^{-i}\\
        \leq& 2^{14-i} T^3\lrp{T^3 L_R^2 L_1^6 + L_R^2 d^3 + T {L_\beta'}^2 L_1^2 + {L_\beta'}^2d},
    \end{alignat*}
    where the first inequality uses the fact that $\delta^i = T/2^i\leq T$ and that $\delta^i L_1 2^{i+1} = 2T L_1$, and second inequality is by our assumed upper bound on $T$. Here, we crucially use the fact that, the effect of $\delta^i$ halving with $i$ "cancels out" the effect of $\lrn{\beta^i(x)}$ becoming larger with $i$.

    \textbf{Step 1.2: Bounding error due to truncation}\\
    We now bound the first two terms of \eqref{e:t:oqmflwkf:1}. Once again recall from the definition in \eqref{d:x^i(t):0} and \eqref{d:x^i(t)} that $x^i(t)$ (resp $\t{x}^{i,i}(t)$) are linear interpolations of the discrete sequence $x^i_k$ (resp $\t{x}^{i,i}_k$) as defined in \eqref{d:x^i_k}. Under the event $\sup_{k\in\lrbb{0...2^i}} \dist\lrp{x^i_k,x(0)} \leq \frac{2^{i/2}L_1}{L_\beta'}$, we verify that $x^i_k = \t{x}^{i,i}_k$ for all $k\in\lrbb{0...2^i}$ -- this is because we can then bound, for all $k$, $\lrn{\beta(x^i_k)} \leq L_\beta' \dist\lrp{x^i_k,x(0)} + L_0 \leq 2^{i/2+1} L_1$, which in turn implies that for all $k$, $\beta(x^i_k)$ equals the truncated version $\beta^i(x^i_k)$, which in turn implies that $\dist\lrp{x^i(T),\t{x}^{i,i}(T)}=0$. Therefore,
    \begin{alignat*}{1}
        &\E{\dist\lrp{x^i(T),\t{x}^{i,i}(T)}^2} \\
        =& \E{\ind{\sup_{k\in\lrbb{0..2^i}} \dist\lrp{x^i_k,x(0)}> \frac{2^{i/2}L_1}{L_\beta'}} \dist\lrp{x^i(T),\t{x}^{i,i}(T)}^2}\\
        \leq& 2\sqrt{\Pr{\sup_{k\in\lrbb{0..2^i}} \dist\lrp{x^i_k,x(0)}> \frac{2^{i/2}L_1}{L_\beta'}}} \cdot \lrp{\sqrt{\E{\dist\lrp{x^i(T),x(0)}^4}} + \sqrt{\E{\dist\lrp{\t{x}^{i,i}(T),x(0)}^4}}}.
        \elb{e:t:oqmflwkf:2}
    \end{alignat*}
    where the second line follows from Young's inequality and Cauchy Schwarz.

    From Lemma \ref{l:near_tail_bound_L4}, and our assumed bound on $T$,
    \begin{alignat*}{1}
        \sqrt{\Pr{\sup_{k\in\lrbb{0...2^i}} \dist\lrp{x^i_k,x(0)} > \frac{2^{i/2}L_1}{L_\beta'}}}
        \leq& \frac{{L_\beta'}^2}{L_1^2 2^i}\exp\lrp{1 + 8 T L_\beta' + 2T L_R d + 2T\delta^i L_R L_0^2}\lrp{3T d + 8T^2 L_0^2}\\
        \leq& \frac{{L_\beta'}^2\lrp{Td + T^2 L_0^2}}{L_1^2} \cdot{2^{4-i}}.
    \end{alignat*}

    Also from Lemma \ref{l:near_tail_bound_L4}, for any $k$,
    \begin{alignat*}{1}
        \E{\dist\lrp{x_k,x(0)}^4} 
        \leq& \exp\lrp{2 + 16 T L_\beta' + 4T L_R d + 3T^2 L_R L_0^2}\lrp{T^2 d^2 + 64T^4 L_0^4}\\
        \leq& 4 \lrp{T^2 d^2 + 64T^4 L_0^4}.
    \end{alignat*}

    The same upper bound also applies to $\E{\dist\lrp{\t{x}^{i,i}(T),x(0)}^4}$. Plugging into \eqref{e:t:oqmflwkf:2}, 
    \begin{alignat*}{1}
        \E{\dist\lrp{x^i(T),\t{x}^{i,i}(T)}^2} \leq 2^{12-i} \frac{{L_\beta'}^2\lrp{Td + T^2 L_0^2}^{3}}{L_1^2}.
    \end{alignat*}

    Note that the bound in Lemma \ref{l:near_tail_bound_L4} is strictly stronger for $x^{i+1}_k$ compared to $x^i_k$. Thus by exactly identical steps, we can also upper bound
    \begin{alignat*}{1}
        \E{\dist\lrp{x^{i+1}(T),\t{x}^{i+1,i}(T)}^2} \leq 2^{12-i} \frac{{L_\beta'}^2\lrp{Td + T^2 L_0^2}^{3}}{L_1^2}.
    \end{alignat*}

    Plugging into \eqref{e:t:oqmflwkf:1}, and applying Young's Inequality, we finally have our bound between the Euler-Murayama sequences $x^i(T)$ and $x^{i+1}(T)$ (without truncation of $\beta$).
    \begin{alignat*}{1}
        \E{\dist\lrp{x^i(T),x^{i+1}(T)}^2} 
        \leq& 2^{14-i} T^3\lrp{T^3 L_R^2 L_1^6 + L_R^2 d^3 + T {L_\beta'}^2 L_1^2 + {L_\beta'}^2d} + 2^{13-i} \frac{{L_\beta'}^2\lrp{Td + T^2 L_0^2}^{3}}{L_1^2} \\
        \leq& 2^{16-i}\lrp{T^4 L_1^4 \lrp{1+L_R}+ T^4 {L_\beta'}^4 + T^{3} \lrp{d^{3}\lrp{L_R + {L_\beta'}^2/{L_1^2}} + {L_\beta'}^2 d}},
        \elb{e:t:oqmflwkf:11}
    \end{alignat*}
    where we use the assumed upper bound on $T$ in the lemma statement. 

    \textbf{Step 2: Summing over $i$}\\
    For any $\ell \in \Z^+$, we can summing \eqref{e:t:oqmflwkf:11} for $i\in \lrbb{0,1,2...\ell}$ to bound
    \begin{alignat*}{1}
        \E{\dist\lrp{x^{0}(T),x^{\ell}(T)}^2} 
        \leq& \sum_{i=0}^{\ell} 3 \cdot \lrp{\frac{3}{2}}^{i} 2^{16-i}\lrp{T^4 L_1^4 \lrp{1+L_R}+ T^4 {L_\beta'}^4 + T^{3} \lrp{d^{3}\lrp{L_R + {L_\beta'}^2/{L_1^2}} + {L_\beta'}^2 d}}\\
        \leq& 2^{20}\lrp{T^4 L_1^4 \lrp{1+L_R}+ T^4 {L_\beta'}^4 + T^{3} \lrp{d^{3}\lrp{L_R + {L_\beta'}^2/{L_1^2}} + {L_\beta'}^2 d}}.
        \elb{e:t:oqmflwkf:3}
    \end{alignat*}
    The first inequality uses triangle inequality and Young's inequality recursively: for any $i$, 
    \begin{alignat*}{1}
        \dist\lrp{x^i(T),x^{\ell}(T)}^2 \leq \frac{3}{2} \dist\lrp{x^{i+1}(T),x^{\ell}(T)}^2 + 3\dist\lrp{x^i(T),x^{i+1}(T)}^2.
    \end{alignat*}

    Since \eqref{e:t:oqmflwkf:3} holds for all $\ell$, we take the limit of $\ell \to \infty$. By dominated convergence together with Lemma \ref{l:x(t)_is_brownian_motion}, 
    \begin{alignat*}{1}
        \E{\dist\lrp{x^{0}(T),x(T)}^2} \leq 2^{20}\lrp{T^4 L_1^4 \lrp{1+L_R}+ T^4 {L_\beta'}^4 + T^{3} \lrp{d^{3}\lrp{L_R + {L_\beta'}^2/{L_1^2}} + {L_\beta'}^2 d}}.
    \end{alignat*}
\end{proof}

\subsection{Proof of Theorem \ref{t:langevin_mcmc}}\label{ss:proof_of_t:langevin_mcmc}
Below, we provide the full proof of Theorem \ref{t:langevin_mcmc}, which was sketched in Section \ref{s:langevin MCMC}. The main results used are Lemma \ref{l:g_contraction_without_gradient_lipschitz} (contraction of Lyapunov function under exact SDE) and Lemma \ref{l:discretization-approximation-lipschitz-derivative} (bound on Euler Murayama discretization error).

\begin{proof}[Proof of Theorem \ref{t:langevin_mcmc}]\ \\
    \textbf{Step 0: Defining some Key Constants}\\
    In this step, we define a radius $r$, an event $A_k$ based on $r$, and an upper bound on $\delta$.
    \begin{alignat*}{1}
        & c_0 := \log\lrp{\frac{L_\beta'}{m}} + \log \lrp{\frac{\lrp{1+L_R}d}{m}} + \log \R + \log\lrp{K}, \\
        & r_0 = 32\sqrt{\frac{{L_\beta'}^2\R^2}{m} + \frac{L_R d^2}{m^2} + \frac{d}{m}\cdot c_0},\\
        & r = r_0 + 32\sqrt{\frac{d}{m} \cdot \log\lrp{1/\delta}}.
        \elb{e:t:wlqekwla:12}
    \end{alignat*}
    Let $A_k$ denote the event $\max_{i\leq k} \dist\lrp{x_i,x^*} \leq r$. The value of $r$ and $A_k$ are chosen so that \eqref{e:t:wlqekwla:0} holds in Step 1 below. Note that $r$ depends on $\log(1/\delta)$ on the last line.

    We now define a suitable upperbound on the stepsize $\delta$. To do so, we first define the constants $c_1,c_5,c_6,c_7$:
    \begin{alignat*}{1}
        c_1 := \min\lrbb{\frac{1}{16{L_\beta'}},  \frac{1}{16L_R d}, \frac{m}{64 d\sqrt{L_R} L_\beta' r_0}, \frac{m}{64 d\sqrt{L_R} L_\beta' \sqrt{\log \lrp{d\sqrt{L_R} L_\beta'/m}}}},
        \elb{e:t:wlqekwla:7}
    \end{alignat*}
    \begin{alignat*}{1}
        & c_5 := \frac{1}{64}\min\lrbb{\frac{m}{{L_\beta'}^2}, \frac{d}{m}},\\
        & c_6 := \frac{1}{64}\min\lrbb{\frac{m}{{L_\beta'}^2\sqrt{L_R} r_0}, \frac{d}{m\sqrt{L_R} r_0}, \frac{d^2}{m^2 r_0^2}},\\
        & c_7 := \frac{1}{64}\min\lrbb{\sqrt{\frac{m^3}{d{L_\beta'}^4L_R^2\log\lrp{\nicefrac{{d L_\beta'}^4L_R^2}{m^3}}}}, \sqrt{\frac{d}{m L_R \log \lrp{\nicefrac{m L_R}{d}}}}, {\frac{d}{m \log\lrp{\nicefrac{d}{m}}}}}.
        \elb{e:t:wlqekwla:11}
    \end{alignat*}

    For our proof, we require that $\delta$ satisfies
    \begin{alignat*}{1}
        \delta \leq \min\lrbb{c_1,c_5,c_6,c_7}
        \elb{e:t:wlqekwla:5}
    \end{alignat*}
    Thus $\C_0$ from the theorem statement is explicitly $\C_0 = \frac{1}{\min\lrbb{c_1,c_5,c_6,c_7}}$. The motivation for this upper bound on $\delta$ is so that $\delta$ satisfies the conditions in Lemma \ref{l:discretization-approximation-lipschitz-derivative} and Lemma \ref{l:far-tail-bound-truncated-sgld} which are used in Step 1 and Step 3. We provide details in Step 4 below. Note that the upper bound on $\delta$ depends only on $r_0$ but not $r$, so the definition is not circular.

    \textbf{Step 1: Tail Bound:}\\
    We now show that with high probability, the discretization sequence $x_k$ never steps outside the ball of radius $r$ centered at $x^*$ (this is exactly the event $A_k$ that we defined in the previous step).

    By Lemma \ref{l:far-tail-bound-truncated-sgld} with $\sigma = 0$ and $\t{\beta} = \beta$, for any $(\delta,r)$ satisfying \\
    $\delta \leq \min\lrbb{\frac{m}{16 {L_\beta'}^2 \lrp{1 + \sqrt{L_R} r}}, \frac{d}{m \lrp{1 + \sqrt{L_R} r}}, \frac{32 {d^2}}{m^2 r^2}}$, we can bound the probability of $x_k$ stepping outside the ball of radius $r$ centered at $x^*$, for any $k\leq K$, as
    \begin{alignat*}{1}
        \Pr{A_k^c} = \Pr{\max_{k\leq K} \dist\lrp{x_k,x^*} \geq r} \leq 32K\delta m \exp\lrp{\frac{2{L_\beta'}^2\R^2}{d} + \frac{64 L_R {d} }{m} - \frac{m r^2}{256 {d} }}.
        \elb{e:t:wlqekwla:3}
    \end{alignat*}
    
    Furthermore, we can bound the fourth-moment of the distance between $x_K$ and $x^*$: by Lemma \ref{l:far-tail-bound-l2} with $\xi_k(x_k) = \zeta_k$ and $\sigma_\xi = 2\sqrt{d}$, we can bound
    \begin{alignat*}{1}
        \E{\dist\lrp{x_K,x^*}^2} \leq\frac{2^{13} L_R {L_\beta'}^4d^2}{m^{6}}  + \frac{16 {L_\beta'} \R^2}{m} + \frac{16 d}{m}
    \end{alignat*}
    Similarly, we can use Lemma \ref{l:far-tail-bound-l2-brownian} to bound
    \begin{alignat*}{1}
        & \E{\dist\lrp{y(K\delta),x^*}^2} \leq \frac{2^{13} L_R {L_\beta'}^4 d^2}{m^{6}}+ \frac{16 {L_\beta'} \R^2}{m} + \frac{32 d}{m}.
    \end{alignat*}
    
    Together, we can bound the expected distance between $y(K\delta)$ and $x_K$ under the low probability event $A_K^c$. By choosing $r$ to be sufficiently large in \eqref{e:t:wlqekwla:12}, we make \eqref{e:t:wlqekwla:3} sufficiently small. We then apply triangle inequality to bound $\dist\lrp{y(K\delta),x_K} \leq \dist\lrp{y(K\delta),x^*}+\dist\lrp{x_K,x^*}$ followed by Cauchy Schwarz, and verify that
    \begin{alignat*}{1}
        \E{\ind{A_K^c} \dist\lrp{y(K\delta),x_K}} \leq \sqrt{\delta}
        \elb{e:t:wlqekwla:0}
    \end{alignat*}

    \textbf{Step 2: Continuous Time Contraction}\\
    Having established a bound in the distance under the event $A_K^c$, we now turn our attention to the high-probability event $A_K$. 
    
    Consider some fixed but arbitrary $k\leq K$. Let us define a continuous time SDE $\bar{x}^k(t)$, for $t\in [k\delta, (k+1)\delta]$, as the solution to the exact Langevin diffusion \eqref{e:intro_euler_murayama} initialized at $\bar{x}^k(k\delta) := x_k$. 

    The goal of this step is to bound $\E{\ind{A_{k}} \dist\lrp{y((k+1)\delta),\bar{x}^k((k+1)\delta)}}$ in terms of $\E{\ind{A_k} \dist\lrp{y(k\delta),\bar{x}^k(k\delta)}} := \E{\ind{A_k} \dist\lrp{y(k\delta),x_k}}$. We apply Lemma \ref{l:g_contraction_without_gradient_lipschitz}, which guarantees that there exists a coupling between the two exact SDE processes $y$ and $\bar{x}^k$, such that $\E{f(\dist\lrp{y(t), \bar{x}^k(t)})}$ contracts with rate $\alpha$, i.e.
    \begin{alignat*}{1}
        \E{\ind{A_{k}} f\lrp{\dist\lrp{y((k+1)\delta),\bar{x}^k((k+1)\delta)}}} \leq e^{-\alpha \delta} \E{\ind{A_k}f\lrp{\dist\lrp{y(k\delta),x_{k}}}},
        \elb{e:t:wlqekwla:1}
    \end{alignat*}
    where $f$ is a Lyapunov function satisfying $f(r) \geq \frac{1}{2}\exp\lrp{- \lrp{q + L_{\Ric}/2} \R^2/2} r$ and $\lrabs{f'(r)} \leq 1$, and $\alpha := \min\lrbb{\frac{m-L_{Ric}/2}{16}, \frac{1}{2 \R^2}} \cdot \exp\lrp{- \frac{1}{2}\lrp{q + L_{\Ric}/2} \R^2}$ are as defined in Lemma \ref{l:g_contraction_without_gradient_lipschitz}, and are consistent with the definition in our theorem statement.

    \textbf{Step 3: Euler Murayama Error}\\
    Next, we bound the distance between $x_{k+1}$ and $\bar{x}^k((k+1)\delta)$. This represents the discretization error between a single Euler-Murayama step with stepsize $\delta$, and the exact Langevin diffusion over $\delta$ time.
    
    We will apply Lemma \ref{l:discretization-approximation-lipschitz-derivative} with $L_1 = L_\beta' r$. We verify that under the event $A_k$ and by Assumption \ref{ass:beta_lipschitz}, $\lrn{\beta(x_k)}$ is indeed bounded by $L_1$. Thus by Lemma \ref{l:discretization-approximation-lipschitz-derivative},
    \begin{alignat*}{1}
        &\E{\ind{A_k}\dist\lrp{x_{k+1},\bar{x}^k((k+1)\delta)}} \\
        \leq& \sqrt{2^{20}\lrp{\delta^4 L_1^4 \lrp{1+L_R}+ \delta^4 {L_\beta'}^4 + \delta^{3} \lrp{d^{3}\lrp{L_R + {L_\beta'}^2/{L_1^2}} + {L_\beta'}^2 d}}}\\
        \leq& \sqrt{2^{20}\lrp{\delta^4 {L_\beta'}^4 r^4 \lrp{1+L_R}+ \delta^4 {L_\beta'}^4 + \delta^{3} \lrp{d^{3}\lrp{L_R + 1/{\R^2}} + {L_\beta'}^2 d}}}\\
        \leq& \t{O}\lrp{\delta^{3/2}}
        \elb{e:t:wlqekwla:2}
    \end{alignat*}
    where $\t{O}$ hides polynomial dependency on $L_\beta', d, L_R, \R, \log K, \log (1/\delta)$. 
    
    Combining \eqref{e:t:wlqekwla:1} and \eqref{e:t:wlqekwla:2} and using triangle inequality and the fact that $\lrabs{f'} \leq 1$, along with the fact that $A_{k+1} \subset A_k$, we can bound
    \begin{alignat*}{1}
        \E{\ind{A_{k+1}}{f\lrp{\dist\lrp{y((k+1)\delta),x_{k+1}}}}} \leq e^{- \alpha \delta}\E{\ind{A_k}f\lrp{\dist\lrp{y(k\delta),x_{k}}}} + \t{O}\lrp{\delta^{3/2}},
        \elb{e:t:wlqekwla:4}
    \end{alignat*}
    where $\t{O}$ hides polynomial dependency on $L_\beta', d, L_R, \R, \log K, \log (1/\delta)$. This shows that, in one $\delta$-time step, the Lyapunov function of the distance contracts with rate $\alpha$, plus a discretization error of order $\delta^{3/2}$. Applying \eqref{e:t:wlqekwla:4} recursively, we can bound
    \begin{alignat*}{1}
        \E{\ind{A_{K}}{f\lrp{\dist\lrp{y(K\delta),x_{K}}}}} \leq e^{-\alpha K\delta}\E{f\lrp{\dist\lrp{y(0),x_{0}}^2}} + \frac{1}{\alpha} \cdot \t{O}\lrp{\delta^{1/2}}.
        \elb{e:t:wlqekwla:6}
    \end{alignat*}
    Combining \eqref{e:t:wlqekwla:0} and \eqref{e:t:wlqekwla:6}, and using the fact that $f(r) \leq r$, gives
    \begin{alignat*}{1}
        \E{{f\lrp{\dist\lrp{y(K\delta),x_{K}}}}} \leq e^{-\alpha K\delta}\E{f\lrp{\dist\lrp{y(0),x_{0}}^2}} + \frac{1}{\alpha} \cdot \t{O}\lrp{\delta}.
    \end{alignat*}
    Using the fact that $\frac{1}{2}\exp\lrp{- \lrp{q + L_{\Ric}/2} \R^2/2} r \leq f(r) \leq r$, we have
    \begin{alignat*}{1}
        \E{\dist\lrp{y(K\delta),x_{K}}} 
        \leq& e^{-\alpha K \delta + \lrp{q + L_{Ric}/2}\R^2/2}\E{\dist\lrp{y(0),x_{0}}} + \frac{\exp\lrp{\lrp{q + L_{Ric}/2}\R^2/2}}{\alpha} \cdot \t{O}\lrp{\delta^{1/2}}\\
        =& e^{-\alpha K \delta + \lrp{q + L_{Ric}/2}\R^2/2}\E{\dist\lrp{y(0),x_{0}}} + \exp\lrp{\lrp{q + L_{Ric}/2}\R^2} \cdot \t{O}\lrp{\delta^{1/2}},
    \end{alignat*}
    where $\t{O}$ hides polynomial dependency on $L_\beta', d, L_R, \R, \frac{1}{m - L_{Ric}/2}, \log K, \log \frac{1}{\delta}$. This concludes the proof of Theorem \ref{t:langevin_mcmc}.

    \textbf{Step 4: Verifying Conditions on $\delta$}\\
    In the proof above, we applied Lemma \ref{l:discretization-approximation-lipschitz-derivative} and Lemma \ref{l:far-tail-bound-truncated-sgld}. Each of these requires certain bounds on $\delta$. In this step, we verify that the conditions on $\delta$ for each of these lemmas is satisfied by \eqref{e:t:wlqekwla:5}.

    \textbf{Lemma \ref{l:discretization-approximation-lipschitz-derivative} with $L_1 = L_\beta' r$ requires}
    \begin{alignat*}{1}
        \delta \leq \frac{1}{16} \min\lrbb{\frac{1}{{L_\beta'}},  \frac{1}{L_R d}, \frac{1}{\sqrt{L_R} L_\beta' r}}
    \end{alignat*}
    We verify that this follows from \eqref{e:t:wlqekwla:7}. We specifically verify that $\delta \leq \frac{1}{16\sqrt{L_R} L_\beta' r}$ is satisfied due to the last two terms in \eqref{e:t:wlqekwla:7}, and Lemma \ref{l:useful_xlogx}.

    \textbf{Lemma \ref{l:far-tail-bound-truncated-sgld} requires} requires $\delta \leq \min\lrbb{\frac{m}{16 {L_\beta'}^2 \lrp{1 + \sqrt{L_R} r}}, \frac{d + \sigma^2}{m \lrp{1 + \sqrt{L_R} r}}, \frac{32 \lrp{d^2 + \sigma^4}}{m^2 r^2}}$. This is satisfied by \eqref{e:t:wlqekwla:11} and Lemma \ref{l:useful_xlogx}.
    
\end{proof}

\subsection{Proof of Theorem \ref{t:SGLD}}
\label{ss:proof_of_t:sgld}
Below, we provide the full proof of Theorem \ref{t:SGLD}, which was sketched in Section \ref{s:SGLD}. The main results used are Lemma \ref{l:sgld-lemma} (contraction of distance under Euler Murayama step) and Lemma \ref{l:discretization-approximation-lipschitz-derivative} (bound on Euler Murayama discretization error).

\begin{proof}[Proof of Theorem \ref{t:SGLD}]\ \\
    \textbf{Step 0: Defining some Key Constants}\\
    In this step, we define a radius $r$, an event $A_k$ based on $r$, and an upper bound on $\delta$.
    \begin{alignat*}{1}
        & c_0 := \log\lrp{\frac{L_\beta'}{m}} + \log \lrp{\frac{\lrp{1+L_R}\lrp{d+\sigma^2}}{m}} + \log \R + \log\lrp{K}, \\
        & r_0 = 32\sqrt{\frac{{L_\beta'}^2\R^2}{m} + \frac{L_R \lrp{d^2+\sigma^4}}{m^2} + \frac{d}{m}\cdot c_0},\\
        & r = r_0 + 32\sqrt{\frac{d}{m} \cdot \log\lrp{1/\delta}}.
        \elb{e:t:asokdmasldm:12}
    \end{alignat*}
    Let $A_k$ denote the event $\max_{i\leq k} \max\lrbb{\dist\lrp{x_i,x^*}, \dist\lrp{y(i\delta),x^*}} \leq r$. The value of $r$ and $A_k$ are chosen so that \eqref{e:t:asokdmasldm:0} holds in Step 1 below. Note that $r$ depends on $\log(1/\delta)$ on the last line.

    We now define a suitable upperbound on the stepsize $\delta$. To do so, we first define the constants $c_1,c_2,c_3,c_4,c_5,c_6,c_7$:
    \begin{alignat*}{1}
        c_1 := \min\lrbb{\frac{1}{16{L_\beta'}},  \frac{1}{16L_R d}, \frac{m}{64 d\sqrt{L_R} L_\beta' r_0}, \frac{m}{64 d\sqrt{L_R} L_\beta' \sqrt{\log \lrp{d\sqrt{L_R} L_\beta'/m}}}},
        \elb{e:t:asokdmasldm:7}
    \end{alignat*}

    \begin{alignat*}{1}
        c_2 := \min\lrbb{\frac{m - L_{Ric}/2}{128{L_\beta'}^2}, \frac{m - L_{Ric}/2}{32 L_R \sigma^2},\frac{m - L_{Ric}/2}{2^{13} L_R L_\beta' d},\frac{(m - L_{Ric}/2)^2}{2^{24} {L_R'}^2 d^3} ,\frac{m - L_{Ric}/2}{2^{17} d^2 L_R^2 }},
        \elb{e:t:asokdmasldm:8}
    \end{alignat*}
    \begin{alignat*}{1}
        c_3 :=& \min\lrbb{ \frac{m - L_{Ric}/2}{32 L_R {L_\beta'}^2 r_0^2}, \sqrt{\frac{m - L_{Ric}/2}{2^{14} \lrp{{L_\beta'}^3 r_0^3} L_R'}}},\\
        c_4 :=& \frac{1}{128} \min\lrbb{\frac{m(m - L_{Ric}/2)}{32 d L_R {L_\beta'}^2 \log\lrp{\nicefrac{ d L_R {L_\beta'}^2 }{m(m - L_{Ric}/2)}}}, \frac{(m - L_{Ric}/2^{1/2}) m^{3/4}}{{L_\beta'}^{1/2} {L_R'}^{1/2} d^{3/4} \log\lrp{\nicefrac{{L_\beta'}^{1/2} {L_R'}^{1/2} d^{3/4}}{((m - L_{Ric}/2)^{1/2} m^{3/4})}}}},
        \elb{e:t:asokdmasldm:10}
    \end{alignat*}
    \begin{alignat*}{1}
        & c_5 := \frac{1}{64}\min\lrbb{\frac{m}{{L_\beta'}^2}, \frac{d}{m}},\\
        & c_6 := \frac{1}{64}\min\lrbb{\frac{m}{{L_\beta'}^2\sqrt{L_R} r_0}, \frac{d}{m\sqrt{L_R} r_0}, \frac{d^2}{m^2 r_0^2}},\\
        & c_7 := \frac{1}{64}\min\lrbb{\sqrt{\frac{m^3}{d{L_\beta'}^4L_R^2\log\lrp{\nicefrac{{d L_\beta'}^4L_R^2}{m^3}}}}, \sqrt{\frac{d}{m L_R \log \lrp{\nicefrac{m L_R}{d}}}}, {\frac{d}{m \log\lrp{\nicefrac{d}{m}}}}}.
        \elb{e:t:asokdmasldm:11}
    \end{alignat*}

    For our proof, we require that $\delta$ satisfies
    \begin{alignat*}{1}
        \delta \leq \min\lrbb{c_1,c_2,c_3,c_4,c_5,c_6,c_7}.
        \elb{e:t:asokdmasldm:5}
    \end{alignat*}
    Thus $\C_1$ from the theorem statement is explicitly $\C_1 = \frac{1}{\min\lrbb{c_1,c_2,c_3,c_4,c_5,c_6}}$. The motivation for this upper bound on $\delta$ is so that $\delta$ satisfies the conditions in Lemma \ref{l:discretization-approximation-lipschitz-derivative}, Lemma \ref{l:sgld-lemma} and Lemma \ref{l:far-tail-bound-truncated-sgld} which are used in Steps 1-3. We provide details in Step 4 below. Note that the upper bound on $\delta$ depends only on $r_0$ but not $r$, so the definition is not circular.

    \textbf{Step 1: Tail Bound:}\\
    We now show that with high probability, the discretization sequence $x_k$ and the exact SDE $y(t)$ never step outside the ball of radius $r$ centered at $x^*$ (this is exactly the event $A_k$ that we defined in the previous step).

    By Lemma \ref{l:far-tail-bound-truncated-sgld}, for any $(\delta,r)$ satisfying $\delta \leq \min\lrbb{\frac{m}{16 {L_\beta'}^2 \lrp{1 + \sqrt{L_R} r}}, \frac{d + \sigma^2}{m \lrp{1 + \sqrt{L_R} r}}, \frac{32 \lrp{d^2 + \sigma^4}}{m^2 r^2}}$, we can bound the probability of $x_k$ stepping outside the ball of radius $r$ centered at $x^*$, for any $k\leq K$, as
    \begin{alignat*}{1}
        \Pr{\max_{k\leq K} \dist\lrp{x_k,x^*} \geq r} \leq 32K\delta m \exp\lrp{\frac{2{L_\beta'}^2\R^2}{d+\sigma^2} + \frac{64 L_R \lrp{d + \sigma^2} }{m} - \frac{m r^2}{256 \lrp{d + \sigma^2} }}.
        \elb{e:t:asokdmasldm:3}
    \end{alignat*}
    On the other hand, by Lemma \ref{l:Phi_is_diffusion}, for $T = K\delta$, there is a family of discrete sequences, $y^i_k$, corresponding to \eqref{e:intro_euler_murayama} with stepsize $\delta^i = T/2^i$, whose linear interpolation $y^i(t)$ converges to $y(t)$ uniformly almost surely. By Lemma \ref{l:far-tail-bound-truncated-sgld} with $\t{\beta} = \beta$, and $\sigma=0$, and stepsize $\delta^i$, and iteration number $K^i := 2^i$ (so that $T = K^i\delta^i$), for $i$ sufficiently large, we can bound $\Pr{\max_{k\leq K^i} \dist\lrp{y^i_k,x^*} \geq r} \leq 32T m \exp\lrp{\frac{2{L_\beta'}^2\R^2}{d} + \frac{64 L_R d }{m} - \frac{m r^2}{256 d}}$. Taking the limit of $i \to \infty$, we can bound
    \begin{alignat*}{1}
        \Pr{\max_{k\leq K} \dist\lrp{y(k\delta),x^*} \geq r} \leq 32T m \exp\lrp{\frac{2{L_\beta'}^2\R^2}{d} + \frac{32 L_R d}{m} - \frac{m r^2}{256 d}}.
        \elb{e:t:asokdmasldm:4}
    \end{alignat*}

    Combining \eqref{e:t:asokdmasldm:3} and \eqref{e:t:asokdmasldm:4}, we can bound
    \begin{alignat*}{1}
        & \E{\ind{A_k^c}} 
        = \Pr{A_k^c} 
        \leq  64K\delta m \exp\lrp{\frac{2{L_\beta'}^2\R^2}{d} + \frac{64 L_R \lrp{d + \sigma^2} }{m} - \frac{m r^2}{256 d}}.
    \end{alignat*}

    Furthermore, we can bound the fourth-moment of the distance between $x_K$ and $x^*$: by Lemma \ref{l:far-tail-bound-l2} with $\xi_k(x_k) = \sqrt{\delta} \lrp{\t{\beta}_k(x_k) - \beta(x_k)} + \zeta_k$ and $\sigma_\xi = 2\lrp{\sigma + \sqrt{d}}$, we can bound
    \begin{alignat*}{1}
        \E{\dist\lrp{x_K,x^*}^4} \leq\frac{2^{26} L_R^2 {L_\beta'}^8\lrp{\sigma^8 + d^4}}{m^{12}}  + \frac{128 {L_\beta'}^2 \R^4}{m^2} + \frac{2^{10} \lrp{d^2 + \sigma^4}}{m^2}.
    \end{alignat*}
    Similarly, we can use Lemma \ref{l:far-tail-bound-l2-brownian} to bound
    \begin{alignat*}{1}
        & \E{\dist\lrp{y(K\delta),x^*}^4} \leq \frac{2^{26} L_R^2 {L_\beta'}^8 d^4}{m^{12}}+ \frac{128 {L_\beta'}^2 \R^4}{m^2} + \frac{512 d^2}{m^2}.
    \end{alignat*}

    Combining the above, we can bound the expected squared distance between $y(K\delta)$ and $x_K$ under the low probability event $A_K^c$. By choosing $r$ to be sufficiently large in \eqref{e:t:asokdmasldm:12}, we make \eqref{e:t:asokdmasldm:4} sufficiently small. We then apply Young's inequality to bound $\dist\lrp{y(K\delta),x_K}^2\leq 2\dist\lrp{y(K\delta),x^*}^2+2\dist\lrp{x_K,x^*}^2$ followed by Cauchy Schwarz, to verify that
    \begin{alignat*}{1}
        \E{\ind{A_K^c} \dist\lrp{y(K\delta),x_K}^2} \leq \sqrt{\delta}.
        \elb{e:t:asokdmasldm:0}
    \end{alignat*}

    \textbf{Step 2: Discrete Contraction}\\
    Having established a bound in the distance under the event $A_K^c$, we now turn our attention to the high-probability event $A_K$. 
    
    Consider some fixed but arbitrary $k\leq K$. We define a useful intermediate variable, representing a single Euler-Murayama step initialized at $y(k\delta)$ (recall that $y(t)$ corresponds to the exact SDE):
    \begin{alignat*}{1}
        \bar{y}_{k+1} := \Exp_{y(k\delta)}\lrp{\delta \beta(y(k\delta)) + \sqrt{\delta} \bar{\zeta}_k}.
    \end{alignat*}
    where $\bar{\zeta}_k \sim \N_{y(k\delta)}(0,I)$. Note that $\bar{y}_{k+1}$ evolves according to the Euler-Murayama step with \emph{exact drift} $\beta(y(k\delta))$, whereas $x_{k+1}$ evolves according to the Euler-Murayama step with \emph{stochastic drift} $\t{\beta}_k(x_k)$. 

    Under the strong-convexity-like condition due to Assumption \ref{ass:distant-dissipativity} with $\R=0$, we can show that a single discrete Euler Murayama step leads to contraction in distance between $\bar{y}_{k+1}$ and $x_{k+1}$. From Lemma \ref{l:sgld-lemma}, there exists a coupling such that
    \begin{alignat*}{1}
        \E{\ind{A_{k}} \dist\lrp{\bar{y}_{k+1},x_{k+1}}^2}\leq \lrp{1  - {\delta (m - L_{Ric}/2)}} \E{\ind{A_k} \dist\lrp{y(k\delta),x_{k}}^2} + 16 \delta^2 \sigma^2.
        \elb{e:t:asokdmasldm:1}
    \end{alignat*}

    \textbf{Step 3: Euler Murayama Error}\\
    Next, we bound the distance between $\bar{y}_{k+1}$ and $y((k+1)\delta)$. This represents the discretization error between a single Euler-Murayama step with stepsize $\delta$, and the exact Langevin diffusion over $\delta$ time.
    
    We will apply Lemma \ref{l:discretization-approximation-lipschitz-derivative} with $L_1 = L_\beta' r$. We verify that under the event $A_k$, $\lrn{\beta(y(k\delta))}$ is indeed bounded by $L_1$. Thus by Lemma \ref{l:discretization-approximation-lipschitz-derivative},
    \begin{alignat*}{1}
        \E{\ind{A_k}\dist\lrp{y((k+1)\delta),\bar{y}_{k+1}}^2} 
        \leq& 2^{20}\lrp{\delta^4 L_1^4 \lrp{1+L_R}+ \delta^4 {L_\beta'}^4 + \delta^{3} \lrp{d^{3}\lrp{L_R + {L_\beta'}^2/{L_1^2}} + {L_\beta'}^2 d}}\\
        \leq& 2^{20}\lrp{\delta^4 {L_\beta'}^4 r^4 \lrp{1+L_R}+ \delta^4 {L_\beta'}^4 + \delta^{3} \lrp{d^{3}\lrp{L_R + 1/{\R^2}} + {L_\beta'}^2 d}}\\
        \leq& \t{O}\lrp{\delta^{3}}
        \elb{e:t:asokdmasldm:2}
    \end{alignat*}
    where $\t{O}$ hides polynomial dependency on $L_\beta', d, L_R, \R, \log K, \log (1/\delta)$. This shows that, in one $\delta$-time step, the distance contracts with rate $m-L_{Ric}/2$, plus a discretization error of order $\delta^2$.
    
    Combining \eqref{e:t:asokdmasldm:1} and \eqref{e:t:asokdmasldm:2} and using Young's inequality inequality, together with the fact that $A_{k+1} \subset A_k$.
    \begin{alignat*}{1}
        \E{\ind{A_{k+1}}{\dist\lrp{y((k+1)\delta),x_{k+1}}}^2} \leq e^{- \delta \lrp{m - L_{Ric}/2}}\E{\ind{A_k}{\dist\lrp{y(k\delta),x_{k}}}^2} + \t{O}\lrp{\delta^2},
    \end{alignat*}
    where $\t{O}$ hides polynomial dependency on $L_\beta', d, L_R, \R, \sigma, \log K, \log (1/\delta)$.

    Applying the above recursively, we can bound
    \begin{alignat*}{1}
        \E{\ind{A_{K}}{\dist\lrp{y(K\delta),x_{K}}}^2} \leq e^{- K \delta \lrp{m - L_{Ric}/2}}\E{{\dist\lrp{y_{0},x_{0}}}^2} + \frac{1}{m - L_{Ric}/2} \cdot \t{O}\lrp{\delta}.
        \elb{e:t:asokdmasldm:6}
    \end{alignat*}
    Combining \eqref{e:t:asokdmasldm:0} with \eqref{e:t:asokdmasldm:6} gives
    \begin{alignat*}{1}
        \E{{\dist\lrp{y(K\delta),x_{K}}}^2} \leq e^{- K\delta \lrp{m-L_{Ric}/2}}\E{{\dist\lrp{y_{0},x_{0}}}^2} + \frac{1}{m - L_{Ric}/2} \cdot \t{O}\lrp{\delta},
    \end{alignat*}
    where $\t{O}$ hides polynomial dependency on $L_\beta', d, L_R, \R, \sigma, \frac{1}{m-L_{Ric}/2}, \log K, \log\frac{1}{\delta}$. This concludes the proof of Theorem \ref{t:SGLD}.

    \textbf{Step 4: Verifying Conditions on $\delta$}\\
    In the proof above, we applied Lemma \ref{l:discretization-approximation-lipschitz-derivative}, Lemma \ref{l:sgld-lemma} and Lemma \ref{l:far-tail-bound-truncated-sgld}. Each of these requires certain bounds on $\delta$.  In this step, we verify that the conditions on $\delta$ for each of these lemmas is satisfied by \eqref{e:t:asokdmasldm:5}.

    \textbf{Lemma \ref{l:discretization-approximation-lipschitz-derivative} with $L_1 = L_\beta' r$ requires}
    \begin{alignat*}{1}
        \delta \leq \frac{1}{16} \min\lrbb{\frac{1}{{L_\beta'}},  \frac{1}{L_R d}, \frac{1}{\sqrt{L_R} L_\beta' r}}.
    \end{alignat*}
    We verify that this follows from \eqref{e:t:asokdmasldm:7}. We specifically verify that $\delta \leq \frac{1}{16\sqrt{L_R} L_\beta' r}$ is satisfied due to the last two terms in \eqref{e:t:asokdmasldm:7}, and Lemma \ref{l:useful_xlogx}.

    \textbf{Lemma \ref{l:sgld-lemma} with $L_1 = r L_\beta'$ requires}
    \begin{alignat*}{1}
        & \delta \\
        \leq& \min\lrbb{\frac{m - L_{Ric}/2}{128{L_\beta'}^2}, \frac{m - L_{Ric}/2}{32 L_R \sigma^2},\frac{m - L_{Ric}/2}{2^{13} L_R L_\beta' d},\frac{(m - L_{Ric}/2)^2}{2^{24} {L_R'}^2 d^3} ,\frac{m - L_{Ric}/2}{2^{17} d^2 L_R^2 }, \frac{m - L_{Ric}/2}{32 L_R L_\beta^2}, \sqrt{\frac{m - L_{Ric}/2}{2^{14} \lrp{L_\beta^3 + \sigma^3} L_R'}} }\\
        =& \min\lrbb{\frac{m - L_{Ric}/2}{128{L_\beta'}^2}, \frac{m - L_{Ric}/2}{32 L_R \sigma^2},\frac{m - L_{Ric}/2}{2^{13} L_R L_\beta' d},\frac{(m - L_{Ric}/2)^2}{2^{24} {L_R'}^2 d^3},\frac{m - L_{Ric}/2}{2^{17} d^2 L_R^2 }, \frac{m - L_{Ric}/2}{32 L_R {L_\beta'}^2 r^2} , \sqrt{\frac{m - L_{Ric}/2}{2^{14} \lrp{{L_\beta'}^3 r^3 + \sigma^3} L_R'}} }
        \elb{e:t:asokdmasldm:9}
    \end{alignat*}
    The first 5 bounds in \eqref{e:t:asokdmasldm:9} follow from \eqref{e:t:asokdmasldm:8}. The last two bounds in \eqref{e:t:asokdmasldm:9} follow from \eqref{e:t:asokdmasldm:10} and Lemma \ref{l:useful_xlogx}.
    
    \textbf{Lemma \ref{l:far-tail-bound-truncated-sgld} requires} requires $\delta \leq \min\lrbb{\frac{m}{16 {L_\beta'}^2 \lrp{1 + \sqrt{L_R} r}}, \frac{d + \sigma^2}{m \lrp{1 + \sqrt{L_R} r}}, \frac{32 \lrp{d^2 + \sigma^4}}{m^2 r^2}}$. This is satisfied by \eqref{e:t:asokdmasldm:11} and Lemma \ref{l:useful_xlogx}.
    
\end{proof}

The following lemma shows that, for any two initial points $x$ and $y$, if $x$ undergoes an \emph{exact} Euler Murayama step with drift $\beta$, and $y$ undergooes a \emph{stochastic} Euler Murayama step with drift $\t{\beta}$, then their expected squared distance contracts, with rate $m - L_{Ric}/2$, plus an additional error of $\delta^2 \sigma^2$, where $\sigma = \lrn{\t{\beta} - \beta}$. This lemma is somewhat analogous to Lemma \ref{l:g_contraction_without_gradient_lipschitz} which shows contraction under the exact SDE, though Lemma \ref{l:sgld-lemma} also requires a fair amount of additional discretization analysis.

The key result used in the proof of Lemma \ref{l:sgld-lemma} is Lemma \ref{l:discrete-approximate-synchronous-coupling-ricci}.

\begin{lemma}\label{l:sgld-lemma}
    Let $M$ satisfy Assumption \ref{ass:ricci_curvature_regularity}, Assumption \ref{ass:sectional_curvature_regularity}. Assume in addition that there exists a constant $L_R'$ such that for all $x\in M$, $u,v,w,z,a\in T_x M$, $\lin{(\nabla_a R)(u,v)w,z} \leq L_R'\lrn{u}\lrn{v}\lrn{w}\lrn{z}\lrn{a}$. Let $\beta$ be a deterministic vector field satisfying Assumption \ref{ass:beta_lipschitz} and Assumption \ref{ass:distant-dissipativity}, with $\R = 0$. Let $x,y\in M$ be arbitrary. Let $\t{\beta}$ be a random vector field such that $\E{\t{\beta}} = \beta$. Assume that there exists $\sigma \in \Re^+$ such that $\lrn{\t{\beta}(y) - \beta(y)} \leq \sigma$. Let $L_\beta :=\max\lrbb{\lrn{\beta(x)}, \lrn{\beta(y)}}$. Let $x' = \Exp_{x}\lrp{\delta \beta(x) + \sqrt{\delta} \zeta}$. Let $y' = \Exp_{y} \lrp{\delta \t{\beta}(y) + \sqrt{\delta}\t{\zeta}}$, where $\zeta \sim \N_x(0,I)$ and $\t{\zeta} \sim \N_y(0,I)$.
    
    Assume that 
    \begin{alignat*}{1}
        \delta \leq \min\lrbb{\frac{m - L_{Ric}/2}{128{L_\beta'}^2}, \frac{m - L_{Ric}/2}{32 L_R L_\beta^2}, \frac{m - L_{Ric}/2}{32 L_R \sigma^2},\frac{m - L_{Ric}/2}{2^{13} L_R L_\beta' d},\frac{(m - L_{Ric}/2)^2}{2^{24} {L_R'}^2 d^3} , \sqrt{\frac{m - L_{Ric}/2}{2^{14} \lrp{L_\beta^3 + \sigma^3} L_R'}},\frac{m - L_{Ric}/2}{2^{17} d^2 L_R^2 } }.
    \end{alignat*}
    Then there is a coupling (synchronous coupling) between $\zeta$ and $\t{\zeta}$ such that
    \begin{alignat*}{1}
        \E{\dist\lrp{x', y'}^2} \leq \lrp{1  - {\delta (m - L_{Ric}/2)}} \dist\lrp{x,y}^2 + 16 \delta^2 \sigma^2
    \end{alignat*}
\end{lemma}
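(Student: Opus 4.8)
The plan is to couple $\zeta$ and $\t{\zeta}$ synchronously, reduce the one-step bound to Lemma~\ref{l:discrete-approximate-synchronous-coupling-ricci}, and treat the stochastic-gradient error as a mean-zero perturbation that enters only quadratically. First I would set up the coupling: write $\zeta = \xi\circ E^x$ for $\xi\sim\N(0,I)$ on $\Re^d$ and $E^x$ an orthonormal basis of $T_xM$; let $\gamma:[0,1]\to M$ be the minimizing geodesic from $x$ to $y$ furnished by Assumption~\ref{ass:distant-dissipativity} (with $\R=0$); set $E^y := \party{}{}\lrp{E^x; x\to y}$ and $\t{\zeta} := \xi\circ E^y = \party{x}{y}\zeta$. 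Then $x' = \Exp_x(u)$ and $y' = \Exp_y(v)$ with $u := \delta\beta(x) + \sqrt{\delta}\,\zeta$ and $v := \delta\t{\beta}(y) + \sqrt{\delta}\,\t{\zeta}$, and I would write $v = \party{x}{y}u + p + w$ with $p := \delta\lrp{\beta(y) - \party{x}{y}\beta(x)}$ and $w := \delta\lrp{\t{\beta}(y) - \beta(y)}$. Here $p$ is determined by $x$ and $y$, with $\lrn{p}\le\delta L_\beta'\,\dist\lrp{x,y}$ by Assumption~\ref{ass:beta_lipschitz} and $2\linp{\party{y}{x}p,\gamma'(0)}\le -2\delta m\,\dist\lrp{x,y}^2$ by Assumption~\ref{ass:distant-dissipativity}, while $w$ satisfies $\E{w\mid\xi}=0$ and $\lrn{w}\le\delta\sigma$ almost surely.

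Next I would apply Lemma~\ref{l:discrete-approximate-synchronous-coupling-ricci} to this synchronously coupled pair, at first discarding $w$, i.e.\ to bound $\E{\dist\lrp{\Exp_x u,\ \Exp_y\lrp{\party{x}{y}u+p}}^2}$. Collecting the Taylor expansion of $\dist\lrp{\Exp_x u,\ \Exp_y\lrp{\party{x}{y}u+p}}^2$ by order in the step size, the leading contributions are: from the drift $p$, the first-order term $2\linp{\party{y}{x}p,\gamma'(0)}\le -2\delta m\,\dist\lrp{x,y}^2$; and from the Gaussian $\sqrt{\delta}\,\zeta$, a vanishing first-order term (by parallelism of the coupled frames) together with a second-order term whose expectation over $\xi$ is $-\delta\,\Ric\lrp{\gamma'(0),\gamma'(0)}\le\delta L_{\Ric}\,\dist\lrp{x,y}^2$, using $\Ric\ge-L_{\Ric}$. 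Together these give the factor $1 - 2\delta\lrp{m - L_{\Ric}/2}$. Everything else is a higher-order remainder: an $O\lrp{\delta^2{L_\beta'}^2}\dist\lrp{x,y}^2$ term from the Lipschitz bound on $p$, $O(\delta^2)$ terms polynomial in $L_R,L_\beta,d$ from curvature-squared distortions of the exponential maps, and an $O(\delta^{3/2})$ term polynomial in $L_R',L_\beta,d$ from the cubic-in-$\zeta$ part of the expansion (which is precisely where the bound on $\nabla R$ is needed). Each of the entries in the hypothesized upper bound on $\delta$ is tailored so that the corresponding remainder is at most a fixed small fraction of $\delta\lrp{m-L_{\Ric}/2}\dist\lrp{x,y}^2$; summing, $\E{\dist\lrp{\Exp_x u,\ \Exp_y\lrp{\party{x}{y}u+p}}^2}\le\lrp{1-\delta\lrp{m-L_{\Ric}/2}}\dist\lrp{x,y}^2$.

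Finally I would put back $w$; the key point is that a triangle-inequality bound would only give an $O(\delta\sigma^2)$ error, so the property $\E{w\mid\xi}=0$ must be exploited. Conditioning on $\xi$, the points $\Exp_x u$ and $\Exp_y\lrp{\party{x}{y}u+p}$ are fixed, and a second-order Taylor expansion of $s\mapsto\dist\lrp{\Exp_x u,\ \Exp_y\lrp{\party{x}{y}u+p+sw}}^2$ gives
\[ \dist\lrp{\Exp_x u,\ \Exp_y\lrp{\party{x}{y}u+p+w}}^2 \;\le\; \dist\lrp{\Exp_x u,\ \Exp_y\lrp{\party{x}{y}u+p}}^2 + 2\linp{w,\,V} + C\lrn{w}^2, \]
where $V$ (the terminal velocity of the connecting minimizing geodesic) and the constant $C$ do not depend on $w$, and $C$ is bounded by an absolute constant for the step sizes in play --- here the $\sigma$-dependent entries of the $\delta$-bound keep the curvature distortions, which involve $\lrn{v}\lesssim\sqrt{\delta}$, under control (via Lemma~\ref{l:discrete-approximate-synchronous-coupling} applied at a single point). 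Taking expectation over $w$ conditionally on $\xi$ annihilates $2\linp{w,V}$ since $\E{w\mid\xi}=0$; then taking the outer expectation, using $\lrn{w}\le\delta\sigma$, checking $C\le16$, and combining with the previous paragraph yields $\E{\dist\lrp{x',y'}^2}\le\lrp{1-\delta\lrp{m-L_{\Ric}/2}}\dist\lrp{x,y}^2 + 16\,\delta^2\sigma^2$, which is the claim.

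The main obstacle is the middle step: extracting the Ricci-rate contraction from Lemma~\ref{l:discrete-approximate-synchronous-coupling-ricci}. Unlike the sectional-curvature estimate of Lemma~\ref{l:discrete-approximate-synchronous-coupling}, here the $O(\delta)$ coefficient must be \emph{exactly} $-\Ric\lrp{\gamma'(0),\gamma'(0)}$ --- the trace that collapses the curvature felt along each of the $d$ orthogonal noise directions into a single Ricci form --- rather than a crude $\pm L_R d$ bound; this forces a third-order Taylor expansion of the squared-distance function along the parallel-transported Gaussian variation, which is why the bound $L_R'$ on $\nabla R$ shows up in the step-size conditions yet never in the contraction rate itself. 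The attendant bookkeeping --- isolating the genuinely $O(\delta^2)$ remainders from the $O(\delta^{3/2})$ ones that carry odd powers of the Gaussian and of $L_\beta,\sigma$, and matching each to its own entry of the $\delta$-bound --- is the delicate part, while the drift-dissipativity estimate and, once one sees that conditioning on $\xi$ is required, the handling of $w$ are routine.
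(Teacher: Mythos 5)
Your proposal is correct and follows essentially the same route as the paper: synchronous coupling of $\zeta$ and $\t{\zeta}$ via parallel-transported frames, the Ricci-level expansion of the squared distance from Lemma \ref{l:discrete-approximate-synchronous-coupling-ricci}, distant dissipativity for the $-2\delta m$ term, the trace identity turning the Gaussian second-order term into $-\delta\,\Ric(\gamma'(0),\gamma'(0))\le \delta L_{\Ric}\dist(x,y)^2$, zero conditional mean of the gradient noise to kill its first-order contribution, and the stepsize hypotheses to absorb every remainder into a fraction of $\delta(m-L_{\Ric}/2)\dist(x,y)^2$ --- the only difference is organizational, in that you peel off $w=\delta(\t{\beta}(y)-\beta(y))$ into a separate conditional Taylor expansion, whereas the paper feeds $v=\delta\t{\beta}(y)+\sqrt{\delta}\t{\zeta}$ directly into Lemma \ref{l:discrete-approximate-synchronous-coupling-ricci} and then isolates the mean-zero pieces ($\xi_1,\xi_2,\xi_3$) inside the resulting expansion. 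One small imprecision: the constant $C$ in your final display is not absolute, since the Hessian of $\dist(z,\cdot)^2$ carries a factor of order $1+\sqrt{L_R}\,\dist(x,y)$, so the resulting cross term of order $\sqrt{L_R}\,\dist(x,y)\,\lrn{w}^2$ must be split by Young's inequality and absorbed into the contraction using $\delta\le (m-L_{\Ric}/2)/(32 L_R\sigma^2)$ (this is exactly the paper's $\tau_9$-type term), after which the $16\delta^2\sigma^2$ bound does follow.
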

\textbf{Note:} elsewhere in this paper, we have used $L_\beta$ do denote a Lipschitz constant for $\beta$; the use of $L_\beta$ in Lemma \ref{l:sgld-lemma} is different (but related).
\begin{proof}
    Let $\gamma(s):[0,1] \to M$ be a minimizing geodesic between $x$ and $y$ with $\gamma(0) = x$ and $\gamma(1) = y$, such that $\linp{\party{y}{x}{\beta(y)-\beta(x)}, \gamma'(0)} \leq -m \dist\lrp{x,y}^2$. (Assumption \ref{ass:distant-dissipativity} guarantees the existence of such a $\gamma$.) 

    \textbf{Step 1: Synchronous Coupling of $\zeta$ and $\t{\zeta}$}\\
    We will now define a coupling between $\zeta$ and $\t{\zeta}$. Let $E$ be an orthonormal basis at $T_{x} M$, and let $F$ be the parallel transport of $E$ along $\gamma$, i.e. $F$ is an orthonormal basis for $T_y M$. Let $\zzeta \sim \N(0,I)$ be a standard Gaussian random variable in $\Re^d$, and define $\zeta := \zzeta \circ E$, and it follows by definition that $\zeta$ so defined has distribution $\N_x(0,I)$. Let $\t{\zeta} := \t{\zzeta} \circ F$, it follows by definition that $\t{\zeta}$ has distribution $\N_y(0,I)$.
    
    \textbf{Step 2: Applying Lemma \ref{l:discrete-approximate-synchronous-coupling-ricci} and Simplifications}\\
    We will apply Lemma \ref{l:discrete-approximate-synchronous-coupling-ricci} with $u = \delta \beta(x) + \sqrt{\delta} \zeta$ and $v = \delta \t{\beta}(y) + \sqrt{\delta} \t{\zeta}$.
    Then
    \begin{alignat*}{1}
        &\dist\lrp{\Exp_{x}(u), \Exp_y(v)}^2 - \dist\lrp{x,y}^2\\
        \leq& \underbrace{2\lin{\gamma'(0), v(0) - u(0)}}_{\circled{1}} + \underbrace{\lrn{v(0) - u(0)}^2}_{\circled{2}}\\
        &\quad \underbrace{-\int_0^1 \lin{R\lrp{\gamma'(s),(1-s) u(s) + s v(s)}(1-s) u(s) + s v(s),\gamma'(s)} ds}_{\circled{3}} \\
        &\quad + \underbrace{\lrp{2\C^2 e^{\C} + 18\C^4 e^{2\C}} \lrn{v(0) - u(0)}^2 + \lrp{18\C^4 e^{2\C} + 4\C'} \dist\lrp{x,y}^2 + 4 \C^2 e^{2\C} \dist\lrp{x,y} \lrn{v(0) - u(0)}}_{\circled{4}}
        \elb{e:t:bound_circled_0}
    \end{alignat*}
    where $\C := \sqrt{L_R} \lrp{\lrn{u} + \lrn{v}}$ and $\C' := L_R' \lrp{\lrn{u} + \lrn{v}}^3$, and $u(t)$ and $v(t)$ are parallel trannsport of $u$ and $v$ along $\gamma$, as defined in Lemma \ref{l:discrete-approximate-synchronous-coupling-ricci}. Some notes on notation: \\
    1. We will use $\party{x}{y}$ and $\party{\gamma(s)}{\gamma(t)}$ to denote parallel transport along $\gamma$.\\
    2. In subsequent parts, for $i=1,2....$, we will use
        \begin{enumerate}
            \item $\tau_i$ to denote terms which depend super-linearly on $\delta$. 
            \item $\xi_i$, to denote terms which have $0$ expectation.
            \item $\theta_i$ to denote terms which depend linearly on $\delta$, and have non-zero expectation (i.e. the important terms).
        \end{enumerate}
    \textbf{Step 2.1: Simplifying $\circled{1}$}\\
    By definition, $v(0) - u(0) = \delta \lrp{\party{y}{x} \t{\beta}(y) - \beta(x)}$, thus
    \begin{alignat*}{1}
        \circled{1} 
        =& \underbrace{2\delta \lin{\party{y}{x} \beta(y) - \beta(x), \gamma'(0)}}_{:=\theta_1} + \underbrace{2\delta \lin{\party{y}{x} \t{\beta}(y) - \party{y}{x} {\beta}(y), \gamma'(0)}}_{:=\xi_1}
        \elb{e:t:bound_circled_1}
    \end{alignat*}
    \textbf{Step 2.2: Simplifying $\circled{2}$}\\
    By similar algebra as Step 2.1,
    \begin{alignat*}{1}
        \circled{2} = \underbrace{\delta^2 \lrn{\party{y}{x}\t{\beta}(y) - \beta(x)}^2}_{:=\tau_1} \leq \delta^2 {L_\beta'}^2 \dist\lrp{x,y}^2 + \delta^2 \sigma^2
        \elb{e:t:bound_circled_2}
    \end{alignat*}

    \textbf{Step 2.3: Simplifying $\circled{3}$}\\
    \begin{alignat*}{1}
        \circled{3} =& -\int_0^1 \lin{R\lrp{\gamma'(s),(1-s) u(s) + s v(s)}(1-s) u(s) + s v(s),\gamma'(s)} ds\\
        =& \underbrace{-\delta \int_0^1 \lin{R\lrp{\gamma'(s), \party{x}{\gamma(s)}\zeta} \party{x}{\gamma(s)}\zeta,\gamma'(s)} ds}_{:=\theta_2}\\
        &\quad \underbrace{-\delta^2 \int_0^1 \lin{R\lrp{\gamma'(s), \party{x}{\gamma(s)}\beta(x)}\party{x}{\gamma(s)}\beta(x) ,\gamma'(s)} ds}_{:=\tau_2}\\
        &\quad \underbrace{-2\delta^{3/2} \int_0^1 \lin{R\lrp{\gamma'(s), \party{x}{\gamma(s)}\zeta} \party{x}{\gamma(s)}\beta(x),\gamma'(s)} ds}_{:=\xi_2}\\
        &\quad \underbrace{-2\delta^2 \int_0^1 s \lin{R\lrp{\gamma'(s),  \party{x}{\gamma(s)}\beta(x)}\party{y}{\gamma(s)}\t{\beta}(y) - \party{x}{\gamma(s)}\beta(x),\gamma'(s)} ds}_{:=\tau_3}\\
        &\quad \underbrace{-2\delta^{3/2} \int_0^1 s \lin{R\lrp{\gamma'(s), \party{x}{\gamma(s)} \zeta }\party{y}{\gamma(s)}\t{\beta}(y) - \party{x}{\gamma(s)}\beta(x),\gamma'(s)} ds}_{:=\xi_3}.
        \elb{e:t:bound_circled_3}
    \end{alignat*}

    We will now bound $\tau_2$ and $\tau_3$. By Assumption \ref{ass:beta_lipschitz}, 
    \begin{alignat*}{1}
        & \lrabs{\tau_2} \leq 2\delta^2 L_R L_\beta^2 \dist\lrp{x,y}^2\\
        & \lrabs{\tau_3} \leq 4\delta^2 L_R \lrp{L_\beta^2 + \sigma^2}  \dist\lrp{x,y}^2
    \end{alignat*}

    \textbf{Step 2.4: Simplifying \circled{4}}\\
    Since $\circled{4}$ has quite a few terms, we will bound them one by one:
    \begin{alignat*}{1}
        \circled{4} 
        =& \underbrace{2\C^2 e^{\C} \lrn{v(0) - u(0)}^2}_{:=\tau_5} + \underbrace{18\C^4 e^{2\C}\lrn{v(0) - u(0)}^2}_{:=\tau_6} + \underbrace{18\C^4 e^{2\C} \dist\lrp{x,y}^2}_{:=\tau_7}\\
        &\quad + \underbrace{4\C' \dist\lrp{x,y}^2}_{:=\tau_8} + \underbrace{4 \C^2 e^{2\C} \dist\lrp{x,y} \lrn{v(0) - u(0)}}_{:=\tau_9}.
    \end{alignat*}
    Recall that $\C := \sqrt{L_R} \lrp{\lrn{u} + \lrn{v}}$ and $\C' := L_R' \lrp{\lrn{u} + \lrn{v}}^3$. Following previous calculations, we can bound $\lrn{v(0) - u(0)} \leq \delta \sigma + \delta {L_\beta'} \dist\lrp{x,y}$. We can also bound $\lrn{u} \leq \delta L_\beta + \sqrt{\delta}\lrn{\zeta}$ and $\lrn{v} \leq \delta L_\beta + \delta \sigma + \sqrt{\delta} \lrn{\zeta}$. Thus
    \begin{alignat*}{1}
        \tau_5 =& 2\C^2 e^\C \lrn{v(0) - u(0)}^2\\
        \leq& 4 L_R \lrp{\lrn{u}^2 + \lrn{v}^2} e^{\sqrt{L_R} \lrp{\lrn{u} + \lrn{v}}} \lrn{v(0) - u(0)}^2\\
        \leq& 16 L_R \lrp{\delta^2 \lrp{2L_\beta^2 + \sigma^2} + \delta \lrn{\zeta}^2 } \cdot \exp\lrp{\sqrt{L_R} \lrp{\delta\lrp{2 L_\beta + \sigma} + 2\sqrt{\delta} \lrn{\zeta}}}\\
        &\quad \cdot \lrp{ \delta L_\beta + \sqrt{\delta}\lrn{\zeta}}^2\\
        \leq& 128 L_R e^{2\sqrt{\delta} \lrn{\zeta}} \cdot\lrp{ \delta^{2}\lrn{\zeta}^4 + \delta^{4} \lrp{L_\beta^4 + \sigma^4}},\\
        {\E{\tau_5^4}}^{1/4}
        \leq& 512 L_R \cdot\lrp{ \delta d + \delta^{2} \lrp{L_\beta^2 + \sigma^2}} \lrp{ \delta^2 \sigma^2 + \delta^2 {L_\beta'}^2 \dist\lrp{x,y}^2},
    \end{alignat*}
    where for the third line, we use the fact that our bound on $\delta$ implies that $\delta \leq \min\lrbb{\frac{1}{32\sqrt{L_R} L_\beta}, \frac{1}{32\sqrt{L_R} \sigma}}$.

    By similar algebra, we verify that
    \begin{alignat*}{2}
        &\tau_6 
        &&= 18\C^4 e^{2\C} \lrn{v(0) - u(0)}^2\\
        & && \leq 2048 L_R^2 e^{4\sqrt{\delta} \lrn{\zeta}} \cdot\lrp{ \delta^{2}\lrn{\zeta}^4 + \delta^{4} \lrp{L_\beta^4 + \sigma^4}}\lrp{ \delta^2 \sigma^2 + \delta^2 {L_\beta'}^2 \dist\lrp{x,y}^2},\\
        &\tau_7 
        &&= 18\C^4 e^{2\C} \dist\lrp{x,y}^2\\
        & && \leq 2048 L_R^2 e^{4\sqrt{\delta} \lrn{\zeta}} \cdot\lrp{ \delta^{2}\lrn{\zeta}^4 + \delta^{4} \lrp{L_\beta^4 + \sigma^4}} \cdot \dist\lrp{x,y}^2,\\
        &\tau_8 
        &&= 4\C' \dist\lrp{x,y}^2\\
        & && \leq 128 L_R' e^{4\sqrt{\delta} \lrn{\zeta}} \cdot\lrp{ \delta^{3/2}\lrn{\zeta}^3 + \delta^3 \lrp{L_\beta^3 + \sigma^3}} \cdot \dist\lrp{x,y}^2,\\
        &\tau_9 
        &&=  4 \C^2 e^{2\C} \dist\lrp{x,y} \lrn{v(0) - u(0)}\\
        & && \leq 128 L_R e^{4\sqrt{\delta} \lrn{\zeta}} \cdot\lrp{ \delta \lrn{\zeta}^2 + \delta^2 \lrp{L_\beta^2 + \sigma^2}} \cdot \lrp{\delta {L_\beta'} \dist\lrp{x,y}^2 + \delta \sigma \dist\lrp{x,y}}, \\
        & {\E{\tau_6^4}}^{1/4} 
        &&\leq 2^{14} L_R^2 \cdot\lrp{ \delta^2d^2 + \delta^{4} \lrp{L_\beta^4 + \sigma^4}}\lrp{ \delta^2 \sigma^2 + \delta^2 {L_\beta'}^2 \dist\lrp{x,y}^2},\\
        & {\E{\tau_7^4}}^{1/4} 
        &&\leq 2^{14} L_R^2 \cdot \lrp{ \delta^2 d^2 + \delta^{4} \lrp{L_\beta^4 + \sigma^4}} \cdot \dist\lrp{x,y}^2,\\
        & {\E{\tau_8^4}}^{1/4} 
        &&\leq 512 L_R' \cdot\lrp{ \delta^{3/2}d^{3/2} + \delta^{3} \lrp{L_\beta^3 + \sigma^3}} \cdot \dist\lrp{x,y}^2,\\
        & {\E{\tau_9^4}}^{1/4} 
        &&\leq 512 L_R \cdot\lrp{ \delta d + \delta^{2} \lrp{L_\beta^2 + \sigma^2}} \cdot \lrp{\delta {L_\beta'} \dist\lrp{x,y}^2 + \delta \sigma \dist\lrp{x,y}},
        \elb{e:t:bound_t8_t9_t10_t11}
    \end{alignat*}
    where we use Lemma \ref{l:subexponential-chi-square} and the fact that our assumption on $\delta$ implies that $\delta \leq \frac{1}{2^{14} L_R d}$.

    \textbf{Step 3: Putting Things Together}\\
    Let $\E{\cdot}$ denote expectation wrt the $\zeta$ and $\t{\beta}$ (which is a random function).  By Assumption \ref{ass:distant-dissipativity},
    \begin{alignat*}{1}
        \E{\theta_1} \leq - 2 \delta m \dist\lrp{x,y}^2.
    \end{alignat*}
    By definition of Ricci curvature and by Assumption \ref{ass:ricci_curvature_regularity}, 
    \begin{alignat*}{1}
        \E{\theta_2} \leq \delta L_{Ric}.
    \end{alignat*}

    Using our assumed bound on $\delta$, we verify that
    \begin{alignat*}{1}
        \E{\tau_1 + \tau_2 + \tau_3 + \tau_5 + \tau_6 + \tau_7 + \tau_8 + \tau_9} \leq \frac{\delta (m - L_{Ric}/2)}{2} \dist\lrp{x,y}^2 + 16 \delta^2 \sigma^2.
    \end{alignat*}
    We omit the proof for this fact, since it follows by basic but long algebra, but note that we need to apply Young's inequality at several points. We also verify that
    \begin{alignat*}{1}
        \E{\xi_1} = \E{\xi_2} = \E{\xi_3} = 0.
    \end{alignat*}

    Plugging everything into \eqref{e:t:bound_circled_0}, we get
    \begin{alignat*}{1}
        \E{\dist\lrp{x', y'}^2} \leq \lrp{1  - \delta \lrp{m - L_{Ric}/2}} \dist\lrp{x,y}^2 + 16 \delta^2 \sigma^2.
    \end{alignat*}

\end{proof}

\section{Distance Contraction under Kendall Cranston Coupling}
\label{sec:distance-contraction}
In this section, we prove Lemma~\ref{l:g_contraction_without_gradient_lipschitz}, which is the main tool for proving mixing of manifold diffusion processes under the distant dissipativity assumption. We note again that the proof is entirely based on existing results from \citep{eberle2016reflection,hsu2002stochastic}, and is only included for completeness.

\subsection{The Kendall Cranston Coupling}
\label{ss:The Kendall Cranston Coupling}

\begin{lemma}\label{l:kendall-cranston}
    Let $T\in \Re^+$ be some fixed time. Assume that there is are constants $L_\beta, L_\beta'$ such that for all $x,y\in M$, $\lrn{\beta(x)} \leq L_\beta$ and $\lrn{\beta(x) - \party{y}{x} \beta(y)} \leq L_\beta' \lrn{x-y}$. Let $i$ be some integer satisfying 
    $i \geq \max\lrbb{\log_2 \lrp{32T \sqrt{L_R} L_\beta}, \log_2 \lrp{32Td}, \log_2\lrp{32L_\beta T}}$.

    For any $x,y$, let $\Lambda(x,y)$ denote the set of minimizing geodesics from $x$ to $y$, i.e. for any $\gamma \in \Lambda(x,y)$, $\gamma(0) = x$, $\gamma(1) = y$, $\forall t, \nabla_{\gamma'(t)}\gamma'(t) = 0$ and $\dist\lrp{x,y} = \lrn{\gamma'(0)}$. Let $\kappa(r):= \frac{1}{r^2} \sup_{\dist\lrp{x,y} =  r} \inf_{\gamma \in \Lambda(x,y)} \lin{\party{y}{x} \beta(y) - \beta(x), \gamma'(0)}$. 
    
    Let $x,y\in M$ and let $E^x$ be an arbitrary orthonormal basis of $T_x M$ and let $E^y$ be an arbitrary orthonormal basis of $T_y$.  let $x^i(t):= \overline{\Phi}(t;x,E^x,\beta,\BB^x,i)$ and $y^i(t) := \overline{\Phi}(t;y,E^y,\beta,\BB^y,i)$ where $\BB^x$ and $\BB^y$ are standard Brownian motion in $\Re^d$, and where $\overline{\Phi}$ is as defined in \eqref{d:x^i(t)}. 
    
    For any $\epsilon$, there exists a coupling between $\BB^x$ and $\BB^y$, and Brownian motion $\WW^i$ over $\Re$, such that for all $k\in \lrbb{0...2^i}$,
    \begin{alignat*}{1}
        \dist\lrp{x^i_{k+1}, y^i_{k+1}}^2 
        \leq& \lrp{1+\delta^i \lrp{2\kappa\lrp{\dist\lrp{x^i_{k}, y^i_{k}}} + L_{Ric}}} \dist\lrp{x^i_{k}, y^i_{k}}^2 \\
        &\quad + \ind{\dist\lrp{x^i_{k}, y^i_{k}} > \epsilon}\lrp{4 \delta^i- 4 \dist\lrp{x^i_{k}, y^i_{k}}  \lrp{\WW^i\lrp{(k+1)\delta^i} - \WW^i\lrp{k\delta^{i}} }}\\
        &\quad + \tau^i_k
    \end{alignat*}
    where $\tau^i_k$ satisfies
    \begin{alignat*}{1}
        & \Ep{\F_k}{\lrabs{\tau^i_k}} \leq \C_1 {\delta^i}^{3/2} \lrp{1 + L_\beta^4}\lrp{1+ \dist\lrp{x^i_k,y^i_k}^2}\\
        & \Ep{\F_k}{{{\tau}^i_k}^2 }\leq \C_1 {\lrp{1+\dist\lrp{x^i_k,y^i_k}^4} {\delta^i}^{2}}
    \end{alignat*}
    where $\C_1$ is a constant depending on $L_R, L_R', d, T$.
\end{lemma}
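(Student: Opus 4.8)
The plan is to realise the Kendall--Cranston (reflection) coupling at the level of the Gaussian increments of the two discretizations $x^i$ and $y^i$, and then to read off the claimed one-step inequality from the second-order expansion of $\dist(\Exp_x(u),\Exp_y(v))^2$ provided by Lemma~\ref{l:discrete-approximate-synchronous-coupling-ricci}, after decomposing the noise into its radial and orthogonal parts relative to the connecting geodesic. First, for each $k$ I would write $r_k:=\dist(x^i_k,y^i_k)$ and, conditionally on $\F_k$ and in the event $r_k>\epsilon$, pick by a measurable selection a minimizing geodesic $\gamma_k\in\Lambda(x^i_k,y^i_k)$ with $\lin{\party{y^i_k}{x^i_k}\beta(y^i_k)-\beta(x^i_k),\gamma_k'(0)}\le\kappa(r_k)r_k^2+\delta^i$ (the slack $\delta^i$ covers the case where the infimum defining $\kappa$ is not attained, and is absorbed into $\tau^i_k$). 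Take $E^i_k$ to be an orthonormal basis of $T_{x^i_k}M$ whose first vector is $\gamma_k'(0)/r_k$, and let $F^i_k$ be the parallel transport of $E^i_k$ along $\gamma_k$ with the sign of its first vector reversed. Couple $\BB^x$ and $\BB^y$ so that their $\Re^d$-increments over $[k\delta^i,(k+1)\delta^i]$, read in $E^i_k$ and in $F^i_k$ respectively, coincide; then the tangent step $v_k$ used at $y^i_k$ is the reflection, across the hyperplane orthogonal to $\gamma_k'(1)$, of the parallel transport of the Gaussian part of the step $u_k$ at $x^i_k$. I would define $\WW^i$ by $\WW^i((k+1)\delta^i)-\WW^i(k\delta^i):=\sqrt{\delta^i}\lin{\zeta^i_k,\gamma_k'(0)/r_k}$, which is a standard one-dimensional Brownian motion adapted to the same filtration as $\BB^x,\BB^y$. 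In the event $r_k\le\epsilon$ I would use synchronous coupling instead (no sign reversal), which contributes nothing to $\WW^i$; this is the source of the indicator.

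Next, with $u_k=\delta^i\beta(x^i_k)+\sqrt{\delta^i}\zeta^i_k$ and $v_k$ the coupled step, Lemma~\ref{l:discrete-approximate-synchronous-coupling-ricci} applied along $\gamma_k$ gives $\dist(x^i_{k+1},y^i_{k+1})^2-r_k^2\le 2\lin{\gamma_k'(0),v_k(0)-u_k(0)}+\lrn{v_k(0)-u_k(0)}^2-\int_0^1\lin{R(\gamma_k'(s),w_k(s))w_k(s),\gamma_k'(s)}\,ds+\rho^i_k$, where $w_k(s):=(1-s)u_k(s)+s\,v_k(s)$ (the $u_k(s),v_k(s)$ being parallel transports along $\gamma_k$) and $\rho^i_k$ collects the explicitly-bounded remainder terms of that lemma. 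Writing $a_k:=\lin{\zeta^i_k,\gamma_k'(0)/r_k}\sim\N(0,1)$, the reflection yields $v_k(0)-u_k(0)=\delta^i\bigl(\party{y^i_k}{x^i_k}\beta(y^i_k)-\beta(x^i_k)\bigr)-2\sqrt{\delta^i}\,a_k\,\gamma_k'(0)/r_k$, so the first-order term equals $2\delta^i\lin{\gamma_k'(0),\party{y^i_k}{x^i_k}\beta(y^i_k)-\beta(x^i_k)}-4r_k\bigl(\WW^i((k+1)\delta^i)-\WW^i(k\delta^i)\bigr)$, the drift part being $\le\bigl(2\kappa(r_k)r_k^2+2\delta^i\bigr)\delta^i$ by the choice of $\gamma_k$; and $\lrn{v_k(0)-u_k(0)}^2=4\delta^i a_k^2+O({\delta^i}^{3/2})$ has conditional mean $4\delta^i+O({\delta^i}^{3/2})$. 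Everything other than the surviving pieces $\bigl(2\kappa(r_k)+L_{Ric}\bigr)r_k^2\delta^i$, $4\delta^i$ and $-4r_k(\WW^i\text{-increment})$ --- that is, $2{\delta^i}^2$, $4\delta^i(a_k^2-1)$, $\rho^i_k$ and the $O({\delta^i}^{3/2})$ corrections --- I would collect into $\tau^i_k$.

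The crux is the curvature integral, which must produce exactly the coefficient $L_{Ric}$. Decomposing $\zeta^i_k=a_k\gamma_k'(0)/r_k+\zeta^\perp_k$, the noise component of $w_k(s)$ is $\sqrt{\delta^i}$ times a radial vector proportional to $a_k$ plus the common parallel transport of $\zeta^\perp_k$; since $R(\gamma_k'(s),\cdot)$ annihilates vectors parallel to $\gamma_k'(s)$, the radial (reflected) part drops out of the curvature integrand, and $\Ep{\F_k}{\lin{R(\gamma_k'(s),\party{\gamma_k(0)}{\gamma_k(s)}\zeta^\perp_k)\party{\gamma_k(0)}{\gamma_k(s)}\zeta^\perp_k,\gamma_k'(s)}}=\Ric(\gamma_k'(s),\gamma_k'(s))$. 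Hence the conditional mean of the curvature term is $-\delta^i\int_0^1\Ric(\gamma_k'(s),\gamma_k'(s))\,ds$ up to $O({\delta^i}^{3/2})$ coming from the cross terms with the $\beta$-drift --- this is the discrete analogue of the second-variation / Bochner identity behind the Laplacian comparison theorem --- and by Assumption~\ref{ass:ricci_curvature_regularity} together with $\lrn{\gamma_k'(s)}=r_k$ it is at most $\delta^i L_{Ric}r_k^2+O({\delta^i}^{3/2})$, the excess going once more into $\tau^i_k$. Assembling this with the previous step gives the stated recursion; the same computation under synchronous coupling (the case $r_k\le\epsilon$) produces $(1+\delta^i(2\kappa(r_k)+L_{Ric}))r_k^2+\tau^i_k$ with neither the $4\delta^i$ nor the $\WW^i$ term.

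Finally, I would verify the two moment bounds on $\tau^i_k$. Each term entering $\tau^i_k$ is a product of a power of $\delta^i$ (at least $3/2$ for the first-moment bound, at least $2$ after a Young step for the second-moment bound), a polynomial in $r_k,L_\beta,L_R,L_R',d$, and a factor of the form $\lrn{\zeta^i_k}^p\exp(c\sqrt{L_R}\lrn{\zeta^i_k})$; Gaussian moment estimates, Lemma~\ref{l:subexponential-chi-square}, and the assumed lower bound on $i$ --- which forces $\delta^i$ small enough that $\Ep{\F_k}{\exp(c\sqrt{L_R}\lrn{\zeta^i_k})}\le2$ and $\delta^iL_\beta\le1/32$, among others --- control all of them, yielding $\Ep{\F_k}{\lrabs{\tau^i_k}}\le\C_1{\delta^i}^{3/2}(1+L_\beta^4)(1+r_k^2)$ and $\Ep{\F_k}{(\tau^i_k)^2}\le\C_1{\delta^i}^2(1+r_k^4)$ with $\C_1$ depending only on $L_R,L_R',d,T$. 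The main obstacle I anticipate is the curvature step: the discrete second-variation computation must be carried out precisely enough to pin down the coefficient $1$ in front of $L_{Ric}$ (rather than $d-1$ or $1/2$), which is what makes the downstream contraction in Lemma~\ref{l:g_contraction_without_gradient_lipschitz} come out with the right rate; the remainder is lengthy but essentially routine error accounting.
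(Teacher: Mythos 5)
Your proposal is correct and follows essentially the same route as the paper: a reflection (Kendall--Cranston) coupling of the Gaussian increments across the connecting minimizing geodesic, switched off below the threshold $\epsilon$, followed by an application of Lemma~\ref{l:discrete-approximate-synchronous-coupling-ricci}, with the reflected (radial) part of the noise annihilated in the curvature integrand by the symmetry $\lin{R(u,u)v,u}=0$ so that the conditional expectation of the quadratic noise term produces exactly $\delta^i\,\Ric(\gamma',\gamma')$, and all higher-order remainders collected into $\tau^i_k$ with the stated moment bounds. The only cosmetic difference is that the paper implements the reflection via the orthogonal matrix $\MM^i_k(I-2\nnu^i_k{\nnu^i_k}^T)$ acting on $d\BB^x$ rather than by flipping a basis vector, and defines $\WW^i$ from $\bar\nnu^i_k$ (without the indicator) so that it is a genuine Brownian motion on every step.
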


\begin{proof}
    We set up some notation: throughout this proof, consider a fixed $i$. Recall that $\delta^i:= T/2^i$, and assume $i$ is large enough such that $\delta^i \leq \frac{1}{32\sqrt{L_R}L_\beta}$. Let $x^i_k$ be as defined in \eqref{d:x^i_k} so that $x^i_k = x^i(k\delta^i)$. Let us also define $K := 2^i$, so that $T = K\delta^i$.

    \textbf{Step 1: defining the coupling}
    By definition, for any $k\in \lrbb{0...K}$,
    \begin{alignat*}{1}
        & x^{i}_{k+1} := \Exp_{x^{i}_{k}}\lrp{\delta^i \beta\lrp{x^i_k} + {\lrp{\BB^x\lrp{(k+1)\delta^i} - \BB^x\lrp{k\delta^{i}}}} \circ E^{i}_{k}}\\
        & y^{i}_{k+1} := \Exp_{y^{i}_{k}}\lrp{\delta^i \beta\lrp{y^i_k} + {\lrp{\BB^y\lrp{(k+1)\delta^i} - \BB^y\lrp{k\delta^{i}}}} \circ \t{E}^{i}_{k}}
    \end{alignat*}

    Let $\gamma^i_k:[0,1] \to M$ denote a minimizing geodesic from $x^i_k$ to $y^i_k$.

    Let $F^i_k$ be an orthonormal basis at $T_{y^i_k} M$, obtained from the parallel transport of $E^i_k$ along $\gamma^i_k$, i.e. for all $j=1...d$,
    \begin{alignat*}{1}
        F^{i,j}_k = \party{\gamma^i_k}{} E^{i,j}_k
    \end{alignat*}
    Let us define $\MM^i_k \in \Re^{d\times d}$ as matrix whose $a,b$ entry is 
    \begin{alignat*}{1}
        \lrb{\MM^i_k}_{a,b} = \lin{F^{i,a}_k, \t{E}^{i,b}_k}
    \end{alignat*}
    one can verify that $\MM^i_k$ is an orthogonal matrix, and that for all $\vv\in \Re^d$, $\vv \circ F^i_k = \MM \vv \circ \t{E}^i_k$. 

    Let us define $\bar{\nnu}^i_k$ denote the unique coordinates of $\frac{{\gamma^i_k}'(1)}{\lrn{{\gamma^i_k}'(1)}}$ wrt $F^i_k$ (equivalently the coordinates of $\frac{{\gamma^i_k}'(0)}{\lrn{{\gamma^i_k}'(0)}}$ wrt $E^i_k$). We define $\nnu^i_k := \ind{\dist\lrp{x^i_{k}, y^i_{k}} > \epsilon} \bar{\nnu}^i_k$.
    
    We now define a coupling between $\BB^x(t)$ and $\BB^y(t)$ as follows:
    \begin{alignat*}{1}
        \BB^y(t) := \int_{0}^{T} \ind{t\in[k\delta^i,(k+1)\delta^i)}\MM^i_k  \lrp{I - 2 \nnu^i_k {\nnu^i_k}^T}  d \BB^x(t)
    \end{alignat*}
    For this to be a valid coupling, it suffices to verify that \\
    $\int_{0}^{T} \ind{t\in[k\delta^i,(k+1)\delta^i)} \MM^i_k \lrp{I - 2 \nnu^i_k {\nnu^i_k}^T} d \BB^x(t)$ is indeed a standard Brownian motion. This can be done by verifying that the definition satisfies Levy's characterization of Brownian motion. We omit the proof, but highlight two important facts: 1. $\int_{0}^{T} \ind{t\in[k\delta^i,(k+1)\delta^i)} \MM^i_k\lrp{I - 2 \nnu^i_k {\nnu^i_k}^T} $ is adapted to the natural filtration of $\BB^x(t)$, and 2. $\MM^i_k\lrp{I - 2 \nnu^i_k {\nnu^i_k}^T} $ is an orthogonal matrix. We have thus defined a coupling between $\BB^x$ and $\BB^y$, and consequently, a coupling between $x^i(t)$ and $y^i(t)$ for all $t$.

    \textbf{Step 2: Applying Lemma \ref{l:discrete-approximate-synchronous-coupling-ricci}}

    Having defined a coupling between $x^i_k$ and $y^i_k$, we bound $\E{\dist\lrp{x^i_K, y^i_K}^2}$ for $K := T/\delta^i = 2^i$ by applying Lemma \ref{l:discrete-approximate-synchronous-coupling-ricci} , with $x=x^i_k$, $y=y^i_k$, $u= \delta^i \beta\lrp{x^i_k} + {\lrp{\BB\lrp{(k+1)\delta^i} - \BB\lrp{k\delta^{i}}}} \circ E^{i}_{k}$, $v = \delta^i \beta\lrp{y^i_k} + {\lrp{\t{\BB}\lrp{(k+1)\delta^i} - \t{\BB}\lrp{k\delta^{i}}}} \circ \t{E}^{i}_{k}$ and $\gamma := \gamma^i_k$.

    Following the notation in Lemma \ref{l:discrete-approximate-synchronous-coupling-ricci}, let $u(t)$ and $v(t)$ be the parallel transport of $u$ and $v$ along $\gamma(t)$. We verify that $u(s) = \delta^i \party{x^i_k}{\gamma^i_k(s)} \beta(x^i_k) + \lrp{\BB\lrp{(k+1)\delta^i} - \BB\lrp{k\delta^{i}}} \circ \party{x^i_k}{\gamma^i_k(s)} E^i_k$ and that
    \begin{alignat*}{1}
        v(s) 
        =& \delta^i \party{y^i_k}{\gamma^i_k(s)} \beta(y^i_k) +  \MM^i_k\lrp{I - 2 \nnu^i_k {\nnu^i_k}^T}\lrp{\BB\lrp{(k+1)\delta^i} - \BB\lrp{k\delta^{i}}} \circ \party{x^i_k}{\gamma^i_k(s)} \t{E}^i_k\\
        =& \delta^i \party{y^i_k}{\gamma^i_k(s)} \beta(y^i_k) + \lrp{I - 2 \nnu^i_k {\nnu^i_k}^T } \lrp{\BB\lrp{(k+1)\delta^i} - \BB\lrp{k\delta^{i}}} \circ \party{y^i_k}{\gamma^i_k(s)} F^i_k\\
        =& \delta^i \party{y^i_k}{\gamma^i_k(s)} \beta(y^i_k) + \lrp{I - 2 \nnu^i_k {\nnu^i_k}^T }\lrp{\BB\lrp{(k+1)\delta^i} - \BB\lrp{k\delta^{i}}} \circ \party{x^i_k}{\gamma^i_k(s)} E^i_k\\
        =& \delta^i \party{y^i_k}{\gamma^i_k(s)} \beta(y^i_k) + \lrp{\BB\lrp{(k+1)\delta^i} - \BB\lrp{k\delta^{i}}} \circ \party{x^i_k}{\gamma^i_k(s)} E^i_k\\
        &\quad  - 2 \lin{\nnu^i_k, \BB\lrp{(k+1)\delta^i} - \BB\lrp{k\delta^{i}}} \frac{{\gamma^i_k}'(s)}{\lrn{{\gamma^i_k}'(s)}}
    \end{alignat*}
    where the second equality is by definition of $\MM^i_k$, the third equality is by definition of $F^i_k$, the fourth equality is by definition of $\nnu^i_k$ and the fact that $\gamma^i_k$ is a geodesic. It is convenient subsequently to note the following:
    \begin{alignat*}{1}
        & v(s) - u(s)\\
        =& \delta^i \lrp{\party{y^i_k}{\gamma^i_k(s)} \beta(y^i_k) - \party{x^i_k}{\gamma^i_k(s)} \beta(x^i_k)} - 2 \lin{\nnu^i_k, \BB\lrp{(k+1)\delta^i} - \BB\lrp{k\delta^{i}}} \frac{{\gamma^i_k}'(s)}{\lrn{{\gamma^i_k}'(s)}}
    \end{alignat*}
    and
    \begin{alignat*}{1}
        & (1-s) u(s) + s v(s) \\
        =& (1-s) \delta^i \party{x^i_k}{\gamma^i_k(s)} \beta(x^i_k) + s \delta^i \party{y^i_k}{\gamma^i_k(s)} \beta(y^i_k)\\
        &\quad + \lrp{\BB\lrp{(k+1)\delta^i} - \BB\lrp{k\delta^{i}}} \circ \party{x^i_k}{\gamma^i_k(s)} E^i_k \\
        &\quad - 2 s \lin{\nnu^i_k, \BB\lrp{(k+1)\delta^i} - \BB\lrp{k\delta^{i}}} \frac{{\gamma^i_k}'(s)}{\lrn{{\gamma^i_k}'(s)}}
    \end{alignat*}

    \textbf{Step 3: Reorganizing Lemma \ref{l:discrete-approximate-synchronous-coupling-ricci}}\\
    With $u,v$ as defined above, Lemma \ref{l:discrete-approximate-synchronous-coupling-ricci} implies that
    \begin{alignat*}{1}
        &\dist\lrp{x^i_{k+1}, y^i_{k+1}}^2 - \dist\lrp{x^i_{k}, y^i_{k}}^2\\
        \leq& 2\lin{{\gamma^i_k}'(0), v(0) - u(0)} + \lrn{v(0) - u(0)}^2 \\
        &\quad -\int_0^1 \lin{R\lrp{{\gamma^i_k}'(s),(1-s) u(s) + s v(s)}(1-s) u(s) + s v(s),{\gamma^i_k}'(s)} ds \\
        &\quad + \lrp{2\C^2 e^{\C} + 18\C^4 e^{2\C}} \lrn{v(0) - u(0)}^2 + \lrp{18\C^4 e^{2\C} + 4\C'} \dist\lrp{x^i_{k}, y^i_{k}}^2\\
        &\quad + 4 \C^2 e^{2\C} \dist\lrp{x^i_{k}, y^i_{k}} \lrn{v(0) - u(0)}
        \elb{e:l:kendall-cranston:step1.1}
    \end{alignat*}
    where $\C := \sqrt{L_R} \lrp{\lrn{u} + \lrn{v}}$ and $\C' := L_R' \lrp{\lrn{u} + \lrn{v}}^3$.

    Below, we bound each of the terms above
    \begin{alignat*}{2}
        & 2\lin{{\gamma^i_k}'(0), v(0) - u(0)} 
        &&= 2 \delta^i \lin{{\gamma^i_k}'(0), \party{y^i_k}{x^i_k} \beta(y^i_k) - \beta(x^i_k)} - 4 \lrn{{\gamma^i_k}'(0)}\lin{\nnu^i_k, \BB\lrp{(k+1)\delta^i} - \BB\lrp{k\delta^{i}}}\\
        & \lrn{v(0) - u(0)}^2 &&\leq 4 \lin{\nnu^i_k, \BB\lrp{(k+1)\delta^i} - \BB\lrp{k\delta^{i}}}^2\\
        & &&\quad + \underbrace{{\delta^i}^2 L_\beta^2 + 4 \delta^i L_\beta \lrabs{\lin{\nnu^i_k, \BB\lrp{(k+1)\delta^i} - \BB\lrp{k\delta^{i}}}}}_{\tau^i_{k,1}}
    \end{alignat*}
    \begin{alignat*}{1}
        & - \int_0^1 \lin{R\lrp{{\gamma^i_k}'(s),(1-s) u(s) + s v(s)}(1-s) u(s) + s v(s),{\gamma^i_k}'(s)} ds \\
        \leq& -\int_0^1 \lin{R\lrp{{\gamma^i_k}'(s),\lrp{\BB\lrp{(k+1)\delta^i} - \BB\lrp{k\delta^{i}}}\circ \party{x^i_k}{\gamma^i_k(s)} E^i_k }\lrp{\BB\lrp{(k+1)\delta^i} - \BB\lrp{k\delta^{i}}}\circ \party{x^i_k}{\gamma^i_k(s)} E^i_k  , {\gamma^i_k}'(s)} ds\\
        &\quad + \underbrace{{\delta^i}^2 L_R \dist\lrp{x^i_k,y^i_k}^2 L_\beta^2 + 4 \delta^i L_R \dist\lrp{x^i_k,y^i_k}^2 L_\beta \lrn{\BB\lrp{(k+1)\delta^i} - \BB\lrp{k\delta^{i}}}_2}_{\tau^i_{k,2}}
    \end{alignat*}
    In the first equality above, we crucially use the fact that $\lin{\nnu^i_k, \BB\lrp{(k+1)\delta^i} - \BB\lrp{k\delta^{i}}} \frac{{\gamma^i_k}'(s)}{\lrn{{\gamma^i_k}'(s)}}$ is a scalar multiple of $\gamma_k'(s)$, and the fact that $\lin{R(u,u)v,u} = \lin{R(u,v)u,u} = 0$ for all $u,v$ by symmetry of the Riemannian curvature tensor.

    Finally, we will take the remaining terms, and denote them by
    \begin{alignat*}{1}
        \tau^i_{k,3} :=& \lrp{2\C^2 e^{\C} + 18\C^4 e^{2\C}} \lrn{v(0) - u(0)}^2 \\
        &\quad + \lrp{18\C^4 e^{2\C} + 4\C'} \dist\lrp{x^i_{k}, y^i_{k}}^2 + 4 \C^2 e^{2\C} \dist\lrp{x^i_{k}, y^i_{k}} \lrn{v(0) - u(0)}
    \end{alignat*}

    We claim that under our assumption on $i$,
    \begin{alignat*}{1}
        \Ep{\F_k}{\lrabs{\tau^i_{k,1} + \tau^i_{k,2} + \tau^i_{k,3}}} = O\lrp{{\delta^i}^{3/2} \lrp{1 + L_\beta^4}\lrp{1+ \dist\lrp{x^i_k,y^i_k}^2}}
    \end{alignat*}
    where $O()$ hides dependencies on $L_R, L_R', d, T$.

    We omit the proof for the above claim, which involves some tedious but straightforward algebra, but we note that the proof uses $\E{\lrn{\BB\lrp{(k+1)\delta^i} - \BB\lrp{k\delta^{i}}}_2^j} = O\lrp{{\delta^i}^{j/2}}$ (for all integer $j$) and that $\E{\exp\lrp{a\lrn{\BB\lrp{(k+1)\delta^i} - \BB\lrp{k\delta^{i}}}_2}} \leq 4\exp\lrp{2a^2 \delta^i d} \leq 8$ for $\delta^i a^2 \leq 1/32$ (see Lemma \ref{l:subexponential-chi-square}). It is also important to use our assumption on $\delta^i$ in the lemma statement.


    We simplify \eqref{e:l:kendall-cranston:step1.1} to
    \begin{alignat*}{1}
        & \dist\lrp{x^i_{k+1}, y^i_{k+1}}^2 - \dist\lrp{x^i_{k}, y^i_{k}}^2\\
        \leq& 2 \delta^i \kappa\lrp{\dist\lrp{x^i_{k}, y^i_{k}}}\dist\lrp{x^i_{k}, y^i_{k}}^2 - 4 \dist\lrp{x^i_{k}, y^i_{k}}\lin{\nnu^i_k, \BB\lrp{(k+1)\delta^i} - \BB\lrp{k\delta^{i}}}\\
        &\quad + 4 \lin{\nnu^i_k, \BB\lrp{(k+1)\delta^i} - \BB\lrp{k\delta^{i}}}^2\\
        &\quad - \int_0^1 \lin{R\lrp{{\gamma^i_k}'(s),\lrp{\BB\lrp{(k+1)\delta^i} - \BB\lrp{k\delta^{i}}}\circ \party{x^i_k}{\gamma^i_k(s)} E^i_k }\lrp{\BB\lrp{(k+1)\delta^i} - \BB\lrp{k\delta^{i}}}\circ \party{x^i_k}{\gamma^i_k(s)} E^i_k  , {\gamma^i_k}'(s)} ds\\
        &\quad + \tau^i_{k,1} + \tau^i_{k,2} + \tau^i_{k,3}
        \elb{e:l:kendall-cranston:step1.2}
    \end{alignat*}

    \textbf{Step 4: Pulling out the expectation}\\
    We will further simplify \eqref{e:l:kendall-cranston:step1.2} by replacing a few terms by their expectations. Define
    \begin{alignat*}{1}
        & {\tau}^i_{k,4}:= \int_0^1 \lin{R\lrp{{\gamma^i_k}'(s),\lrp{\BB\lrp{(k+1)\delta^i} - \BB\lrp{k\delta^{i}}}\circ \party{x^i_k}{\gamma^i_k(s)} E^i_k }, \lrp{\BB\lrp{(k+1)\delta^i} - \BB\lrp{k\delta^{i}}}\circ \party{x^i_k}{\gamma^i_k(s)} E^i_k , {\gamma^i_k}'(s)}\\
        &\qquad - \delta^iRic\lrp{{\gamma^i_k}'(s)} ds \\
        & {\tau}^i_{k,5}:= \delta^i - \lin{\nnu^i_k, \BB\lrp{(k+1)\delta^i} - \BB\lrp{k\delta^{i}}}^2\\
    \end{alignat*}

    By definition of Ricci Curvature and by Assumption \ref{ass:ricci_curvature_regularity},
    \begin{alignat*}{1}
        & - \Ep{\F_k}{\int_0^1 \lin{R\lrp{{\gamma^i_k}'(s),\lrp{\BB\lrp{(k+1)\delta^i} - \BB\lrp{k\delta^{i}}}\circ \party{x^i_k}{\gamma^i_k(s)} E^i_k }, \lrp{\BB\lrp{(k+1)\delta^i} - \BB\lrp{k\delta^{i}}}\circ \party{x^i_k}{\gamma^i_k(s)} E^i_k , {\gamma^i_k}'(s)} ds}\\
        \leq& - \delta^i L_{Ric} \lrn{{\gamma^i_k}'(s)}^2 = \leq - \delta^i L_{Ric} \dist\lrp{x^i_k,y^i_k}^2
    \end{alignat*}
    where $Ric$ denotes the Ricci curvature tensor.

    By definition of $\nnu^i_k$,  $\E{\lin{\nnu^i_k, \BB\lrp{(k+1)\delta^i} - \BB\lrp{k\delta^{i}}}^2} = \delta^i \ind{\dist\lrp{x^i_k,y^i_k}>\epsilon}$. 
    
    Let $\tau^i_k:= \tau^i_{k,1}+\tau^i_{k,2}+\tau^i_{k,3}$.
    We can thus further simplify \eqref{e:l:kendall-cranston:step1.2} to
    \begin{alignat*}{1}
        & \dist\lrp{x^i_{k+1}, y^i_{k+1}}^2 - \dist\lrp{x^i_{k}, y^i_{k}}^2\\
        \leq& 2 \delta^i \kappa\lrp{\dist\lrp{x^i_{k}, y^i_{k}}}\dist\lrp{x^i_{k}, y^i_{k}}^2 - 4 \dist\lrp{x^i_{k}, y^i_{k}}\lin{\nnu^i_k, \BB\lrp{(k+1)\delta^i} - \BB\lrp{k\delta^{i}}}\\
        &\quad + 4 \delta^i \ind{\dist\lrp{x^i_k,y^i_k}>\epsilon} - \delta^i L_{Ric} \dist\lrp{x^i_k,y^i_k}^2\\
        &\quad + \tau^i_k\\
        \leq& \delta^i \lrp{2\kappa\lrp{\dist\lrp{x^i_{k}, y^i_{k}}} + L_{Ric}} \dist\lrp{x^i_{k}, y^i_{k}}^2\\
        &\quad - 4 \dist\lrp{x^i_{k}, y^i_{k}}\lin{\nnu^i_k, \BB\lrp{(k+1)\delta^i} - \BB\lrp{k\delta^{i}}} + 4 \delta^i \ind{\dist\lrp{x^i_k,y^i_k}>\epsilon}\\
        &\quad + \tau^i_k
        \elb{e:l:kendall-cranston:step2}
    \end{alignat*}  
    the conclusion follows by defining $\WW^i(t):= \int_0^t \ind{t\in[k\delta^i,(k+1)\delta^i]}\lin{\bar{\nnu}^i_k, \BB\lrp{(k+1)\delta^i} - \BB\lrp{k\delta^{i}}}$ and verifying that it is a Brownian motion. (Recall our definition that $\nnu^i_k := \ind{\dist\lrp{x^i_{k}, y^i_{k}} > \epsilon} \bar{\nnu}^i_k$)
\end{proof}

\subsection{Lyapunov function and its smooth approximation}
\label{ss:Lyapunov function and its smooth approximation}
In this section, we consider a Lyapunov function $f$ taken from \cite{eberle2016reflection}. By analyzing how $f(\dist\lrp{x^i_k,y^i_k})$ evolves under the dynamic in Lemma \ref{l:kendall-cranston}, one can demonstrate that the distance function contracts.

Let $\L,\R\in \Re^+$. We will see later that $\L$ and $\R$ will correspond to distant-dissipativity parameters in \eqref{ass:distant-dissipativity}.

Let $\epsilon\in[0,\infty)$. One should think of $\epsilon$ as being arbitrarily small, as eventually we are only interested in the limit as $\epsilon \to 0$.
    
Define functions $\psi_\epsilon(r)$, $\Psi_\epsilon(r)$ and $\nu(r)$, all from $ \Re^+$ to $\Re$:
\begin{align*}
&\mu_\epsilon(r) = \threecase{1}{r\leq \R}{1 - (r-\R)/\lrp{\epsilon}}{r\in{\R, \R + \epsilon}}{0}{r \geq \R + \epsilon}\\
&\nu_\epsilon(r) := 1- \frac{1}{2}
\frac{\int_0^{r}\frac{ \mu_\epsilon(s) \Psi_\epsilon(s)}{\psi_\epsilon(s)} ds}{\int_0^{\infty}\frac{ \mu_\epsilon(s) \Psi_\epsilon(s)}{\psi_\epsilon(s)}ds}
&\psi_\epsilon(r) := e^{- \frac{\L \int_0^r r \mu_\epsilon(r) dr}{2}}\\
&\Psi_\epsilon(r) := \int_0^r \psi_\epsilon(s) ds, \\
\end{align*}

We defined an $\epsilon$-smoothed Lyapunov function as
\begin{definition}
    \label{d:f_epsilon}
    \begin{alignat*}{1}
        & f_\epsilon(r):= \int_0^r \psi_\epsilon(s) \nu_\epsilon(s) ds\\
        & g_\epsilon(s) = f_\epsilon\lrp{\sqrt{s + \epsilon}}
    \end{alignat*}
\end{definition}
The case when $\epsilon=0$ (when there is no smoothing) will be of particular interest to us:
\begin{definition}
    \label{d:f}
    \begin{alignat*}{1}
        & f(r):= f_0(r) = g_0(r)
    \end{alignat*}
\end{definition}

\begin{remark}
    The Lyapunov function from \cite{eberle2016reflection} is more general, but for the specific case of $\L , \R$ distant dissipative functions, it is equal to $f$ as defined in \eqref{d:f}.
\end{remark}
\begin{lemma}
    \label{l:fproperties}
    Assume $\epsilon \in [0, 1/(4\sqrt{\L})]$, then $f_\epsilon$ as defined in \eqref{d:f_epsilon} satisfies
    \begin{alignat*}{2}
        &1.\ f_\epsilon(r) \in [\frac{1}{2}\exp\lrp{-  (1+\epsilon) \L \R^2/2} r, r] \qquad && \text{for all $r$} \\
        &2.\ f_\epsilon'(r) \in [\frac{1}{2}\exp\lrp{-  (1+\epsilon) \L \R^2/2}, 1] \qquad && \text{for all $r$} \\
        &3.\ f_\epsilon''(r) \in [-4{\L}^{3/2}, 0] \qquad && \text{for all $r$} \\
        &4.\ f_\epsilon''(r) + \L r f_\epsilon'(r)  \leq -\frac{\exp\lrp{- (1+\epsilon)\L \R^2/2}}{\lrp{1+\epsilon}^2 \R^2} f_\epsilon(r) \qquad && \text{for $r\in [0,\R]$}
    \end{alignat*}
    If in addition, $\epsilon >0$, $f_\epsilon$ satisfies
    \begin{alignat*}{1}
        &5.\ \lrabs{f_\epsilon'''(r)} \leq \frac{256 \sqrt{\L}}{\epsilon} \qquad \text{for all $r$}
    \end{alignat*}
\end{lemma}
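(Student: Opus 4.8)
The plan is to verify the five properties by straightforward differentiation of the closed forms, following the construction of \citet{eberle2016reflection} verbatim. Write $c:=\int_0^\infty \mu_\epsilon(s)\Psi_\epsilon(s)/\psi_\epsilon(s)\,ds$, so that $f_\epsilon'(r)=\psi_\epsilon(r)\nu_\epsilon(r)$, $\psi_\epsilon'(r)=-\tfrac{\L}{2}r\mu_\epsilon(r)\psi_\epsilon(r)$, and $\nu_\epsilon'(r)=-\tfrac{1}{2c}\,\mu_\epsilon(r)\Psi_\epsilon(r)/\psi_\epsilon(r)$. The first thing I would record is the two monotonicity facts that drive everything. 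Since $\mu_\epsilon\ge 0$, the exponent defining $\psi_\epsilon$ is nondecreasing in $r$, so $\psi_\epsilon$ is nonincreasing with $\psi_\epsilon(0)=1$; and because $\mu_\epsilon\le 1$ is supported in $[0,\R+\epsilon]$, $\int_0^\infty s\mu_\epsilon(s)\,ds\le \tfrac12(\R+\epsilon)^2$, so $\psi_\epsilon(r)\ge\psi_\epsilon(\infty)\ge e^{-(1+\epsilon)\L\R^2/2}$ after an elementary manipulation of $(\R+\epsilon)^2$ using $\epsilon\le 1/(4\sqrt{\L})$. Likewise $\nu_\epsilon$ is nonincreasing with $\nu_\epsilon(0)=1$ and $\nu_\epsilon(\infty)=\tfrac12$, so $\nu_\epsilon(r)\in[\tfrac12,1]$ for all $r$, because the integrand $\mu_\epsilon\Psi_\epsilon/\psi_\epsilon$ is nonnegative and $c$ is exactly its total mass.

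Given these, properties 1 and 2 are immediate: $f_\epsilon'(r)=\psi_\epsilon(r)\nu_\epsilon(r)\in[\tfrac12 e^{-(1+\epsilon)\L\R^2/2},\,1]$, and integrating from $f_\epsilon(0)=0$ gives $f_\epsilon(r)\in[\tfrac12 e^{-(1+\epsilon)\L\R^2/2}r,\,r]$. For property 3, $f_\epsilon''=\psi_\epsilon'\nu_\epsilon+\psi_\epsilon\nu_\epsilon'$, and each summand is $\le 0$, so $f_\epsilon''\le 0$. For the lower bound I would bound the two pieces separately: $|\psi_\epsilon'\nu_\epsilon|\le\tfrac{\L}{2}\,r\mu_\epsilon(r)\psi_\epsilon(r)\le\tfrac{\L}{2}\sup_{x\ge 0}x\,e^{-\L x^2/4}=O(\sqrt{\L})$ via the elementary $\sup_x xe^{-ax^2}=(2ea)^{-1/2}$ and $\psi_\epsilon(r)\le e^{-\L(r\wedge\R)^2/4}$, and $|\psi_\epsilon\nu_\epsilon'|=\mu_\epsilon(r)\Psi_\epsilon(r)/(2c)$, bounded by $\Psi_\epsilon(r)\le r$, the lower estimate $c\ge\int_0^{\R}\Psi_\epsilon(s)\,ds$, and the same sup trick; summing and absorbing constants yields $f_\epsilon''(r)\ge -4\L^{3/2}$.

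Property 4 is the heart of the construction. On $[0,\R]$ we have $\mu_\epsilon\equiv 1$, hence $\psi_\epsilon'/\psi_\epsilon=-\tfrac{\L}{2}r$; substituting this and the formula for $\nu_\epsilon'$ into $f_\epsilon''+\L r f_\epsilon'$ makes the $\nu_\epsilon$-proportional terms collapse, leaving an expression bounded above by $-\Psi_\epsilon(r)/(2c)$ (plus a term absorbed by the slack in the target constant) --- this is precisely the identity that the definition of $\nu_\epsilon$ is designed to produce. Since $\nu_\epsilon\ge\tfrac12$ gives $f_\epsilon(r)=\int_0^r\psi_\epsilon\nu_\epsilon\le\Psi_\epsilon(r)$, it then suffices to show $1/(2c)\ge e^{-(1+\epsilon)\L\R^2/2}/((1+\epsilon)^2\R^2)$, i.e.\ to upper bound $c\le\int_0^{\R+\epsilon}s/\psi_\epsilon(\infty)\,ds\le\tfrac12(\R+\epsilon)^2 e^{\L(\R+\epsilon)^2/4}$, and conclude by the elementary comparison $(\R+\epsilon)^2 e^{\L(\R+\epsilon)^2/4}\le(1+\epsilon)^2\R^2 e^{(1+\epsilon)\L\R^2/2}$ valid under $\epsilon\le 1/(4\sqrt{\L})$. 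Finally, property 5 follows by differentiating the closed form of $f_\epsilon''$ once more: every resulting term is $O(\sqrt{\L})$ uniformly except the one carrying $\mu_\epsilon'$, which equals $-1/\epsilon$ on $(\R,\R+\epsilon)$ and $0$ elsewhere, and whose coefficient is again controlled by $\sup_x x e^{-\L x^2/4}=O(1/\sqrt{\L})$; crude bounding gives $|f_\epsilon'''(r)|\le 256\sqrt{\L}/\epsilon$.

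I expect the only genuine obstacle to be the constant bookkeeping in properties 3 and 4: making the prefactor in property 4 come out exactly as $e^{-(1+\epsilon)\L\R^2/2}/((1+\epsilon)^2\R^2)$ needs the upper bound on $c$ to be tight and the slightly awkward interplay between $\R+\epsilon$ and $(1+\epsilon)\R$ to be handled carefully, while everything else is routine calculus once the monotonicity of $\psi_\epsilon,\nu_\epsilon$ and the defining identity for $\nu_\epsilon$ are in place. Since the lemma is lifted essentially verbatim from \citet{eberle2016reflection}, I would present the argument as a self-contained reproduction rather than claim it as new.
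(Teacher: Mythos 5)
Your overall route is the same as the paper's: differentiate the closed forms, use monotonicity of $\psi_\epsilon$ and $\nu_\epsilon$, lower/upper bound the normalizer $c=\int_0^\infty \mu_\epsilon\Psi_\epsilon/\psi_\epsilon$, and isolate the $\mu_\epsilon'$ term for property 5. Properties 1, 2, 3 and 5 are handled essentially as in the paper (modulo the same constant-bookkeeping the paper itself glosses over).

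There is, however, a genuine gap in your proof of property 4. Taking the definition of $\psi_\epsilon$ literally, as you do, gives $\psi_\epsilon'(r)=-\tfrac{\L}{2}r\mu_\epsilon(r)\psi_\epsilon(r)$, and then on $[0,\R]$ one gets $f_\epsilon''(r)+\L r f_\epsilon'(r)=\tfrac{\L}{2}r\psi_\epsilon(r)\nu_\epsilon(r)+\psi_\epsilon(r)\nu_\epsilon'(r)$: the $\nu_\epsilon$-proportional terms do \emph{not} collapse, and a positive residual $\tfrac{\L}{2}r\psi_\epsilon\nu_\epsilon$ survives. Your claim that this residual is ``absorbed by the slack in the target constant'' is false in general. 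The available slack per unit of $\Psi_\epsilon(r)$ is $\tfrac{1}{2c}-\exp\lrp{-(1+\epsilon)\L\R^2/2}/\lrp{(1+\epsilon)^2\R^2}$, which can be made arbitrarily small (your own upper bound on $c$ is nearly attained when $\L\R^2=O(1)$), whereas the residual is of order $\L r$. Concretely, with $\L=\R=1$, $\epsilon=0$ and $r=1/2$ one computes $f''(r)+\L r f'(r)\approx-0.25$ while the required right-hand side is $\approx-0.28$, so the inequality you are trying to prove actually fails under your derivative convention. The cancellation that property 4 is built on requires $\psi_\epsilon'=-\L r\mu_\epsilon\psi_\epsilon$ (equivalently, reading the exponent in the definition of $\psi_\epsilon$ without the factor $\tfrac12$), which is the convention the paper's own computation of $f_\epsilon''$ silently adopts; with it, $f_\epsilon''+\L r f_\epsilon'=\psi_\epsilon\nu_\epsilon'=-\Psi_\epsilon(r)/(2c)$ exactly on $[0,\R]$, and the remainder of your argument (the upper bound on $c$ and $f_\epsilon\leq\Psi_\epsilon$) closes the proof. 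You should either adopt that convention explicitly or restate property 4 with the coefficient of $rf_\epsilon'$ halved; as written, your step does not go through.
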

\begin{proof}
    We can verify that
    \begin{alignat*}{2}
        & f_\epsilon'(r) &&= \psi_\epsilon(r) \nu_\epsilon(r)\\
        & f_\epsilon''(r) &&= \psi_\epsilon'(r) \nu_\epsilon(r) + \psi_\epsilon(r) \nu_\epsilon'(r)\\
        & &&= - \L \mu_\epsilon(r) r\psi_\epsilon(r) \nu_\epsilon(r) + \psi_\epsilon(r) \nu_\epsilon'(r)\\
        & f_\epsilon'''(r) &&= -\L \psi_\epsilon(r) \nu_\epsilon(r) + \L r \psi_\epsilon(r) \mu_\epsilon'(r) + \L^2 r^2 \psi_\epsilon(r) \nu_\epsilon(r) - 2\L r \psi_\epsilon(r) \nu_\epsilon'(r) + \psi_\epsilon(r) \nu_\epsilon''(r)
    \end{alignat*}
    1. follows from integrating 2.
    
    2. follows from $\nu_\epsilon(r) \in [1/2,1]$ and $\psi_\epsilon \in [\exp\lrp{-  (1+\epsilon) \L \R^2/2}, 1]$ and the expression for $f_\epsilon'(r)$ above.

    3. follows from $\mu_\epsilon, \psi_\epsilon, \nu_\epsilon \geq 0$ and $\nu_\epsilon' \leq 0$, and the fact that $r \psi_\epsilon(r) \leq 2 \sqrt{\L}$ and \eqref{e:t:qoidmas}.

    4. is a little more involved. First note that over $r\in [0,\R]$, $\mu_\epsilon(r) = 1$. This will simplify some calculations. From the expression for $f_\epsilon''$ above, we verify
    \begin{alignat*}{1}
        f_\epsilon''(r) + \L r f_\epsilon'(r) 
        = \psi_\epsilon(r) \nu_\epsilon'(r)
        = - \frac{\Psi_\epsilon(r)}{2\int_0^{\infty}\frac{ \mu_\epsilon(s) \Psi_\epsilon(s)}{\psi_\epsilon(s)}ds}
    \end{alignat*}
    We can bound the denominator as
    \begin{alignat*}{1}
        & \int_0^{\infty}\frac{ \mu_\epsilon(s) \Psi_\epsilon(s)}{\psi_\epsilon(s)}ds
        \leq \int_0^{\R + \epsilon}\frac{\Psi_\epsilon(s)}{\psi_\epsilon(s)}ds
        \leq \frac{\int_0^{\R + \epsilon} \Psi_\epsilon(s) ds}{\psi\lrp{\R + \epsilon}} 
        \leq \frac{(1+\epsilon)^2\R^2}{2\exp\lrp{-\L (1+\epsilon) \R^2/2}}\\
    \end{alignat*}
    where the first inequality is by $\mu_\epsilon(s) \leq 1$, and $\mu_\epsilon(r) = 0$ for $r\geq \R + \epsilon$ the second inequality is by $\psi_\epsilon(r)$ being monotonically decreasing, and the third inequality is by $\Psi_\epsilon(r) \leq r$.Finally, note that $\Psi_\epsilon(r) \geq f_\epsilon(r)$. Put together,
    \begin{alignat*}{1}
        f_\epsilon''(r) + \L r f_\epsilon'(r)  \leq -\frac{\exp\lrp{- (1+\epsilon)\L \R^2/2}}{\lrp{1+\epsilon}^2 \R^2} f_\epsilon(r)
    \end{alignat*}

    We now prove the bound for 5. It is useful to recall that $\psi_\epsilon(r) \leq 1$ and $\nu_\epsilon(r) \leq 1$. 
    \begin{alignat*}{1}
        \Psi_\epsilon(r) = \int_0^r \exp\lrp{-\L s^2} ds \leq \frac{4}{\sqrt{\L}}
    \end{alignat*}
    \begin{alignat*}{1}
        \int_0^{\infty}\frac{ \mu_\epsilon(s) \Psi_\epsilon(s)}{\psi_\epsilon(s)}ds
        \geq& \int_0^{\R}\frac{\Psi_\epsilon(s)}{\psi_\epsilon(s)}ds
        \geq \frac{1}{2}\int_0^{1/\sqrt{2\L}} \Psi_\epsilon(s) ds \geq \frac{1}{16\L}
        \elb{e:t:qoidmas}
    \end{alignat*}
    \begin{alignat*}{1}
        \lrabs{\psi_\epsilon(r) \nu'_\epsilon(r)} \leq \frac{\Psi\lrp{\R + \epsilon}}{ 2\int_0^{\infty}\frac{ \mu_\epsilon(s) \Psi_\epsilon(s)}{\psi_\epsilon(s)}ds}
        \leq 8 \sqrt{\L}
    \end{alignat*}

    For $r\in [0, \R + \epsilon]$ ($\nu_\epsilon'' = 0$ outside this range),
    \begin{alignat*}{1}
        \lrabs{\psi_\epsilon(r) \nu_\epsilon''(r)} \leq \frac{\frac{1}{\epsilon}\Psi\lrp{r} + r \psi_\epsilon(r)/\epsilon + \psi_\epsilon(r) + 2r\Psi_\epsilon(r)/\psi_\epsilon(r)}{ 2\int_0^{\infty}\frac{ \mu_\epsilon(s) \Psi_\epsilon(s)}{\psi_\epsilon(s)}ds} \leq 32\L \cdot \Psi_\epsilon(r) \cdot \lrp{\frac{2}{\epsilon} + 2\L \R} \leq \frac{128 \sqrt{\L}}{\epsilon}
    \end{alignat*}

    We can thus bound $\lrabs{f'''(r)}$ as
    \begin{alignat*}{1}
        \lrabs{f_\epsilon'''(r)} \leq 2\L + 16 \L^{3/2} \R + \frac{128 \sqrt{\L}}{\epsilon} \leq \frac{256 \sqrt{\L}}{\epsilon}
    \end{alignat*}
\end{proof}

\begin{lemma}
    \label{l:gproperties}
    Assume $\epsilon \in (0, 1/(4\sqrt{\L})]$
    \begin{alignat*}{2}
        &1.\ {g_\epsilon'(s)} = \frac{1}{2\sqrt{s+\epsilon}} f_\epsilon'(\sqrt{s+\epsilon})\\
        &2.\ {g_\epsilon''(s)} = \frac{1}{4\lrp{s+\epsilon}} f_\epsilon''(\sqrt{s+\epsilon}) - \frac{1}{4\lrp{s+\epsilon}^{3/2}} f_\epsilon'(\sqrt{s+\epsilon})\\
        &3.\ {g_\epsilon'''(s)} = \frac{1}{8\lrp{s+\epsilon}^{3/2}} f_\epsilon'''(\sqrt{s+\epsilon}) - \frac{1}{8\lrp{s+\epsilon}^{2}} f_\epsilon''(\sqrt{s+\epsilon}) + \frac{1}{6\lrp{s+\epsilon}^{5/2}} f_\epsilon'(\sqrt{s+\epsilon})\\
        &4.\ \lrabs{g_\epsilon'''(s)} \leq O\lrp{\epsilon^{-5/2}} \qquad \text{for all $s$}
    \end{alignat*}
    where $O()$ notation hides dependency on $\L$ and $\R$.
\end{lemma}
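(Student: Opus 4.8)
The plan is to derive parts 1--3 by iterating the chain rule, and then to obtain the bound in part~4 by feeding the derivative estimates for $f_\epsilon$ from Lemma~\ref{l:fproperties} into the resulting formula.

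First I would introduce $u = u(s) := \sqrt{s+\epsilon}$, so that $g_\epsilon(s) = f_\epsilon(u(s))$ and
\begin{alignat*}{1}
  u'(s) = \tfrac{1}{2\sqrt{s+\epsilon}}, \qquad u''(s) = -\tfrac{1}{4(s+\epsilon)^{3/2}}, \qquad u'''(s) = \tfrac{3}{8(s+\epsilon)^{5/2}}.
\end{alignat*}
Differentiating $g_\epsilon = f_\epsilon \circ u$ once gives $g_\epsilon'(s) = f_\epsilon'(u)\,u'(s)$, which is part~1. Differentiating again with the product rule gives $g_\epsilon''(s) = f_\epsilon''(u)\,u'(s)^2 + f_\epsilon'(u)\,u''(s)$, and substituting the expressions for $u',u''$ yields part~2. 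Differentiating a third time produces the three terms $f_\epsilon'''(u)\,u'(s)^3$, $3 f_\epsilon''(u)\,u'(s)u''(s)$, and $f_\epsilon'(u)\,u'''(s)$; substituting gives part~3. This step is entirely mechanical, the only care needed being to track the numerical coefficients correctly when collecting the $f_\epsilon'''$, $f_\epsilon''$, and $f_\epsilon'$ contributions.

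For part~4 I would bound each of the three terms of the formula for $g_\epsilon'''$ using Lemma~\ref{l:fproperties}: $\lrabs{f_\epsilon'(r)} \le 1$, $\lrabs{f_\epsilon''(r)} \le 4{\L}^{3/2}$, and $\lrabs{f_\epsilon'''(r)} \le 256\sqrt{\L}/\epsilon$ (the last requiring $\epsilon > 0$, which is part of the hypothesis). Since $g_\epsilon$ is applied only to $s \ge 0$ (it is evaluated at squared distances), we have $s+\epsilon \ge \epsilon$, so each negative power of $s+\epsilon$ is dominated by the corresponding negative power of $\epsilon$. The dominant contribution is the one carrying $f_\epsilon'''$: it is $O\lrp{(s+\epsilon)^{-3/2}\cdot \sqrt{\L}/\epsilon} = O(\sqrt{\L}\,\epsilon^{-5/2})$. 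The $f_\epsilon'$ term is $O(\epsilon^{-5/2})$ directly, while the $f_\epsilon''$ term is $O({\L}^{3/2}\epsilon^{-2})$, which is absorbed into $O(\epsilon^{-5/2})$ using the standing assumption $\epsilon \le 1/(4\sqrt{\L})$ (so that $\epsilon^{-2} \le \tfrac{1}{2}{\L}^{1/4}\epsilon^{-5/2}$). Summing gives $\lrabs{g_\epsilon'''(s)} = O(\epsilon^{-5/2})$ with an implied constant depending only on $\L$ (hence on $\L$ and $\R$), as stated.

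I do not expect a real obstacle here: the lemma is a bookkeeping step whose only substantive input is the $\epsilon^{-1}$ blow-up of $f_\epsilon'''$ established in Lemma~\ref{l:fproperties}(5), which, combined with the extra $(s+\epsilon)^{-3/2}$ factor from the $\sqrt{\cdot}$ reparametrization, produces exactly the $\epsilon^{-5/2}$ rate. The only point requiring attention is to pair each term in the third-derivative formula with the matching line of Lemma~\ref{l:fproperties} and to verify that the lower-order terms are genuinely dominated on the range $\epsilon \le 1/(4\sqrt{\L})$.
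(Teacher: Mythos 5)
Your proposal is correct and follows exactly the paper's (very terse) proof: parts 1--3 by the chain rule for $g_\epsilon = f_\epsilon\circ\sqrt{\cdot+\epsilon}$, and part 4 by plugging the bounds $\lrabs{f_\epsilon'}\leq 1$, $\lrabs{f_\epsilon''}\leq 4\L^{3/2}$, $\lrabs{f_\epsilon'''}\leq 256\sqrt{\L}/\epsilon$ from Lemma~\ref{l:fproperties} together with $s+\epsilon\geq\epsilon$. One small remark: carrying out the substitution you describe actually yields coefficients $-\tfrac{3}{8}(s+\epsilon)^{-2}$ and $+\tfrac{3}{8}(s+\epsilon)^{-5/2}$ on the $f_\epsilon''$ and $f_\epsilon'$ terms, rather than the $-\tfrac{1}{8}$ and $+\tfrac{1}{6}$ printed in the lemma statement---a typo in the paper that is immaterial for part~4.
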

\begin{proof}
    The first 3 points follow from chain rule.

    The last point follows from point 5 from Lemma \ref{l:fproperties}. 
    \begin{alignat*}{1}
        \lrabs{g_\epsilon'''(s)} \leq \frac{64\sqrt{\L}}{\epsilon^{5/2} \R} + \frac{\sqrt{\L}}{\epsilon^2} + \frac{1}{\epsilon^{5/2}}
    \end{alignat*}
\end{proof}

\subsection{Contraction of Lyapunov Function under Kendall Cranston Coupling}
\label{ss:Evolution_of_Lyapunov_Function_under_Kendall_Cranston_Coupling}

\begin{lemma}
    \label{l:g_epsiilon_evolution_beta_lipschitz}
    Consider the same setup as Lemma \ref{l:kendall-cranston}. For any $x,y$, let $\Lambda(x,y)$ denote the set of minimizing geodesics from $x$ to $y$, i.e. for any $\gamma \in \Lambda(x,y)$, $\gamma(0) = x$, $\gamma(1) = y$, $\forall t, \nabla_{\gamma'(t)}\gamma'(t) = 0$ and $\dist\lrp{x,y} = \lrn{\gamma'(0)}$. Let $\kappa(r):= \frac{1}{r^2} \sup_{\dist\lrp{x,y} =  r} \inf_{\gamma \in \Lambda(x,y)} \lin{\party{y}{x} \beta(y) - \beta(x), \gamma'(0)}$. 
    
    Assume there exists $\R\geq 0, q\leq 0$ such that $\kappa(r) \leq q$ for all $r\leq \R$. Let $\L = q + L_{Ric}/2$. Let  $\epsilon \in (0, 1/(4\sqrt{\L})]$. Let $g_\epsilon$ be as defined in \ref{d:f_epsilon} with parameters $\L$ and $\R$. Let $\F_k$ denote the natural filtration generated by $x^i_k$ and $y^i_k$.
    
    There exists a constant $c_1$, depending on $L_\beta, L_\beta', L_R, T, d$, and some constant $c_2$, depending on $L_\beta', L_{Ric}, \R$ such that for any $i>c_1$ and $\epsilon>c_2$, there exists a coupling between $x^i_k$ and $y^i_k$ such that
    \begin{alignat*}{1}
        &\E{g_\epsilon\lrp{\dist\lrp{x^i_{k+1}, y^i_{k+1}}^2}} \\
        \leq& \E{\ind{r > \R}\delta^i \lrp{\lrp{\kappa(r_k) + L_{Ric}/2}\exp\lrp{-  (1+\epsilon) \L \R^2/2}/8} g_\epsilon\lrp{\dist\lrp{x^i_{k+1}, y^i_{k+1}}^2}}\\
        &\quad - \frac{\exp\lrp{- (1+\epsilon)\L \R^2/2}}{2\lrp{1+\epsilon}^2 \R^2} \delta^i \E{\ind{r \leq \R} g_\epsilon\lrp{\dist\lrp{x^i_{k+1}, y^i_{k+1}}^2}} + O\lrp{\delta^i{\epsilon^{1/2}} + \epsilon^{-5/2} {\delta^i}^{3/2}}
    \end{alignat*}
    where $O\lrp{}$ hides dependency on $L_R, L_\beta', T, d$.
\end{lemma}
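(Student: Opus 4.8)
The plan is to transcribe into discrete time the It\^o-calculus argument of \citep{eberle2016reflection,hsu2002stochastic} for the evolution of a Lyapunov function of the coupling distance, feeding in the one-step recursion of Lemma~\ref{l:kendall-cranston} and pushing every discretization and $\epsilon$-smoothing artifact into the stated remainder. First I would fix $i,k$, write $r_k := \dist\lrp{x^i_k,y^i_k}$ and $\Delta\WW_k := \WW^i((k+1)\delta^i)-\WW^i(k\delta^i)$, and invoke Lemma~\ref{l:kendall-cranston}, which supplies a coupling under which $\dist\lrp{x^i_{k+1},y^i_{k+1}}^2 \le r_k^2 + \widetilde\Delta_k$ with $\widetilde\Delta_k := \delta^i(2\kappa(r_k)+L_{Ric})r_k^2 + \ind{r_k>\epsilon}\lrp{4\delta^i-4 r_k\Delta\WW_k} + \tau^i_k$ together with the two conditional moment bounds on $\tau^i_k$. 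Because $g_\epsilon$ is nondecreasing (Lemma~\ref{l:gproperties}(1), Lemma~\ref{l:fproperties}(2)) and $r_k^2+\widetilde\Delta_k \ge \dist\lrp{x^i_{k+1},y^i_{k+1}}^2 \ge 0$, it suffices to control $g_\epsilon\lrp{r_k^2+\widetilde\Delta_k}$, which I would expand by a third-order Taylor formula at $r_k^2$ and then average via $\Ep{\F_k}{\cdot}$, using $\Ep{\F_k}{\Delta\WW_k}=0$ and $\Ep{\F_k}{(\Delta\WW_k)^2}=\delta^i$. The leading term is $\delta^i$ times $\mathcal D(r_k):=(2\kappa(r_k)+L_{Ric})r_k^2 g_\epsilon'(r_k^2)+\ind{r_k>\epsilon}\lrp{4 g_\epsilon'(r_k^2)+8 r_k^2 g_\epsilon''(r_k^2)}$; the third-order term (bounded via $\sup_s|g_\epsilon'''(s)|=O(\epsilon^{-5/2})$ from Lemma~\ref{l:gproperties}(4) and $\Ep{\F_k}{|\widetilde\Delta_k|^3}=O((\delta^i)^{3/2})$), all cross-terms, and every occurrence of $\tau^i_k$ go into a remainder $\mathcal E_k$.

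Next I would determine the sign of $\mathcal D$. Writing $t:=\sqrt{r^2+\epsilon}$, the chain-rule identities of Lemma~\ref{l:gproperties}(1)--(2) give $r^2 g_\epsilon'(r^2)=\tfrac{r^2}{2t}f_\epsilon'(t)$ and $4 g_\epsilon'(r^2)+8 r^2 g_\epsilon''(r^2) = 2\lrp{1-\epsilon/t^2}f_\epsilon''(t)+2\epsilon t^{-3}f_\epsilon'(t)$; the key point is that the $\Theta(1/r)$ contributions of $4 g_\epsilon'$ and of $8 r^2 g_\epsilon''$ cancel exactly, leaving only the benign $2\epsilon t^{-3}f_\epsilon'(t)$ piece, which is precisely why the concave $g_\epsilon$ is used. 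Thus, up to $\epsilon$-dependent corrections, $\mathcal D(r)\le 2\lrp{\kappa(r)+L_{Ric}/2}\,t\, f_\epsilon'(t)+2\ind{r>\epsilon}f_\epsilon''(t)$. For $r\le\R$ the hypothesis $\kappa(r)\le q$ bounds the bracket by $f_\epsilon''(t)+\lrp{f_\epsilon''(t)+\L t f_\epsilon'(t)}$, which by Lemma~\ref{l:fproperties}(4) --- using $f_\epsilon''\le 0$, and treating the $O(\epsilon)$-width boundary layer $t\in(\R,\sqrt{\R^2+\epsilon}]$ separately by the crude bounds of Lemma~\ref{l:fproperties}(2)--(3) --- is at most $-\frac{\exp\lrp{-(1+\epsilon)\L\R^2/2}}{(1+\epsilon)^2\R^2}g_\epsilon(r^2)$, a factor-two margin over the asserted $-\frac{\exp\lrp{-(1+\epsilon)\L\R^2/2}}{2(1+\epsilon)^2\R^2}g_\epsilon(r^2)$. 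For $r>\R$ the $\ind{r>\epsilon}$ bracket is $O(\epsilon t^{-3})=O(\epsilon\R^{-3})$, and $2\lrp{\kappa(r)+L_{Ric}/2}r^2 g_\epsilon'(r^2)$ is compared to $g_\epsilon(r^2)=f_\epsilon(t)$ using $f_\epsilon(t)\in[\tfrac12\exp\lrp{-(1+\epsilon)\L\R^2/2}t,\,t]$ (Lemma~\ref{l:fproperties}(1)--(2)) and $\tfrac{r^2}{t}\ge\tfrac{r}{\sqrt{2}}$ (valid once $\epsilon\le\R^2$), which produces the claimed $\frac{\lrp{\kappa(r)+L_{Ric}/2}\exp\lrp{-(1+\epsilon)\L\R^2/2}}{8}g_\epsilon(r^2)$ coefficient.

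Assembling, $\Ep{\F_k}{g_\epsilon\lrp{\dist\lrp{x^i_{k+1},y^i_{k+1}}^2}}\le g_\epsilon(r_k^2)+\delta^i\mathcal D(r_k)+\mathcal E_k$; I would then split $g_\epsilon(r_k^2)=\ind{r_k>\R}g_\epsilon(r_k^2)+\ind{r_k\le\R}g_\epsilon(r_k^2)$, absorb $\delta^i\mathcal D(r_k)$ into the two indicator terms via the previous step, and take full expectation to recover the statement. The main obstacle --- and the only part that is not bookkeeping of standard terms --- is establishing $\mathcal E_k=O\lrp{\delta^i\epsilon^{1/2}+\epsilon^{-5/2}(\delta^i)^{3/2}}$: this forces one to track the polynomial-in-$\dist\lrp{x^i_k,y^i_k}$ factors attached to the $\tau^i_k$-moments from Lemma~\ref{l:kendall-cranston}, to control the smoothing discrepancy $|f_\epsilon(\sqrt{r^2+\epsilon})-f_\epsilon(r)|\le\sqrt{\epsilon}$ wherever it enters (in particular near the thresholds $r\approx\epsilon$ and $r\approx\R$), and to use the sub-exponential moment estimates for Gaussian increments (Lemma~\ref{l:subexponential-chi-square}); it is exactly here that the admissibility hypotheses $i>c_1$ (making $\delta^i$ small relative to $L_R,L_\beta',d,T$), $\epsilon>c_2$, and $\epsilon\le 1/(4\sqrt{\L})$ are consumed.
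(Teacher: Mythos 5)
Your strategy coincides with the paper's proof: both feed the one-step recursion of Lemma~\ref{l:kendall-cranston} into a third-order Taylor expansion of $g_\epsilon$ about $r_k^2$, exploit the exact cancellation between the It\^o-correction term $4g_\epsilon'(r_k^2)$ and the second-order term $8r_k^2 g_\epsilon''(r_k^2)$ (leaving $\tfrac{2\epsilon}{t^3}f_\epsilon'(t)+2(1-\epsilon/t^2)f_\epsilon''(t)$ with $t=\sqrt{r_k^2+\epsilon}$), run the same three-case analysis in $r_k$ with Lemma~\ref{l:fproperties}(4) supplying the contraction in the middle regime and the distant-dissipativity term supplying it for $r_k>\R$, and dump the $\tau^i_k$ moments together with the $\lrabs{g_\epsilon'''}=O(\epsilon^{-5/2})$ third-order remainder into the error term.

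There is, however, one concrete step that fails as written: your choice of reflection cutoff. You instantiate the free threshold of Lemma~\ref{l:kendall-cranston} as the smoothing parameter $\epsilon$ itself, gating the noise by $\ind{r_k>\epsilon}$. But the leftover piece $\tfrac{2\epsilon}{t^3}f_\epsilon'(t)$, which you label benign, is evaluated at $t=\sqrt{r_k^2+\epsilon}$; for $r_k$ just above your cutoff one has $t\approx\sqrt{\epsilon}$, hence $\tfrac{2\epsilon}{t^3}\approx 2\epsilon^{-1/2}$, so this drift contribution is of order $\delta^i\epsilon^{-1/2}$ and cannot be absorbed into the stated $O\lrp{\delta^i\epsilon^{1/2}}$ remainder. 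Your estimate $O(\epsilon t^{-3})=O(\epsilon\R^{-3})$ only covers $r_k>\R$ and leaves the regime $\epsilon<r_k\leq\R$ uncontrolled. The paper sidesteps this by choosing the cutoff to be $\epsilon^{1/3}$ rather than $\epsilon$ (its Case~1 is $r_k\leq\epsilon^{1/3}$); more generally a cutoff $\epsilon^{a}$ with $a$ small enough that $\epsilon/t^3\leq\epsilon^{1-3a}$ is acceptable (e.g.\ $a\leq 1/6$ to literally land in $O(\epsilon^{1/2})$). With that single adjustment — and redoing your Case~1 drift bound $\tfrac{\delta^i}{2t}f_\epsilon'(t)(2\kappa(r_k)+L_{Ric})r_k^2\lesssim\delta^i\L\cdot(\text{cutoff})^2/\sqrt{\epsilon}$ for the new cutoff — the rest of your argument goes through along the paper's lines.
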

\begin{proof}
    Let us define, for convenience, $r_k := \dist\lrp{x^i_{k}, y^i_{k}}$. By Lemma \ref{l:kendall-cranston}, for any $i$ and any $\epsilon$, there exists a coupling satisfying
    \begin{alignat*}{1}
        r_{k+1}^2 
        \leq& \lrp{1+\delta^i \lrp{2\kappa\lrp{r_k} + L_{Ric}}} r_k^2 \\
        &\quad +\ind{r_k > \epsilon^{1/3}} \lrp{4 \delta^i - 4 r_k \WW^i\lrp{(k+1)\delta^i} - \WW^i\lrp{k\delta^{i}}} + \tau^i_k
    \end{alignat*}
    where $\tau^i_k$ satisfies
    \begin{alignat*}{1}
        & \E{\lrabs{\tau^i_k}} \leq O \lrp{{\delta^i}^{3/2} \lrp{1 + L_\beta^4}\lrp{1+ r_k^2}} \qquad \E{{{\tau}^i_k}^2 }\leq O\lrp{{\lrp{1+r_k^4} {\delta^i}^{2}}}
    \end{alignat*}
    where $O\lrp{}$ hides dependencies on $L_R, L_R', d, T$.

    By third order Taylor expansion,
    \begin{alignat*}{1}
        & \E{g_{\epsilon}\lrp{r_{k+1}^2}}\\
        =& \E{g_{\epsilon}\lrp{r_k^2}}\\
        &\quad + \E{g_\epsilon'\lrp{r_k^2} \cdot  \lrp{\delta^i \lrp{2\kappa\lrp{r_k} + L_{Ric}}} r_k^2}\\
        &\quad + \E{g_\epsilon'\lrp{r_k^2} \cdot 4\delta^i }\\
        &\quad + \E{\frac{1}{2} g_\epsilon''\lrp{r_k^2} \cdot \lrp{ 4 r_k \ind{r_k > \epsilon^{1/3}} \lrp{\WW^i\lrp{(k+1)\delta^i} - \WW^i\lrp{k\delta^{i}} }}^2}\\
        &\quad + O\lrp{\epsilon^{-5/2} {\delta^i}^{3/2}}
        \elb{e:t:qknqoiwdn:0}
    \end{alignat*}
    The last line uses two facts:
    \begin{enumerate}
        \item From Lemma \ref{l:near_tail_bound_one_step}, for any $j$, there exists a constant $\C$, depending on $T,d, L_R, L_\beta'$, but independent of $L_\beta$, such that for all $i,k$, $\E{\dist\lrp{x^i_k,x_0}^{2j}} < \C$ and $\E{\dist\lrp{y^i_k,y_0}^{2j}} < \C$.
        \item Roughly speaking, $\E{\dist\lrp{x^i_{k+1},y^i_{k+1}}^2 - \dist\lrp{x^i_{k},y^i_{k}}^2} = O\lrp{{\delta^i}^{3/2}}$. More specifically:
        \begin{alignat*}{1}
            & \lrabs{\dist\lrp{x^i_{k+1},y^i_{k+1}} - \dist\lrp{x^i_{k},y^i_{k}}}\\
            & \leq 2\dist\lrp{x^i_k x^i_{k+1}} + 2\dist\lrp{y^i_k y^i_{k+1}} \\
            & \leq 2\delta^i\lrp{\lrn{\beta(x_0)} + \lrn{\beta(y_0)} + L_\beta' \dist\lrp{\dist\lrp{x^i_k,x_0}} + L_\beta' \dist\lrp{\dist\lrp{x^i_k,x_0}}}\\
            &\quad + 4 \lrn{\BB((k+1)\delta^i) - \BB(k\delta^i)}_2
        \end{alignat*}
    \end{enumerate}

    Plugging in the definition of $g_\epsilon'$ and $g_\epsilon''$,
    \begin{alignat*}{1}
        & g_\epsilon'\lrp{r_k^2} \cdot  \lrp{\delta^i \lrp{\lrp{2\kappa\lrp{r_k} + L_{Ric}}} r_k^2+ 4\ind{r_k > \epsilon^{1/3}}\delta^i}\\
        =& \frac{\delta^i}{2\sqrt{r_k^2 + \epsilon}} f_\epsilon'\lrp{\sqrt{r_k^2+\epsilon}} \lrp{\lrp{2\kappa\lrp{r_k} + L_{Ric}} r_k^2 + 4\ind{r_k > \epsilon^{1/3}}}\\
        \leq&  \frac{\delta^i}{2\sqrt{r_k^2 + \epsilon}} f_\epsilon'\lrp{\sqrt{r_k^2+\epsilon}} \lrp{\lrp{2\kappa\lrp{r_k} + L_{Ric}} r_k^2} + 2\ind{r_k> \epsilon^{1/3}} \frac{\delta^i f_\epsilon'\lrp{\sqrt{r_k^2+\epsilon}}}{\sqrt{r_k^2 + \epsilon}}
    \end{alignat*}
    where we use the assumption that $\epsilon \leq \frac{1}{4\R^2}$ and $\epsilon < 1$.

    On the other hand, 
    \begin{alignat*}{1}
        & \Ep{\F_k}{\frac{1}{2} g_\epsilon''\lrp{r_k^2} \cdot \lrp{ 4 r_k \ind{r_k > \epsilon^{1/3}} \lrp{\WW^i\lrp{(k+1)\delta^i} - \WW^i\lrp{k\delta^{i}} }}^2}\\
        =& 8 \delta^i r_k^2 g_\epsilon''\lrp{r_k^2} \cdot \ind{r_k>\epsilon^{1/3}}\\
        =& \ind{r_k>\epsilon^{1/3}}\frac{2 \delta^i r_k^2}{r_k^2 + \epsilon} f_\epsilon''\lrp{\sqrt{r_k^2+\epsilon}} - \ind{r_k>\epsilon^{1/3}} \frac{2\delta^i r_k^2f_\epsilon'\lrp{\sqrt{r_k^2+\epsilon}}}{\lrp{r_k^2 + \epsilon}^{3/2}}
    \end{alignat*}

    Note that $r_k > \epsilon^{1/3}$ implies that $\frac{r_k^2}{\lrp{r_k^2 + \epsilon}^{3/2}} \geq \frac{1}{1+\epsilon^{1/3}}$. Thus
    \begin{alignat*}{1}
        2\ind{r_k> \epsilon^{1/3}} \frac{\delta^i f_\epsilon'\lrp{\sqrt{r_k^2+\epsilon}}}{\sqrt{r_k^2 + \epsilon}}- \ind{r_k>\epsilon^{1/3}} \frac{2\delta^i r_k^2f_\epsilon'\lrp{\sqrt{r_k^2+\epsilon}}}{\lrp{r_k^2 + \epsilon}^{3/2}}\leq 4\delta^i \epsilon^{1/3}
        \elb{e:t:qknqoiwdn:1}
    \end{alignat*}
    where we use the fact that $\lrabs{f_\epsilon'}\leq 1$.

    We now bound $\frac{\delta^i}{2\sqrt{r_k^2 + \epsilon}} f_\epsilon'\lrp{\sqrt{r_k^2+\epsilon}} \lrp{\lrp{2\kappa\lrp{r_k} + L_{Ric}} r_k^2} + \ind{r_k>\epsilon^{1/3}}\frac{2 \delta^i r_k^2}{r_k^2 + \epsilon} f_\epsilon''\lrp{\sqrt{r_k^2+\epsilon}}$. Consider three cases:
    \begin{enumerate}
        \item $r_k \leq \epsilon^{1/3}$: 
        \begin{alignat*}{1}
            & \frac{\delta^i}{2\sqrt{r_k^2 + \epsilon}} f_\epsilon'\lrp{\sqrt{r_k^2+\epsilon}} \lrp{\lrp{2\kappa\lrp{r_k} + L_{Ric}} r_k^2}
            \leq {\delta^i \lrp{q + L_{Ric}/2}}{\epsilon^{1/2}} 
        \end{alignat*}
        \item $r_k \in (\epsilon^{1/3},\R]$: 
        \begin{alignat*}{1}
            & \frac{\delta^i}{2\sqrt{r_k^2 + \epsilon}} f_\epsilon'\lrp{\sqrt{r_k^2+\epsilon}} \lrp{\lrp{2\kappa\lrp{r_k} + L_{Ric}} r_k^2} + \frac{2 \delta^i r_k^2}{r_k^2 + \epsilon} f_\epsilon''\lrp{\sqrt{r_k^2+\epsilon}}\\
            \leq& \frac{\delta^i r_k^2}{{r_k^2 + \epsilon}}\lrp{\L f_\epsilon'\lrp{\sqrt{r_k^2 + \epsilon}}\sqrt{r_k^2 + \epsilon} + 2f_\epsilon''\lrp{\sqrt{r_k^2+\epsilon}}}\\
            \leq& - \frac{\exp\lrp{- (1+\epsilon)\L \R^2/2}}{2\lrp{1+\epsilon}^2 \R^2}  \delta^i f_\epsilon\lrp{\sqrt{r_k^2 + \epsilon}}
        \end{alignat*}
        where we use Lemma \ref{l:fproperties} and the definition of $\L$.
        \item $r_k > \R$: We use the fact that $f_\epsilon''(r) \leq 0$ for all $r \geq \R \geq \epsilon$. Thus
        \begin{alignat*}{1}
            & \frac{\delta^i}{2\sqrt{r_k^2 + \epsilon}} f_\epsilon'\lrp{\sqrt{r_k^2+\epsilon}} \lrp{\lrp{2\kappa\lrp{r_k} + L_{Ric}} r_k^2} + \frac{2 \delta^i r_k^2}{r_k^2 + \epsilon} f_\epsilon''\lrp{\sqrt{r_k^2+\epsilon}}\\
            \leq& \frac{\delta^i}{2\sqrt{r_k^2 + \epsilon}} f_\epsilon'\lrp{\sqrt{r_k^2+\epsilon}} \lrp{\lrp{2\kappa\lrp{r_k} + L_{Ric}} r_k^2}\\
            \leq& - \frac{\delta^i \lrp{\lrp{\kappa(r_k) + L_{Ric}/2}} r_k^2 f_\epsilon'\lrp{\sqrt{r_k^2 + \epsilon}}}{8\sqrt{r_k^2 + \epsilon}}\\
            \leq& - \frac{1}{8}\delta^i \lrp{\lrp{\kappa(r_k) + L_{Ric}/2}\exp\lrp{-  (1+\epsilon) \L \R^2/2}} f_\epsilon\lrp{\sqrt{r_k^2 + \epsilon}}
        \end{alignat*}
    \end{enumerate}
\end{proof}

\begin{proof}[Proof of Lemma \ref{l:g_contraction_without_gradient_lipschitz}]
    Let $E^x$ be an orthonormal basis of $T_{x(0)} M$, $E^y$ be an orthonormal basis of $T_{y(0)} M$, and let $\BB^x$ and $\BB^y$ denote two Brownian motions which may be coupled in a non-trivial way. By definition of $\Phi$ in \eqref{d:x(t)} and by Lemma \ref{l:Phi_is_diffusion}, $x(t) = \Phi(t;x(0),E^x,\beta,\BB^x)$ and $y(t) = \Phi(t;y(0),E^y,\beta,\BB^y)$, where equivalence is in the sense of distribution.

    Lemma \ref{l:g_epsiilon_evolution_beta_lipschitz} almost gives us what we need. However, because we assumed that $\beta$ satisfies Assumption \ref{ass:distant-dissipativity}, the assumption that $\lrn{\beta(x)} \leq L_\beta$ cannot possibly hold. We thus need to approximate $\beta$ by a sequence of increasingly non-Lipschitz functions.

    Consider a fixed $i$. Let $s^j$ be a sequence of increasing radius, such that $s^j \to \infty$ as $j\to \infty$. Let $\beta^j$ denote the truncation of $\beta$ to norm $s^j$, i.e.
    \begin{alignat*}{1}
        \beta^j(x):= \twocase{\beta(x)}{\lrn{\beta(x)} \leq s^j}{\beta(x)\cdot\frac{s^j}{\lrn{\beta(x)}}}{\lrn{\beta(x)} > s^j}.
    \end{alignat*}
    We verify that $\beta^j$ also satisfies Assumption \ref{ass:beta_lipschitz} with the same $L_\beta'$ as $\beta$.

    Consider some fixed $j$. Let us now define the Euler Murayama discretization of $x(t)$ and $y(t)$ as
    \begin{alignat*}{1}
        & x^i(t) := \overline{\Phi}(t;x(0),E^x,\beta,\BB^x,i) \\
        & y^i(t) := \overline{\Phi}(t;y(0),E^y,\beta,\BB^y,i).
    \end{alignat*}
    Where $\overline{\Phi}$ is as defined in \eqref{d:x^i(t)}, and is a short-hand for the (interpolated) Euler Murayama sequence with stepsize $\delta^i = T/2^i$, defined in \eqref{d:x^i(t):0} (equivalently \eqref{d:x^i(t):0-appendix}). It is by definition that $x(t) = \lim_{i\to \infty} x^i(t)$ (and similarly for $y(t)$ and $y^i(t)$).
    
    Furthermore, define, for all $i,j$,
    \begin{alignat*}{1}
        & \t{x}^{i,j}(t) := \overline{\Phi}(t;x(0),E^x,\beta^j,\BB^x,i), \\
        & \t{y}^{i,j}(t) := \overline{\Phi}(t;y(0),E^y,\beta^j,\BB^y,i), \\
        & \t{x}^{\cdot,j}(t) := {\Phi}(t;x(0),E^x,\beta^j,\BB^x), \\
        & \t{y}^{\cdot,j}(t) := {\Phi}(t;y(0),E^y,\beta^j,\BB^y). 
    \end{alignat*}
    Note that the above definition implies a non-trivial coupling between $\t{x}^{i,j}(t)$ and $x^{i}(t)$, via the shared Brownian motion $\BB^x$. In words, $\t{x}^{\cdot,j}(t)$ denotes the exact Langevin SDE, but with drift given by $\beta^j$ (the truncated version of $\beta$), and $\t{x}^{i,j}$ denotes the (interpolated) Euler Murayama discretization of $\t{x}^{\cdot,j}$ with stepsize $\delta^i$.

    Let $L_0 := \max\lrbb{\lrn{\beta(x(0))},\lrn{\beta(y(0))}}$. Let us define ${\t{r}^{i,j}_k} := \dist\lrp{\t{x}^{i,j}_k,\t{y}^{i,j}_k}$. Let $\kappa(r)$ be as defined in the statement of Lemma \ref{l:g_epsiilon_evolution_beta_lipschitz}.
    
    Using Assumption \ref{ass:distant-dissipativity}, we verify that
    \begin{alignat*}{1}
        & \ind{{\t{r}^{i,j}_k} > \R}\lrp{\kappa({\t{r}^{i,j}_k}) + L_{Ric}/2} \\
        <& \ind{\R < {\t{r}^{i,j}_k} \leq \frac{s^j - L_0}{L_\beta'}} \lrp{- m + L_{Ric}/2} + \ind{\R < {\t{r}^{i,j}_k}, \frac{s^j - L_0}{L_\beta'}\leq {\t{r}^{i,j}_k}} \lrp{\frac{s^j}{{\t{r}^{i,j}_k}} + L_{Ric}/2}.
    \end{alignat*}

    
    Let $q,\R$ be the parameters in Assumption \ref{ass:distant-dissipativity}. This implies that $\kappa(r) \leq q$ for all $r\leq \R$. Let $\L := q + L_{Ric}/2$ and $\epsilon$ be as defined in Lemma \ref{l:g_epsiilon_evolution_beta_lipschitz}. Let $g_\epsilon$ be as defined in Definition \ref{d:f_epsilon} with parameters $\L$ and $\R$. Then 
    \begin{alignat*}{1}
        &\E{g_\epsilon\lrp{r_{k+1}^2}} \\
        \leq& \E{\ind{r > \R}\delta^i \lrp{\lrp{\kappa({\t{r}^{i,j}_k}) + L_{Ric}/2}\exp\lrp{-  (1+\epsilon) \L \R^2/2}/8} g_\epsilon\lrp{r_{k+1}^2}} \\
        &\quad - \frac{\exp\lrp{- (1+\epsilon)\L \R^2/2}}{2\lrp{1+\epsilon}^2 \R^2} \delta^i \E{\ind{r \leq \R} g_\epsilon\lrp{r_{k+1}^2}} + O\lrp{\delta^i{\epsilon^{1/2}} + \epsilon^{-5/2} {\delta^i}^{3/2}}\\
        \leq& -\frac{\delta^i (m-L_{Ric}/2) \exp\lrp{-  (1+\epsilon) \L \R^2/2}}{16} \E{\ind{\R < {\t{r}^{i,j}_k} \leq \frac{s^j - L_0}{L_\beta'}} g_\epsilon\lrp{r_{k+1}^2}} \\
        &\quad - \frac{\delta^i \exp\lrp{- (1+\epsilon)\L \R^2/2}}{2\lrp{1+\epsilon}^2 \R^2} \E{\ind{r \leq \R} g_\epsilon\lrp{r_{k+1}^2}}\\
        &\quad + \delta^i \E{\ind{\frac{s^j - L_0}{L_\beta'}< {\t{r}^{i,j}_k}} \lrp{{s^j}+ \frac{1}{2}L_{Ric}{\t{r}^{i,j}_k}}} + O\lrp{\delta^i{\epsilon^{1/2}} + \epsilon^{-5/2} {\delta^i}^{3/2}}\\
        \leq& - \alpha_\epsilon \delta^i \E{g_\epsilon\lrp{r_{k+1}^2}} + \delta^i \E{\ind{\frac{s^j - L_0}{L_\beta'}< {\t{r}^{i,j}_k}} \lrp{{s^j}+ \lrp{m + L_{Ric}/2}{\t{r}^{i,j}_k}}} + O\lrp{\delta^i{\epsilon^{1/2}} + \epsilon^{-5/2} {\delta^i}^{3/2}}
        \elb{e:t:rwkenmnf:0}
    \end{alignat*}
    where we define $\alpha_\epsilon := \min\lrbb{\frac{m-L_{Ric}/2}{16}, \frac{1}{2\lrp{1+\epsilon}^2 \R^2}}\exp\lrp{-  \frac{1}{2}(1+\epsilon) \L \R^2}$. The first line follows from Lemma \ref{l:g_epsiilon_evolution_beta_lipschitz}, the second line simply splits $\ind{r > \R}$ into two cases: $\ind{\R < {\t{r}^{i,j}_k} \leq \frac{s^j - L_0}{L_\beta'}}$ and $\ind{\frac{s^j - L_0}{L_\beta'}< {\t{r}^{i,j}_k}}$, and bounds $\kappa(r) \leq -m$ when $r>\R$ under Assumption \ref{ass:distant-dissipativity}. The third inequality is by definition of $\alpha_\epsilon$.

    Applying \eqref{e:t:rwkenmnf:0} recursively for $k=0...K$, where $K = T/2^i$, we get that
    \begin{alignat*}{1}
        &\E{g_{\epsilon}\lrp{\lrp{\t{r}^{i,j}_K}^2}} \\
        \leq& \exp\lrp{-\alpha_\epsilon K\delta^i}\E{g_{\epsilon}\lrp{r_{0}^2}} + O\lrp{T \epsilon^{1/2} + T\epsilon^{-5/2} {\delta^i}^{1/2}}\\
        &\quad + \delta^i \sum_{k=0}^K \E{\ind{\frac{s^j - L_0}{L_\beta'}< {\t{r}^{i,j}_k}} \lrp{{s^j}+ \lrp{m + L_{Ric}/2}{\t{r}^{i,j}_k}}} + O\lrp{T{\epsilon^{1/2}} + \epsilon^{-5/2} T {\delta^i}^{1/2}}
        \elb{e:t:rwkenmnf:1}
    \end{alignat*}
    We now bound the second term of \eqref{e:t:rwkenmnf:1} more carefully:
    \begin{alignat*}{1}
        & \delta^i \sum_{k=0}^K \E{\ind{\frac{s^j - L_0}{L_\beta'}\leq {\t{r}^{i,j}_k}} \lrp{{s^j}+ \lrp{m + L_{Ric}/2}{\t{r}^{i,j}_k}}}\\
        \leq& \delta^i \E{ \lrp{\max_{k\leq K} \ind{\frac{s^j - L_0}{L_\beta'}\leq {\t{r}^{i,j}_k}}} \sum_{k=0}^K\lrp{{s^j}+ \lrp{m + L_{Ric}/2}{\t{r}^{i,j}_k}}}\\
        \leq& \delta^i \E{ \lrp{\ind{\frac{s^j - L_0}{L_\beta'}\leq \max_{k\leq K}{\t{r}^{i,j}_k}}} \sum_{k=0}^K\lrp{{s^j}+ \lrp{m + L_{Ric}/2}{\t{r}^{i,j}_k}}}\\
        \leq& \delta^i \sqrt{\Pr{\frac{s^j - L_0}{L_\beta'}\leq \max_{k\leq K}{\t{r}^{i,j}_k}}} \cdot \sqrt{\E{\lrp{\sum_{k=0}^K\lrp{{s^j}+ \lrp{m + L_{Ric}/2}{\t{r}^{i,j}_k}}}^2}}\\
        \leq& O\lrp{\frac{1}{s^j}}
        \elb{e:t:rwkenmnf:2}
    \end{alignat*}
    The last line is because of the following: from Lemma \ref{l:near_tail_bound_L4}, $\delta^i \Pr{\frac{s^j - L_0}{L_\beta'}\leq \sup_{k\leq K} {\t{r}^{i,j}_k}}^{1/2} = O\lrp{\frac{{L_\beta'}^2}{(s^j - L_0)^2}} = O\lrp{\frac{1}{s_j^2}}$ assuming $j$ sufficiently large. Also from Lemma \ref{l:near_tail_bound_L4}, $\delta^i \sqrt{\E{\lrp{\sum_{k=0}^{K}\lrp{{s^j}+ \lrp{m + L_{Ric}/2}{\t{r}^{i,j}_k}}}^2}} = O\lrp{T}$. 
    
    Plugging \eqref{e:t:rwkenmnf:2} into \eqref{e:t:rwkenmnf:1}, and recalling the definition of $\t{r}$, and the fact that $\dist\lrp{\t{x}^i_K,\t{y}^i_K}:= \dist\lrp{\t{x}^i(T),\t{y}^i(T)}$,
    \begin{alignat*}{1}
        \E{g_{\epsilon}\lrp{\dist\lrp{\t{x}^{i,j}(T),\t{y}^{i,j}(T}^2}} 
        \leq& \exp\lrp{-\alpha_\epsilon K\delta^i}g_{\epsilon}\lrp{\dist\lrp{x(0),y(0)}^2} + O\lrp{\epsilon^{1/2} + \epsilon^{-5/2} {\delta^i}^{1/2} + \frac{1}{s^j}}
    \end{alignat*}
    where $O(\cdot)$ hides $T$ dependency as well.

    First, by taking the limit of $i$ to infinity (e.g. for each $i$, we see that for any $j$ and any $\epsilon$,
    \begin{alignat*}{1}
        \lim_{i \to \infty} \E{g_{\epsilon}\lrp{\dist\lrp{\t{x}^{i,j}(T),\t{y}^{i,j}(T)}^2}}  \leq& \exp\lrp{-\alpha_\epsilon K\delta^i}g_{\epsilon}\lrp{\dist\lrp{x(0),y(0)}^2}  + O\lrp{\epsilon^{1/2} + \frac{1}{s^j}}
    \end{alignat*}
    Let us define $\t{x}^{\cdot,j}(t)$ as the almost sure limit of $\t{x}^{i,j}(t)$, as $i\to \infty$, whose existence is shown in Lemma \ref{l:x(t)_is_brownian_motion} (similarly for $\t{y}^{\cdot,j}(t)$). It follows that $g_{\epsilon}\lrp{\dist\lrp{\t{x}^{i,j}(T),\t{y}^{i,j}(T}^2}$ converges almost surely to $g_{\epsilon}\lrp{\dist\lrp{\t{x}^{\cdot,j}(T),\t{y}^{\cdot,j}(T)}^2}$ as $i\to \infty$. By dominated convergence (Lemma \ref{l:near_tail_bound_L2} implies a single constant upper bounds $\E{\dist\lrp{\t{x}^{i,j}(T),\t{y}^{i,j}(T}^2}$ for all $i$), $\E{g_{\epsilon}\lrp{\dist\lrp{\t{x}^{i,j}(T),\t{y}^{i,j}(T)}^2}}$ converges to $\E{g_{\epsilon}\lrp{\dist\lrp{\t{x}^{\cdot,j}(T),\t{y}^{\cdot,j}(T)}^2}}$ as $i\to \infty$. Let $\Omega$ denote the set of all couplings between $\t{x}^{\cdot,j}(t)$ and $\t{y}^{\cdot,j}(t)$. Then 
    \begin{alignat*}{1}
        & \inf_{\Omega} \E{g_{\epsilon}\lrp{\dist\lrp{\t{x}^{\cdot,j}(T),\t{y}^{\cdot,j}(T)}^2}}\\
        \leq & \lim_{i \to \infty} \E{g_{\epsilon}\lrp{\dist\lrp{\t{x}^{i,j}(T),\t{y}^{i,j}(T)}^2}} \\
        \leq& \exp\lrp{-\alpha_\epsilon K\delta^i}g_{\epsilon}\lrp{\dist\lrp{x(0),y(0)}^2}  + O\lrp{\epsilon^{1/2} + \frac{1}{s^j}},
        \elb{e:t:pmfalkfma}
    \end{alignat*}
    where the first inequality uses the fact that $\t{x}^{\cdot,j}(t)$ (resp $\t{y}^{\cdot,j}(t)$) is the limit, as $i \to \infty$, of $\t{x}^{i,j}(t)$ (resp $\t{y}^{i,j}(t)$).




    From Lemma \ref{l:near_tail_bound_L4}, we know that
    \begin{alignat*}{1}
        \Pr{\sup_{t} \dist\lrp{x(t),x(0)} \geq s} \leq O\lrp{\frac{1}{s^4}},
    \end{alignat*}
    where we use the fact that, by definition in \eqref{d:x^i(t)}, $x^i(t)$ are linear interpolations of $x^i(k)$. Next, notice that when $\sup_{t\in[0,T]} \dist\lrp{x(t), x(0)} \leq \frac{s^j - L_0}{L_\beta'}$, $x(t) = \t{x}^{\cdot,j}(t)$ for all $t\in [0,T]$. It thus follows that as $s^j \to \infty$, $\E{g_\epsilon\lrp{\dist\lrp{\t{x}^{\cdot,j}(T), \t{y}^{\cdot,j}(T)}}}$ converges to $\E{g_\epsilon\lrp{\dist\lrp{x(T),y(T)}}}$ almost surely. Thus taking limit of \eqref{e:t:pmfalkfma} as $j\to \infty$, i.e. $s^j \to \infty$,
    \begin{alignat*}{1}
        & \inf_{\Omega} \E{g_{\epsilon}\lrp{\dist\lrp{x(T),y(T)}^2}}\\
        \leq& \exp\lrp{-\alpha_\epsilon K\delta^i}g_{\epsilon}\lrp{\dist\lrp{x(0),y(0)}^2}  + O\lrp{\epsilon^{1/2}}.
    \end{alignat*}
    Finally, take the limit of $\epsilon \to 0$. Note that $g_\epsilon(r^2) \to g_0(r^2) = f(r)$, where $f$ is defined in Definition \ref{d:f}. Note also that $\alpha_\epsilon \to \alpha$ as defined in the lemma statement. Finally, the properties of $f$ follows from Lemma \ref{l:fproperties}.

\end{proof}

\section{Tail Bounds}
In this section, we establish various probability and moment bounds for  and \eqref{e:intro_sde}, \eqref{e:intro_euler_murayama} and \eqref{e:intro_sgld}. These bounds are used at many places in our proofs.

\subsection{One-Step Distance Bounds}
\subsubsection{Under Lipschitz Continuity}
\begin{lemma}[One-step distance evolution under Lipschitz Continuity]\label{l:near_tail_bound_one_step}
    Let $\beta$ be a vector field satisfying \ref{ass:beta_lipschitz}. Let $\delta \in \Re^+$ be a stepsize satisfying $\delta \leq {\frac{1}{16{L_\beta'}}}$. Let $x_0\in M$ be arbitrary, let $x_k$ denote the iterative process
    \begin{alignat*}{1}
        x_{k+1} = \Exp_{x_k}\lrp{\delta \beta(x_k) + \sqrt{\delta} \xi_k(x_k)}
    \end{alignat*}
    where $\xi_k$ denote a random variable that possibly depends on $x_k$. Let $\gamma_k(t): [0,1] \to M$ denote any minimizing geodesic with $\gamma(0) = x_k$ and $\gamma(1) = x_0$. Then for any positive integer $K$ and for any $k\leq K$, we can bound,
    \begin{alignat*}{1}
        \dist\lrp{x_{k+1}, x_0}^2 
        \leq&  \lrp{1 + 8\delta L_\beta' + \frac{1}{2K} + \delta L_R \lrn{\xi_k(x_k)}^2 + \delta^2 L_R L_0^2}\dist\lrp{x_k, x_0}^2 +  2\delta \lrn{\xi_k(x_k)}^2 + 8K\delta^2 L_0^2\\
        & \quad + \ind{\dist\lrp{x_k,x_0} \leq \frac{1}{\delta \sqrt{L_R} {L_\beta'}}}\lrp{- 2\lin{\sqrt{\delta} \xi_k(x_k), \gamma_k'(0)}}
    \end{alignat*}
\end{lemma}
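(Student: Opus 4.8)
The proof should split on the indicator event $\ind{\dist\lrp{x_k,x_0}\le 1/(\delta\sqrt{L_R}L_\beta')}$ and run, in each regime, a single–step expansion of $\dist\lrp{\cdot,x_0}^2$. Write $u := \delta\beta(x_k)+\sqrt\delta\,\xi_k(x_k)$, so $x_{k+1}=\Exp_{x_k}\lrp{u}$; abbreviate $\ell:=\dist\lrp{x_k,x_0}$ and $L_0:=\lrn{\beta(x_0)}$ (the convention used in the proofs of Lemmas~\ref{l:x(t)_is_brownian_motion} and~\ref{l:discretization-approximation-lipschitz-derivative}); note $\gamma_k'(0)$ is $-\tfrac12$ the gradient of $\dist\lrp{\cdot,x_0}^2$ at $x_k$ with $\lrn{\gamma_k'(0)}=\ell$. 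Two elementary inputs are reused: from the integrated form of Assumption~\ref{ass:beta_lipschitz}, $\lrn{\beta(x_k)-\party{x_0}{x_k}\beta(x_0)}\le L_\beta'\ell$, hence $\lrn{\beta(x_k)}\le L_0+L_\beta'\ell$; and, since $\delta\le 1/(16L_\beta')$, one has $\lrn{u}^2\le (1+a)\delta\lrn{\xi_k(x_k)}^2+(1+a^{-1})\delta^2\lrn{\beta(x_k)}^2$ for a suitable small $a>0$, so the $\delta\lrn{\xi_k}^2$–coefficient stays below $2$ while the $\delta^2$–pieces get absorbed into $8K\delta^2 L_0^2$ and $8\delta L_\beta'\ell^2$.

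\textbf{Near regime} ($\ell\le 1/(\delta\sqrt{L_R}L_\beta')$). Apply the single–step metric comparison following from the sectional–curvature bound (Assumption~\ref{ass:sectional_curvature_regularity}) — the signed, one–sided specialization of the distance–evolution estimates used in Section~\ref{Brownian Motion Section} — along the chosen minimizing geodesic $\gamma_k$: $\dist\lrp{x_{k+1},x_0}^2 \le \ell^2 - 2\lin{u,\gamma_k'(0)} + \lrp{1+O(\sqrt{L_R}\ell)}\lrn{u}^2$; non–uniqueness of $\gamma_k$ is harmless since the inequality holds for any minimizer (hence the statement's wording). Split $-2\lin{u,\gamma_k'(0)}=-2\delta\lin{\beta(x_k),\gamma_k'(0)}-2\sqrt\delta\lin{\xi_k(x_k),\gamma_k'(0)}$; keep the second summand verbatim (it is the indicator term, and has zero conditional mean when $\xi_k$ is centered — the reason this lemma is stated with a signed term), bound the first by $2\delta L_\beta'\ell^2+2\delta L_0\ell\le 2\delta L_\beta'\ell^2+\tfrac1{4K}\ell^2+4K\delta^2L_0^2$, and absorb the distortion $O(\sqrt{L_R}\ell)\lrn{u}^2$ by expanding $\lrn{u}^2$ as above: Young's inequality handles the $\sqrt{L_R}\ell\cdot\delta\lrn{\xi_k}^2$ and $\sqrt{L_R}\ell\cdot\delta^2L_0^2$ pieces, while the $\sqrt{L_R}\ell\cdot\delta^2{L_\beta'}^2\ell^2$ piece — cubic in $\ell$ — is exactly why the threshold is $1/(\delta\sqrt{L_R}L_\beta')$: there $\delta\sqrt{L_R}L_\beta'\ell\le 1$ makes it $\le 4\delta L_\beta'\ell^2$.

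\textbf{Far regime} ($\ell > 1/(\delta\sqrt{L_R}L_\beta')$). Here the curvature distortion would be too expensive, so use only the triangle inequality $\dist\lrp{x_{k+1},x_0}\le\lrn{u}+\ell$, i.e. $\dist\lrp{x_{k+1},x_0}^2\le\ell^2+2\lrn{u}\ell+\lrn{u}^2$, which is benign since $\lrn{u}$ is at most linear in $\ell$. Expand $\lrn{u}\le\delta L_0+\delta L_\beta'\ell+\sqrt\delta\lrn{\xi_k(x_k)}$; the cross term yields $2\delta L_\beta'\ell^2 + 2\delta L_0\ell + 2\sqrt\delta\lrn{\xi_k(x_k)}\ell$, and on this event $1<\delta\sqrt{L_R}L_\beta'\ell$ lets one rewrite $2\sqrt\delta\lrn{\xi_k(x_k)}\ell\le 2\delta^{3/2}\sqrt{L_R}L_\beta'\lrn{\xi_k(x_k)}\ell^2\le\lrp{\delta L_R\lrn{\xi_k(x_k)}^2+\delta^2{L_\beta'}^2}\ell^2$, moving the whole noise cross term into the $\dist^2$–coefficient — which is why the signed term drops out on the far event. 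Collecting these with $2\delta L_0\ell\le\tfrac1{4K}\ell^2+4K\delta^2L_0^2$ and $\lrn{u}^2$ gives the stated coefficients.

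\textbf{Main obstacle.} The individual steps are routine; the work is the Young's–inequality bookkeeping so that every $\delta^2$–order, $O(1/K)$–order and curvature–distortion remainder lands inside the exact coefficient $1+8\delta L_\beta'+\tfrac1{2K}+\delta L_R\lrn{\xi_k}^2+\delta^2L_RL_0^2$ and the exact additive terms $2\delta\lrn{\xi_k}^2+8K\delta^2L_0^2$ — in particular keeping the $\delta\lrn{\xi_k}^2$–constant at $2$ — together with pinning down the precise one–sided curvature–comparison inequality valid in the near regime and checking that the threshold $1/(\delta\sqrt{L_R}L_\beta')$ is exactly what renders the distortion quadratic rather than cubic in $\dist\lrp{x_k,x_0}$.
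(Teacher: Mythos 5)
Your proposal is correct, and the near regime is exactly the paper's argument: the paper invokes the Zhang--Sra comparison inequality (Lemma~\ref{l:zhang2016}), $\dist\lrp{x_{k+1},x_0}^2 \le \ell^2 - 2\lin{u,\gamma_k'(0)} + \lrp{1+\sqrt{L_R}\,\ell}\lrn{u}^2$, keeps the signed noise term under the indicator, and uses the threshold $\ell \le 1/(\delta\sqrt{L_R}L_\beta')$ precisely to tame the cubic distortion term $\sqrt{L_R}\,\ell\cdot\delta^2{L_\beta'}^2\ell^2$, as you say. Where you genuinely diverge is the far regime. The paper does not use the triangle inequality there; it introduces the interpolating geodesic $z(t)=\Exp_{x_k}\lrp{t(\delta\beta(x_k)+\sqrt{\delta}\xi_k(x_k))}$, differentiates $\dist\lrp{z(t),x_0}^2$ via the first-variation inequality $\frac{d}{dt}\dist\lrp{z(t),x_0}^2\le -2\lin{\gamma_t'(0),z'(t)}$, splits the integrand into drift, drift-discretization, and noise pieces, converts the noise cross term into $\delta L_R\lrn{\xi_k}^2\dist\lrp{x_k,x_0}^2$ using the far-regime condition (your step $2\sqrt{\delta}\lrn{\xi_k}\ell \le \lrp{\delta L_R\lrn{\xi_k}^2+\delta^2{L_\beta'}^2}\ell^2$ is the same trick), and closes with Gronwall. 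Your route --- squaring $\dist\lrp{x_{k+1},x_0}\le\lrn{u}+\ell$ and absorbing the cross terms --- is strictly simpler and does deliver the stated coefficients (the extra $\delta^2{L_\beta'}^2\ell^2$ and the $\lrn{u}^2$ expansion with Young parameter $a=1$ fit inside $8\delta L_\beta'$ and $2\delta\lrn{\xi_k}^2$ because $\delta L_\beta'\le 1/16$). What the paper's heavier Gronwall machinery buys is reusability: in the companion dissipativity lemma (Lemma~\ref{l:far_tail_bound_one_step}) the far-regime drift must contribute a \emph{negative} $-\delta m\,\ell^2$ term, which the triangle inequality would throw away; here, where the drift is only bounded above via Lipschitzness, your shortcut loses nothing.
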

\begin{proof}
    We will be using the bound from \cite{zhang2016first} (see Lemma \ref{l:zhang2016}). Let $v := \delta \beta(x_k) + \sqrt{\delta} \xi_k(x_k)$. Then Lemma \ref{l:zhang2016} bounds
    \begin{alignat*}{1}
        \dist\lrp{x_{k+1}, x}^2
        \leq& \dist\lrp{x_{k}, x_0}^2 - 2\lin{v, \gamma_k'(0)} + {\tc\lrp{\sqrt{L_R}\dist\lrp{x_k,x_0}}} \lrn{v}^2\\
        \leq& \dist\lrp{x_{k}, x_0}^2 - 2\lin{v, \gamma_k'(0)} + \lrp{1+\sqrt{L_R} \dist(x_k,x_0)} \lrn{v}^2
        \elb{e:triangle_lemma_far_copy:1}
    \end{alignat*}
    where $\zeta(r) := \frac{r}{\tanh(r)}$.
    
    We will consider two cases:\\
    \textbf{Case 1: $\dist\lrp{x_k,x_0} \leq \frac{1}{\delta\sqrt{L_R} {L_\beta'}}$}.\\
    From \eqref{e:triangle_lemma_far_copy:1}:
    \begin{alignat*}{1}
        &\dist\lrp{x_{k+1}, x_0}^2 \\
        \leq& \dist\lrp{x_{k}, x_0}^2 - 2\lin{\delta \beta(x_k) + \sqrt{\delta} \xi_k(x_k), \gamma_k'(0)} + \lrp{1+\sqrt{L_R} \dist(x_k,x_0)} \lrn{\delta \beta(x_k) + \sqrt{\delta} \xi_k(x_k)}^2\\
        \leq& \dist\lrp{x_{k}, x_0}^2 - 2\lin{\delta \beta(x_k) + \sqrt{\delta} \xi_k(x_k), \gamma_k'(0)} + \delta^2\lrp{{L_0}^2 +  {L_\beta'}^2 \dist\lrp{x_k,x_0}^2} + \delta \lrn{\xi_k(x_k)}^2\\
        &\quad + \delta^2 \sqrt{L_R} \lrp{L_0^2\dist\lrp{x_k,x_0} + {L_\beta'}^2 \dist\lrp{x_k,x_0}^3 }+ \delta \sqrt{L_R} \lrn{{\xi_k(x_k)}}^2 \dist\lrp{x_k,x_0}\\
        \leq& \dist\lrp{x_{k}, x_0}^2 + \delta L_\beta' \dist\lrp{x_{k}, x_0}^2  + K\delta^2 L_0^2 + \frac{1}{4K} \dist\lrp{x_{k}, x_0}^2 - 2\lin{\sqrt{\delta} \xi_k(x_k), \gamma_k'(0)}\\
        &\quad + \delta^2 L_0^2 + \delta L_\beta' \dist\lrp{x_k,x_0}^2 + \delta \lrn{\xi_k(x_k)}^2\\
        &\quad + \delta^2 L_R L_0^2 \dist\lrp{x_,x_0}^2 + \delta^2 L_0^2 + \delta L_\beta' \dist\lrp{x_k,x_0}^2 + \delta L_R \lrn{\xi_k(x_k)}^2 \dist\lrp{x_k,x_0}^2 + {\delta \lrn{\xi_k(x_k)}^2}\\
        \leq& \lrp{1 + 3\delta L_\beta'+ \delta L_R \lrn{\xi_k(x_k)}^2 + \frac{1}{4K} + \delta^2 L_R L_0^2}\dist\lrp{x_{k}, x_0}^2 - 2\lin{\sqrt{\delta} \xi_k(x_k), \gamma_k'(0)}\\
        &\quad + \lrp{2K\delta^2 L_0^2 + \delta \lrn{\xi_k(x_k)}^2 + {\delta \lrn{\xi_k(x_k)}^2}}
    \end{alignat*}
    where the third inequality uses the definition of Case 1, and the fourth inequality is by several applications of Young's Inequality.
    
    \textbf{Case 2: $\dist\lrp{x_k,x_0} > \frac{1}{4\delta \sqrt{L_R} {L_\beta'}}$}.\\
    Let us define 
    \begin{alignat*}{1}
        z(t) := \Exp_{x_k}\lrp{t\lrp{\delta \beta(x_k) + \sqrt{\delta}\xi_k(x_k)}}
    \end{alignat*}
    I.e. $z(t)$ interpolates between $x_k$ and $x_{k+1}$. We verify that $z'(t) = \party{z(0)}{z(t)} \lrp{\delta \beta(x_k) + \sqrt{\delta}\xi_k(x_k)}$. Let us also define a family of geodesics $\gamma_t$, where for each $t$, $\gamma_t$ is a minimizing geodesic with $\gamma_t(0) = z(t)$ and $\gamma_t(1) = x_0$. If such a minimizing geodesic is not unique, any choice will do. We verify that
    \begin{alignat*}{1}
        \frac{d}{dt} \dist\lrp{z(t), x_0}^2
        \leq& -2\lin{\gamma_t'(0), z'(t)}\\
        \leq& \underbrace{- 2 \lin{\delta \beta(z(t)), \gamma_t'(0)} }_{\circled{1}} \\
        &\quad + \underbrace{2\lin{\delta \beta(z(t)) - \party{z(0)}{z(t)}\lrp{\delta \beta(x_k) }, \gamma_t'(0)}}_{\circled{2}} - \underbrace{2 \lin{\party{z(0)}{z(t)}\lrp{\sqrt{\delta}\xi_k(x_k)}, \gamma_t'(0)} }_{\circled{3}}.
    \end{alignat*}

    Let's upper bound the terms one by one.
    
    We first bound $\circled{2}$, which represents the "discretization error in drift":
    \begin{alignat*}{1}
        \circled{2} :=& 2 \lin{\delta \beta(z(t)) - \party{z(0)}{z(t)}\lrp{\delta \beta(x_k) }, \gamma_t'(0)}\\
        \leq& 2 \lrn{\delta \beta(z(t)) - \party{z(0)}{z(t)}\lrp{\delta \beta(x_k) }} \dist\lrp{z(t), x_0} \\
        \leq& 2 \delta {L_\beta'} \dist\lrp{z(t), x_k} \dist\lrp{z(t), x_0} \\
        \leq& 2\delta L_\beta' \dist\lrp{z(t),x_k}^2 + 2\delta L_\beta' \dist\lrp{z(t),x_0}^2
    \end{alignat*}
    
    By definition of $z(t)$, we know that $\dist\lrp{z(t), x_k} \leq \lrn{\delta \beta(x_k) + \sqrt{\delta} \xi_k(x_k)} \leq \delta L_0 + \delta {L_\beta'} \dist\lrp{x_k,x_0} + \sqrt{\delta} \lrn{\xi_k(x_k)}$, so that $2\delta L_\beta' \dist\lrp{z(t),x_k}^2 \leq 8 \delta^3 {L_\beta'}^3 \dist\lrp{x_k,x_0}^2 + 8\delta^2 {L_\beta'} \lrn{\xi_k(x_k)}^2 + 8 \delta^3 L_\beta' L_0^2\leq \delta L_\beta' \dist\lrp{x_k,x_0}^2 + \delta \lrn{\xi_k(x_k)}^2 + \delta^2 L_0^2$, so that
    \begin{alignat*}{1}
        \circled{2} 
        \leq& \delta L_\beta' \dist\lrp{z(t),x_k}^2 + \delta L_\beta' \dist\lrp{x_k,x_0}^2 + \delta \lrn{\xi_k(x_k)}^2 + \delta^2 L_0^2
    \end{alignat*}
    
    Next, we bound $\circled{3}$, which is the most significant error term. From the definition of Case 2, $\dist\lrp{x_k,x_0} > \frac{1}{\delta\sqrt{L_R} {L_\beta'}}$,
    \begin{alignat*}{1}
        \circled{3} \leq & 2 \lin{\party{z(0)}{z(t)}\lrp{\sqrt{\delta}\xi_k(x_k)}, \gamma_t'(0)} \\
        \leq& 2 \sqrt{\delta} \lrn{\xi_k(x_k)} \dist\lrp{z(t), x_0}\\
        \leq& \delta L_\beta' \dist\lrp{z(t),x_0}^2 + \frac{1}{L_\beta'} \lrn{\xi_k(x_k)}^2\\
        \leq& \delta L_\beta' \dist\lrp{z(t),x_0}^2 + {\delta L_R} \lrn{\xi_k(x_k)}^2 \dist\lrp{x_k,x_0}^2
    \end{alignat*}
    where we use our assumption that $\delta \leq \frac{1}{ {L_\beta'}}$.

    Finally, we bound $\circled{1}$ as
    \begin{alignat*}{1}
        - 2 \lin{\delta \beta(z(t)), \gamma_t'(0)} 
        \leq& 4\delta L_\beta' \dist\lrp{z(t),x_0}^2 + 4K\delta^2 L_0^2 + \frac{1}{4K} \dist\lrp{z(t),x_0}^2
    \end{alignat*}

    Putting everything together,
    \begin{alignat*}{1}
        \frac{d}{dt} \dist\lrp{z(t), x_0}^2
        \leq& \lrp{6\delta L_\beta' + \frac{1}{4K}} \dist\lrp{z(t), x_0}^2 + \lrp{\delta L_\beta' + \delta L_R \lrn{\xi_k(x_k)}^2} \dist\lrp{x_k, x_0}^2 + \delta \lrn{\xi_k(x_k)}^2 + 4K\delta^2 L_0^2
    \end{alignat*}
    By Gronwall's Lemma (integrating from $t=0$ to $t=1$), 
    \begin{alignat*}{1}
        &\dist\lrp{x_{k+1}, x_0}^2 \\
        =& \dist\lrp{z(1),x_0}^2 \\
        \leq& \exp\lrp{6\delta L_\beta' + \frac{1}{4K} } \dist\lrp{x_k, x_0}^2 + \lrp{2\delta L_\beta' + 2\delta L_R \lrn{\xi_k(x_k)}^2} \dist\lrp{x_k, x_0}^2 + 2\delta \lrn{\xi_k(x_k)}^2 + 8K\delta^2 L_0^2\\
        \leq& \lrp{1 + 8\delta L_\beta' + \frac{1}{2K} + \delta L_R \lrn{\xi_k(x_k)}^2}\dist\lrp{x_k, x_0}^2 +  2\delta \lrn{\xi_k(x_k)}^2 + 8K\delta^2 L_0^2
    \end{alignat*}
    where we use the assumption that $\delta \leq \frac{1}{8 L_\beta'}$.

    \textbf{Combining Case 1 and Case 2:}
    \begin{alignat*}{1}
        \dist\lrp{x_{k+1}, x_0}^2 
        \leq&  \lrp{1 + 8\delta L_\beta' + \frac{1}{2K} + \delta L_R \lrn{\xi_k(x_k)}^2 + \delta^2 L_R L_0^2}\dist\lrp{x_k, x_0}^2 +  2\delta \lrn{\xi_k(x_k)}^2 + 8K\delta^2 L_0^2\\
        & \quad + \ind{\dist\lrp{x_k,x_0} \leq \frac{1}{\delta \sqrt{L_R} {L_\beta'}}}\lrp{- 2\lin{\sqrt{\delta} \xi_k(x_k), \gamma_k'(0)}}
    \end{alignat*}
\end{proof}
\subsubsection{Under Dissipativity}
\begin{lemma}[One-step distance evolution under Dissipativity]\label{l:far_tail_bound_one_step}
    Let $\beta$ be a vector field satisfying \ref{ass:beta_lipschitz}. Let $\delta \in \Re^+$ be a stepsize satisfying $\delta \leq {\frac{m}{128 {L_\beta'}^2}}$. Let $x^*$ be some point with $\beta(x^*)=0$. Let $x_0 \in M$ be arbitrary. Let $x_k$ be the iterative process
    \begin{alignat*}{1}
        x_{k+1} = \Exp_{x_k}\lrp{\delta \beta(x_k) + \sqrt{\delta} \xi_k(x_k)}
    \end{alignat*}
    Assume that for all $x$ such that $\dist\lrp{x,x^*} \geq \R$, there exists a minimizing geodesic $\gamma : [0,1]\to M$ with $\gamma(0) = x, \gamma(1) = x^*$, and
    \begin{alignat*}{1}
        \lin{\beta(x), \gamma'(0)} \leq - m \dist\lrp{x,x^*}^2,
    \end{alignat*}
    and let $\gamma_k$ denote such a geodesic for $x=x_k$. Then for any $k$, 
    \begin{alignat*}{1}
        \dist\lrp{x_{k+1}, x^*}^2 
        \leq& \lrp{1 - \delta m}\dist\lrp{x_k, x^*}^2 + {\frac{2048 \delta L_R {L_\beta'}^4}{m^5} \lrn{\xi_k(x_k)}^4 + 4\delta {L_\beta'} \R^2}\\
        & \quad + \ind{\dist\lrp{x_k,x^*} \leq \frac{m}{4\delta\sqrt{L_R} {L_\beta'}^2}}\lrp{- 2\lin{\sqrt{\delta} \xi_k(x_k), \gamma_k'(0)}}
    \end{alignat*}
\end{lemma}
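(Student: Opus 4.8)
The plan is to mirror the proof of Lemma~\ref{l:near_tail_bound_one_step}, replacing the reference point $x_0$ there by the equilibrium $x^*$ and replacing the Lipschitz control of $\lin{\beta(x_k),\gamma_k'(0)}$ by the dissipativity hypothesis. The one new preliminary is that dissipativity, together with $\beta(x^*)=0$, supplies a drift estimate valid \emph{everywhere}: since $\beta$ is $L_\beta'$-Lipschitz (Assumption~\ref{ass:beta_lipschitz}), parallel transporting $\beta$ along a minimizing geodesic from $x^*$ to $x$ gives $\lrn{\beta(x)}\le L_\beta'\dist(x,x^*)$ for all $x$, and the same computation shows $m\le L_\beta'$. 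Hence for $\dist(x_k,x^*)<\R$ one has $-\lin{\beta(x_k),\gamma_k'(0)}\le L_\beta'\dist(x_k,x^*)^2\le -m\dist(x_k,x^*)^2+2L_\beta'\R^2$, which together with the dissipativity inequality for $\dist(x_k,x^*)\ge\R$ yields the uniform bound $-\lin{\beta(x_k),\gamma_k'(0)}\le -m\dist(x_k,x^*)^2+2L_\beta'\R^2$, where $\gamma_k$ is taken to be the dissipativity geodesic whenever $\dist(x_k,x^*)\ge\R$. I would then split on whether $\dist(x_k,x^*)$ is at most or at least $\frac{m}{4\delta\sqrt{L_R}{L_\beta'}^2}$; the two ranges overlap and cover $[0,\infty)$, exactly as in Lemma~\ref{l:near_tail_bound_one_step}.

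In the small-distance case, apply the comparison inequality of \cite{zhang2016first} (Lemma~\ref{l:zhang2016}) with $v=\delta\beta(x_k)+\sqrt\delta\,\xi_k(x_k)$ and the bound $\zeta(r)\le 1+r$ on $\zeta(r)=r/\tanh r$, giving $\dist(\Exp_{x_k}v,x^*)^2\le\dist(x_k,x^*)^2-2\lin{v,\gamma_k'(0)}+\zeta\big(\sqrt{L_R}\dist(x_k,x^*)\big)\lrn v^2$. Split $-2\lin{v,\gamma_k'(0)}$ into the drift part, bounded by the uniform estimate as $-2\delta m\dist(x_k,x^*)^2+4\delta L_\beta'\R^2$, and the noise part $-2\lin{\sqrt\delta\,\xi_k(x_k),\gamma_k'(0)}$, which is \emph{retained}: it is exactly the indicator term, and, having zero conditional mean, it will vanish when this one-step bound is later summed inside an expectation. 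Then bound $\lrn v^2\le 2\delta^2{L_\beta'}^2\dist(x_k,x^*)^2+2\delta\lrn{\xi_k(x_k)}^2$ and use $\delta\le\frac{m}{128{L_\beta'}^2}$ together with the case bound $\sqrt{L_R}\dist(x_k,x^*)\le\frac{m}{4\delta{L_\beta'}^2}$ to absorb the curvature correction $\zeta(\cdot)\lrn v^2$ (i.e.\ the pieces $\delta^2{L_\beta'}^2\dist^2$, $\sqrt{L_R}\delta^2{L_\beta'}^2\dist^3$, and $\sqrt{L_R}\delta\dist\lrn{\xi_k}^2$) into a fraction of $\delta m\dist(x_k,x^*)^2$ plus, via Young's inequality on the cross-terms, an error of the form $\frac{\delta L_R{L_\beta'}^4}{m^5}\lrn{\xi_k(x_k)}^4$.

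In the large-distance case I would argue as in Case~2 of Lemma~\ref{l:near_tail_bound_one_step}: set $z(t):=\Exp_{x_k}(tv)$ and use the first-variation identity $\frac{d}{dt}\dist(z(t),x^*)^2=-2\lin{\gamma_t'(0),z'(t)}$, with $\gamma_t$ a minimizing geodesic from $z(t)$ to $x^*$ and $z'(t)$ the parallel transport of $v$; splitting $v=\delta\beta(x_k)+\sqrt\delta\,\xi_k(x_k)$, the right-hand side breaks into (i) $-2\delta\lin{\gamma_t'(0),\beta(z(t))}$, bounded by the uniform drift estimate; (ii) the drift-discretization error $2\delta\lin{\gamma_t'(0),\beta(z(t))-\party{z(0)}{z(t)}\beta(x_k)}\le 2\delta L_\beta'\dist(z(t),x_k)\dist(z(t),x^*)$ with $\dist(z(t),x_k)\le\lrn v\le\delta L_\beta'\dist(x_k,x^*)+\sqrt\delta\lrn{\xi_k(x_k)}$; and (iii) the noise term $-2\sqrt\delta\lin{\gamma_t'(0),\party{z(0)}{z(t)}\xi_k(x_k)}\le\delta L_\beta'\dist(z(t),x^*)^2+\frac1{L_\beta'}\lrn{\xi_k(x_k)}^2$. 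Since the threshold does not involve $\R$, the decisive step is that $\dist(x_k,x^*)>\frac{m}{4\delta\sqrt{L_R}{L_\beta'}^2}$ lets one trade the standalone constants ($\frac1{L_\beta'}\lrn{\xi_k}^2$ and $\delta^2\lrn{\xi_k}^2$) against $(\text{small})\cdot\lrn{\xi_k}^2\dist(x_k,x^*)^2$, which Young's inequality converts into $\frac{\delta m}{c}\dist(x_k,x^*)^2+\frac{2048\,\delta L_R{L_\beta'}^4}{m^5}\lrn{\xi_k}^4$ (the exponent $m^{-5}$ arising from squaring the $\delta^2L_R{L_\beta'}^4/m^2$-type factor, dividing by $\delta m$, and simplifying with $\delta\le m/(128{L_\beta'}^2)$ and $m\le L_\beta'$). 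Gronwall's lemma on $[0,1]$, using $\delta L_\beta'\le m/128\le\tfrac12$, then converts the differential inequality into $\dist(x_{k+1},x^*)^2\le(1-\delta m)\dist(x_k,x^*)^2+\frac{2048\,\delta L_R{L_\beta'}^4}{m^5}\lrn{\xi_k}^4+4\delta L_\beta'\R^2$, with no surviving noise inner product because the indicator is $0$ in this range. Merging the two cases gives the statement.

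The main obstacle is bookkeeping rather than any new idea: one must track how the powers of $L_R,L_\beta',m$ propagate through the several applications of Young's inequality and the case-dependent distance bounds so the error collapses to precisely $\frac{2048\,\delta L_R{L_\beta'}^4}{m^5}\lrn{\xi_k}^4+4\delta L_\beta'\R^2$, checking in particular that the curvature corrections carrying a bare $\lrn{\xi_k}^2$ in the small-distance case are genuinely dominated after these manipulations (which may require handling that term via the interpolation ODE, whose first-variation form produces no $\lrn v^2$ term, rather than via the comparison inequality), and handling the edge case $\frac{m}{4\delta\sqrt{L_R}{L_\beta'}^2}<\dist(x_k,x^*)<\R$ (possible only for $\delta$ of order $\frac{m}{\sqrt{L_R}{L_\beta'}^2\R}$) by invoking the uniform drift estimate rather than bare dissipativity.
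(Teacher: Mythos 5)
Your proposal follows essentially the same route as the paper's proof: the same case split at the threshold $\frac{m}{4\delta\sqrt{L_R}{L_\beta'}^2}$, the Zhang--Sra comparison inequality (Lemma~\ref{l:zhang2016}) with the zero-mean noise term retained under the indicator in the near case, and the interpolated curve $z(t)$ with the three-way split of $-2\lin{\gamma_t'(0),z'(t)}$ plus Gronwall in the far case, with the same Young-inequality bookkeeping producing the $L_R{L_\beta'}^4 m^{-5}\lrn{\xi_k}^4$ error. This matches the paper's argument, including the concerns you flag about absorbing the curvature corrections.
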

\begin{proof}
    Throughout the proof, it is useful to note that by our assumptions, it must be that $m\leq L_\beta'$. We will be using the bound from \cite{zhang2016first} (see Lemma \ref{l:zhang2016}). Let $v := \delta \beta(x_k) + \sqrt{\delta} \xi_k(x_k)$. Then Lemma \ref{l:zhang2016} bounds
    \begin{alignat*}{1}
        \dist\lrp{x_{k+1}, x}^2
        \leq& \dist\lrp{x_{k}, x^*}^2 - 2\lin{v, \gamma_k'(0)} + {\tc\lrp{\sqrt{L_R}\dist\lrp{x_k,x^*}}} \lrn{v}^2\\
        \leq& \dist\lrp{x_{k}, x^*}^2 - 2\lin{v, \gamma_k'(0)} + \lrp{1+\sqrt{L_R} \dist(x_k,x^*)} \lrn{v}^2
        \elb{e:triangle_lemma_far_copy}
    \end{alignat*}
    where $\zeta(r) := \frac{r}{\tanh(r)}$.
    
    We will consider two cases:\\
    \textbf{Case 1: $\dist\lrp{x_k,x^*} \leq \frac{m}{4\delta\sqrt{L_R} {L_\beta'}^2}$}.\\
    From \eqref{e:triangle_lemma_far_copy}:
    \begin{alignat*}{1}
        &\dist^2\lrp{x_{k+1}, x^*}^2 \\
        \leq& \dist^2\lrp{x_{k}, x^*} - 2\lin{\delta \beta(x_k) + \sqrt{\delta} \xi_k(x_k), \gamma_k'(0)} + \lrp{1+\sqrt{L_R} \dist(x_k,x^*)} \lrn{\delta \beta(x_k) + \sqrt{\delta} \xi_k(x_k)}^2\\
        \leq& \dist^2\lrp{x_{k}, x^*} - 2\lin{\delta \beta(x_k) + \sqrt{\delta} \xi_k(x_k), \gamma_k'(0)} + \delta^2 {L_\beta'}^2 \dist\lrp{x_k,x^*}^2 + \delta \lrn{\xi_k(x_k)}^2\\
        &\quad + \delta^2 \sqrt{L_R} {L_\beta'}^2 \dist\lrp{x_k,x^*}^3 + \delta \sqrt{L_R} \lrn{{\xi_k(x_k)}}^2 \dist\lrp{x_k,x^*}\\
        \leq& \lrp{1 + \delta m/2}\dist^2\lrp{x_{k}, x^*} - 2\lin{\delta \beta(x_k) + \sqrt{\delta} \xi_k(x_k), \gamma_k'(0)} + \frac{4\delta L_R}{m} \lrn{{\xi_k(x_k)}}^4
        \elb{e:t:dakkmd:1}
    \end{alignat*}
    where we use our assumptions that $\delta \leq m/\lrp{16{L_\beta'}^2}$ and the inequality under Case 1. We used Cauchy Schwarz a few times.

    We can further bound
    \begin{alignat*}{1}
        2\lin{\delta \beta(x_k), \gamma_k'(0)}
        \leq& \ind{\dist\lrp{x_k,x^*}\geq \R}\lrp{-2m\dist\lrp{x_k,x^*}^2} + \ind{\dist\lrp{x_k,x^*}\leq \R}\lrp{2L_\beta' \dist\lrp{x_k,x^*}^2}\\
        \leq& -2\delta m\dist\lrp{x_k,x^*}^2 + 2\delta \lrp{m + L_\beta'}\R^2
    \end{alignat*}
    Thus
    \begin{alignat*}{1}
        \dist^2\lrp{x_{k+1}, x^*}^2 
        \leq& \lrp{1 -\delta m}\dist^2\lrp{x_{k}, x^*} - 2\lin{\sqrt{\delta} \xi_k(x_k), \gamma_k'(0)} \\
        &\quad + 2\delta \lrp{m + L_\beta'}\R^2 + \frac{4\delta L_R}{m} \lrn{{\xi_k(x_k)}}^4
        \elb{e:t:omfl:1}
    \end{alignat*}
    where we use the fact that $e^{\delta m} \leq e^{\frac{m^2}{16{L_\beta'}^2}} \leq 2$.

    \textbf{Case 2: $\dist\lrp{x_k,x^*} > \frac{m}{4\delta \sqrt{L_R} {L_\beta'}^2}$}.\\
    Let us define 
    \begin{alignat*}{1}
        z(t) := \Exp_{x_k}\lrp{t\lrp{\delta \beta(x_k) + \sqrt{\delta}\xi_k(x_k)}}
    \end{alignat*}
    I.e. $z(t)$ interpolates between $x_k$ and $x_{k+1}$. We verify that $z'(t) = \party{z(0)}{z(t)} \lrp{\delta \beta(x_k) + \sqrt{\delta}\xi_k(x_k)}$. Let us also define a family of geodesics $\gamma_t$, where for each $t$, $\gamma_t$ is a minimizing geodesic with $\gamma_t(0) = z(t)$ and $\gamma_t(1) = x^*$. If such a minimizing geodesic is not unique, any choice will do. We also verify that 
    \begin{alignat*}{1}
        \frac{d}{dt} \dist\lrp{z(t), x^*}^2
        \leq& -2\lin{\gamma_t'(0), z'(t)}\\
        \leq& \underbrace{- 2 \lin{\delta \beta(z(t)), \gamma_t'(0)} }_{\circled{1}} \\
        &\quad + \underbrace{2\lin{\delta \beta(z(t)) - \party{z(0)}{z(t)}\lrp{\delta \beta(x_k) }, \gamma_t'(0)}}_{\circled{2}} - \underbrace{2 \lin{\party{z(0)}{z(t)}\lrp{\sqrt{\delta}\xi_k(x_k)}, \gamma_t'(0)} }_{\circled{3}}
    \end{alignat*}
    Let's upper bound the terms one by one.
    
    We first bound $\circled{2}$, which represents the "discretization error in drift":
    \begin{alignat*}{1}
        \circled{2} :=& 2 \lin{\delta \beta(z(t)) - \party{z(0)}{z(t)}\lrp{\delta \beta(x_k) }, \gamma_t'(0)}\\
        \leq& 2 \lrn{\delta \beta(z(t)) - \party{z(0)}{z(t)}\lrp{\delta \beta(x_k) }} \dist\lrp{z(t), x^*} \\
        \leq& 2 \delta {L_\beta'} \dist\lrp{z(t), x_k} \dist\lrp{z(t), x^*} \\
        \leq& \frac{\delta m}{4} \dist\lrp{z(t),x^*}^2 + \frac{4\delta {L_\beta'}^2}{m} \dist\lrp{z(t),x_k}^2
    \end{alignat*}
    
    By definition of $z(t)$, we know that $\dist\lrp{z(t), x_k} \leq \lrn{\delta \beta(x_k) + \sqrt{\delta} \xi_k(x_k)} \leq \delta {L_\beta'} \dist\lrp{x_k,x^*} + \sqrt{\delta} \lrn{\xi_k(x_k)}$, so that $\frac{4\delta {L_\beta'}^2}{m} \dist\lrp{z(t),x_k}^2 \leq \frac{4\delta^3 {L_\beta'}^4}{m} \dist\lrp{x_k,x^*}^2 + \frac{4\delta^2 {L_\beta'}^2}{m} \lrn{\xi_k(x_k)}^2 \leq \frac{\delta m}{8} \dist\lrp{x_k,x^*}^2 + \delta \lrn{\xi_k(x_k)}^2$, so that
    \begin{alignat*}{1}
        \circled{2} 
        \leq& \frac{\delta m}{4} \dist\lrp{z(t),x^*}^2 + \frac{\delta m}{8} \dist\lrp{x_k,x^*}^2 + \delta \lrn{\xi_k(x_k)}^2
    \end{alignat*}
    
    Next, we bound $\circled{3}$, which is the most significant error term. From the definition of Case 2, $\dist\lrp{x_k,x^*} > \frac{1}{4\delta\sqrt{L_R} {L_\beta'}}$,
    \begin{alignat*}{1}
        \circled{3} \leq & 2 \lin{\party{z(0)}{z(t)}\lrp{\sqrt{\delta}\xi_k(x_k)}, \gamma_t'(0)} \\
        \leq& 2 \sqrt{\delta} \lrn{\xi_k(x_k)} \dist\lrp{z(t), x^*}\\
        \leq& \frac{\delta m}{8} \dist\lrp{z(t),x^*}^2 + \frac{8}{m} \lrn{\xi_k(x_k)}^2\\
        \leq& \frac{\delta m}{8} \dist\lrp{z(t),x^*}^2 + \frac{32\delta \sqrt{L_R} {L_\beta'}^2}{m^2} \lrn{\xi_k(x_k)}^2 \dist\lrp{x_k,x^*}\\
        \leq& \frac{\delta m}{8} \dist\lrp{z(t),x^*}^2 + 
        \frac{\delta m}{8} \dist\lrp{x_k,x^*}^2 + \frac{2048 \delta L_R {L_\beta'}^4}{m^5} \lrn{\xi_k(x_k)}^4
    \end{alignat*}
    where we use our assumption that $\delta \leq \frac{m}{128 {L_\beta'}^2}$.

    Finally, we bound $\circled{1}$ as
    \begin{alignat*}{1}
        &- 2 \lin{\delta \beta(z(t)), \gamma_t'(0)} \\
        \leq& \ind{\dist\lrp{z(t),x^*}^2 \geq \R}\lrp{-2 \delta m \dist\lrp{z(t),x^*}^2} + \ind{\dist\lrp{z(t),x^*} \leq \R}\lrp{2 \delta L_\beta' \dist\lrp{z(t)x^*}}\\
        \leq& 4 \delta L_\beta' \R^2 - 2\delta m \dist\lrp{\dist\lrp{z(t),x^*}^2}
    \end{alignat*}

    Putting everything together,
    \begin{alignat*}{1}
        \frac{d}{dt} \dist\lrp{z(t), x^*}^2
        \leq& - \frac{3\delta m}{2} \dist\lrp{z(t), x^*}^2 + \frac{\delta m}{4} \dist\lrp{x_k, x^*}^2 +  \frac{2048 \delta L_R {L_\beta'}^4}{m^5} \lrn{\xi_k(x_k)}^4 + 4\delta {L_\beta'} \R^2
    \end{alignat*}
    By Gronwall's Lemma (integrating from $t=0$ to $t=1$), 
    \begin{alignat*}{1}
        \dist\lrp{x_{k+1}, x^*}^2 
        =& \dist\lrp{z(1),x^*}^2 \\
        \leq& \exp\lrp{-{3\delta m/2}} \dist\lrp{x_k, x^*}^2 + \frac{\delta m}{4} \dist\lrp{x_k, x^*}^2 + \frac{2048 \delta L_R {L_\beta'}^4}{m^5} \lrn{\xi_k(x_k)}^4 + 4\delta{L_\beta'} \R^2 \\
        \leq& \lrp{1 - \delta m}\dist\lrp{x_k, x^*}^2 + \frac{2048 \delta L_R {L_\beta'}^4}{m^5} \lrn{\xi_k(x_k)}^4 + 4\delta{L_\beta'} \R^2
        \elb{e:t:omfl:2}
    \end{alignat*}
    where we use the assumption that $\delta \leq \frac{1}{128 m}$ so that $\exp\lrp{-3\delta m/2} \leq 1-5\delta m/4$.

    \textbf{Combining Case 1 and Case 2:}

    Combining \eqref{e:t:omfl:1} and \eqref{e:t:omfl:2},
    \begin{alignat*}{1}
        \dist\lrp{x_{k+1}, x^*}^2 
        \leq& \lrp{1 - \delta m}\dist\lrp{x_k, x^*}^2 + \lrp{\frac{2048 \delta L_R {L_\beta'}^4}{m^5} \lrn{\xi_k(x_k)}^4 + 4\delta {L_\beta'} \R^2}\\
        & \quad + \ind{\dist\lrp{x_k,x^*} \leq \frac{1}{4\delta \sqrt{L_R} {L_\beta'}}}\lrp{- 2\lin{\sqrt{\delta} \xi_k(x_k), \gamma_k'(0)}}
    \end{alignat*}
\end{proof}

\subsection{$L_p$ Bounds}
\subsubsection{Under Lipschitz Continuity}
\begin{lemma}[L2 Bound and Chevyshev under Lipschitz Continuity]\label{l:near_tail_bound_L2}
    Consider the same setup as Lemma \ref{l:near_tail_bound_one_step}. Assume in addition that there exists $\sigma_\xi \in \Re^+$ such that for all $x$ and for all $k$, $\E{\lrn{\xi_k(x)}^2} \leq \sigma_\xi^2$. Then for any positive integer $K$, and for all $k\leq K$,
    \begin{alignat*}{1}
        \E{\dist\lrp{x_k,x_0}^2} \leq 4\exp\lrp{8K\delta L_\beta' + K\delta L_R \sigma_\xi^2 + K\delta^2 L_R L_0^2} \cdot \lrp{2K\delta \sigma_\xi^2 + 8K^2\delta^2 L_0^2}
    \end{alignat*}
    and
    \begin{alignat*}{1}
        \Pr{\max_{k\leq K} \dist\lrp{x_k,x_0} \geq s}\leq \frac{4}{s^2}\exp\lrp{8K\delta L_\beta' + K\delta L_R \sigma_\xi^2 + K\delta^2 L_R L_0^2} \cdot \lrp{2K\delta \sigma_\xi^2 + 8K^2\delta^2 L_0^2}
    \end{alignat*}
\end{lemma}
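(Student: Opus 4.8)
The plan is to reduce the whole statement to a scalar linear recursion in expectation and then apply, respectively, discrete Gr\"onwall and Doob's maximal inequality. Write $D_k:=\dist\lrp{x_k,x_0}^2$ and let $\F_k$ be the $\sigma$-field generated by $\xi_0,\dots,\xi_{k-1}$, so that $x_k$, the minimizing geodesic $\gamma_k$, and the indicator appearing in Lemma~\ref{l:near_tail_bound_one_step} are all $\F_k$-measurable while $\xi_k$ is fresh (independent of $\F_k$, conditionally centered, with $\E{\lrn{\xi_k(x)}^2}\le\sigma_\xi^2$). Taking $\E{\cdot\mid\F_k}$ of the one-step bound of Lemma~\ref{l:near_tail_bound_one_step}: the martingale-difference term $\ind{\dist\lrp{x_k,x_0}\le\cdot}\lrp{-2\lin{\sqrt{\delta}\,\xi_k(x_k),\gamma_k'(0)}}$ has zero conditional mean; the random coefficient $\delta L_R\lrn{\xi_k(x_k)}^2$ and additive term $2\delta\lrn{\xi_k(x_k)}^2$ are replaced by $\delta L_R\sigma_\xi^2$ and $2\delta\sigma_\xi^2$ (pulling the $\F_k$-measurable $D_k$ out of the expectation). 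This gives $\E{D_{k+1}\mid\F_k}\le \mu D_k+\nu$ with $\mu:=1+8\delta L_\beta'+\tfrac{1}{2K}+\delta L_R\sigma_\xi^2+\delta^2 L_R L_0^2$ and $\nu:=2\delta\sigma_\xi^2+8K\delta^2 L_0^2$.

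For the first claim I would iterate: since $D_0=0$, $\E{D_k}\le\nu\sum_{j<k}\mu^j\le \nu\,\mu^K/(\mu-1)$. To match the stated constants I only need $\mu-1\ge\tfrac{1}{2K}$ (this is exactly why the $\tfrac{1}{2K}$ was inserted into Lemma~\ref{l:near_tail_bound_one_step}), so $1/(\mu-1)\le 2K$, together with $\mu^K\le\exp\lrp{K(\mu-1)}\le e^{1/2}\exp\lrp{8K\delta L_\beta'+K\delta L_R\sigma_\xi^2+K\delta^2 L_R L_0^2}$ and $e^{1/2}<2$; multiplying, $\E{D_k}\le 4K\nu\exp(\cdots)=4\exp(\cdots)\lrp{2K\delta\sigma_\xi^2+8K^2\delta^2 L_0^2}$.

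For the Chebyshev/maximal bound I would not pass to expectations but build a nonnegative supermartingale directly from the pathwise conditional bound: set $W_k:=\mu^{-k}\lrp{D_k+\tfrac{\nu}{\mu-1}}$. A one-line computation using $\E{D_{k+1}\mid\F_k}\le\mu D_k+\nu$ gives $\E{W_{k+1}\mid\F_k}\le W_k$, and $\E{W_0}=\nu/(\mu-1)$. Doob's maximal inequality for nonnegative supermartingales yields $\Pr{\max_{k\le K}W_k\ge\lambda}\le \nu/((\mu-1)\lambda)$; choosing $\lambda=\mu^{-K}s^2$ and using $W_k\ge\mu^{-k}D_k\ge\mu^{-K}D_k$ for $k\le K$ shows $\Pr{\max_{k\le K}\dist\lrp{x_k,x_0}\ge s}\le \nu\mu^K/((\mu-1)s^2)$, and the same two estimates $\mu-1\ge\tfrac1{2K}$ and $\mu^K\le 2\exp(\cdots)$ turn the right-hand side into exactly the claimed expression.

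The hard part is not the Gr\"onwall/Doob machinery, which is mechanical, but the bookkeeping in the first step: one must be careful that $\xi_k$ genuinely behaves as independent, conditionally centered randomness with the stated second moment, so that the inner-product term drops under $\E{\cdot\mid\F_k}$ and the random multiplicative factor $1+\dots+\delta L_R\lrn{\xi_k}^2$ can be safely upper-bounded by the deterministic $\mu$ while the $\F_k$-measurable $D_k$ is factored out. Everything else — iterating the scalar recursion, verifying the supermartingale property, and invoking Doob — is routine, and the constants (in particular the $\tfrac1{2K}$ term controlling $\mu-1$ from below) have been arranged precisely so that no spurious $1/\delta$ factors enter the final bounds.
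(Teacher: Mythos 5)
Your proof is correct, and the first claim is handled essentially as in the paper: condition on $\F_k$, kill the inner-product term, replace the random coefficients by $\delta L_R\sigma_\xi^2$ and $2\delta\sigma_\xi^2$, and unroll the scalar recursion (the paper bounds $\sum_{j<K}\mu^j\le K\mu^K$ directly rather than via $\mu^K/(\mu-1)$ and $\mu-1\ge\tfrac1{2K}$, but the constants come out the same). For the maximal inequality you take a genuinely different route: the paper keeps the martingale-difference term and builds a \emph{pathwise} auxiliary process $r_k^2$ (turning the one-step inequality into an equality) which it checks is a nonnegative submartingale dominating $\dist\lrp{x_k,x_0}^2$, and then applies Doob's submartingale inequality to $r_K^2$; you instead normalize $D_k$ by $\mu^{-k}$ and add the constant $\nu/(\mu-1)$ to get a nonnegative supermartingale $W_k$, and invoke the supermartingale maximal inequality. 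Your construction is cleaner in that it works directly with the conditional-expectation recursion and avoids verifying the pathwise domination $r_k\ge\dist\lrp{x_k,x_0}$, at the cost of the (harmless, since $\mu\ge 1$) loss $W_k\ge\mu^{-K}D_k$ when translating the level set. One shared caveat: as stated the lemma only assumes $\E{\lrn{\xi_k(x)}^2}\le\sigma_\xi^2$, yet both your argument and the paper's need $\E{\xi_k(x_k)\mid\F_k}=0$ for the cross term to vanish; you make this explicit, the paper uses it implicitly, and it holds in every application of the lemma in the paper.
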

\begin{proof}
    Let $\F_k$ denote the $\sigma$-field generated by $\xi_0...\xi_{k-1}$.

    To bound the first claim, take expectation of the bound from Lemma \ref{l:near_tail_bound_one_step} wrt $\F_k$:
    \begin{alignat*}{1}
        &\Ep{\F_k}{\dist\lrp{x_{k+1},x_0}^2}\\
        \leq& \Ep{\F_k}{\lrp{1 + 8\delta L_\beta' + \frac{1}{2K} + \delta L_R \lrn{\xi_k(x_k)}^2 + \delta^2 L_R L_0^2}{\dist\lrp{x_k, x_0}^2}} +  2\delta \Ep{\F_k}{\lrn{\xi_k(x_k)}^2} + 8K\delta^2 L_0^2\\
        & \quad + \Ep{\F_k}{\ind{\dist\lrp{x_k,x_0} \leq \frac{1}{\delta \sqrt{L_R} {L_\beta'}}}\lrp{- 2\lin{\sqrt{\delta} \xi_k(x_k), \gamma_k'(0)}}}\\
        \leq& \lrp{1 + 8\delta L_\beta' + \frac{1}{2K} + \delta L_R \sigma_\xi^2 + \delta^2 L_R L_0^2}{\dist\lrp{x_k, x_0}^2} +  2\delta \sigma_\xi^2 + 8K\delta^2 L_0^2\\
        \leq& \exp\lrp{8\delta L_\beta' + \frac{1}{2K} + \delta L_R \sigma_\xi^2 + \delta^2 L_R L_0^2}{\dist\lrp{x_k, x_0}^2} +  2\delta \sigma_\xi^2 + 8K\delta^2 L_0^2
    \end{alignat*}
    In line 3 above, $\gamma_k(t)$ is a minimizing geodesic from $x_k$ to $x_0$, as defined in Lemma \ref{l:near_tail_bound_one_step}.

    Applying the above recursively,
    \begin{alignat*}{1}
        \E{\dist\lrp{x_K,x_0}^2} \leq \exp\lrp{1 + 8K\delta L_\beta' + K\delta L_R \sigma_\xi^2 + K\delta^2 L_R L_0^2} \cdot \lrp{2K\delta \sigma_\xi^2 + 8K^2\delta^2 L_0^2}
    \end{alignat*}
    The above upper bound clearly also holds for $\E{\dist\lrp{x_k,x_0}^2}$ for all $k\leq K$. This proves our first claim.

    To prove the second claim, let us define
    \begin{alignat*}{1}
        & r_0^2 := 0 \\
        & r_{k+1}^2 := \lrp{1 + 8\delta L_\beta' + \frac{1}{4K} + \delta L_R \lrn{\xi_k(x_k)}^2 + \delta^2 L_R L_0^2}r_k^2 +  2\delta \lrn{\xi_k(x_k)}^2 + 8K\delta^2 L_0^2\\
        & \quad + \ind{\dist\lrp{x_k,x_0} \leq \frac{1}{\delta \sqrt{L_R} {L_\beta'}}}\lrp{- 2\lin{\sqrt{\delta} \xi_k(x_k), \gamma_k'(0)}}
    \end{alignat*}

    We verify that $r_k$ as defined above is a sub-martingale. Thus by Doob's martingale inequality,
    \begin{alignat*}{1}
        \Pr{\max_{k\leq K} r_k^2 \geq s} \leq \frac{\E{r_K^2}}{s}
    \end{alignat*}
    Furthermore, notice that 
    \begin{alignat*}{1}
        & r_0^2 = \dist\lrp{x_0,x_0}^2 = 0\\
        & r_{k+1}^2 - \dist\lrp{x_{k+1}^2,x_0}^2 \geq \lrp{1 + 8\delta L_\beta' + \frac{2}{K} + \delta L_R \lrn{\xi_k(x_k)}^2 + \delta^2 L_R L_0^2} \lrp{r_k^2 - \dist\lrp{x_k,x_0}^2} \geq 0
    \end{alignat*}
    so that $r_k \geq \dist\lrp{x_k,x_0}$ with probability 1, for all $k$.

    Thus
    \begin{alignat*}{1}
        &\Pr{\max_{k\leq K} \dist\lrp{x_k,x_0} \geq s} \leq \Pr{\max_{k\leq K} r_k^2 \geq s^2} \leq \frac{\E{r_K^2}}{s^2} \\
        &\leq \frac{1}{s^2}\exp\lrp{1 + 8K\delta L_\beta' + K\delta L_R \sigma_\xi^2 + K\delta^2 L_R L_0^2} \cdot \lrp{2K\delta \sigma_\xi^2 + 8K^2\delta^2 L_0^2}
    \end{alignat*}

    The proof for the bound on $r_K^2$ is identical to the proof of the first claim. We conclude our proof of the second claim
\end{proof}

\begin{lemma}[L4 Bound and Chevyshev under Lipschitz Continuity]\label{l:near_tail_bound_L4}
    Let $\beta$ be a vector field satisfying \ref{ass:beta_lipschitz}. Assume in addition that $\delta \in \Re^+$ satisfies $\delta \leq \min\lrbb{\frac{1}{16{L_\beta'}}, \frac{1}{16\sqrt{L_R} L_0}, \frac{1}{L_R d}}$. Let $L_0 := \lrn{\beta(x_0)}$. Let $x_k$ be the following stochastic process:
    \begin{alignat*}{1}
        x_{k+1} = \Exp_{x_k}\lrp{\delta \beta(x_k) + \sqrt{\delta} \xi_k(x_k)}
    \end{alignat*}
   
    Assume in addition that for all $x$ and for all $k$, $\E{\lrn{\xi_k(x)}^4} \leq 2d^2$. Then for any positive $K \geq 4$, and for all $k\leq K$,
    \begin{alignat*}{1}
        \E{\dist\lrp{x_k,x_0}^4} \leq \exp\lrp{2 + 16 K\delta L_\beta' + 4K\delta L_R d + 3K\delta^2 L_R L_0^2}\lrp{5K^2\delta^2 d^2 + 64K^4\delta^4 L_0^4}
    \end{alignat*}
    and
    \begin{alignat*}{1}
        &\Pr{\max_{k\leq K} \dist\lrp{x_k,x_0} \geq s}\leq \frac{1}{s^4}\exp\lrp{2 + 16 K\delta L_\beta' + 4K\delta L_R d + 3K\delta^2 L_R L_0^2}\lrp{5K^2\delta^2 d^2 + 64K^4\delta^4 L_0^4}
    \end{alignat*}
\end{lemma}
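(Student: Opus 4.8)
The plan is to mirror the proof of Lemma~\ref{l:near_tail_bound_L2}, squaring the one-step estimate of Lemma~\ref{l:near_tail_bound_one_step} a second time so as to run a recursion on $\dist\lrp{x_k,x_0}^4$ instead of $\dist\lrp{x_k,x_0}^2$. Writing $\F_k$ for the $\sigma$-field generated by $\xi_0,\dots,\xi_{k-1}$ and abbreviating the one-step bound of Lemma~\ref{l:near_tail_bound_one_step} as
\[
  \dist\lrp{x_{k+1},x_0}^2 \;\le\; (1+c_k)\,\dist\lrp{x_k,x_0}^2 + b_k + m_k ,
\]
with $c_k = 8\delta L_\beta' + \tfrac{1}{2K} + \delta L_R\lrn{\xi_k(x_k)}^2 + \delta^2 L_R L_0^2 \ge 0$, $b_k = 2\delta\lrn{\xi_k(x_k)}^2 + 8K\delta^2 L_0^2 \ge 0$, and $m_k = \ind{\dist\lrp{x_k,x_0}\le \tfrac{1}{\delta\sqrt{L_R}L_\beta'}}\lrp{-2\lin{\sqrt{\delta}\,\xi_k(x_k),\gamma_k'(0)}}$, I would use (exactly as in the proof of Lemma~\ref{l:near_tail_bound_L2}) that $\Ep{\F_k}{\xi_k}=0$, hence $\Ep{\F_k}{m_k}=0$ and $\lrabs{m_k}\le 2\sqrt{\delta}\lrn{\xi_k(x_k)}\dist\lrp{x_k,x_0}$. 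Squaring and taking $\Ep{\F_k}{\cdot}$ produces three groups of terms: (i) a main term $\Ep{\F_k}{(1+c_k)^2}\dist\lrp{x_k,x_0}^4$, whose coefficient I bound by $1+O\lrp{\delta L_\beta' + \delta L_R d + \delta^2 L_R L_0^2}+\tfrac1K$ using $\Ep{\F_k}{\lrn{\xi_k}^2}\le\bigl(\Ep{\F_k}{\lrn{\xi_k}^4}\bigr)^{1/2}\le\sqrt2\,d$, $\Ep{\F_k}{\lrn{\xi_k}^4}\le 2d^2$, and the stepsize restrictions $\delta\le\min\{\tfrac{1}{16L_\beta'},\tfrac{1}{16\sqrt{L_R}L_0},\tfrac{1}{L_R d}\}$; (ii) the source square $\Ep{\F_k}{(b_k+m_k)^2}$, containing $\Ep{\F_k}{b_k^2}\le 16\delta^2 d^2 + 128K^2\delta^4 L_0^4$, the piece $\Ep{\F_k}{m_k^2}\le 4\delta\,\Ep{\F_k}{\lrn{\xi_k}^2}\dist\lrp{x_k,x_0}^2$, and an odd-in-$\xi_k$ cross piece $\Ep{\F_k}{b_k m_k}$ that is $0$ on its $\F_k$-measurable part and $O\lrp{\delta^{3/2}d^{3/2}\dist\lrp{x_k,x_0}}$ on the rest via $\Ep{\F_k}{\lrn{\xi_k}^3}\le\bigl(\Ep{\F_k}{\lrn{\xi_k}^4}\bigr)^{3/4}$; and (iii) the cross term $2\,\Ep{\F_k}{(1+c_k)\dist\lrp{x_k,x_0}^2(b_k+m_k)}$, whose surviving part is $\le\dist\lrp{x_k,x_0}^2\cdot O\lrp{\delta d + K\delta^2 L_0^2}$.

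The key point is the treatment of the terms in (ii) and (iii) that are proportional only to $\dist\lrp{x_k,x_0}^2$ (or to $\dist\lrp{x_k,x_0}$): absorbing them into $\dist\lrp{x_k,x_0}^4$ by Young's inequality with the $\Theta(1/K)$-size weight that a geometric summation over $K$ steps forces would inflate the remainder by a factor $K^3$ and destroy the target $O\lrp{(K\delta)^2 d^2}$ scaling. Instead I would take full expectations and substitute the already-proved second-moment bound of Lemma~\ref{l:near_tail_bound_L2}, i.e. $\E{\dist\lrp{x_k,x_0}^2}\le A_2:=4\exp\lrp{8K\delta L_\beta' + \sqrt2 K\delta L_R d + K\delta^2 L_R L_0^2}\lrp{2\sqrt2 K\delta d + 8K^2\delta^2 L_0^2}$ and $\E{\dist\lrp{x_k,x_0}}\le A_2^{1/2}$. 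Each such per-step contribution then becomes an $O\lrp{K\delta^2 d^2 + K^2\delta^3 d L_0^2}$-type quantity after one further weighted AM--GM step (of the form $\delta^{5/2}d^{3/2}L_0\le\tfrac34\delta^2 d^2+\tfrac14\delta^4 L_0^4$ and $K^3\delta^3 d L_0^2\le\tfrac12 K^2\delta^2 d^2+\tfrac12 K^4\delta^4 L_0^4$), so that the recursion for $v_k:=\E{\dist\lrp{x_k,x_0}^4}$ reads $v_{k+1}\le\lrp{1+O(\delta L_\beta'+\delta L_R d+\delta^2 L_R L_0^2)+\tfrac1K}v_k+O\lrp{K\delta^2 d^2 + K^3\delta^4 L_0^4}$. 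Unrolling over $k=0,\dots,K-1$ with $(1+c)^K\le e^{cK}$ gives $v_K\le\exp\lrp{2+16K\delta L_\beta'+4K\delta L_R d+3K\delta^2 L_R L_0^2}\lrp{5K^2\delta^2 d^2 + 64K^4\delta^4 L_0^4}$, which is the first claim; since this bound is monotone in $K$, it also holds for every $k\le K$.

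For the maximal (Chebyshev) statement I would copy the second half of the proof of Lemma~\ref{l:near_tail_bound_L2}: introduce an auxiliary process with $r_0=0$ and $r_{k+1}^2=\lrp{1+8\delta L_\beta'+\tfrac{1}{2K}+\delta L_R\lrn{\xi_k}^2+\delta^2 L_R L_0^2}r_k^2+b_k+m_k$, so that $r_k\ge 0$, that $r_k^2\ge\dist\lrp{x_k,x_0}^2$ almost surely for all $k$ (the increment $r_{k+1}^2-\dist\lrp{x_{k+1},x_0}^2$ dominates $(1+c_k)\lrp{r_k^2-\dist\lrp{x_k,x_0}^2}\ge 0$), and that $\lrp{r_k^4}_k$ is a nonnegative submartingale (since $\Ep{\F_k}{r_{k+1}^2}\ge r_k^2$ and Jensen). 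Doob's maximal inequality then yields $\Pr{\max_{k\le K}\dist\lrp{x_k,x_0}\ge s}\le\Pr{\max_{k\le K}r_k^4\ge s^4}\le s^{-4}\E{r_K^4}$, and $\E{r_K^4}$ is bounded exactly as $v_K$ above (run the same argument with $r_k$ in place of $\dist\lrp{x_k,x_0}$, using the uniform-in-$k$ bound $\E{r_k^2}\le A_2$ established inside the proof of Lemma~\ref{l:near_tail_bound_L2}).

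I expect the main obstacle to be purely organizational: tracking the dozen-odd cross terms produced by the squaring and, crucially, recognizing which ones must be routed back through the $L^2$ bound (those that are linear or ``$\tfrac{3}{2}$-power'' in $\dist\lrp{x_k,x_0}^2$) rather than Young'd against $\dist\lrp{x_k,x_0}^4$ using the $\tfrac{1}{2K}$ slack built into Lemma~\ref{l:near_tail_bound_one_step}; landing all the powers of $K,\delta,d,L_0$ precisely on $K^2\delta^2 d^2$ and $K^4\delta^4 L_0^4$ is delicate but routine.
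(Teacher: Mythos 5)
Your skeleton matches the paper's proof exactly: square the one-step estimate of Lemma~\ref{l:near_tail_bound_one_step}, recurse on the fourth moment, and for the maximal bound introduce the dominating process $r_k$ (with equality in the recursion), check that $r_k^4$ is a nonnegative submartingale dominating $\dist\lrp{x_k,x_0}^4$, and apply Doob. That part is fine.

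The one place you deviate is based on a miscalculation. You argue that absorbing the terms proportional to $\dist\lrp{x_k,x_0}^2$ into $\dist\lrp{x_k,x_0}^4$ via Young's inequality with weight $\Theta(1/K)$ would ``inflate the remainder by a factor $K^3$ and destroy the target $O\lrp{(K\delta)^2 d^2}$ scaling,'' and you therefore reroute these terms through the $L^2$ bound of Lemma~\ref{l:near_tail_bound_L2}. But look at the additive term in the statement you are proving: it is $5K^2\delta^2 d^2 + 64K^4\delta^4 L_0^4$, not $O(K\delta^2 d^2 + K^2\delta^4 L_0^4)$. The direct Young step $2AD^2 \leq \tfrac{1}{K}D^4 + KA^2$ applied to $A = 2\delta\lrn{\xi_k}^2 + 8K\delta^2 L_0^2$ produces a per-step source of order $K\delta^2\lrn{\xi_k}^4 + K^3\delta^4 L_0^4$ and a $\tfrac{2}{K}$ increment to the contraction coefficient; summing over $K$ steps yields precisely $K^2\delta^2 d^2 + K^4\delta^4 L_0^4$ and the factor $e^2$ in the exponential. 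This is exactly what the paper does (the displayed intermediate bound \eqref{e:t:alkdsmalsc} has the terms $5K\delta^2\lrn{\xi_k}^4 + 64K^3\delta^4 L_0^4$ and the coefficient $\tfrac{2}{K}$), and it is exactly the claimed scaling. So the detour you build around this ``obstacle'' is solving a problem that is not there.

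Your alternative route is not unsound in principle --- Lemma~\ref{l:near_tail_bound_L2} is proved independently, so there is no circularity --- but it does not deliver the stated constants for free: $A_2$ carries its own prefactor $\exp\lrp{8K\delta L_\beta' + \cdots}$, which then gets multiplied by the $\exp\lrp{16K\delta L_\beta' + \cdots}$ produced by unrolling the fourth-moment recursion, so you would end up with roughly $\exp\lrp{24K\delta L_\beta' + \cdots}$ rather than the claimed $\exp\lrp{16K\delta L_\beta' + \cdots}$. Your final display simply asserts the paper's constants without accounting for this. Dropping the detour and keeping the single Young absorption both simplifies the bookkeeping and lands on the stated bound.
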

\begin{proof}
    Let $\F_k$ denote the $\sigma$-field generated by $\xi_0...\xi_{k-1}$.

    We will use the following inequality from Lemma \ref{l:near_tail_bound_one_step}:
    \begin{alignat*}{1}
        \dist\lrp{x_{k+1}, x_0}^2 
        \leq&  \lrp{1 + 8\delta L_\beta' + \frac{1}{2K} + \delta L_R \lrn{\xi_k(x_k)}^2 + \delta^2 L_R L_0^2}\dist\lrp{x_k, x_0}^2 +  2\delta \lrn{\xi_k(x_k)}^2 + 8K\delta^2 L_0^2\\
        & \quad + \ind{\dist\lrp{x_k,x_0} \leq \frac{1}{\delta \sqrt{L_R} {L_\beta'}}}\lrp{- 2\lin{\sqrt{\delta} \xi_k(x_k), \gamma_k'(0)}}
    \end{alignat*}

    Squaring both sides,
    \begin{alignat*}{1}
        & \dist\lrp{x_{k+1},x_0}^4 \\
        \leq& \lrp{1 + 16 \delta L_\beta' + 3\delta L_R \lrn{\xi_k\lrp{x_k}}^2 + 3\delta^2 L_R L_0^2 + 2\delta^2 L_R^2 \lrn{\xi_k(x_k)}^4 + \frac{2}{K}} \dist\lrp{x_k,x_0}^4\\
        &\quad + 5K\delta^2 \lrn{\xi_k(x_k)}^4 + 64 K^3\delta^4L_0^4\\
        &\quad + \circled{*}
        \elb{e:t:alkdsmalsc}
    \end{alignat*}
    where $\circled{*}$ has $0$-mean, and we used a few times Cauchy Schwarz and Young's Inequality.

    Taking expectation wrt $\F_k$,
    \begin{alignat*}{1}
        &\Ep{\F_k}{\dist\lrp{x_{k+1},x_0}^4}\\
        \leq& \lrp{1 + 16 \delta L_\beta' + 3\delta L_R d + 3\delta^2 L_R L_0^2 + 2\delta^2 L_R^2 d^2 + \frac{2}{K}} \dist\lrp{x_k,x_0}^4 + 5K\delta^2 d^2 + 64K^3\delta^4 L_0^4\\
        \leq& \lrp{1 + 16 \delta L_\beta' + 4\delta L_R d + 3\delta^2 L_R L_0^2 + \frac{2}{K}} \dist\lrp{x_k,x_0}^4 + 5K\delta^2 d^2 + 64K^3\delta^4 L_0^4
    \end{alignat*}

    Applying the above recursively,
    \begin{alignat*}{1}
        \E{\dist\lrp{x_{K},x_0}^4} 
        \leq& \exp\lrp{2 + 16 K\delta L_\beta' + 4K\delta L_R d + 3K\delta^2 L_R L_0^2}\lrp{5K^2\delta^2 d^2 + 64K^4\delta^4 L_0^4}
    \end{alignat*}

    To prove the second claim, define
    \begin{alignat*}{1}
        & r_0^4 := 0 \\
        & r_{k+1}^4 := \lrp{1 + 16 \delta L_\beta' + 3\delta L_R \lrn{\xi_k\lrp{x_k}}^2 + 3\delta^2 L_R L_0^2 + 2\delta^2 L_R^2 \lrn{\xi_k(x_k)}^4 + \frac{2}{K}} r_k^4\\
        &\quad + 5K\delta^2 \lrn{\xi_k(x_k)}^4 + 64 K^3\delta^4L_0^4\\
        &\quad + \circled{*}
    \end{alignat*}
    where $\circled{*}$ is the same term as \eqref{e:t:alkdsmalsc}.

    We verify that $r_k$ as defined above is a sub-martingale. Thus by Doob's martingale inequality,
    \begin{alignat*}{1}
        \Pr{\max_{k\leq K} r_k^2 \geq s} \leq \frac{\E{r_K^2}}{s}
    \end{alignat*}
    Furthermore, notice that 
    \begin{alignat*}{1}
        r_0^2 =& \dist\lrp{x_0,x_0}^2 = 0\\
        r_{k+1}^2 - \dist\lrp{x_{k+1}^2,x_0}^2 \geq& \lrp{1 + 16 \delta L_\beta' + 3\delta L_R \lrn{\xi_k\lrp{x_k}}^2 + 3\delta^2 L_R L_0^2 + 2\delta^2 L_R^2 \lrn{\xi_k(x_k)}^4 + \frac{2}{K}} \lrp{r_k^2 - \dist\lrp{x_k,x_0}^2} \\
        \geq& 0
    \end{alignat*}
    so that $r_k \geq \dist\lrp{x_k,x_0}$ with probability 1, for all $k$.

    Thus
    \begin{alignat*}{1}
        &\Pr{\max_{k\leq K} \dist\lrp{x_k,x_0} \geq s} \leq \Pr{\max_{k\leq K} r_k \geq s} \leq \frac{\E{r_K^4}}{s^4} \\
        &\leq \frac{1}{s^4}\exp\lrp{2 + 16 K\delta L_\beta' + 4K\delta L_R d + 3K\delta^2 L_R L_0^2}\lrp{5K^2\delta^2 d^2 + 64K^4\delta^4 L_0^4}
    \end{alignat*}

    The proof for the bound on $r_K^2$ is identical to the proof of the first claim. We conclude our proof of the second claim
\end{proof}
\subsubsection{Under Dissipativity}
\begin{lemma}[L4 Bound under Dissipativity, Discretized SDE]
    \label{l:far-tail-bound-l2}
    Let $\beta$ be a vector field satisfying \ref{ass:beta_lipschitz}. Let $x^*$ be some point with $\beta(x^*)=0$. Assume that for all $x$ such that $\dist\lrp{x,x^*} \geq \R$, there exists a minimizing geodesic $\gamma : [0,1]\to M$ with $\gamma(0) = x, \gamma(1) = x^*$, and
    \begin{alignat*}{1}
        \lin{\beta(x), \gamma'(0)} \leq - m \dist\lrp{x,x^*}^2
    \end{alignat*}. Assume in addition that $\delta \in \Re^+$ satisfies $\delta \leq {\frac{m}{128 {L_\beta'}^2}}$
    Let $x_k$ be the following stochastic process:
    \begin{alignat*}{1}
        x_{k+1} = \Exp_{x_k}\lrp{\delta \beta(x_k) + \sqrt{\delta} \xi_k(x_k)}
    \end{alignat*}
    where $\xi_k$ is a random vector field satisfying $\E{\xi_k(x)} = 0$ and $\E{\lrn{\xi_k}^4} \leq \lrp{\sigma_\xi}^4$.

    For any $k$,
    \begin{alignat*}{1}
        \E{\dist\lrp{x_K,x^*}^4} \leq e^{-K \delta m } \E{\dist\lrp{x_0,x^*}^4} + \frac{2^{24} L_R^2 {L_\beta'}^8}{m^{12}}\sigma_\xi^8  + \frac{64 {L_\beta'}^2 \R^4}{m^2} + \frac{128}{m^2} \sigma_\xi^4
    \end{alignat*}
\end{lemma}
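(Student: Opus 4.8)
The plan is to iterate a squared version of the one-step dissipative estimate, mirroring how Lemma~\ref{l:near_tail_bound_L4} iterates a squared one-step bound in the Lipschitz regime. Write $D_k := \dist\lrp{x_k,x^*}^2$, and let $\F_k$ be the $\sigma$-field generated by $x_0,\dots,x_k$ (equivalently $\xi_0,\dots,\xi_{k-1}$). All the hypotheses of Lemma~\ref{l:far_tail_bound_one_step} are in force ($\beta(x^*)=0$, the dissipativity-at-distance-$\ge\R$ condition, Assumption~\ref{ass:beta_lipschitz}, and $\delta\le m/(128{L_\beta'}^2)$), so it gives
\begin{alignat*}{1}
  D_{k+1} \le (1-\delta m)\,D_k + E_k + N_k,
\end{alignat*}
where $E_k := \tfrac{2048\,\delta L_R{L_\beta'}^4}{m^5}\lrn{\xi_k(x_k)}^4 + 4\delta{L_\beta'}\R^2 \ge 0$ and $N_k := \ind{\dist\lrp{x_k,x^*}\le m/(4\delta\sqrt{L_R}{L_\beta'}^2)}\lrp{-2\lin{\sqrt{\delta}\,\xi_k(x_k),\,\gamma_k'(0)}}$. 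Two structural facts drive everything: the indicator in $N_k$ is $\F_k$-measurable and $\E{\xi_k(x_k)\mid\F_k}=0$, so $\E{N_k\mid\F_k}=0$; and $N_k^2 \le 4\delta\lrn{\xi_k(x_k)}^2\lrn{\gamma_k'(0)}^2 = 4\delta\lrn{\xi_k(x_k)}^2 D_k$, hence $\E{N_k^2\mid\F_k}\le 4\delta\sigma_\xi^2 D_k$ (using $\E{\lrn{\xi_k}^4\mid\F_k}\le\sigma_\xi^4$ and Jensen).

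Next I would square. Since $D_{k+1}\ge 0$ the bound above forces its right-hand side nonnegative, so $D_{k+1}^2 \le \lrp{(1-\delta m)D_k + E_k + N_k}^2$; expanding, taking $\E{\cdot\mid\F_k}$, noting that $2(1-\delta m)D_k N_k$ has vanishing conditional mean, and bounding the remaining cross term by $2E_kN_k\le E_k^2+N_k^2$, one gets
\begin{alignat*}{1}
  \E{D_{k+1}^2\mid\F_k} \le (1-\delta m)^2 D_k^2 + 2(1-\delta m)\,\E{E_k\mid\F_k}\,D_k + 2\,\E{E_k^2\mid\F_k} + 2\,\E{N_k^2\mid\F_k}.
\end{alignat*}
Write $e := \E{E_k\mid\F_k} \le \tfrac{2048\,\delta L_R{L_\beta'}^4}{m^5}\sigma_\xi^4 + 4\delta{L_\beta'}\R^2 = O(\delta)$. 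By Young's inequality $2eD_k \le \tfrac{\delta m}{4}D_k^2 + \tfrac{4e^2}{\delta m}$ and $2\E{N_k^2\mid\F_k} \le 8\delta\sigma_\xi^2 D_k \le \tfrac{\delta m}{4}D_k^2 + \tfrac{64\delta\sigma_\xi^4}{m}$. Since $\delta m \le m^2/(128{L_\beta'}^2) \le 1/128$ (the hypotheses force $m\le{L_\beta'}$, as noted in the proof of Lemma~\ref{l:far_tail_bound_one_step}), the coefficient of $D_k^2$ becomes $(1-\delta m)^2 + \tfrac{\delta m}{2} = 1 - \tfrac{3\delta m}{2} + \delta^2 m^2 \le 1-\delta m \le e^{-\delta m}$. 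Therefore
\begin{alignat*}{1}
  \E{D_{k+1}^2\mid\F_k} \le e^{-\delta m} D_k^2 + R, \qquad R := \tfrac{4e^2}{\delta m} + 2\,\E{E_k^2\mid\F_k} + \tfrac{64\delta\sigma_\xi^4}{m},
\end{alignat*}
and since $R = O(\delta)$, taking full expectations and recursing over $k=0,\dots,K-1$ yields $\E{D_K^2} \le e^{-K\delta m}\E{D_0^2} + R\sum_{j\ge0}e^{-j\delta m} \le e^{-K\delta m}\E{\dist\lrp{x_0,x^*}^4} + \tfrac{2R}{\delta m}$.

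What remains is bookkeeping on $\tfrac{2R}{\delta m}$. The $N_k^2$ piece contributes exactly $\tfrac{2}{\delta m}\cdot\tfrac{64\delta\sigma_\xi^4}{m} = \tfrac{128\sigma_\xi^4}{m^2}$, matching the last term of the claim. The $\tfrac{4e^2}{\delta m}$ piece, after the $\tfrac{2}{\delta m}$ geometric-sum factor and the substitution $e=\delta c$ with $c = \tfrac{2048 L_R{L_\beta'}^4}{m^5}\sigma_\xi^4 + 4{L_\beta'}\R^2$, produces $\tfrac{8c^2}{m^2} \lesssim \tfrac{L_R^2{L_\beta'}^8}{m^{12}}\sigma_\xi^8 + \tfrac{{L_\beta'}^2\R^4}{m^2}$; and $2\,\E{E_k^2\mid\F_k}$ is $O(\delta^2)$, so after division by $\delta m$ it carries a residual factor $\delta$ that is killed using $\delta\le m/(128{L_\beta'}^2)$, again landing inside $\tfrac{L_R^2{L_\beta'}^8}{m^{12}}\sigma_\xi^8$. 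Choosing the Young splittings with the right weights reproduces the stated constants $\tfrac{2^{24}L_R^2{L_\beta'}^8}{m^{12}}\sigma_\xi^8 + \tfrac{64{L_\beta'}^2\R^4}{m^2} + \tfrac{128}{m^2}\sigma_\xi^4$. The main obstacle is not any individual inequality but two coupled bookkeeping points: first, the Young splittings must be tuned so the $D_k^2$-coefficient stays at or below $1-\delta m$ while every $O(\delta)$ remainder is simultaneously small enough; second, the $\E{E_k^2\mid\F_k}$ term involves $\lrn{\xi_k}^8$, so one leans on an eighth-moment control $\E{\lrn{\xi_k}^8\mid\F_k}\lesssim\sigma_\xi^8$ — automatic in the applications where $\xi_k$ is Gaussian or a bounded perturbation plus Gaussian — and the non-sign-definite martingale term $N_k$, whose odd conditional moments need not vanish, must be handled through $N_k^2$ rather than exactly.
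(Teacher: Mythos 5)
Your proposal is correct and follows essentially the same route as the paper's proof: square the one-step dissipative bound of Lemma \ref{l:far_tail_bound_one_step}, use that the indicator-weighted noise term has zero conditional mean so its cross term with $\dist\lrp{x_k,x^*}^2$ vanishes, absorb the remaining cross terms by Young's inequality into a contraction factor $1-\delta m$, and recurse with a geometric sum. The eighth-moment issue you flag is real but is equally present (and equally glossed over) in the paper's own argument, which writes $\E{\lrn{\xi_k(x_k)}^8}$ in the intermediate step and replaces it by $\sigma_\xi^8$ in the conclusion despite the lemma only assuming a fourth-moment bound.
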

\begin{proof}
    Let $\gamma_k$ denote a geodesic with $\gamma_k(0) = x_k, \gamma(1) = x^*$, and
    \begin{alignat*}{1}
        \lin{\beta(x_k), \gamma_k'(0)} \leq - m \dist\lrp{x_k,x^*}^2
    \end{alignat*}

    From Lemma \ref{l:far_tail_bound_one_step}, 
    \begin{alignat*}{1}
        \dist\lrp{x_{k+1}, x^*}^2 
        \leq& \lrp{1 - \delta m}\dist\lrp{x_k, x^*}^2 + {\frac{2048 \delta L_R {L_\beta'}^4}{m^5} \lrn{\xi_k(x_k)}^4 + 4\delta {L_\beta'} \R^2}\\
        & \quad + \ind{\dist\lrp{x_k,x^*} \leq \frac{m}{4\delta \sqrt{L_R} {L_\beta'}^2}}\lrp{- 2\lin{\sqrt{\delta} \xi_k(x_k),\gamma_k'(0)}}
    \end{alignat*}
    Squaring both sides and taking expectation wrt $\xi_k$ (and applying Young's Inequality),
    \begin{alignat*}{1}
        & \E{\dist\lrp{x_{k+1}, x^*}^4}\\
        \leq& \lrp{1 - \frac{3 \delta m}{2} }\dist\lrp{x_k, x^*}^4 + \frac{2^{24} \delta L_R^2 {L_\beta'}^8}{m^{11}} \E{\lrn{\xi_k(x_k)}^8}  + \frac{64\delta {L_\beta'}^2 \R^4}{m} \\
        & \quad + \frac{\delta m}{2} \dist\lrp{x_k,x^*}^4 + \frac{128\delta}{m} \E{\lrn{\xi_k(x_k)}^4}\\
        \leq& \lrp{1 - \delta m}\dist\lrp{x_k, x^*}^4 + \frac{2^{24} \delta L_R^2 {L_\beta'}^8}{m^{11}} \E{\lrn{\xi_k(x_k)}^8}  + \frac{64\delta {L_\beta'}^2 \R^4}{m} + \frac{128\delta}{m} \E{\lrn{\xi_k(x_k)}^4}
    \end{alignat*}
    Applying the above recursively, 
    \begin{alignat*}{1}
        \E{\dist\lrp{x_K,x^*}^4} \leq e^{-K \delta m } \E{\dist\lrp{x_0,x^*}^4} + \frac{2^{24} L_R^2 {L_\beta'}^8}{m^{12}}\sigma_\xi^8  + \frac{64 {L_\beta'}^2 \R^4}{m^2} + \frac{128}{m^2} \sigma_\xi^4
    \end{alignat*}
\end{proof}

\begin{lemma}[L4 Bound under Dissipativity, Exact SDE]
    \label{l:far-tail-bound-l2-brownian}
    Let $\beta$ be a vector field satisfying \ref{ass:beta_lipschitz}. Let $x^*$ be some point with $\beta(x^*)=0$. Assume that for all $x$ such that $\dist\lrp{x,x^*} \geq \R$, there exists a minimizing geodesic $\gamma : [0,1]\to M$ with $\gamma(0) = x, \gamma(1) = x^*$, and
    \begin{alignat*}{1}
        \lin{\beta(x), \gamma'(0)} \leq - m \dist\lrp{x,x^*}^2
    \end{alignat*}.
    Let $x(t)$ denote the solution to \eqref{e:intro_euler_murayama}.
    For any $k$,
    \begin{alignat*}{1}
        \E{\dist\lrp{x(T),x^*}^4} \leq \exp\lrp{- K\delta m}\E{\dist\lrp{x^i_0,x^*}^4} + \frac{2^{26} L_R^2 {L_\beta'}^8}{m^{12}}d^4  + \frac{64 {L_\beta'}^2 \R^4}{m^2} + \frac{256}{m^2} d^2
    \end{alignat*}
\end{lemma}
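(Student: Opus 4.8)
The plan is to derive the bound for the exact diffusion $x(t)$ as the $i\to\infty$ limit of the corresponding bounds for its Euler--Murayama discretizations, which are already controlled by Lemma~\ref{l:far-tail-bound-l2}. Recall from Lemma~\ref{l:Phi_is_diffusion} and Lemma~\ref{l:x(t)_is_brownian_motion} that, fixing the horizon $T=K\delta$, an orthonormal basis $E$ of $T_{x(0)}M$, and a standard Brownian motion $\BB$ on $\Re^d$, we may write $x(t) = \Phi(t;x(0),E,\beta,\BB) = \lim_{i\to\infty}\overline{\Phi}(t;x(0),E,\beta,\BB,i)$ (see \eqref{d:x(t)}), with the convergence uniform on $[0,T]$ and almost sure. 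Writing $x^i(t) := \overline{\Phi}(t;x(0),E,\beta,\BB,i)$ as in \eqref{d:x^i(t)}, so that $x^i(T) = x^i_{2^i}$ with $\delta^i := 2^{-i}T$, each sequence $(x^i_k)_k$ is exactly an iteration of the form treated in Lemma~\ref{l:far-tail-bound-l2}: stepsize $\delta^i$, drift $\beta$, and noise $\xi_k = \zeta^i_k$ where $\zeta^i_k := \tfrac{1}{\sqrt{\delta^i}}\bigl(\BB((k+1)\delta^i)-\BB(k\delta^i)\bigr)\circ E^i_k \sim \N_{x^i_k}(0,I)$; in particular $\E{\zeta^i_k}=0$ and $\E{\lrn{\zeta^i_k}^4}\leq 2d^2$.

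First I would check that the hypotheses of Lemma~\ref{l:far-tail-bound-l2} hold for these sequences: $\beta$ satisfies Assumption~\ref{ass:beta_lipschitz}; $\beta(x^*)=0$; and the one-sided contraction $\lin{\beta(x),\gamma'(0)}\leq -m\dist\lrp{x,x^*}^2$ for $\dist\lrp{x,x^*}\geq\R$ along a minimizing geodesic from $x$ to $x^*$ is precisely the standing hypothesis (it is the $y=x^*$ instance of Assumption~\ref{ass:distant-dissipativity}). The stepsize requirement $\delta^i\leq m/(128{L_\beta'}^2)$ holds once $i$ is large enough, say $i\geq i_0$. Then Lemma~\ref{l:far-tail-bound-l2} applied with $K=2^i$ (so that $K\delta^i=T$) and $\sigma_\xi^4 = 2d^2$ yields, for all $i\geq i_0$,
\begin{alignat*}{1}
  \E{\dist\lrp{x^i(T),x^*}^4} \leq e^{-Tm}\,\E{\dist\lrp{x(0),x^*}^4} + \frac{2^{24}L_R^2{L_\beta'}^8}{m^{12}}\lrp{2d^2}^2 + \frac{64{L_\beta'}^2\R^4}{m^2} + \frac{128}{m^2}\lrp{2d^2},
\end{alignat*}
and the last three terms simplify to $\tfrac{2^{26}L_R^2{L_\beta'}^8}{m^{12}}d^4 + \tfrac{64{L_\beta'}^2\R^4}{m^2} + \tfrac{256}{m^2}d^2$, which is exactly the additive constant in the claim.

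Finally I would pass to the limit $i\to\infty$. Since $\dist\lrp{x^i(T),x^*}\to\dist\lrp{x(T),x^*}$ almost surely and $\dist\lrp{\cdot,x^*}^4\geq 0$, Fatou's lemma gives $\E{\dist\lrp{x(T),x^*}^4}\leq\liminf_{i\to\infty}\E{\dist\lrp{x^i(T),x^*}^4}$; because the right-hand side of the displayed bound does not depend on $i$, this is the desired inequality (with $K\delta$ in place of $T$, and $x^i_0=x(0)$). There is no genuine obstacle here: the only points needing a little care are that Lemma~\ref{l:far-tail-bound-l2} applies only for $i\geq i_0$ — hence the truncation and the use of $\liminf$ rather than an honest limit of expectations (Fatou suffices since we only want an upper bound, so no uniform integrability argument is required) — and the bookkeeping that combines the $2^{24}$ and $128$ factors of Lemma~\ref{l:far-tail-bound-l2} with $\sigma_\xi^4=2d^2$ to produce the stated constants $2^{26}$ and $256$.
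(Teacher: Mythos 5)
Your proposal is correct and follows essentially the same route as the paper: apply Lemma~\ref{l:far-tail-bound-l2} to the Euler--Murayama discretizations $x^i$ with $\xi_k=\zeta^i_k$ and $\E{\lrn{\zeta^i_k}^4}\leq 2d^2$ (which yields exactly the stated constants $2^{26}$ and $256$), then pass to the limit $i\to\infty$ using the almost-sure convergence from Lemmas~\ref{l:Phi_is_diffusion} and~\ref{l:x(t)_is_brownian_motion}. Your invocation of Fatou's lemma for the limit step is slightly more careful than the paper's terse "the conclusion follows by almost sure convergence," but it is the same argument.
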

\begin{proof}
    For $i \in \Z^+$, let $\delta^i$ be a sequence of stepsizes going to $0$, let $K^i := T/\delta^i$, and let $x^i_k$ be a discretization of $x(t)$ with stepsize $\delta^i$ of the form \eqref{e:intro_euler_murayama}, i.e.
    \begin{alignat*}{1}
        x^i_{k+1} = \Exp_{x^i_k}\lrp{\delta^i \beta(x^i_k) + \sqrt{\delta^i}\zeta_k}
    \end{alignat*}
    where $\zeta_k\sim \N_{x^i_k}(0,I)$.
    Applying Lemma \ref{l:far-tail-bound-l2} to $x^i_k$ with $\xi_k(x_k) = \zeta_k$ and $\sigma_\xi = \sqrt{2} d$, and for $i$ sufficiently large, gives the bound
    \begin{alignat*}{1}
        \E{\dist\lrp{x^i_{K^i},x^*}^4} \leq \exp\lrp{- K^i\delta^i m}\E{\dist\lrp{x^i_0,x^*}^4} + \frac{2^{26} L_R^2 {L_\beta'}^8}{m^{12}}d^4  + \frac{64 {L_\beta'}^2 \R^4}{m^2} + \frac{256}{m^2} d^2
    \end{alignat*}
    Our conclusion follows by Lemma \ref{l:Phi_is_diffusion} as $x^i_{K^i}$ converges to $x(T)$ almost surely as $i\to \infty$.
\end{proof}

\subsection{Subgaussian Bounds}

\subsubsection{Under Dissipativity}
\begin{lemma}[Subgaussian Bound under Dissipativity, Discrete Time Semimartingale, Adaptive Stepsize]\label{l:far-tail-bound-gaussian-adaptive}
    Let $\beta$ be a vector field satisfying \ref{ass:beta_lipschitz}. Let $x^*$ be some point with $\beta(x^*)=0$. Assume that for all $x$ such that $\dist\lrp{x,x^*} \geq \R$, there exists a minimizing geodesic $\gamma : [0,1]\to M$ with $\gamma(0) = x, \gamma(1) = x^*$, and
    \begin{alignat*}{1}
        \lin{\beta(x), \gamma'(0)} \leq - m \dist\lrp{x,x^*}^2.
    \end{alignat*}
    Let $x_k$ be the following stochastic process:
    \begin{alignat*}{1}
        x_{k+1} = \Exp_{x_k}\lrp{\delta_k \beta(x_k) + \sqrt{\delta_k} \xi_k(x_k)}
    \end{alignat*}
    where $\xi_k$ is a random vector field. Assume that for all $x$, $\E{\xi_k(x)} = 0$, and that for any $\rho \leq \frac{1}{8}$, $\E{\exp\lrp{\rho \lrn{\xi_k(x)}^2}} \leq \exp\lrp{\rho \sigma_\xi^2}$. For each $k$, $\delta_k$ is a positive stepsize that depends only on $x_k$ and satisfies
    $\delta_k \leq \min\lrbb{\frac{m}{16 {L_\beta'}^2 \lrp{1 + \sqrt{L_R} \dist\lrp{x_k,x^*}}}, \frac{\sigma_\xi^2}{m \lrp{1 + \sqrt{L_R} \dist\lrp{x_k,x^*}}}, \frac{32 \sigma_\xi^4}{m^2 \dist\lrp{x_k,x^*}^2}}$. Assume that $\dist\lrp{x_0,x^*} \leq 2\R$. Finally, assume that there exists $\delta \in \Re^+$ such that for all $k$, $\delta_k \leq \delta$. Then
    \begin{alignat*}{1}
        \Pr{\max_{i\leq K} \dist\lrp{x_k,x^*} \geq t} 
        \leq& 32K\delta \lambda \exp\lrp{\frac{2L_\beta' \R^2}{\sigma_\xi^2} + \frac{16 L_R \sigma_\xi^2}{m} - \frac{mt^2}{64 \sigma_\xi^2}}
    \end{alignat*}
\end{lemma}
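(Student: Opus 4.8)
The plan is to run an exponential–supermartingale (Foster--Lyapunov) argument on the squared distance $r_k^2:=\dist(x_k,x^*)^2$, with the adaptive stepsize cap doing the work of keeping all one–step error terms sub-exponential. Write $\F_k$ for the $\sigma$-field generated by $\xi_0,\dots,\xi_{k-1}$; since $\delta_k$ depends only on $x_k$ it is $\F_k$-measurable, so every estimate below may be performed conditionally on $\F_k$ with $\delta_k$ treated as a constant. First I would feed $\delta=\delta_k$ into the one–step drift analysis underlying Lemma~\ref{l:far_tail_bound_one_step}. The cap $\delta_k\le \frac{m}{16{L_\beta'}^2(1+\sqrt{L_R}\dist(x_k,x^*))}$ already forces $\dist(x_k,x^*)\le \frac{m}{4\delta_k\sqrt{L_R}{L_\beta'}^2}$, so the indicator in that lemma is always active and, with $\gamma_k$ the minimizing geodesic from $x_k$ to $x^*$ supplied by the hypothesis,
\[
 r_{k+1}^2 \;\le\; (1-\delta_k m)\,r_k^2 \;-\; 2\sqrt{\delta_k}\,\lin{\xi_k(x_k),\gamma_k'(0)} \;+\; E_k ,
\]
where $E_k$ collects the $4\delta_k{L_\beta'}\R^2$ drift term and the curvature remainder; using the remaining two stepsize caps, and in particular $\delta_k\le \frac{32\sigma_\xi^4}{m^2\dist(x_k,x^*)^2}$, one rewrites $E_k$ as $\delta_k\cdot(\text{const}\cdot{L_\beta'}\R^2)$ plus a term whose conditional moment generating function at level $\rho$ is at most $\exp(\rho\delta_k\cdot\text{const}\cdot L_R\sigma_\xi^2)$.

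Next I would exponentiate at the scale $\rho:=\tfrac{m}{64\sigma_\xi^2}$. Since $\xi_k(x_k)$ is mean zero with $\E{\exp(\rho'\lrn{\xi_k(x_k)}^2)}\le\exp(\rho'\sigma_\xi^2)$ for $\rho'\le 1/8$, it is sub-Gaussian, so $\lin{\xi_k(x_k),\gamma_k'(0)}$ is mean zero and sub-Gaussian with variance proxy of order $\sigma_\xi^2 r_k^2$; hence $\E{\exp(-2\rho\sqrt{\delta_k}\lin{\xi_k(x_k),\gamma_k'(0)})\mid \F_k}\le\exp(C\rho^2\delta_k\sigma_\xi^2 r_k^2)$, and by the choice of $\rho$ the exponent is at most $\tfrac12\delta_k m\rho\,r_k^2$, i.e. it is swallowed by half the contraction. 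Combining this with the MGF estimate for $E_k$ from the previous step (decoupling the several $\xi_k$-dependent factors by Cauchy--Schwarz/H\"older, and checking that $\rho\delta_k$ and $\rho\sigma_\xi^2/m$ stay below $1/8$ so the sub-exponential bounds apply) produces a geometric-drift inequality
\[
 \E{\exp(\rho r_{k+1}^2)\mid \F_k}\;\le\;(1-c_1\delta_k m)\exp(\rho r_k^2)\;+\;c_2\,\delta_k m,
\]
with $c_1$ an absolute constant and $c_2$ of order $\frac{{L_\beta'}\R^2}{\sigma_\xi^2}+\frac{L_R\sigma_\xi^2}{m}$.

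Finally I would convert the drift inequality into the maximal tail bound. Dividing $\exp(\rho r_k^2)$ by $\prod_{j<k}(1-c_1\delta_j m)$ yields a nonnegative process that is a supermartingale up to an additive defect which telescopes and, using $\sum_j\delta_j\prod_{j'>j}(1-c_1\delta_{j'}m)\le \frac{1}{c_1 m}$ and $\delta_j\le\delta$, is bounded $K$-independently by $c_2/c_1$. Doob's maximal inequality, together with a union bound over the steps $k\le K$ (which is what turns the crude $\exp(\rho r_0^2 + c_2/c_1 - \rho t^2)$ estimate into one with the displayed $K\delta m$ prefactor, since a single step crosses any fixed level only with probability of order $\delta_k m$ times the tail factor), then gives $\Pr{\max_{k\le K}\dist(x_k,x^*)\ge t}\le \text{const}\cdot K\delta m\,\exp(\rho(2\R)^2 + c_2/c_1 - \rho t^2)$, where $\dist(x_0,x^*)\le 2\R$ was used for the initial value. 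Substituting $\rho=m/(64\sigma_\xi^2)$ and the value of $c_2$, and using $m\le{L_\beta'}$ to absorb constants, reproduces the stated bound.

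I expect the main obstacle to lie in the first two steps: a single Euler--Murayama step on a curved manifold produces a curvature remainder that, bounded crudely as in the proof of Lemma~\ref{l:far_tail_bound_one_step}, is an $\lrn{\xi_k(x_k)}^4$-type quantity with no finite exponential moment. Reorganizing it — using precisely the three-way minimum defining $\delta_k$, including the factor $\sigma_\xi^4/(m^2\dist(x_k,x^*)^2)$ — into a genuinely sub-exponential, $\delta_k$-proportional error of the correct order is the delicate point, as is checking that the single scale $\rho$ of order $m/\sigma_\xi^2$ simultaneously respects every MGF-domain restriction invoked along the way.
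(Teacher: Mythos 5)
Your proposal matches the paper's proof in all essentials: the paper likewise derives the one-step recursion $r_{k+1}^2 \leq (1-\delta_k m)r_k^2 + 4\delta_k L_\beta'\R^2 + 2\sqrt{\delta_k}\lin{\xi_k,\gamma_k'(0)} + 2\delta_k(1+\sqrt{L_R}\,r_k)\lrn{\xi_k}^2$ directly from Lemma~\ref{l:zhang2016} (keeping the curvature error quadratic rather than quartic in $\xi_k$, exactly as you flag is necessary), exponentiates at scale $s=m/(32\sigma_\xi^2)$, absorbs the Hoeffding-controlled linear term and the sub-exponential quadratic term into half the contraction using the three stepsize caps, and then invokes a Doob-type maximal inequality (Lemma~\ref{l:doob_maximal_3}). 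The only point you gloss over is how the $K\delta m$ prefactor actually arises: a naive union bound over individual steps gives prefactor $K$, and a global Doob argument is ruined by the $e^{c_1 m K\delta}$ normalization, so the paper partitions $\{0,\dots,K\}$ into $\sim K\delta m$ blocks of length $\sim 1/(\delta m)$, runs Doob within each block, and union-bounds over blocks.
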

\begin{proof}
    For each $k$, let $\gamma_k$ be a minimizing geodesic with $\gamma_k(0) = x_k$ and $\gamma_k(1) = x^*$ satisfying $\lin{\beta(x_k), \gamma_k'(0)} \leq - m \dist\lrp{x_k,x^*}^2$.

    Using the result from Corollary 8 of \cite{zhang2016first} (see Lemma \ref{l:zhang2016}),
    \begin{alignat*}{1}
        \dist\lrp{x_{k+1}, x^*}^2
        \leq& \dist^2\lrp{x_{k}, x^*} - 2\lin{\delta_k \beta(x_k) + \sqrt{\delta_k} \xi_k(x_k), \gamma_k'(0)}\\
        &\quad + \lrp{1+\sqrt{L_R} \dist(x_k,x^*)} \lrn{\delta_k \beta(x_k) + \sqrt{\delta_k} \xi_k(x_k)}^2
    \end{alignat*}
    By Assumption \ref{ass:distant-dissipativity} and Assumption \ref{ass:beta_lipschitz}, $\lin{\beta(x_k), \gamma_k'(0)} \leq - m \dist\lrp{x_k,x^*}^2 + 2L_\beta' \R^2$. Applying Cauchy Schwarz and simplifying,
    \begin{alignat*}{1}
        & \dist\lrp{x_{k+1}, x^*}^2\\
        \leq& \lrp{-2m \delta_k + 2\delta_k^2 {L_\beta'}^2 +  2 \delta_k^2 \sqrt{L_R} {L_\beta'}^2 \dist\lrp{x_k, x^*}}\dist\lrp{x_k, x^*}^2  + 4\delta_k L_\beta' \R^2 \\
        &\quad + 2\sqrt{\delta_k} \lin{\xi_k(x_k), \gamma_k'(0)} + 2 \delta_k \lrp{1+\sqrt{L_R} \dist(x_k,x^*)} \lrn{\xi_k(x_k)}^2\\
        \leq& -\delta_k m \dist\lrp{x_k, x^*}^2 + 4\delta_k L_\beta'\R^2 + 2\sqrt{\delta_k} \lin{\xi_k(x_k), \gamma_k'(0)} + 2 \delta_k \lrp{1+\sqrt{L_R} \dist(x_k,x^*)} \lrn{\xi_k(x_k)}^2
    \end{alignat*}
    where we used our assumption that $\delta_k \leq \frac{m}{16 {L_\beta'}^2 \lrp{1 + \sqrt{L_R} \dist\lrp{x_k,x^*}}}$.

    Let $s := \frac{m}{32 \sigma_\xi^2 }$ We will now apply Lemma \ref{l:doob_maximal_3} with
    \begin{alignat*}{1}
        & q_k = s \dist\lrp{x_k,x^*}^2 \qquad \nu_k = s\lin{\xi_k(x_k), \gamma_k'(0)} + 2 \sqrt{\delta_k} s \lrp{1+\sqrt{L_R} \dist(x_k,x^*)} \lrn{\xi_k(x_k)}^2 \\
        & \lambda = m \qquad \gamma  =  4sL_\beta' \R^2 \qquad \mu = 2 s \sigma_\xi^2 + \frac{64 s L_R \sigma_\xi^4}{m}
        \elb{e:t:qijdkfwnjf:0}
    \end{alignat*}
    We will verify Lemma \ref{l:doob_maximal_3}'s condition regarding $\E{\exp\lrp{\sqrt{\delta_k} \nu_k}}$. Taking expectation conditioned on $\xi_0(x_0)...\xi_{k-1}(x_{k-1})$,
    \begin{alignat*}{1}
        & \E{\exp\lrp{\sqrt{\delta_k} \nu_k}} \\
        =& \E{\exp\lrp{\sqrt{\delta_k} s\lin{\xi_k(x_k), \gamma_k'(0)} + 2 \delta_k s\lrp{1+\sqrt{L_R} \dist(x_k,x^*)} \lrn{\xi_k(x_k)}^2}} \\
        \leq& \E{\exp\lrp{\sqrt{\delta_k} 2s\lin{\xi_k(x_k), \gamma_k'(0)} }}^{1/2} \cdot \E{\exp\lrp{ 4 \delta_k s\lrp{1+\sqrt{L_R} \dist(x_k,x^*)} \lrn{\xi_k(x_k)}^2}}^{1/2}\\
        \elb{e:t:qijdkfwnjf:1}
    \end{alignat*} 
    By our assumption on $\xi_k$ and Lemma \ref{l:hoeffding}, and our assumption that {$\delta_k \leq \frac{32 \sigma_\xi^4}{m^2 \dist\lrp{x_k,x^*}^2}$}, 
    \begin{alignat*}{1}
        \E{\exp\lrp{\sqrt{\delta_k} 2s\lin{\xi_k(x_k), \gamma_k'(0)} }}
        \leq& \E{\exp\lrp{8\delta_k s^2 \dist\lrp{x_k,x^*}^2 \lrn{\xi_k(x_k)}^2}} \\
        \leq& \E{\exp\lrp{8\delta_k s^2 \dist\lrp{x_k,x^*}^2 \sigma_\xi^2}}\\
        \leq& \E{\exp\lrp{ \frac{s m \delta_k}{2} \dist\lrp{x_k,x^*}^2}}
    \end{alignat*}
    On the other hand, by our assumption on $\xi_k$ and {$\delta_k \leq  \frac{\sigma_\xi^2}{m \lrp{1+\sqrt{L_R} \dist(x_k,x^*)}}$},
    \begin{alignat*}{1}
        \E{\exp\lrp{ 4 \delta_k s\lrp{1+\sqrt{L_R} \dist(x_k,x^*)} \lrn{\xi_k(x_k)}^2}} 
        \leq& \exp\lrp{ 4 \delta_k s\lrp{1+\sqrt{L_R} \dist(x_k,x^*)} \sigma_\xi^2}\\
        \leq& \exp\lrp{\frac{s m \delta_k}{2} \dist\lrp{x_k,x^*}^2 + \frac{128 \delta_k s L_R \sigma_\xi^4}{m} + 4 \delta_k s \sigma_\xi^2}
    \end{alignat*}

    Plugging both of the above into \eqref{e:t:qijdkfwnjf:1},
    \begin{alignat*}{1}
        \E{\exp\lrp{\sqrt{\delta_k} \nu_k}}  \leq \exp\lrp{\frac{\delta_k m s}{4} \dist\lrp{x_k,x^*}^2 +  \frac{64 \delta_k s L_R \sigma_\xi^4}{m} + 2 \delta_k s \sigma_\xi^2}
    \end{alignat*}
    We thus verify that $(\nu_k,\lambda,\mu)$ satisfy the requirement for Lemma \ref{l:doob_maximal_3}, which bounds
    \begin{alignat*}{1}
        \Pr{\max_{i\leq K} q_{i} \geq t^2} 
        \leq& 8K\delta \lambda \exp\lrp{q_0 + \frac{8(\gamma + \mu)}{\lambda} - \frac{t^2}{2}}\\
        \leq& 8K\delta \lambda \exp\lrp{\frac{m\R^2}{16 \sigma_\xi^2} + \frac{L_\beta' \R^2}{\sigma_\xi^2} + \frac{1}{2} + \frac{16 L_R \sigma_\xi^2}{m} - \frac{t^2}{2}}\\
        \leq& 16K\delta \lambda \exp\lrp{\frac{2L_\beta' \R^2}{\sigma_\xi^2} + \frac{16 L_R \sigma_\xi^2}{m} - \frac{t^2}{2}}
    \end{alignat*}
    where the second inequality plugs in definitions from \eqref{e:t:qijdkfwnjf:0}, uses our assumption on $x_0$. Finally, using the fact that $q_k = s \dist\lrp{x_k,x^*}$,
    \begin{alignat*}{1}
        \Pr{\max_{i\leq K} \dist\lrp{x_i,x^*} \geq t} \leq& 32K\delta \lambda \exp\lrp{\frac{2L_\beta' \R^2}{\sigma_\xi^2} + \frac{16 L_R \sigma_\xi^2}{m} - \frac{mt^2}{64 \sigma_\xi^2}}
    \end{alignat*}
\end{proof}

\begin{lemma}[Subgaussian Bound under Dissipativity, SGLD, fixed stepsize]\label{l:far-tail-bound-truncated-sgld}
    Let $\beta$ be a vector field satisfying Assumption \ref{ass:beta_lipschitz}. Let $x^*$ be some point with $\beta(x^*)=0$. Assume that for all $x$ such that $\dist\lrp{x,x^*} \geq \R$, there exists a minimizing geodesic $\gamma : [0,1]\to M$ with $\gamma(0) = x, \gamma(1) = x^*$, and
    \begin{alignat*}{1}
        \lin{\beta(x), \gamma'(0)} \leq - m \dist\lrp{x,x^*}^2
    \end{alignat*}.
    Let $r \in \Re^+$ denote an arbitrary radius, and assume that $\delta$ is a stepsize satisfying
    \begin{alignat*}{1}
        \delta \leq \min\lrbb{\frac{m}{16 {L_\beta'}^2 \lrp{1 + \sqrt{L_R} r}}, \frac{d + \sigma^2}{m \lrp{1 + \sqrt{L_R} r}}, \frac{32 \lrp{d^2 + \sigma^4}}{m^2 r^2}}
    \end{alignat*}. 
    Let $x_k$ be the following stochastic process:
    \begin{alignat*}{1}
        x_{k+1} = \Exp_{x_k}\lrp{\delta \t{\beta}_k(x_k) + \sqrt{\delta} \zeta_k(x_k)}
    \end{alignat*}
    where $\zeta_k(x_k) \sim \N_{x_k}\lrp{0,I}$ and $\t{\beta}_k(x)$ satisfies, for all $x$, $\E{\t{\beta}_k(x)} = \beta(x)$ and $\lrn{\t{\beta}_k(x) - \beta(x)}\leq \sigma$. Assume that $\dist\lrp{x_0,x^*} \leq 2\R$. Then
    \begin{alignat*}{1}
        \Pr{\max_{k\leq K} \dist\lrp{x_k,x^*} \geq r} \leq 32K\delta m \exp\lrp{\frac{2{L_\beta'}^2\R^2}{d+\sigma^2} + \frac{64 L_R \lrp{d + \sigma^2} }{m} - \frac{m r^2}{256 \lrp{d + \sigma^2} }}
    \end{alignat*}
\end{lemma}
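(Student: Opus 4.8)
The plan is to \textbf{reduce the statement to Lemma~\ref{l:far-tail-bound-gaussian-adaptive}}, which already establishes a sub-Gaussian tail for a discrete-time process driven by the \emph{exact} drift $\beta$ plus a mean-zero, sub-exponentially controlled noise, and which allows an adaptive stepsize. The first step is to rewrite the SGLD update so that the true drift is isolated: since
\begin{alignat*}{1}
  \delta \t{\beta}_k(x_k) + \sqrt{\delta}\,\zeta_k(x_k) \;=\; \delta\,\beta(x_k) + \sqrt{\delta}\,\xi_k(x_k), \qquad \xi_k(x) := \sqrt{\delta}\lrp{\t{\beta}_k(x) - \beta(x)} + \zeta_k(x),
\end{alignat*}
where $\zeta_k(x)\sim\N_x(0,I)$, the process $x_k$ is exactly of the form treated in Lemma~\ref{l:far-tail-bound-gaussian-adaptive}, provided the random field $\xi_k$ satisfies that lemma's hypotheses.

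Next I would verify those hypotheses. Unbiasedness $\E{\xi_k(x)} = 0$ is immediate from $\E{\t{\beta}_k(x)} = \beta(x)$ and $\E{\zeta_k(x)} = 0$. For the moment-generating bound, $\lrn{\t{\beta}_k(x) - \beta(x)} \leq \sigma$ almost surely and $\delta\leq 1$ in the relevant parameter regime, so $\lrn{\xi_k(x)}^2 \leq 2\delta\sigma^2 + 2\lrn{\zeta_k(x)}^2 \leq 2\sigma^2 + 2\lrn{\zeta_k(x)}^2$; combining the deterministic factor $e^{2\rho\sigma^2}$ with the $\chi^2$-MGF estimate of Lemma~\ref{l:subexponential-chi-square} applied to $\lrn{\zeta_k(x)}^2$ gives $\E{\exp\lrp{\rho\lrn{\xi_k(x)}^2}} \leq \exp\lrp{\rho\sigma_\xi^2}$ for all $\rho\leq\tfrac18$, with $\sigma_\xi^2$ a fixed constant multiple of $d+\sigma^2$ (e.g. $\sigma_\xi^2 = 4d + 2\sigma^2$). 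With this choice $\sigma_\xi^2 \asymp d+\sigma^2$, so both the three stepsize conditions of Lemma~\ref{l:far-tail-bound-gaussian-adaptive} (with $\dist\lrp{x_k,x^*}$ replaced by $r$) and the three exponential terms $2L_\beta'\R^2/\sigma_\xi^2$, $16L_R\sigma_\xi^2/m$, $mt^2/(64\sigma_\xi^2)$ in its conclusion match, up to the stated numerical constants, the quantities $\tfrac{2{L_\beta'}^2\R^2}{d+\sigma^2}$, $\tfrac{64L_R(d+\sigma^2)}{m}$, $\tfrac{m r^2}{256(d+\sigma^2)}$ in the present statement; in particular the third stepsize comparison uses $(d+\sigma^2)^2 \geq d^2 + \sigma^4$.

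The remaining point is that Lemma~\ref{l:far-tail-bound-gaussian-adaptive} requires the stepsize to obey a bound depending on $\dist\lrp{x_k,x^*}$, whereas here $\delta$ is fixed and $\dist\lrp{x_k,x^*}$ is unbounded. I would handle this by a truncation/stopping argument: let $\Delta(\cdot)$ be the minimum of the three adaptive stepsize bounds from Lemma~\ref{l:far-tail-bound-gaussian-adaptive} (with $\sigma_\xi^2$ as above), a positive decreasing function of the distance, and run the process with the $x_k$-measurable stepsize $\delta_k := \min\lrbb{\delta,\ \Delta(\dist\lrp{x_k,x^*})}$. Since $\Delta$ is decreasing and $\delta\leq\Delta(r)$ by hypothesis, $\delta_k = \delta$ whenever $\dist\lrp{x_k,x^*}\leq r$; hence, writing $\tau$ for the first time the iterate leaves the ball of radius $r$ about $x^*$, this process coincides with the original $x_k$ for all $k\leq\tau$, and therefore the event $\lrbb{\max_{k\leq K}\dist\lrp{x_k,x^*}\geq r}$ is the same for both. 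Applying Lemma~\ref{l:far-tail-bound-gaussian-adaptive} to this truncated process with $\lambda = m$ and simplifying constants yields the claimed bound. The \textbf{main obstacle} is purely the bookkeeping: choosing $\sigma_\xi^2$ so that simultaneously (i) the sub-exponential bound on $\xi_k$ holds on the full range $\rho\leq\tfrac18$, (ii) the fixed-$\delta$ hypothesis implies all three adaptive stepsize hypotheses of Lemma~\ref{l:far-tail-bound-gaussian-adaptive}, and (iii) the three exponential terms come out with the stated constants; one must also check explicitly that the truncated process is genuinely of the required form (positive, $x_k$-measurable stepsizes dominated by a uniform $\delta$, noise still mean-zero and sub-exponential with $\E{\t{\beta}_k(x)}=\beta(x)$).
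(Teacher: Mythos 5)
Your proposal is correct and follows essentially the same route as the paper's proof: the paper likewise absorbs the gradient noise into $\xi_k(x) := \zeta_k(x) + \sqrt{\delta}\lrp{\t{\beta}_k(x)-\beta(x)}$, bounds its MGF via Lemma~\ref{l:subexponential-chi-square} with $\sigma_\xi^2$ on the order of $d+\sigma^2$, and couples the fixed-stepsize process to an auxiliary adaptive-stepsize process $\t{x}_k$ with $\delta_k = \min\lrbb{\delta, \Delta(\dist\lrp{\t{x}_k,x^*})}$ so that the two coincide until first exit from the ball of radius $r$, at which point Lemma~\ref{l:far-tail-bound-gaussian-adaptive} applies. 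The remaining work is exactly the constant bookkeeping you identify.
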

\begin{proof}
    We begin by defining $\xi_k(x_k) := \zeta_k(x_k) + \sqrt{\delta} \lrp{\t{\beta}(x_k) - \beta(x_k)}$. We verify that
    \begin{alignat*}{1}
        x_{k+1} = \Exp_{x_k}\lrp{\delta \t{\beta}_k(x_k) + \sqrt{\delta} \xi_k(x_k)}.
    \end{alignat*}
    We verify that $\E{\xi_k} = 0$ and that for any $\lambda \leq \frac{1}{8}$,
    \begin{alignat*}{1}
        \E{\exp\lrp{\lambda\lrn{\xi_k(x_k)}^2}} \leq \E{\exp\lrp{2\lambda\lrn{\zeta_k(x_k)}^2 + 2\lambda \sigma^2}} \leq \exp\lrp{4 \lambda d + 2\lambda \sigma^2}
    \end{alignat*}
    where we use Lemma \ref{l:subexponential-chi-square}. Let us define $\sigma_\xi := \sqrt{4d + \sigma^2}$.

    Next, let us define, for analysis purposes, the following process:
    \begin{alignat*}{1}
        \t{x}_{k+1} = \Exp_{\t{x}_k}\lrp{\delta_k \beta(\t{x}_k) + \sqrt{\delta_k} \xi_k(\t{x}_k)}
    \end{alignat*}
    initialized at $\t{x}_0 = x_0$ and where 
    \begin{alignat*}{1}
        \delta_k := \min\lrbb{\delta, \frac{m}{16 {L_\beta'}^2 \lrp{1 + \sqrt{L_R} \dist\lrp{\t{x}_k,x^*}}}, \frac{\sigma_\xi^2}{m \lrp{1 + \sqrt{L_R} \dist\lrp{\t{x}_k,x^*}}},\frac{32 \sigma_\xi^4}{m^2 \dist\lrp{\t{x}_k,x^*}^2}}
    \end{alignat*}
    Define the event $A_k := \lrbb{\max_{i\leq k} \dist\lrp{\t{x}_i,x^*} \leq r}$. Under the event $A_k$, $\delta_i = \delta$ for all $i\leq k$, and consequently, $\t{x}_i = x_i$ for all $i \leq k$. Therefore, $A_k = \lrbb{\max_{i\leq k} \dist\lrp{{x}_i,x^*} \leq r}$, and thus
    \begin{alignat*}{1}
        & \Pr{\max_{k\leq K} \dist\lrp{x_k,x^*} \geq r}\\
        =& \Pr{A_k^c}\\
        =& \Pr{\max_{k\leq K} \dist\lrp{\t{x}_k,x^*} \geq r}\\
        \leq& 32K\delta m \exp\lrp{\frac{2{L_\beta'}^2\R^2}{\sigma_\xi^2} + \frac{16 L_R \sigma_\xi^2 }{m} - \frac{m r^2}{64 \sigma_\xi^2 }}
    \end{alignat*}
    where the last inequality follows from Lemma \ref{l:far-tail-bound-gaussian-adaptive} with $\xi_k$ and $\sigma_\xi$ as defined above.
\end{proof}

\subsection{Misc}
\begin{lemma}\label{l:subexponential-chi-square}
    For $\lambda \leq \frac{1}{4}$ and $\xi \sim \N(0,I_{d\times d})$,
    \begin{alignat*}{1}
        \E{\exp\lrp{\lambda \lrn{\xi}_2^2}} \leq \exp\lrp{\lambda d + 2 \lambda^2 d} \leq \exp\lrp{2\lambda d}
    \end{alignat*}
\end{lemma}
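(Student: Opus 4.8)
The plan is to reduce to one dimension by independence, compute the moment generating function of the resulting chi-squared variable exactly, and then finish with an elementary logarithmic inequality.

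First I would observe that since $\xi \sim \N(0, I_{d\times d})$ has i.i.d.\ standard Gaussian coordinates $\xi_1, \dots, \xi_d$, we have $\lrn{\xi}_2^2 = \sum_{i=1}^d \xi_i^2$, so by independence $\E{\exp\lrp{\lambda \lrn{\xi}_2^2}} = \prod_{i=1}^d \E{\exp\lrp{\lambda \xi_i^2}} = \bigl(\E{\exp\lrp{\lambda \xi_1^2}}\bigr)^d$. For $\lambda < 1/2$, completing the square in the Gaussian integral gives $\E{\exp\lrp{\lambda \xi_1^2}} = \int_\Re (2\pi)^{-1/2} e^{-(1-2\lambda)x^2/2}\, dx = (1-2\lambda)^{-1/2}$, and hence $\E{\exp\lrp{\lambda \lrn{\xi}_2^2}} = (1-2\lambda)^{-d/2}$. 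Since $\lambda \le 1/4 < 1/2$ by hypothesis, this identity is valid.

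Next, it remains to show $(1-2\lambda)^{-d/2} \le \exp\lrp{\lambda d + 2\lambda^2 d}$, which after taking logarithms and dividing by $d/2$ is equivalent to $-\log(1-2\lambda) \le 2\lambda + 4\lambda^2$. I would establish the scalar inequality $-\log(1-u) \le u + u^2$ for $u \in [0, 1/2]$ by writing $-\log(1-u) = \sum_{k\ge 1} u^k/k = u + \sum_{k\ge 2} u^k/k \le u + \tfrac12\sum_{k\ge 2} u^k = u + \tfrac{u^2}{2(1-u)} \le u + u^2$, where the last step uses $u \le 1/2$. Substituting $u = 2\lambda \in [0,1/2]$ then yields $-\log(1-2\lambda) \le 2\lambda + 4\lambda^2$, which is the first claimed inequality.

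Finally, the second inequality $\exp\lrp{\lambda d + 2\lambda^2 d} \le \exp\lrp{2\lambda d}$ is immediate from $2\lambda^2 \le \lambda$, valid whenever $\lambda \le 1/2$ and in particular when $\lambda \le 1/4$. There is no substantive obstacle here; the only point requiring a little care is keeping track of the admissible range of $\lambda$ — the exact chi-squared MGF formula requires $\lambda < 1/2$, and the hypothesis $\lambda \le 1/4$ comfortably accommodates both that computation and the final simplification (and matches the regime in which the downstream tail-bound lemmas invoke this result).
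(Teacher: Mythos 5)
Your proof is correct, and it is exactly the standard argument behind the fact the paper invokes: the paper's entire proof is the one-line remark that this is a ``consequence of the $\chi^2$ distribution being subexponential,'' and your MGF computation $(1-2\lambda)^{-d/2}$ together with the bound $-\log(1-u)\le u+u^2$ for $u\in[0,1/2]$ is precisely the standard derivation of that property. No gaps; you have simply supplied the details the paper omits.
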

\begin{proof}
    Consequence of $\chi^2$ distribution being subexponential.
\end{proof}
\begin{lemma}[Hoeffding's Lemma] \label{l:hoeffding}
    Let $\eta_k$ be a $0$-mean random variable. Then for all $\lambda$, 
    \begin{alignat*}{1}
        \Ep{\eta}{\exp\lrp{\lambda \eta}} \leq \Ep{\eta}{\exp\lrp{2\lambda^2 \eta^2}}
    \end{alignat*}
\end{lemma}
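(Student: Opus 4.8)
The plan is to prove the inequality by a symmetrization argument. Let $\eta'$ denote an independent copy of $\eta$. Since $\mathbb{E}[\eta'] = 0$ and $u \mapsto e^{-\lambda u}$ is convex, Jensen's inequality gives, for each fixed value of $\eta$, that $e^{\lambda \eta} = e^{\lambda\eta}\,e^{-\lambda\mathbb{E}[\eta']} \le e^{\lambda\eta}\,\mathbb{E}_{\eta'}[e^{-\lambda\eta'}]$; taking expectation over $\eta$ yields
\[
  \mathbb{E}\bigl[e^{\lambda\eta}\bigr] \le \mathbb{E}_{\eta,\eta'}\bigl[e^{\lambda(\eta-\eta')}\bigr],
\]
where the right-hand expectation is over the product law of $(\eta,\eta')$. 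This first step is just the standard symmetrization.

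Next I would exploit that $\eta - \eta'$ has a symmetric distribution (it is equal in law to $\eta'-\eta$), so $\mathbb{E}_{\eta,\eta'}[e^{\lambda(\eta-\eta')}] = \mathbb{E}_{\eta,\eta'}[\cosh(\lambda(\eta-\eta'))]$. Using the elementary bound $\cosh(t) \le e^{t^2/2}$ (compare Taylor coefficients: $(2k)! \ge 2^k\,k!$) together with $(\eta-\eta')^2 \le 2\eta^2 + 2\eta'^2$, and then independence, I get
\[
  \mathbb{E}_{\eta,\eta'}\bigl[\cosh(\lambda(\eta-\eta'))\bigr] \le \mathbb{E}_{\eta,\eta'}\bigl[e^{\lambda^2(\eta^2+\eta'^2)}\bigr] = \Bigl(\mathbb{E}\bigl[e^{\lambda^2\eta^2}\bigr]\Bigr)^2 .
\]
A single application of Jensen's inequality to the convex map $t\mapsto t^2$ (equivalently Cauchy--Schwarz) upgrades this to $\bigl(\mathbb{E}[e^{\lambda^2\eta^2}]\bigr)^2 \le \mathbb{E}[e^{2\lambda^2\eta^2}]$. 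Chaining the displays gives $\mathbb{E}[e^{\lambda\eta}] \le \mathbb{E}[e^{2\lambda^2\eta^2}]$, and since the whole argument is insensitive to the sign of $\lambda$ it holds for every $\lambda\in\mathbb{R}$.

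I do not expect a genuine obstacle: this is a textbook sub-Gaussian/symmetrization fact. The only point to watch is integrability — if $\mathbb{E}[e^{2\lambda^2\eta^2}]=+\infty$ the inequality is vacuous, and otherwise this same quantity dominates all the intermediate expectations, so every step is legitimate. (An alternative one-line route avoids symmetrization: verify the pointwise bound $e^{x} \le x + e^{x^2}$ — the function $x\mapsto e^{x^2}-e^x+x$ is convex and vanishes together with its first derivative at $x=0$ — then substitute $x=\lambda\eta$, take expectations using $\mathbb{E}[\eta]=0$, and note $e^{\lambda^2\eta^2}\le e^{2\lambda^2\eta^2}$.)
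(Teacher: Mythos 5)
Your proof is correct and follows essentially the same route as the paper: symmetrize with an independent copy $\eta'$, bound the symmetric variable $\eta-\eta'$ via $\cosh(t)\le e^{t^2/2}$ (the paper does this equivalently by introducing a Rademacher sign $\epsilon$), and then split $(\eta-\eta')^2\le 2\eta^2+2\eta'^2$. In fact your version is slightly more careful at the last step: the paper asserts $\Ep{\eta,\eta'}{e^{\lambda^2(\eta^2+\eta'^2)}}=\Ep{\eta}{e^{2\lambda^2\eta^2}}$, whereas independence only gives $\bigl(\Ep{\eta}{e^{\lambda^2\eta^2}}\bigr)^2$, and your explicit Jensen/Cauchy--Schwarz step is exactly what is needed to close that gap.
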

\begin{proof}
    \begin{alignat*}{1}
        \Ep{\eta}{\exp\lrp{\lambda \eta}}
        =& \Ep{\eta}{\exp\lrp{\lambda \eta - \Ep{\eta'}{\lambda \eta'}}}\\
        \leq& \Ep{\eta,\eta'}{\exp\lrp{\lambda \lrp{\eta - \eta'}}}\\
        =& \Ep{\eta,\eta',\epsilon}{\exp\lrp{\lambda \epsilon \lrp{\eta - \eta'}}}\\
        \leq& \Ep{\eta,\eta'}{\exp\lrp{\lambda^2 \lrp{\eta - \eta'}^2/2}}\\
        \leq& \Ep{\eta,\eta'}{\exp\lrp{\lambda^2 \lrp{\eta^2 + {\eta'}^2}}}\\
        =& \Ep{\eta}{\exp\lrp{2\lambda^2 \eta^2}}
    \end{alignat*}
    where $\epsilon$ is a Rademacher random variable.
\end{proof}

\begin{lemma}[Corollary of Doob's maximal inequality]
    \label{l:doob_maximal}
    Let $K$ be any positive integer. For any $k\leq K$, let $a_k, b_k, c_k, d_k \in \Re^+$ be arbitrary positive constants. Assume that for all $k$, $a_k \leq \frac{1}{4}$ and ${a_k+c_k} \leq \frac{1}{4}$. Let $q_k$ be a semi-martingale of the form
    \begin{alignat*}{1}
        q_{k+1} \leq \lrp{1 + a_k}q_k + b_k + \eta_k
    \end{alignat*}
    where $\eta_k$ are random variables. Assume that for all $k$, $\eta_k$ satisfy
    \begin{alignat*}{1}
        \E{\exp\lrp{\eta_k} | \eta_0...\eta_{k-1}} \leq \exp\lrp{c_k q_{k} + d_k}
    \end{alignat*}

    Assume in addition that $q_k \geq 0$ almost surely, for all $k$. Finally, assume that $\sum_{k=0}^K a_k + c_k \leq\frac{1}{8}$. Then
    \begin{alignat*}{1}
        \Pr{\max_{k\leq K} q_{k} \geq t^2} \leq \exp\lrp{q_0 + \sum_{k=0}^K (b_k+d_k) - \frac{t^2}{2}}
    \end{alignat*}
\end{lemma}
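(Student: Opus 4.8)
The plan is to reduce the maximal inequality to Doob's inequality for a nonnegative supermartingale obtained by exponentiating $q_k$ with carefully chosen time-varying weights. Let $\F_k:=\sigma(\eta_0,\dots,\eta_{k-1})$, so that $q_k$ is $\F_k$-measurable and the hypothesis becomes $\E{e^{\eta_k}\mid\F_k}\le e^{c_kq_k+d_k}$. The obstruction to using a fixed weight $e^{\lambda q_k}$ is the multiplicative factor $(1+a_k)>1$ in the recursion, so I would instead set
\begin{alignat*}{1}
  \lambda_k:=\tfrac12\prod_{j=k}^{K-1}(1+a_j+c_j),\qquad A_k:=\sum_{j=0}^{k-1}\lambda_{j+1}(b_j+d_j),\qquad Z_k:=\exp\!\big(\lambda_kq_k-A_k\big).
\end{alignat*}
Here $\lambda_K=\tfrac12$; $\lambda_k$ is nonincreasing in $k$ and $\lambda_k\ge\tfrac12$; and using $1+x\le e^x$ together with $\sum_{k=0}^{K}(a_k+c_k)\le\tfrac18$ one gets $\lambda_0\le\tfrac12 e^{1/8}<1$, hence $\lambda_k\in[\tfrac12,1]$ for all $k$. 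Also $A_k$ is deterministic, nondecreasing (since $a,b,c,d\ge0$), and $A_K\le\sum_{k=0}^{K}(b_k+d_k)$ because every $\lambda_{j+1}\le1$.

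The core step is to check that $(Z_k)$ is a supermartingale for $(\F_k)$. Since $\lambda_{k+1}\ge0$ and $q_k\ge0$, the recursion gives $Z_{k+1}\le\exp\!\big(\lambda_{k+1}(1+a_k)q_k+\lambda_{k+1}b_k-A_{k+1}\big)e^{\lambda_{k+1}\eta_k}$; taking $\E{\cdot\mid\F_k}$ and bounding $\E{e^{\lambda_{k+1}\eta_k}\mid\F_k}\le\big(\E{e^{\eta_k}\mid\F_k}\big)^{\lambda_{k+1}}\le e^{\lambda_{k+1}(c_kq_k+d_k)}$ by conditional Jensen applied to the concave map $x\mapsto x^{\lambda_{k+1}}$ (legitimate precisely because $\lambda_{k+1}\le1$), one obtains $\E{Z_{k+1}\mid\F_k}\le\exp\!\big(\lambda_{k+1}(1+a_k+c_k)q_k+\lambda_{k+1}(b_k+d_k)-A_{k+1}\big)$, which equals $\exp(\lambda_kq_k-A_k)=Z_k$ by the identities $\lambda_{k+1}(1+a_k+c_k)=\lambda_k$ and $A_{k+1}=A_k+\lambda_{k+1}(b_k+d_k)$.

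To conclude, note that on the event $\{\max_{k\le K}q_k\ge t^2\}$ there is an index $k^\ast$ with $q_{k^\ast}\ge t^2$, whence $\max_{k\le K}Z_k\ge Z_{k^\ast}\ge\exp(\tfrac12 t^2-A_K)$ using $\lambda_{k^\ast}\ge\tfrac12$ and $A_{k^\ast}\le A_K$. Doob's maximal inequality for the nonnegative supermartingale $(Z_k)_{k\le K}$ then gives
\begin{alignat*}{1}
  \Pr{\max_{k\le K}q_k\ge t^2}\le\Pr{\max_{k\le K}Z_k\ge e^{t^2/2-A_K}}\le\E{Z_0}\,e^{A_K-t^2/2}=e^{\lambda_0q_0}\,e^{A_K-t^2/2},
\end{alignat*}
and since $q_0$ is deterministic with $q_0\ge0$ and $\lambda_0\le1$, while $A_K\le\sum_{k=0}^K(b_k+d_k)$, this is at most $\exp\!\big(q_0+\sum_{k=0}^K(b_k+d_k)-t^2/2\big)$, as claimed. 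The argument is almost entirely bookkeeping; the one genuine design decision --- and the part I expect to need the most care --- is the choice of the weights $\lambda_k$, which must simultaneously cancel the geometric factor $1+a_k$ and the conditional-MGF drift $c_kq_k$ while staying in $[\tfrac12,1]$ so that both the power-mean downscaling of $e^{\eta_k}$ and the final passage $\lambda_0q_0\le q_0$ are valid; this is exactly why the smallness condition $\sum_k(a_k+c_k)\le\tfrac18$ enters.
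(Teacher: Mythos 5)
Your proof is correct and follows essentially the same route as the paper: both exponentiate $q_k$ with telescoping weights proportional to $\prod(1+a_j+c_j)^{\pm1}$ so that the multiplicative drift $1+a_k$ and the conditional-MGF term $c_kq_k$ cancel, use conditional Jensen with the concave power $x\mapsto x^{\lambda}$ ($\lambda\le 1$) to downweight $\E{e^{\eta_k}\mid\F_k}$, and finish with Doob's maximal inequality, exploiting that the weights stay in $[\tfrac12,1]$. If anything, your bookkeeping is slightly tidier, since you package everything into an honest nonnegative supermartingale $Z_k$ before invoking Doob, whereas the paper introduces an auxiliary majorant $r_k$ and applies the weights only at the end.
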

\begin{proof}
    Let us first define
    \begin{alignat*}{1}
        r_0 :=& q_0\\
        r_{k+1} 
        :=& \lrp{1 + a_k} r_k + b_k  + \eta_k
    \end{alignat*}
    i.e. $r_k$ is very similar to $q_k$, only difference being that we replaced $\leq$ by $=$.
    
    We first verify that for all $k\leq K$, $r_k \geq q_k$. For $k=0$, by definition, $r_0 = q_0$. Now assume that $r_k \geq q_k$ for some $k$. Then for $k+1$, 
    \begin{alignat*}{1}
        r_{k+1} 
        :=& \lrp{1 + a_k} r_k + b_k  + \eta_k\\
        \geq& \lrp{1 + a_k} q_k + b_k  + \eta_k\\
        \geq& q_{k+1}
    \end{alignat*}

    We verify below that $\exp\lrp{r_k}$ is a sub-martingale: conditioning on $\eta_0...\eta_{k-1}$, and taking expectation wrt $\eta_k$, 
    \begin{alignat*}{1}
        & \E{\exp\lrp{r_{k+1}} | \eta_0...\eta_k}\\
        =& \E{\exp\lrp{\lrp{1 + a_k} r_k + b_k + \eta_k} | \eta_0...\eta_k}\\
        =& \exp\lrp{(1+a_k)r_k + b_k} \cdot \E{\exp\lrp{\eta_k} | \eta_0...\eta_k}\\
        \geq& \exp\lrp{(1+a_k)r_k + b_k}\\
        \geq& \exp\lrp{ r_k}
    \end{alignat*}
    where the first inequality is by convexity of $\exp$, and $\E{\eta_k}= 0$, and Jensen's inequality. 
    
    Let us now define $s_k := \prod_{i=0}^{k-1} \lrp{1+a_i+c_i}^{-1}$. We can upper bound
    \begin{alignat*}{1}
        & \E{\exp\lrp{s_{k+1} r_{k+1}}}\\
        =& \E{\exp\lrp{s_{k+1} \lrp{(1+a_k) r_{k} + b_k + \eta_k}}}\\
        =& \E{\exp\lrp{s_{k+1}(1+a_k) r_k + s_{k+1} b_k} \cdot \E{\exp\lrp{s_{k+1} \eta_k}| \eta_0...\eta_k}}\\
        \leq& \E{\exp\lrp{s_{k+1}(1+a_k) r_k + s_{k+1} b_k} \cdot \E{\exp\lrp{s_{k+1} \eta_k}| \eta_0...\eta_k}}\\
        \leq& \E{\exp\lrp{s_{k+1}(1+a_k) r_k + s_{k+1} b_k} \cdot \lrp{\E{\exp\lrp{\eta_k}| \eta_0...\eta_k}}^{s_{k+1}}}\\
        \leq& \E{\exp\lrp{s_{k+1}(1+a_k) r_k + s_{k+1} \lrp{b_k + c_kq_k + d_k}}}\\
        =& \E{\exp\lrp{s_{k}r_k}} \cdot \exp\lrp{s_{k+1} \lrp{b_k + d_k}}
    \end{alignat*}
    where the second inequality is by Lemma \ref{l:hoeffding}, the third inequality is by the fact that $s_k \leq 1$ for all $k$ and by Jensen's inequality, the fourth inequality uses our assumption on $\eta_k$ in the Lemma statement, as well as the fact that $s_k \leq 1$ for all $k$. The last equality is by definition of $s_k$ and because $q_k \leq r_k$. Applying this recursively gives
    \begin{alignat*}{1}
        \E{\exp\lrp{s_K r_{K}}} \leq \exp\lrp{r_0} \cdot \exp\lrp{\sum_{k=0}^K s_{k+1} (b_k+d_k)} \leq \exp\lrp{r_0 + \sum_{k=0}^K (b_k+d_k)}
    \end{alignat*}
    
    By Doob's maximal inequality (recall that we $e^{r_k}$ is a sub-martingale),
    \begin{alignat*}{1}
        \Pr{\max_{k\leq K} q_{k} \geq t^2}
        \leq& \Pr{\max_{k\leq K} r_{k} \geq t^2}\\
        =& \Pr{\max_{k\leq K} \exp\lrp{s_K r_{k}} \geq \exp\lrp{s_K t^2}}\\
        \leq& \E{\exp\lrp{s_K r_K } } \cdot \exp\lrp{- \frac{t^2}{2}}\\
        \leq& \exp\lrp{r_0 + \sum_{k=0}^K s_{k+1} (b_k+d_k) - \frac{t^2}{2}}
        \elb{e:t:wkqndasq:4}
    \end{alignat*}
    The first equality uses our assumption that $s_K = \prod_{k=0}^{K-1} \lrp{1+ a_k + c_k}^{-1} \geq e^{- \sum_{k=0}^{K-1} {4(a_k+c_k)}} \geq e^{- \frac{1}{4}} \geq \frac{1}{2}$ and the fact that $r_K \geq q_k \geq 0$.
    \end{proof}

    \begin{lemma}[Uniform Bound]
        \label{l:doob_maximal_3}
        Let $K$ be any positive integer. For $k\leq K$, let $\delta_k \in \Re^+$, let $\lambda, \gamma, \mu \in \Re^+$. Let $q_k$ be a sequence of random numbers of the form
        \begin{alignat*}{1}
            q_{k+1} \leq \lrp{1 - \delta_k \lambda}q_k + \delta_k \gamma + \sqrt{\delta_k} \nu_k
        \end{alignat*}
        where $\nu_k$ are random variables. Assume that $q_k$ and $\nu_k$ are measurable wrt some filtration $\F_k$. Assume that for all $k$, $\nu_k$ satisfy
        \begin{alignat*}{1}
            & \E{\exp\lrp{\sqrt{\delta_k} \nu_k}| \F_k} \leq \exp\lrp{\delta_k \lambda q_k/2 + \delta_k \mu}
        \end{alignat*}
        Assume that there is a constant $\delta$ such that for all $k$, $\delta_k \leq \delta \leq \frac{1}{8\lambda}$. Assume in addition that $q_k \geq 0$ almost surely, for all $k$. Then
        \begin{alignat*}{1}
            \Pr{\max_{i\leq K} q_{i} \geq t^2} 
            \leq& 8K\delta \lambda \exp\lrp{q_0 + \frac{8(\gamma + \mu)}{\lambda} - \frac{t^2}{2}}
        \end{alignat*}
    \end{lemma}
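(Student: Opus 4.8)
The plan is to turn the one–shot bound of Lemma~\ref{l:doob_maximal} (which loses an additive $\sum_k\delta_k(\gamma+\mu)$ and is therefore only useful over a short horizon) into the stated uniform-in-$K$ estimate, by exploiting the contraction factor $1-\delta_k\lambda$ in two ways: first to keep the exponential moments $\E{\exp(q_k)}$ bounded uniformly in $k$, and then to replace a union bound over all $K$ steps by one over only $O(K\delta\lambda)$ ``mixing-time'' blocks. One may assume throughout that the claimed right-hand side is at most $1$, since otherwise there is nothing to prove.

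\textbf{Step 1 (uniform exponential moment).} Applying $\exp$ to the recursion and taking $\F_k$-conditional expectations, the hypothesis on $\nu_k$ gives $\E{\exp(q_{k+1})\mid\F_k}\le\exp\big((1-\tfrac{\delta_k\lambda}{2})q_k+\delta_k(\gamma+\mu)\big)$. Since $q_k\ge0$ and $x\mapsto x^{1-\delta_k\lambda/2}$ is concave on $[0,\infty)$, Jensen's inequality turns this into a scalar recursion $a_{k+1}\le(1-\tfrac{\delta_k\lambda}{2})a_k+\delta_k(\gamma+\mu)$ for $a_k:=\log\E{\exp(q_k)}\ge0$, with $a_0=q_0$. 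Solving it, using $1-x\le e^{-x}$, $\delta_k\le\delta\le\tfrac{1}{8\lambda}$, and the geometric-sum bound $\sum_{i<k}\delta_i\prod_{i<j<k}(1-\tfrac{\delta_j\lambda}{2})\le\tfrac{3}{\lambda}$ (uniform in $k$), yields $a_k\le q_0+\tfrac{3(\gamma+\mu)}{\lambda}$, i.e.\ $\E{\exp(q_k)}\le\exp\big(q_0+\tfrac{3(\gamma+\mu)}{\lambda}\big)$ for every $k\le K$.

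\textbf{Step 2 (blocks, then Lemma~\ref{l:doob_maximal} inside each).} Set $\ell:=\max\{1,\lfloor1/(4\delta\lambda)\rfloor\}$; since $\delta\lambda\le\tfrac{1}{8}$ one has $\ell\ge\tfrac{1}{8\delta\lambda}$, so partitioning $\{0,\dots,K\}$ into consecutive blocks $B_1,\dots,B_N$ of size $\le\ell$ gives $N\le8K\delta\lambda$ (absorbing harmless additive constants; if $\ell>K$ the whole range is a single block and the conclusion is one direct application of Lemma~\ref{l:doob_maximal}). Fix a block $B_j$ with least index $k_0$. Conditionally on $\F_{k_0}$ the value $q_{k_0}$ is deterministic and, discarding the favorable term (legitimate since $q_k\ge0$), the sub-process obeys $q_{k+1}\le q_k+\delta_k\gamma+\eta_k$ with $\eta_k:=\sqrt{\delta_k}\nu_k$; this matches the hypotheses of (the conditional form of) Lemma~\ref{l:doob_maximal} with $a_k=0$, $b_k=\delta_k\gamma$, $c_k=\tfrac{\delta_k\lambda}{2}$, $d_k=\delta_k\mu$, the requirement $\sum_{k\in B_j}(a_k+c_k)=\sum_{k\in B_j}\tfrac{\delta_k\lambda}{2}\le\tfrac{\ell\delta\lambda}{2}\le\tfrac{1}{8}$ holding by the choice of $\ell$. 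Because the proof of Lemma~\ref{l:doob_maximal} is purely a supermartingale plus Doob-maximal-inequality argument, it localizes over the initial $\sigma$-field $\F_{k_0}$ and gives
\[
\Pr{\max_{k\in B_j}q_k\ge t^2 \mid \F_{k_0}}\le\exp\Big(q_{k_0}+\sum_{k\in B_j}\delta_k(\gamma+\mu)-\tfrac{t^2}{2}\Big)\le\exp\Big(q_{k_0}+\tfrac{\gamma+\mu}{4\lambda}-\tfrac{t^2}{2}\Big).
\]
Taking expectations and invoking Step~1,
\[
\Pr{\max_{k\in B_j}q_k\ge t^2}\le\E{\exp(q_{k_0})}\exp\Big(\tfrac{\gamma+\mu}{4\lambda}-\tfrac{t^2}{2}\Big)\le\exp\Big(q_0+\tfrac{8(\gamma+\mu)}{\lambda}-\tfrac{t^2}{2}\Big),
\]
and a union bound over $j=1,\dots,N$ with $N\le8K\delta\lambda$ gives the claim.

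\textbf{Main obstacle.} I expect Step~1 to be the delicate point: the per-step estimate still carries a $q_k$ on its right-hand side, so keeping $\E{\exp(q_k)}$ bounded uniformly in $k$ hinges on the concave-power Jensen step followed by the uniform geometric-sum bound $\sum\delta_i\prod(1-\cdot)\le3/\lambda$; without this one only gets $\E{\exp(q_k)}$ growing with $k$ and the whole scheme collapses. A secondary nuisance is calibrating the block length $\ell$ so that it simultaneously respects Lemma~\ref{l:doob_maximal}'s constraint $\sum(a_k+c_k)\le\tfrac{1}{8}$ and keeps the number of blocks at $O(K\delta\lambda)$, and checking that the proof of Lemma~\ref{l:doob_maximal} indeed furnishes the conditional (random-initial-value) version invoked above.
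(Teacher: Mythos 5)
Your proposal is correct and follows essentially the same route as the paper's own proof: a uniform-in-$k$ bound on $\E{\exp(q_k)}$ obtained by iterating the one-step MGF estimate (the Jensen step for the concave power $x\mapsto x^{1-\delta_k\lambda/2}$ and the geometric-sum bound are exactly the paper's mechanism), followed by partitioning $\{0,\dots,K\}$ into blocks of length $\approx\frac{1}{4\delta\lambda}$, applying Lemma~\ref{l:doob_maximal} conditionally within each block with $a_k=0$, $b_k=\delta_k\gamma$, $c_k=\delta_k\lambda/2$, $d_k=\delta_k\mu$, and union-bounding over the $\leq 8K\delta\lambda$ blocks. Your bookkeeping is in fact slightly cleaner than the paper's (you correctly put the $\delta_k$ factors into $c_k$ and $d_k$ and explicitly flag the conditional form of Lemma~\ref{l:doob_maximal}), and the small constant discrepancies ($3/\lambda$ vs.\ $4/\lambda$ in the geometric sum) are immaterial to the stated bound.
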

    \begin{proof}
        For any $s \leq 1$, 
        \begin{alignat*}{1}
            \E{\exp\lrp{s q_{k+1}}}
            \leq& \E{\exp\lrp{s \lrp{(1-\delta_k \lambda) q_k + \delta_k\gamma + \sqrt{\delta_k} \nu_k}}}\\
            =& \E{\exp\lrp{s \lrp{(1-\delta_k \lambda) q_k + \delta_k\gamma}} \cdot \E{\exp\lrp{s \sqrt{\delta_k} \nu_k}}}\\
            \leq& \E{\exp\lrp{s \lrp{(1-\delta_k \lambda) q_k + \delta_k\gamma}} \cdot \lrp{\E{\exp\lrp{\sqrt{\delta_k} \nu_k}}}^{s}}\\
            \leq& \E{\exp\lrp{s \lrp{(1-\delta_k \lambda/2) q_k + \delta_k(\gamma+\mu)}}}
        \end{alignat*}

        Applying the above recursively, for any $k$, we can bound
        \begin{alignat*}{1}
            \E{\exp\lrp{q_k}}
            \leq& \E{\exp\lrp{(1 - \delta_k \lambda/2) q_{k-1} + \delta_k (\gamma+\mu) }}\\
            \leq& ...\\
            \leq& \E{\exp\lrp{\prod_{i=0}^{k-1}\lrp{1-\delta_i \lambda/2} q_0 + \sum_{i=0}^{k-1} \prod_{j=i}^{k-1} \lrp{1-\delta_j \lambda/2 }\lrp{\delta_i (\gamma+\mu)}}}\\
            \leq& \E{\exp\lrp{e^{-\frac{\lambda}{2} \sum_{i=0}^{k-1}\delta_i } q_0 + (\gamma+\mu) \sum_{i=0}^{k-1} e^{-\frac{\lambda}{2} \sum_{j=i}^{k-1} \delta_j}\delta_i}}
            \elb{e:t:qokdsda:1}
        \end{alignat*}
        Let us define $t_k := \sum_{i=0}^{k} \delta_i$. By our assumption that $\delta_i \leq \frac{1}{4\lambda}$, we can verify that $\sum_{i=0}^{k-1} e^{\frac{\lambda}{2} \sum_{j=i}^{k-1} \delta_j}\delta_i \leq 2 \int_0^{t_k} e^{-\frac{\lambda(t_k - t)}{2}} dt \leq \frac{4}{\lambda}$. Therefore, for all $k$,
        \begin{alignat*}{1}
            \E{\exp\lrp{q_{k}}} \leq \exp\lrp{e^{-\frac{\lambda}{2} \sum_{i=0}^{k-1}\delta_i }q_0 + \frac{4(\gamma+\mu)}{\lambda}}
        \end{alignat*}
        Let us now define $N := \left\lceil \frac{1}{4\delta \lambda}\right\rceil \geq 1$ (inequality is because $\delta \leq \delta_k \leq \frac{1}{8\lambda}$). We verify that $q_{k+1} \leq q_{k-1} + \delta_k (\gamma+\mu) + \eta_k$. Let us now apply Lemma \ref{l:doob_maximal} with $\eta_k = \sqrt{\delta_k} \nu_k$, $a_k=0, b_k = \delta_k \gamma, c_k = \lambda/2, d_k=\mu$ and the fact that $\delta_k \lambda \leq 1/4$ to bound, for any $k$,
        \begin{alignat*}{1}
            \Pr{\max_{i\leq N} q_{k+i} \geq t^2} 
            \leq& \E{\exp\lrp{q_k + (\gamma+\mu) \sum_{i=0}^N \delta_{i+N} - \frac{t^2}{2}}} 
            \leq \exp\lrp{q_0 + \frac{8(\gamma+\mu)}{\lambda} - \frac{t^2}{2}}
        \end{alignat*}
        where we use the fact that $N \leq \frac{1}{2\delta \lambda}$

        Applying union bound over the events $\lrbb{\max_{i\leq N} q_{k+i} \geq t^2}$ for $k = 0, N, 2N...$, we can bound, for any positive integer $M$,
        \begin{alignat*}{1}
            \Pr{\max_{i\leq MN} q_{i} \geq t^2} 
            \leq& \sum_{j=0}^{M-1} \Pr{\max_{i\leq N} q_{jN + i} \geq t^2}\\
            \leq& \sum_{j=0}^{M-1} \exp\lrp{e^{-\frac{\lambda}{2} \sum_{i=0}^{jN}\delta_i } q_0 + \frac{8(\gamma+\mu)}{\lambda} - \frac{t^2}{2}}\\
            \leq& M \exp\lrp{q_0 + \frac{8(\gamma+\mu)}{\lambda} - \frac{t^2}{2}}
        \end{alignat*}
        Plugging in $M = \frac{K}{N} \leq 8K\delta \lambda$, it follows that for any $K$, 
        \begin{alignat*}{1}
            \Pr{\max_{i\leq K} q_{i} \geq t^2} 
            \leq& 8K\delta \lambda \exp\lrp{q_0 + \frac{8(\gamma+\mu)}{\lambda} - \frac{t^2}{2}}
        \end{alignat*}

    \end{proof}

    \begin{lemma}\label{l:zhang2016}
        Let $M$ satisfy Assumption \ref{ass:sectional_curvature_regularity}. For any 3 points $x,y,z \in M$, let $u, v \in T_y M$ be such that $z = \Exp_y(v)$ and $x = \Exp_y(u)$. Assume in addition that $\lrn{u} = \dist\lrp{x,y}$ (i.e. $t\to \Exp_x(tu)$ is a minimizing geodesic). Then
        \begin{alignat*}{1}
            \dist\lrp{z, x}^2
            \leq& \dist\lrp{y, x}^2 - 2\lin{v, u} + {\tc\lrp{\sqrt{L_R}\dist\lrp{y,x}}} \lrn{v}^2
        \end{alignat*}    
        where $\zeta(r) := \frac{r}{\tanh(r)}$.
    \end{lemma}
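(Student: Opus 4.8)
The plan is to recognize the claim as the geodesic-triangle comparison inequality of \citet{zhang2016first}, valid whenever the sectional curvature of $M$ is bounded below by $-L_R$ (Assumption~\ref{ass:sectional_curvature_regularity}), and to reproduce its proof by reducing to a two-dimensional model space. First I would dispose of the degenerate cases: if $u=0$ then $\dist(z,x)=\dist(z,y)\le\lrn{v}$ and $\tc(0)=1$, and if $v=0$ the inequality is an identity. Otherwise set $c:=\lrn{u}=\dist(y,x)$, $b:=\lrn{v}$, and let $A$ be the angle at $y$ between $u$ and $v$, so that $\lin{v,u}=bc\cos A$. The two geodesics $t\mapsto\Exp_y(tu)$ and $t\mapsto\Exp_y(tv)$, $t\in[0,1]$, form a hinge at $y$ with opening angle $A$ and side lengths $c$ and $b$; the first is minimizing by hypothesis, while the second (ending at $z$) need not be.

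Next I would apply the hinge form of Toponogov's theorem: since $\sec_M\ge-L_R$ and the side of length $c$ is minimizing, the distance in $M$ between the far endpoints of this hinge, namely $\dist(z,x)$, is at most the distance $\bar a$ between the endpoints of the comparison hinge with the same data $(b,c,A)$ in the simply connected surface of constant curvature $-L_R$. Because the comparison curvature is nonpositive there is no restriction on the side lengths, and only one side of the hinge must be minimizing, so the fact that $v$ is not assumed to be a minimizing direction is harmless; this global comparison is also what makes the coefficient end up evaluated at the \emph{fixed} length $\dist(y,x)$ rather than at a running distance, as a naive integration of the Hessian of $\dist(\cdot,x)^2$ would give. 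If $L_R=0$ the model is Euclidean, $\bar a^2=b^2+c^2-2bc\cos A=\lrn{u}^2-2\lin{v,u}+\lrn{v}^2$, and $\tc(0)=1$ finishes the proof; so assume $L_R>0$ and write $\mu:=\sqrt{L_R}$, so that $\bar a$ obeys the hyperbolic law of cosines $\cosh(\mu\bar a)=\cosh(\mu b)\cosh(\mu c)-\sinh(\mu b)\sinh(\mu c)\cos A$.

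It then remains to prove the one-variable inequality $\bar a(b)^2\le\tc(\mu c)\,b^2+c^2-2bc\cos A$ for all $b\ge0$, with $c$ and $A$ fixed; combined with the comparison step this yields $\dist(z,x)^2\le\bar a^2\le\tc(\sqrt{L_R}\,\dist(y,x))\lrn{v}^2+\dist(y,x)^2-2\lin{v,u}$, which is the claim. From the law of cosines one computes $\bar a(0)=c$, $\bar a'(0)=-\cos A$, and $\bar a''(0)=\mu\coth(\mu c)\sin^2 A$, so the difference $h(b):=\tc(\mu c)b^2+c^2-2bc\cos A-\bar a(b)^2$ satisfies $h(0)=h'(0)=0$ and $h''(0)=2\cos^2 A\,(\tc(\mu c)-1)\ge0$; I would then conclude $h\ge0$ on $[0,\infty)$ by a convexity/monotonicity argument, for instance by showing $\tfrac12\tfrac{d^2}{db^2}\bar a(b)^2\le\tc(\mu c)$ uniformly in $b$ (this quantity decreases from at most $\tc(\mu c)$ at $b=0$ towards $1$ as $b\to\infty$, since $\bar a(b)=b+O(1)$ there). \textbf{The main obstacle is exactly this last step}: the desired inequality is tight to second order as $b\to0$, so one must verify with some care that it never reverses for large $b$ — routine calculus from the law of cosines, but delicate, and it is the crux of the argument. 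A secondary point needing attention is the case where the minimizing geodesic realizing $\dist(z,x)$ is non-unique or $z$ lies in the cut locus of $x$; this is covered by the standard statement of the hinge theorem, which bounds the endpoint distance directly and requires no regularity of that geodesic.
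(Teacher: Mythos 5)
The paper itself does not prove this lemma: it restates Corollary 8 of \citet{zhang2016first} and merely remarks that the uniqueness of minimizing geodesics assumed there is not needed. Your outline reconstructs the proof of that cited result along its original lines — Toponogov's hinge comparison (correctly noting that for a nonpositive comparison curvature only the side of length $\lrn{u}$ need be minimizing, which is why the non-minimality of the $v$-geodesic and cut-locus issues are harmless), the hyperbolic law of cosines in the model space of curvature $-L_R$, and a one-variable inequality in $b=\lrn{v}$. The skeleton and the boundary computations ($\bar a(0)=c$, $\bar a'(0)=-\cos A$, $\bar a''(0)=\mu\coth(\mu c)\sin^2A$) are all correct.

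The step you flag as the crux is, however, genuinely incomplete as written, and your heuristic for it is partly wrong. Your proposed closing move, $\tfrac12\tfrac{d^2}{db^2}\bar a(b)^2\le\tc(\mu c)$ uniformly in $b$, is in fact true, but it does not follow from the asymptotics you cite: one has $\tfrac12(\bar a^2)''=1+\bigl(\tc(\mu\bar a)-1\bigr)\sin^2\theta_b$, where $\theta_b$ is the triangle's angle at $\Exp_y(bv/\lrn{v})$, and $\tc(\mu\bar a)\to\infty$ as $b\to\infty$, so one must trade this growth against the decay of $\sin\theta_b$ via the hyperbolic law of sines, $\sin\theta_b=\sinh(\mu a_{\min})/\sinh(\mu\bar a)$ with $\sinh(\mu a_{\min})=\sinh(\mu c)\sin A$, and then invoke the monotone decrease of $a\mapsto(\mu a\coth(\mu a)-1)/\sinh^2(\mu a)$ to get $\tfrac12(\bar a^2)''\le\tc(\mu a_{\min})\le\tc(\mu c)$. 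Moreover, your parenthetical claim that this quantity decreases from $b=0$ is false when $\cos A>0$: it \emph{increases} up to the foot of the perpendicular from $x$ onto the geodesic through $z$, peaking at $\tc(\mu a_{\min})$, and the uniform bound survives only because $a_{\min}\le c$. (Zhang and Sra instead prove the first-derivative inequality $\tfrac{d}{db}\bar a^2\le 2\tc(\mu c)\,b-2c\cos A$ and integrate from $b=0$.) So: right strategy, closable along the line you propose, but the analytic heart of the lemma is asserted rather than proved.
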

    The above lemma is a restatement of Corollary 8 from \cite{zhang2016first}. Although \cite{zhang2016first} required minimizizng geodesics to be unique, their proof, based on Lemma 6 of the same paper, works even if minimizing geodesics are not unique.

    \section{Fundamental Manifold Results}
    
    In this section, we provide Taylor expansion style inequalities for the evolution of geodesics on manifold. By making use of our bounds for matrix ODEs in Section \ref{s:matrix_exponent}, we can bound the distance between two points along geodesics under various conditions. Most of our analysis is based on some variant of the Jacobi equation $D_t^2 J + R(J,\gamma')\gamma' = 0$.

    Notably,
    \begin{enumerate}
        \item Lemma \ref{l:discrete-approximate-synchronous-coupling} quantifies the distance evolution between $x(t) = \Exp_x(tu)$ and $y(t)=\Exp_y(tv)$. This is the key to proving Lemma \ref{l:discretization-approximation-lipschitz-derivative}, which bounds the distance between Euler Murayama discretization \eqref{e:intro_euler_murayama} and \eqref{e:intro_sde}.
        \item Lemma \ref{l:discrete-approximate-synchronous-coupling-ricci} is a more refined version of Lemma \ref{l:discrete-approximate-synchronous-coupling}. Lemma \ref{l:discrete-approximate-synchronous-coupling-ricci} is key to proving Lemma \ref{l:sgld-lemma}, which is in turn key to proving Theorem \ref{t:SGLD}. Lemma \ref{l:discrete-approximate-synchronous-coupling-ricci} is also used to analyze the distance evolution of two processes under the Kendall-Cranston coupling in Lemma \ref{l:g_contraction_without_gradient_lipschitz}.
    \end{enumerate}

    \subsection{Jacobi Field Approximations}
    In the following lemma, we consider a variation field $\Lambda(s,t)$, where for each $s$, $\Lambda(s,t)$ is a geodesic. We bound the error between $\Lambda(s,t)$, and its Taylor approximation of various orders. This lemma is key to proving Lemma \ref{l:discrete-approximate-synchronous-coupling} and Lemma \ref{l:discrete-approximate-synchronous-coupling}.

    \begin{lemma}\label{l:jacobi_field_norm_bound}
        Let $\Lambda(s,t) : [0,1]\times[0,1]\to M$ be a variation field, where for each fixed $s$, $t \to \Lambda(s,t)$ is a geodesic. Let us define $\C := \sqrt{L_R \lrn{\del_t \Lambda(s,0)}^2}$. Then for all $s,t\in[0,1]$, 
        \begin{alignat*}{1}
            &\lrn{\del_s \Lambda(s,t)}
            \leq \cosh\lrp{{\C}} \lrn{\del_s \Lambda(s,0)} + \frac{\sinh\lrp{{\C}}}{{\C}} \lrn{D_t \del_s \Lambda(s,0)}\\
            &\lrn{\del_s \Lambda(s,t) - \party{\Lambda(s,0)}{\Lambda(s,t)}\lrp{\del_s \Lambda(s,0) + t D_t \del_s \Lambda(s,0)}}
            \leq \lrp{\cosh\lrp{{\C}} - 1} \lrn{\del_s \Lambda(s,0)} + \lrp{\frac{\sinh\lrp{{\C}}}{{\C}}  - 1} \lrn{D_t \del_s \Lambda(s,0)}\\
            & \lrn{\del_s \Lambda(s,t) - \party{\Lambda(s,0)}{\Lambda(s,t)}\lrp{\del_s \Lambda(s,0)}}
            \leq \lrp{\cosh\lrp{{\C}} - 1} \lrn{\del_s \Lambda(s,0)} + \frac{\sinh\lrp{{\C} }}{{\C}}\lrn{D_t \del_s \Lambda(s,0)}\\
            & \lrn{D_t \del_s \Lambda(s,t)}
            \leq {\C}  \sinh\lrp{{\C}} \lrn{\del_s \Lambda(s,0)} + \cosh\lrp{{\C}} \lrn{D_t \del_s \Lambda(s,0)}\\
            &\lrn{D_t \del_s \Lambda(s,t) - \party{\Lambda(s,0)}{\Lambda(s,t)}\lrp{D_t \del_s \Lambda(s,0)}} 
            \leq {\C}  \sinh\lrp{{\C}}\lrn{\del_s \Lambda(s,0)}  + \lrp{\cosh\lrp{{\C}} - 1} \lrn{D_t \del_s \Lambda(s,0)}
        \end{alignat*}

        If in addition, the derivative of the Riemannian curvature tensor is globally bounded by $L_R'$, then
        \begin{alignat*}{1}
            & \lrn{D_t \del_s \Lambda(s,t) - \party{\Lambda(s,0)}{\Lambda(s,t)}\lrp{D_t \del_s \Lambda(s,0)} - t\party{\Lambda(s,0)}{\Lambda(s,t)}\lrp{D_t D_t \del_s \Lambda(s,0)}}\\
            \leq& \lrp{L_R'\lrn{\del_t\Lambda(s,0)}^3 + \C^4} e^{\C} \lrn{\del_s \Lambda(s,0)} + \lrp{L_R'\lrn{\del_t\Lambda(s,0)}^3 + \C^2}e^{\C} \lrn{D_t \del_s \Lambda(s,0)}
        \end{alignat*}
    \end{lemma}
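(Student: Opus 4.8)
The plan is to set up the Jacobi equation for the variation field $\Lambda(s,t)$ and treat each of the desired inequalities as a statement about the solution of a second-order linear ODE along $t$ (for each fixed $s$). Fix $s$ and write $J(t) := \del_s \Lambda(s,t)$, which is a Jacobi field along the geodesic $t \mapsto \Lambda(s,t)$, so it satisfies $D_t^2 J + R(J, \gamma')\gamma' = 0$ with $\gamma'(t) = \del_t \Lambda(s,t)$; note $\lrn{\gamma'(t)}$ is constant in $t$ and equals $\lrn{\del_t\Lambda(s,0)}$, so $\C = \sqrt{L_R}\lrn{\gamma'(t)}$ throughout. Working in a parallel orthonormal frame along $\gamma$, this becomes a matrix ODE $\ddot{\mathbf{J}}(t) = -\mathbf{A}(t)\mathbf{J}(t)$ where $\mathbf{A}(t)$ is symmetric with operator norm $\leq \C^2$ by Assumption~\ref{ass:sectional_curvature_regularity}. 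The first five inequalities are then obtained by comparing $\mathbf{J}(t)$ (and $\dot{\mathbf{J}}(t)$) to their zeroth/first-order Taylor polynomials at $t=0$, using the integral (Volterra/Gr\"onwall) form of the ODE; I expect these to follow from the matrix-exponential bounds in Section~\ref{s:matrix_exponent} (which the excerpt references), the point being that the comparison equation $\ddot u = \C^2 u$ has solutions $\cosh(\C t)$ and $\sinh(\C t)/\C$, and all five bounds are exactly the ``defect'' estimates one gets from Duhamel's formula $\mathbf{J}(t) = \mathbf{J}(0) + t\dot{\mathbf{J}}(0) + \int_0^t (t-r)\bigl(-\mathbf{A}(r)\bigr)\mathbf{J}(r)\,dr$ and its differentiated version, combined with the a priori bound $\lrn{\mathbf{J}(r)} \le \cosh\C\,\lrn{\mathbf{J}(0)} + \tfrac{\sinh\C}{\C}\lrn{\dot{\mathbf{J}}(0)}$.

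For the first five inequalities, the order of work is: (i) establish the a priori growth bound for $\lrn{J(t)}$ and $\lrn{D_tJ(t)}$ by a Gr\"onwall argument on the system $(\mathbf J,\dot{\mathbf J})$ — this is the first displayed inequality; (ii) plug this bound back into the Duhamel remainder to get the second and third inequalities (zeroth- and first-order Taylor defects for $J$); (iii) do the same for the differentiated Duhamel identity $D_tJ(t) = D_tJ(0) + \int_0^t (-\mathbf A(r))\mathbf J(r)\,dr$ to get the fourth and fifth inequalities. Each is a direct estimate: bound $\int_0^t (t-r)\C^2(\cosh(\C r)\lrn{J(0)} + \tfrac{\sinh \C r}{\C}\lrn{D_tJ(0)})\,dr$ and recognize the result as $(\cosh\C - 1)\lrn{J(0)} + (\tfrac{\sinh\C}{\C}-1)\lrn{D_tJ(0)}$ etc., using $\int_0^1(1-r)\C^2\cosh(\C r)\,dr = \cosh\C - 1$ and similar identities. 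I would state these as consequences of a clean lemma on scalar/matrix comparison rather than re-derive them.

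The last inequality is the genuinely new part and the main obstacle. Here we need a \emph{second-order} Taylor expansion of $D_tJ(t)$ in $t$, so we must differentiate the Jacobi equation once more: $D_t^3 J = -D_t\bigl(R(J,\gamma')\gamma'\bigr) = -(\nabla_{\gamma'}R)(J,\gamma')\gamma' - R(D_tJ,\gamma')\gamma'$ (using $D_t\gamma' = 0$). The first term is controlled by the new hypothesis $\lrn{(\nabla_a R)(u,v)w} \le L_R'\lrn{a}\lrn{u}\lrn{v}\lrn{w}$, giving $\lrn{(\nabla_{\gamma'}R)(J,\gamma')\gamma'} \le L_R'\lrn{\gamma'}^3\lrn{J}$; the second is $\le L_R\lrn{\gamma'}^2\lrn{D_tJ} = \C^2\lrn{D_tJ}$. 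Then $D_tJ(t) = D_tJ(0) + t\,D_t^2J(0) + \int_0^t(t-r)D_t^3J(r)\,dr$, and I bound the remainder integral using $\lrn{D_t^3J(r)} \le L_R'\lrn{\gamma'}^3\lrn{J(r)} + \C^2\lrn{D_tJ(r)}$ together with the a priori bounds from step (i) for $\lrn{J(r)},\lrn{D_tJ(r)}$, which each grow like $e^\C$ times a linear combination of $\lrn{J(0)},\lrn{D_tJ(0)}$. Collecting terms yields a bound of the shape $(L_R'\lrn{\del_t\Lambda(s,0)}^3 + \C^4)e^\C\lrn{\del_s\Lambda(s,0)} + (L_R'\lrn{\del_t\Lambda(s,0)}^3 + \C^2)e^\C\lrn{D_t\del_s\Lambda(s,0)}$, matching the claim (the $\C^4$ and $\C^2$ coefficients come from the $\C^2\lrn{D_tJ}$ piece of $D_t^3J$ interacting with the $\C^2,\,\cosh\C,\,\sinh\C/\C$ factors in the a priori bounds, crudely bounded by $e^\C$). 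The bookkeeping here — keeping the powers of $\C$ and the $e^\C$ factors clean while splitting the $D_t^3 J$ term into its curvature-derivative and curvature parts — is where the care is needed, but there is no conceptual difficulty beyond iterating the Duhamel estimate one more time than in the earlier parts.
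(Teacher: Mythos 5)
Your proposal matches the paper's proof in all essentials: both pass to a parallel orthonormal frame along each geodesic $t\mapsto\Lambda(s,t)$, rewrite the Jacobi equation as a matrix ODE whose curvature block is symmetric with operator norm at most $\C^2$, and obtain the first five bounds from the $\cosh/\sinh$ comparison estimates of Section~\ref{s:matrix_exponent} (Lemma~\ref{l:matrix-exponent-block-bounds}), exactly as you anticipate. For the final inequality the paper organizes the remainder as $\int_0^t\lrb{(\MM(r)-\MM(0))\JJ(0)+\MM(r)(\JJ(r)-\JJ(0))}dr$ instead of differentiating the Jacobi equation a second time as you do, but the two computations are related by integration by parts and rest on the same two ingredients — the $L_R'$ control on the variation of the curvature operator along the geodesic and the previously established growth/defect bounds — so your route is sound and yields the stated constants.
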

    \begin{proof}
        For any fixed $s$, let $E_i(s,0)$ be a basis of $T_{\Lambda(s,0)} M$.
        
        Let $E_i(s,t)$ denote an orthonormal frame along $\gamma_s(t) := \Lambda(s,t)$, by parallel transporting $E_i(s,0)$.
        
        Let $\JJ(s,t) \in \Re^d$ denote the coordinates of $\del_s \Lambda(s,t)$ wrt $E_i(s,t)$. Let $\KK(s,t) \in \Re^d$ denote the coordinates of $D_t \del_s \Lambda(s,t)$ wrt $E_i(s,t)$. Let $\aa(s,t) \in \Re^d$ denote the coordinates of $\del_t \Lambda(s,t)$ wrt $E_i(s,t)$ (this is constant for fixed $s$, for all $t$). Let $\RR(s,t) \in \Re^{4d}$ be such that $\RR^i_{jkl} = \lin{R(E_j(s,t), E_k(s,t)) E_l(s,t), E_i(s,t)}$. Let $\MM(s,t)$ denote the matrix with \\
        $\MM_{i,j}(s,t) := -\sum_{k,l} \RR^i_{jkl}(s,t) \aa_k(s,t) \aa_l(s,t)$. 
        
        Notice that $\MM_{i,j}$ is symmetric, since $\MM_{i,j} = \lin{R(E_j, \aa)\aa, E_i} = \lin{R(\aa, E_i)E_j, \aa} = \lin{R(E_i,\aa)\aa,E_j}$, where the first equality is by interchange symmetry and second equality is by skew symmetry of the Riemannian curvature tensor. Therefore, by definition of $L_R$ in Assumption \ref{ass:sectional_curvature_regularity}, it follows that $\lrn{\MM(s,t)}_2 \leq L_R \lrn{\del_t \Lambda(s,t)}^2 = L_R \lrn{\del_t \Lambda(s,0)}^2$.
        
        The Jacobi Equation states that $D_t D_t \del_s \Lambda(s,t) = - R(\del_s \Lambda(s,t), \del_t \Lambda(s,t)) \del_t \Lambda(s,t)$. We verify that $-\lin{R(\del_s , \del_t) \del_t, E_i} = -\sum_{j,k,l} \RR^i_{jkl} \JJ_j \aa_k \aa_l = \lrb{\MM(s,t)\JJ(s,t)}_i$, thus $D_t D_t \del_s \Lambda(s,t) = \sum_{i=1}^d \lrb{\MM(s,t) \JJ(s,t)}_{i} \cdot E_i(s,t)$.

        We verify that $\frac{d}{dt} \JJ_i(s,t) = D_t \lin{\del_s \Lambda(s,t), E_i(s,t)} = \lin{D_t \del_s \Lambda(s,t), E_i(s,t)} = \KK_i(s,t)$. We also verify that $\frac{d}{dt} \KK_i(s,t) = D_t \lin{D_t \del_s \Lambda(s,t), E_i(s,t)} = \lin{D_t D_t \del_s \Lambda(s,t), E_i(s,t)} = \lrb{\MM(s,t)\JJ(s,t)}_i$.

        Let us now consider a fixed $s$. To simplify notation, we drop the $s$ dependence. The Jacobi Equation, in coordinate form, corresponds to the following second-order ODE:
        \begin{alignat*}{1}
            & \frac{d}{dt} \JJ(t) = \KK(t)\\
            & \frac{d}{dt} \KK(t) = \MM(t) \JJ(t) dt
        \end{alignat*}

        Define $L_{\MM} := L_R \lrn{\del_t \Lambda(s,0)}^2 = \C^2$. We verify that $L_{\MM} \geq \max_{t\in[0,1]} \lrn{\MM(t)}_2$. Then from \ref{l:formal-matrix-exponent}, we see that
        \begin{alignat*}{1}
            \cvec{\JJ(t)}{\KK(t)} = \emat\lrp{t; \MM} \cvec{\JJ(0)}{\KK(0)}
        \end{alignat*}
        From Lemma \ref{l:matrix-exponent-block-bounds},
        \begin{alignat*}{1}
            \emat\lrp{t; \bmat{0 & I \\ \MM(t) & 0}} = \bmat{\AA(t) & \BB(t) \\ \CC(t) & \DD(t)} 
        \end{alignat*}
        where each block is $\Re^{2d}$, and can be bounded as
        \begin{alignat*}{1}
            & \lrn{\AA(t)}_2 \leq \cosh\lrp{{\C} t} \leq \cosh\lrp{{\C}}\\
            & \lrn{\BB(t)}_2 \leq \frac{\sinh\lrp{{\C} t}}{{\C}} \leq \frac{\sinh\lrp{{\C} }}{{\C}} \\
            & \lrn{\CC(t)}_2 \leq {\C}  \sinh\lrp{{\C} t} \leq {\C}  \sinh\lrp{{\C}}\\
            & \lrn{\DD(t)}_2 \leq \cosh\lrp{{\C}t} \leq \cosh\lrp{{\C}}\\
            & \lrn{\AA(t) - I}_2 \leq \cosh\lrp{{\C} t} - 1 \leq \cosh\lrp{{\C}} - 1\\
            & \lrn{\BB(t) - tI}_2 \leq \frac{\sinh\lrp{{\C} t}}{{\C}}  - t \leq \frac{\sinh\lrp{{\C}}}{{\C}}  - 1\\
            & \lrn{\DD(t) - I}_2 \leq \cosh\lrp{{\C} t} - 1 \leq \cosh\lrp{{\C}} - 1
        \end{alignat*}
        where we use the fact that $\cosh(r)$, $\sinh(r)$ and $\frac{\sinh(r)}{r}$ are monotonically increasing and that $\frac{\sinh(r)}{r} - 1 \geq 0$ for positive $r$.

        It follows that
        \begin{alignat*}{1}
            \JJ(t) =& \AA(t) \JJ(0) + \BB(t) \KK(0)\\
            \KK(t) =& \CC(t) \JJ(0) + \DD(t) \KK(0)
            \elb{e:t:kjqnmd:1}
        \end{alignat*}

        Thus
        \begin{alignat*}{1}
            & \lrn{\del_s \Lambda(s,t)}\\
            =& \lrn{\JJ(t)}\\
            =& \lrn{\AA(t) \JJ(0) + \BB(t) \KK(0)}\\
            \leq& \cosh\lrp{{\C}} \lrn{\del_s \Lambda(s,0)} + \frac{\sinh\lrp{{\C}}}{{\C}} \lrn{D_t \del_s \Lambda(s,0)}
        \end{alignat*}
        and
        \begin{alignat*}{1}
            & \lrn{\del_s \Lambda(s,t) - \lrbb{\del_s \Lambda(s,0) + t D_t \del_s \Lambda(s,0)}^{\to\Lambda(s,t)}}\\
            =& \lrn{\JJ(t) - \JJ(0) - t \KK(0)}_2\\
            =& \lrn{\lrp{\AA(t) - I} \JJ(0) + \lrp{\BB(t) - tI} \KK(0)}_2\\
            \leq& \lrp{\cosh\lrp{{\C}} - 1} \lrn{\del_s \Lambda(s,0)} + \lrp{\frac{\sinh\lrp{{\C}}}{{\C}}  - 1} \lrn{D_t \del_s \Lambda(s,0)}
        \end{alignat*}
        and
        \begin{alignat*}{1}
            & \lrn{\del_s \Lambda(s,t) - \lrbb{\del_s \Lambda(s,0)}^{\to\Lambda(s,t)}}\\
            \leq& \lrn{\lrp{\AA(t) - I} \JJ(0) + \BB(t) \KK(0)}_2\\
            \leq& \lrp{\cosh\lrp{{\C}} - 1} \lrn{\del_s \Lambda(s,0)} + \frac{\sinh\lrp{{\C} }}{{\C}}\lrn{D_t \del_s \Lambda(s,0)}
        \end{alignat*}

        Similarly,
        \begin{alignat*}{1}
            & \lrn{D_t \del_s \Lambda(s,t)}\\
            =& \lrn{\KK(t)}_2\\
            \leq& \lrn{\CC(t) \JJ(0)}_2 + \lrn{\DD(t)\KK(0)}_2\\
            \leq& {\C}  \sinh\lrp{{\C}} \lrn{\del_s \Lambda(s,0)} + \cosh\lrp{{\C}} \lrn{D_t \del_s \Lambda(s,0)}
        \end{alignat*}
        and
        \begin{alignat*}{1}
            & \lrn{D_t \del_s \Lambda(s,t) - \lrbb{D_t \del_s \Lambda(s,0)}^{\to \Lambda(s,t)}}\\
            =& \lrn{\KK(t) - \KK(0)}_2\\
            =& \lrn{\CC(t) \JJ(0) + \lrp{\DD(t) - I} \KK(0)}_2\\
            \leq& {\C}  \sinh\lrp{{\C}}\lrn{\del_s \Lambda(s,0)}  + \lrp{\cosh\lrp{{\C}} - 1} \lrn{D_t \del_s \Lambda(s,0)}
        \end{alignat*}

        To prove the last bound, let us define $L_{\MM}' := L_R' \lrn{\del_t \Lambda(s,0)}^3$. We verify that \\
        $L_{\MM}' \geq \max_{t\in[0,1]} \lrn{\MM(t) - \MM(0)}_2$.
        
        we know that
        \begin{alignat*}{1}
            & \lrn{D_t \del_s \Lambda(s,t) - \lrbb{D_t \del_s \Lambda(s,0)}^{\to \Lambda(s,t)} - t\lrbb{D_t D_t \del_s \Lambda(s,0)}^{\to \Lambda(s,t)}}\\
            =& \lrn{\KK(t) - \KK(0) - t \MM(0) \JJ(0)}_2\\
            =& \lrn{\int_0^t \MM(r) \JJ(r) - \MM(0) \JJ(0) dr}_2\\
            \leq& \int_0^t \lrn{\MM(r) - \MM(0)}_2 \lrn{\JJ(0)}_2 dr + \int_0^t \lrn{\MM(r)}_2 \lrn{\JJ(r) - \JJ(0)}_2 dr\\
            \leq& \int_0^t L_\MM' \lrn{\del_s \Lambda(s,r)} + L_\MM \lrn{\del_s \Lambda(s,r) - \party{\Lambda(s,0)}{\Lambda(s,r)}\lrp{\del_s \Lambda(s,0)}} dr
        \end{alignat*}
        where the last line follows from \eqref{e:t:kjqnmd:1}. From our earlier results in this lemma,
        \begin{alignat*}{2}
            & \lrn{\del_s \Lambda(s,r)} 
            &&\leq \cosh\lrp{{\C}} \lrn{\del_s \Lambda(s,0)} + \frac{\sinh\lrp{{\C}}}{{\C}} \lrn{D_t \del_s \Lambda(s,0)}\\
            & &&\leq e^{\C} \lrn{\del_s \Lambda(s,0)} + e^{\C} \lrn{D_t \del_s \Lambda(s,0)}\\
            & \lrn{\del_s \Lambda(s,t) - \party{\Lambda(s,0)}{\Lambda(s,t)}\lrp{\del_s \Lambda(s,0)}}
            &&\leq \lrp{\cosh\lrp{{\C}} - 1} \lrn{\del_s \Lambda(s,0)} + \frac{\sinh\lrp{{\C} }}{{\C}}\lrn{D_t \del_s \Lambda(s,0)}\\
            & &&\leq {\C}^2 e^{\C} \lrn{\del_s \Lambda(s,0)} + e^{\C} \lrn{D_t \del_s \Lambda(s,0)}
        \end{alignat*}
        where the simplifications are from Lemma \ref{l:sinh_bounds}. Put together,
        \begin{alignat*}{1}
            & \lrn{D_t \del_s \Lambda(s,t) - \lrbb{D_t \del_s \Lambda(s,0)}^{\to \Lambda(s,t)} - t\lrbb{D_t D_t \del_s \Lambda(s,0)}^{\to \Lambda(s,t)}}\\
            \leq& \lrp{L_R'\lrn{\del_t\Lambda(s,0)}^3 + \C^4} e^{\C} \lrn{\del_s \Lambda(s,0)} + \lrp{L_R'\lrn{\del_t\Lambda(s,0)}^3 + \C^2}e^{\C} \lrn{D_t \del_s \Lambda(s,0)}
        \end{alignat*}
    \end{proof}
    
    \begin{lemma}\label{l:jacobi_field_divergence}
        Let $x,y,z \in M$, with $x = \Exp_z(u)$, $y = \Exp_z(v)$. 
        \begin{alignat*}{1}
            \dist(x,y) \leq  \frac{\sinh\lrp{\sqrt{L_R}\lrp{\lrn{u}+\lrn{v}} t}}{\sqrt{L_R}\lrp{\lrn{u}+\lrn{v}}} \lrn{v - u}
        \end{alignat*}
        
    \end{lemma}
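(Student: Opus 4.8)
The plan is to reduce the statement to a comparison estimate for a single Jacobi field along the geodesic variation connecting the rays $\Exp_z(tu)$ and $\Exp_z(tv)$, and then integrate. First I would set up the variation $\Lambda(s,t) := \Exp_z\bigl(t((1-s)u + sv)\bigr)$ for $s,t \in [0,1]$, so that for each fixed $s$, $t\mapsto \Lambda(s,t)$ is a geodesic emanating from $z$; the curve $s\mapsto \Lambda(s,1)$ then runs from $x$ to $y$, and hence $\dist(x,y) \le \int_0^1 \lrn{\del_s \Lambda(s,1)}\,ds$. The field $\del_s\Lambda(s,\cdot)$ is a Jacobi field along $\gamma_s$ vanishing at $t=0$ (since $\Lambda(s,0) = z$ for all $s$), with $D_t\del_s\Lambda(s,0) = (1-s)u + sv$ by the standard interchange $D_t\del_s = D_s\del_t$ evaluated at $t=0$.

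Next I would apply the first bound of Lemma~\ref{l:jacobi_field_norm_bound} with this variation. Because $\del_s\Lambda(s,0) = 0$, that bound collapses to
\begin{alignat*}{1}
  \lrn{\del_s\Lambda(s,t)} \le \frac{\sinh(\C_s)}{\C_s}\lrn{D_t\del_s\Lambda(s,0)} = \frac{\sinh(\C_s)}{\C_s}\lrn{(1-s)u + sv},
\end{alignat*}
where $\C_s := \sqrt{L_R\lrn{\del_t\Lambda(s,0)}^2} = \sqrt{L_R}\lrn{(1-s)u+sv}$. I would then bound $\lrn{(1-s)u + sv} \le (1-s)\lrn{u} + s\lrn{v} \le \lrn{u} + \lrn{v}$, use monotonicity of $r\mapsto \sinh(r)/r$ on $\Re^+$ to replace $\C_s$ by $\sqrt{L_R}(\lrn{u}+\lrn{v})$ in the prefactor, and evaluate at $t=1$. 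Integrating over $s\in[0,1]$ and again bounding $\lrn{(1-s)u+sv}\le \lrn{v-u} + \min(\lrn{u},\lrn{v})$ — or more simply noting the statement's right-hand side already has $\lrn{v-u}$ and the claimed bound is for general $t$, so I would actually carry the $t$-dependence through and at the last step use $\lrn{(1-s)u+sv} \le \lrn{v-u}$ is \emph{not} generally true, so I would instead keep $\lrn{(1-s)u+sv}$ and observe that the displayed target has $\lrn{v-u}$ in place of it; this suggests the intended reading is that one bounds $\dist(x,y)$ along the \emph{difference} variation. To match the stated form exactly I would re-choose the variation as $\Lambda(s,t) := \Exp_z(t u + ts(v-u))$ is the same thing, so the cleanest route is: the geodesic from $x$ to $y$ has length $\le \int_0^1\lrn{\del_s\Lambda(s,1)}ds$, and since the Jacobi field has initial velocity $(v-u)$ only in the sense of the $s$-derivative of the \emph{endpoint data}...

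Let me state the plan more carefully: I take $\Lambda(s,t)=\Exp_z(t\,w(s))$ with $w(s)=(1-s)u+sv$, note $\del_s\Lambda(s,0)=0$ and $D_t\del_s\Lambda(s,0)=w'(s)=v-u$, apply the first inequality of Lemma~\ref{l:jacobi_field_norm_bound} to get $\lrn{\del_s\Lambda(s,t)}\le \frac{\sinh(\C_s t)}{\C_s}\lrn{v-u}$ with $\C_s=\sqrt{L_R}\lrn{w(s)}\le\sqrt{L_R}(\lrn u+\lrn v)$, then use monotonicity of $r\mapsto\sinh(rt)/r$ to uniformly replace $\C_s$ by $\sqrt{L_R}(\lrn u+\lrn v)$, and finally $\dist(x,y)\le\int_0^1\lrn{\del_s\Lambda(s,1)}ds\le \frac{\sinh(\sqrt{L_R}(\lrn u+\lrn v))}{\sqrt{L_R}(\lrn u+\lrn v)}\lrn{v-u}$, which is the stated bound at $t=1$; the general-$t$ version follows identically by stopping the inner geodesic at time $t$. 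The only mild subtlety — and the place I would be most careful — is justifying $D_t\del_s\Lambda(s,0)=w'(s)$ and the uniform monotone replacement of $\C_s$; both are routine, so I do not expect any real obstacle, only bookkeeping to make sure the $t$ and $s$ roles are not swapped when invoking Lemma~\ref{l:jacobi_field_norm_bound}.
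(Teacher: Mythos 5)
Your final restatement is exactly the paper's proof: the same variation $\Lambda(s,t)=\Exp_z\lrp{t\lrp{u+s(v-u)}}$, with $\del_s\Lambda(s,0)=0$ and $D_t\del_s\Lambda(s,0)=v-u$, fed into the first bound of Lemma~\ref{l:jacobi_field_norm_bound} (the $t$-dependent form from its proof), followed by the monotone replacement of $\C_s$ by $\sqrt{L_R}\lrp{\lrn{u}+\lrn{v}}$ and integration over $s$. The mid-proposal claim that $D_t\del_s\Lambda(s,0)=(1-s)u+sv$ was an error (that is $\del_t\Lambda(s,0)$), but you correct it yourself in the final paragraph, so the argument as ultimately stated is sound and matches the paper.
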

    \begin{proof}
        Let us define the variational field
        \begin{alignat*}{1}
            \Lambda(s,t) = \Exp_z\lrp{t \lrp{u + s (v-u)}}
        \end{alignat*}
        We verify that
        \begin{alignat*}{1}
            \del_s \Lambda(s,0) =& 0\\
            \del_t \Lambda(s,0) =& u + s (v-u)\\
            D_t \del_s \Lambda(s,0) =& v-u
        \end{alignat*}

        Lemma \ref{l:jacobi_field_norm_bound} then immediately gives
        \begin{alignat*}{1}
            & \lrn{\del_s \Lambda(s,t)}
            \leq \frac{\sinh\lrp{\sqrt{L_R}\lrp{\lrn{u}+\lrn{v}} t}}{\sqrt{L_R}\lrp{\lrn{u}+\lrn{v}}} \lrn{v - u}
        \end{alignat*}

    \end{proof}

    \subsection{Discrete Coupling Bounds}
    \label{s:discrete_coupling_bounds}
    This section presents two key lemmas which play an important role in many of our proofs. Lemma \ref{l:discrete-approximate-synchronous-coupling} analyzes the distance between $\Exp_x(u)$ and $\Exp_y(v)$, as a function of $x,y$ and $u,v$. Notably, Lemma \ref{l:discrete-approximate-synchronous-coupling} implies that when $u,v$ are "parallel", i.e. $u - \party{y}{x} v = 0$, then the distance between $\dist\lrp{\Exp_{x}(u), \Exp_y(v)}$ is not much larger than $\dist\lrp{x,y}$. The proof of Lemma \ref{l:discrete-approximate-synchronous-coupling} is based on a first-order expansion of the Jacobi equation for Riemannian manifolds.

    The second key lemma is Lemma \ref{l:discrete-approximate-synchronous-coupling-ricci}. It considers a similar problem setup as Lemma \ref{l:discrete-approximate-synchronous-coupling}, but is based on a second-order expansion of the Jacobi equation. It thus requires an additional bound on the derivative of the Riemannian curvature tensor. The more refined distance bound in Lemma \ref{l:discrete-approximate-synchronous-coupling-ricci} is required to properly analyze the convergence of both continuous-time SDEs (Lemma \ref{l:g_contraction_without_gradient_lipschitz}) as well as discrete-time stochastic processes (Lemma \ref{l:sgld-lemma}). The term $\int_0^1 \lin{R...\gamma'(s)} ds$ in the upper bound of Lemma \ref{l:discrete-approximate-synchronous-coupling-ricci} gives rise to the Ricci curvature (as opposed to sectional curvature) dependencies in our results.

    \begin{lemma}\label{l:discrete-approximate-synchronous-coupling}
        Let $x,y\in M$. Let $\gamma(s):[0,1] \to M$ be a minimizing geodesic between $x$ and $y$ with $\gamma(0) = x$ and $\gamma(1) = y$. Let $u\in T_x M$ and $v\in T_y M$. Let $u(s)$ and $v(s)$ be the parallel transport of $u$ and $v$ along $\gamma$, with $u(0) = u$ and $v(1) = v$. 
        
        Then
        \begin{alignat*}{1}
            \dist\lrp{\Exp_{x}(u), \Exp_y(v)}^2 \leq& \lrp{1+ 4 \C^2 e^{4\C}} \dist\lrp{x,y}^2 + 32 e^{\C} \lrn{v(0) - u(0)}^2 + 2\lin{\gamma'(0), v(0) - u(0)} 
        \end{alignat*}
        where $\C := \sqrt{L_R} \lrp{\lrn{u} + \lrn{v}}$.

    \end{lemma}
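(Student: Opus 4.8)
The plan is to join $\Exp_x(u)$ and $\Exp_y(v)$ by an explicit one-parameter family of geodesics, bound the energy of the connecting curve using the Jacobi-field estimates of Lemma~\ref{l:jacobi_field_norm_bound}, and then recognize the three terms of the claimed inequality in the resulting expression. Concretely, I would set $w(s):=(1-s)u(s)+s\,v(s)\in T_{\gamma(s)}M$ and define the variation field $\Lambda(s,t):=\Exp_{\gamma(s)}(t\,w(s))$ on $[0,1]\times[0,1]$; for each fixed $s$ the curve $t\mapsto\Lambda(s,t)$ is a geodesic, $\Lambda(0,1)=\Exp_x(u)$, $\Lambda(1,1)=\Exp_y(v)$, and $\Lambda(s,0)=\gamma(s)$. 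The boundary data are then $\del_s\Lambda(s,0)=\gamma'(s)$, $\del_t\Lambda(s,0)=w(s)$, and — using the symmetry $D_t\del_s=D_s\del_t$ together with the fact that $u(\cdot)$ and $v(\cdot)$ are parallel along $\gamma$ — $D_t\del_s\Lambda(s,0)=D_s w(s)=v(s)-u(s)$. Since $\gamma$ is minimizing, $\lrn{\del_s\Lambda(s,0)}=\dist(x,y)$; since $\lrn{\del_t\Lambda(s,0)}\le\lrn{u}+\lrn{v}$, the curvature constant appearing in Lemma~\ref{l:jacobi_field_norm_bound} is at most $\C$ for every $s$ (and $\cosh$, $r\mapsto\sinh r/r$ are increasing); and $v(\cdot)-u(\cdot)$ is itself parallel along $\gamma$, so $\lrn{D_t\del_s\Lambda(s,0)}=\lrn{v(0)-u(0)}$.

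Next I would bound $\dist(\Exp_x u,\Exp_y v)$ by the length of $s\mapsto\Lambda(s,1)$ and apply Cauchy--Schwarz on the unit interval to get $\dist(\Exp_x u,\Exp_y v)^2\le\int_0^1\lrn{\del_s\Lambda(s,1)}^2\,ds$. Then the second inequality of Lemma~\ref{l:jacobi_field_norm_bound} at $t=1$ gives $\del_s\Lambda(s,1)=N(s)+E(s)$, where $N(s)$ is the parallel transport of $\gamma'(s)+v(s)-u(s)$ along $t\mapsto\Lambda(s,t)$ and $\lrn{E(s)}\le(\cosh\C-1)\dist(x,y)+(\tfrac{\sinh\C}{\C}-1)\lrn{v(0)-u(0)}$. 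The crucial observation is that $\gamma'(\cdot)$ and $v(\cdot)-u(\cdot)$ are both parallel along $\gamma$, so $\lrn{N(s)}^2=\lrn{\gamma'(0)+v(0)-u(0)}^2=\dist(x,y)^2+2\lin{\gamma'(0),v(0)-u(0)}+\lrn{v(0)-u(0)}^2$ is independent of $s$ — this is exactly where the signed cross term of the statement comes from. Expanding $\lrn{\del_s\Lambda(s,1)}^2=\lrn{N(s)}^2+2\lin{N(s),E(s)}+\lrn{E(s)}^2$, estimating $\lin{N(s),E(s)}\le\lrn{N(s)}\lrn{E(s)}$ with $\lrn{N(s)}\le\dist(x,y)+\lrn{v(0)-u(0)}$ and $\lrn{E(s)}\le(\cosh\C-1)(\dist(x,y)+\lrn{v(0)-u(0)})$, and integrating in $s$, I obtain a bound of the shape $\bigl[1+4(\cosh\C-1)+2(\cosh\C-1)^2\bigr]\bigl(\dist(x,y)^2+\lrn{v(0)-u(0)}^2\bigr)+2\lin{\gamma'(0),v(0)-u(0)}$. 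Elementary estimates on the hyperbolic functions (for instance $\cosh r-1\le\tfrac{r^2}{2}e^r$ and $\tfrac{\sinh r}{r}\le e^r$) then turn the common prefactor into the stated coefficients $1+4\C^2e^{4\C}$ on $\dist(x,y)^2$ and $32e^{\C}$ on $\lrn{v(0)-u(0)}^2$.

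I expect the only real work to lie in the first step: choosing the variation field so that all three pieces of boundary data come out in closed form, and in particular computing $D_t\del_s\Lambda(s,0)$ via the connection-symmetry identity $D_t\del_s=D_s\del_t$ and the parallelism of $u(\cdot),v(\cdot)$ along $\gamma$. This parallelism does double duty: it makes $D_t\del_s\Lambda(s,0)$ equal to the clean parallel field $v(s)-u(s)$, and it makes $\gamma'(s)+v(s)-u(s)$ parallel, hence of constant norm, which is precisely what keeps the first-order term $2\lin{\gamma'(0),v(0)-u(0)}$ intact with its sign instead of being absorbed into an unsigned quadratic error. After that the inequality is a mechanical substitution into Lemma~\ref{l:jacobi_field_norm_bound} followed by routine algebra and hyperbolic-function bounds, with no analytic subtlety beyond the choice of variation.
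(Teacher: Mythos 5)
Your argument is correct and rests on the same two pillars as the paper's proof — the interpolating variation $\Lambda(s,t)=\Exp_{\gamma(s)}\lrp{t\lrp{(1-s)u(s)+sv(s)}}$ and the Jacobi-field estimates of Lemma~\ref{l:jacobi_field_norm_bound} — but it organizes the estimate differently. The paper computes $\frac{d}{dt}E(\gamma_t)$ for the energy $E(\gamma_t)=\int_0^1\lrn{\del_s\Lambda(s,t)}^2ds$, splits the integrand into the $t=0$ value plus two deviation terms, and integrates over $t\in[0,1]$; you instead work directly at $t=1$, writing $\del_s\Lambda(s,1)$ as the parallel transport of $\gamma'(s)+v(s)-u(s)$ plus a controlled error $E(s)$, and then apply Cauchy--Schwarz to the arc length. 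Both routes isolate the signed cross term $2\lin{\gamma'(0),v(0)-u(0)}$ for the same underlying reason: $\gamma'(\cdot)$ and $v(\cdot)-u(\cdot)$ are parallel along $\gamma$, so their inner product (in the paper) or the norm of their sum (in your version) is constant in $s$. Your version is arguably cleaner, since it avoids differentiating the energy and needs only the second inequality of Lemma~\ref{l:jacobi_field_norm_bound} rather than four of them.

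One caveat on the final constants. With $\eta:=\cosh\C-1$ your expansion yields the prefactor $1+4\eta+2\eta^2$ on \emph{both} $\dist(x,y)^2$ and $\lrn{v(0)-u(0)}^2$. The bound $4\eta+2\eta^2\leq 4\C^2e^{4\C}$ is fine, so the coefficient on $\dist(x,y)^2$ works out. But $1+4\eta+2\eta^2$ grows like $\tfrac12 e^{2\C}$, which does \emph{not} fit under the stated $32e^{\C}$ once $\C$ is moderately large, so your last sentence overclaims there. You are in good company: the paper's own computation produces $8\lrp{\cosh^2\C+\sinh^2\C/\C^2+1}\leq 32e^{2\C}$ on that term, yet states $32e^{\C}$; the exponent discrepancy is already present in the source, and every downstream use of the lemma tolerates an $e^{2\C}$ coefficient. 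So this is not a gap you introduced, but you should state the coefficient you actually prove (of order $e^{2\C}$) rather than asserting it matches $32e^{\C}$.
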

    \begin{proof}
        Let us consider the length function $E(\gamma) = \int_0^1 \lrn{\gamma'(s)}^2 ds$. We define a variation of geodesics $\Lambda(s,t)$:
        \begin{alignat*}{1}
            \Lambda(s,t):= \Exp_{\gamma(s)}\lrp{t\lrp{u(s) + s (v(s) - u(s))}}
        \end{alignat*}
        We verify that
        \begin{alignat*}{1}
            \del_s \Lambda(s,0) =& \gamma'(s)\\
            \del_t \Lambda(s,0) =& u(s) + s (v(s)-u(s))\\
            D_t \del_s \Lambda(s,0) =& v(s)-u(s)
        \end{alignat*}
        
        Consider a fixed $t$, and let $\gamma_t(s):= \Lambda(s,t)$ (so $\gamma_t'(s)$ is the velocity wrt $s$). 
        \begin{alignat*}{1}
            & \frac{d}{dt} E(\gamma_t)\\
            =& \frac{d}{dt}\int_0^1 \lrn{\gamma_t'(s)}^2 ds\\
            =& \int_0^1 2\lin{\gamma_t'(s), D_t \gamma_t'(s)} ds\\
            =& \int_0^1 2\lin{\del_s \Lambda(s,t), D_t \del_s \Lambda(s,t)} ds\\
            =& \int_0^1 2\lin{\del_s \Lambda(s,0), D_t \del_s \Lambda(s,0)} ds \\
            &\quad + \int_0^1 2\lin{\del_s \Lambda(s,0), \lrbb{D_t \del_s \Lambda(s,t)}^{\to \Lambda(s,0)} - D_t \del_s \Lambda(s,0)} ds \\
            &\quad +\int_0^1 2\lin{\del_s \Lambda(s,t) - \lrbb{\del_s \Lambda(s,0)}^{\to \Lambda(s,t)}, D_t \del_s \Lambda(s,t)} ds 
            \numberthis \label{e:t:ddtE}
        \end{alignat*}

        For any $s$, and for $t=0$, $\del_s \Lambda(s,0) = \gamma'(s)$ and $D_t \del_s \Lambda(s,0) = v(s) - u(s)$. Using the fact that norms and inner products are preserved under parallel transport, the first term can be simplified as
        \begin{alignat*}{1}
            \int_0^1 2\lin{\del_s \Lambda(s,0), D_t \del_s \Lambda(s,0)} ds = 2\lin{\gamma'(0), v(0) - u(0)}
        \end{alignat*}

        To bound the second and third term, we use Lemma \ref{l:jacobi_field_norm_bound}:
        \begin{alignat*}{1}
            \lrn{\del_s \Lambda(s,0)}
            =& \lrn{\gamma'(0)}
        \end{alignat*}
        and
        \begin{alignat*}{1}
            &\lrn{D_t \del_s \Lambda(s,t) - \party{\Lambda(s,0)}{\Lambda(s,t)}\lrp{D_t \del_s \Lambda(s,0)}} \\
            \leq& \sqrt{L_R \lrn{\del_t \Lambda(s,0)}^2}  \sinh\lrp{\sqrt{L_R \lrn{\del_t \Lambda(s,0)}^2}}\lrn{\del_s \Lambda(s,0)}  + \lrp{\cosh\lrp{\sqrt{L_R \lrn{\del_t \Lambda(s,0)}^2}} - 1} \lrn{D_t \del_s \Lambda(s,0)}\\
            \leq& \C  \sinh\lrp{\C}\lrn{\gamma'(0)}  + \lrp{\cosh\lrp{\C} - 1} \lrn{v(0) - u(0)}
        \end{alignat*}
        where we use the fact that $\sqrt{L_R \lrn{\del_t \Lambda(s,0)}^2}\leq \C$.

        We can thus bound the second term of \eqref{e:t:ddtE} as
        \begin{alignat*}{1}
            &\lrabs{\int_0^1 2\lin{\del_s \Lambda(s,0), \lrbb{D_t \del_s \Lambda(s,t)}^{\to \Lambda(s,0)} - D_t \del_s \Lambda(s,0)} ds}\\
            \leq& 2\lrn{\gamma'(0)} \cdot \lrp{\C  \sinh\lrp{\C}\lrn{\gamma'(0)}  + \lrp{\cosh\lrp{\C} - 1} \lrn{v(0) - u(0)}}\\
            \leq& 4 \lrn{\gamma'(0)}^2 \lrp{\C \sinh\lrp{\C} + \lrp{\cosh\lrp{\C}-1}^2} + 4 \lrn{v(0) - u(0)}^2
        \end{alignat*}
        
        Finally, to bound the third term of \eqref{e:t:ddtE}, we again apply Lemma \ref{l:jacobi_field_norm_bound}:
        \begin{alignat*}{1}
            &\lrn{\del_s \Lambda(s,t) - \party{\Lambda(s,0)}{\Lambda(s,t)}\lrp{\del_s \Lambda(s,0)}}\\
            \leq& \lrp{\cosh\lrp{\sqrt{L_R \lrn{\del_t \Lambda(s,0)}^2} } - 1} \lrn{\del_s \Lambda(s,0)} + \lrp{\frac{\sinh\lrp{\sqrt{L_R \lrn{\del_t \Lambda(s,0)}^2}}}{\sqrt{L_R \lrn{\del_t \Lambda(s,0)}^2}}} \lrn{D_t \del_s \Lambda(s,0)}\\
            \leq& \lrp{\cosh\lrp{\sqrt{L_R} \lrp{\lrn{u(s)} + \lrn{v(s)}}} - 1} \lrn{\gamma'(s)} + \lrp{\frac{\sinh\lrp{\sqrt{L_R} \lrp{\lrn{u(s)} + \lrn{v(s)}}}}{\sqrt{L_R} \lrp{\lrn{u(s)} + \lrn{v(s)}}}} \lrn{v(s)-u(s)}\\
            =& \lrp{\cosh\lrp{\C} - 1} \lrn{\gamma'(0)} + \lrp{\frac{\sinh\lrp{\C}}{\C}} \lrn{v(0) - u(0)}
        \end{alignat*}
        and
        \begin{alignat*}{1}
            & \lrn{D_t \del_s \Lambda(s,t)}\\
            \leq& \sqrt{L_R \lrn{\del_t \Lambda(s,0)}^2} \sinh\lrp{\sqrt{L_R \lrn{\del_t \Lambda(s,0)}^2}} \lrn{\del_s \Lambda(s,0)} + \cosh\lrp{\sqrt{L_R \lrn{\del_t \Lambda(s,0)}^2}} \lrn{D_t \del_s \Lambda(s,0)}\\
            \leq& \sqrt{L_R}\lrp{\lrn{u} + \lrn{v}}  \sinh\lrp{\sqrt{L_R}\lrp{\lrn{u} + \lrn{v}}} \lrn{\gamma'(0)} + \cosh\lrp{\sqrt{L_R}\lrp{\lrn{u} + \lrn{v}}} \lrn{v(0) - u(0)}\\
            =& \C  \sinh\lrp{\C} \lrn{\gamma'(0)} + \cosh\lrp{\C} \lrn{v(0) - u(0)}
        \end{alignat*}
        for $0 \leq t \leq 1$, where we usse the fact that $\cosh(r)$ and $\frac{\sinh(r)}{r}$ are monotonically increasing in $r$.
        Put together, the third term of \eqref{e:t:ddtE} is bounded as
        \begin{alignat*}{1}
            & \lrabs{\int_0^1 2\lin{\del_s \Lambda(s,t) - \lrbb{\del_s \Lambda(s,0)}^{\to \Lambda(s,t)}, D_t \del_s \Lambda(s,t)} ds} \\
            \leq& 2 \lrp{\lrp{\cosh\lrp{\C} - 1} \lrn{\gamma'(0)} + \lrp{\frac{\sinh\lrp{\C}}{\C}} \lrn{v(0) - u(0)} }\cdot \lrp{\C  \sinh\lrp{\C} \lrn{\gamma'(0)} + \cosh\lrp{\C} \lrn{v(0) - u(0)}}\\
            \leq& 8 \lrn{v(0) - u(0)}^2 \lrp{\cosh\lrp{\C}^2 + \frac{\sinh\lrp{\C}^2}{\C^2}} + 8\lrn{\gamma'(0)}^2 \lrp{\lrp{\cosh\lrp{\C} - 1}^2 + \C^2 \sinh\lrp{\C}^2}
        \end{alignat*}

        Put together, we get
        \begin{alignat*}{1}
            &\lrabs{\frac{d}{dt} E(\gamma_t) - 2\lin{\gamma'(0), v(0) - u(0)}} \\
            \leq& 8 \lrn{v(0) - u(0)}^2 \lrp{\cosh\lrp{\C}^2 + \frac{\sinh\lrp{\C}^2}{\C^2}} + 8\lrn{\gamma'(0)}^2 \lrp{\lrp{\cosh\lrp{\C} - 1}^2 + \C^2 \sinh\lrp{\C}^2}\\
            &\quad + 4 \lrn{v(0) - u(0)}^2 + 4 \lrn{\gamma'(0)}^2 \lrp{\C \sinh\lrp{\C} + \lrp{\cosh\lrp{\C}-1}^2}\\
            \leq& 8 \lrn{v(0) - u(0)}^2 \lrp{\cosh\lrp{\C}^2 + \frac{\sinh\lrp{\C}^2}{\C^2} + 1} \\
            &\quad + 8\lrn{\gamma'(0)}^2 \lrp{2\lrp{\cosh\lrp{\C} - 1}^2 + \C^2 \sinh\lrp{\C}^2 + \C \sinh\lrp{\C}}
        \end{alignat*}

        Integrating for $t=\in[0,1]$, and noting that $E(\gamma_0) = \lrn{\gamma'(0)}$,
        \begin{alignat*}{1}
            E\lrp{\gamma_1} 
            \leq& \lrp{1+8 \lrp{2\lrp{\cosh\lrp{\C} - 1}^2 + \C^2 \sinh\lrp{\C}^2 + \C \sinh\lrp{\C}}} E\lrp{\gamma_0} \\
            &\qquad + 8 \lrn{v(0) - u(0)}^2 \lrp{\cosh\lrp{\C}^2 + \frac{\sinh\lrp{\C}^2}{\C^2} + 1}\\
            &\qquad + 2\lin{\gamma'(0), v(0) - u(0)} 
        \end{alignat*}

        From Lemma \ref{l:sinh_bounds}, we can upper bound
        \begin{alignat*}{1}
            8 \lrp{2\lrp{\cosh\lrp{\C} - 1}^2 + \C^2 \sinh\lrp{\C}^2 + \C \sinh\lrp{\C}}
            \leq& 8 r^4 e^{2r} + 8 r^4 e^{2r} + r^2 e^r\\
            \leq& 4 r^2 e^{4r}\\
            \cosh\lrp{\C}^2 + \frac{\sinh\lrp{\C}^2}{\C^2} + 1
            \leq& 4e^{2r}
        \end{alignat*}
        where we use the fact that $r^2 \leq e^{2r}/6$ for all $r \geq 0$. 

        The conclusion follows by noting that $\dist(x,y) = \sqrt{E\lrp{\gamma_0}}$ and $\dist\lrp{\Exp_{x}(u), \Exp_y(v)} \leq \sqrt{E\lrp{\gamma_1}}$.

    \end{proof}

    \begin{lemma}\label{l:discrete-approximate-synchronous-coupling-ricci}
        Let $x,y\in M$. Let $\gamma(s):[0,1] \to M$ be a minimizing geodesic between $x$ and $y$ with $\gamma(0) = x$ and $\gamma(1) = y$. Let $u\in T_x M$ and $v\in T_y M$. Let $u(s)$ and $v(s)$ be the parallel transport of $u$ and $v$ along $\gamma$. Let $u = u_1 + u_2$ and $v = v_1 + v_2$ be a decomposition such that $v_2 = \party{x}{y} u_2$, where the parallel transport is along $\gamma(s)$. 

        Let us define $u_1(s), u_2(s), v_1(s)$, all mapping from $[0,1] \to T_{\gamma(s)} M$, such that they are the parallel transport of $u_1, u_2, v_1$ along $\gamma(s)$ respectively ($u_1(0) = u_1$, $u_2(0) = u_2$, $v_1(1) = v_1$, $u_2(1) = v_2$)
        
        Then
        \begin{alignat*}{1}
            &\dist\lrp{\Exp_{x}(u), \Exp_y(v)}^2 - \dist\lrp{x,y}^2\\
            \leq& 2\lin{\gamma'(0), v(0) - u(0)} + \lrn{v(0) - u(0)}^2 \\
            &\quad -\int_0^1 \lin{R\lrp{\gamma'(s),(1-s) u(s) + s v(s)}(1-s) u(s) + s v(s),\gamma'(s)} ds \\
            &\quad + \lrp{2\C^2 e^{\C} + 18\C^4 e^{2\C}} \lrn{v(0) - u(0)}^2 + \lrp{18\C^4 e^{2\C} + 4\C'} \dist\lrp{x,y}^2 + 4 \C^2 e^{2\C} \dist\lrp{x,y} \lrn{v(0) - u(0)}
        \end{alignat*}
        where $\C := \sqrt{L_R} \lrp{\lrn{u} + \lrn{v}}$ and $\C' := L_R' \lrp{\lrn{u} + \lrn{v}}^3$.
    \end{lemma}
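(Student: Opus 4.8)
The plan is to mimic the proof of Lemma~\ref{l:discrete-approximate-synchronous-coupling}, but to carry the second-order (curvature) term of the Jacobi-field expansion explicitly instead of absorbing it into the error. First I would fix a minimizing geodesic $\gamma$ from $x$ to $y$, set $\C := \sqrt{L_R}\lrp{\lrn{u}+\lrn{v}}$, and introduce the same variation of geodesics $\Lambda(s,t) := \Exp_{\gamma(s)}\lrp{t\lrp{(1-s)u(s) + s\,v(s)}}$. Because $v_2 = \party{x}{y}u_2$ forces $u_2(s) = v_2(s)$ for every $s$, the vector $(1-s)u(s)+s\,v(s)$ — and hence $\Lambda$ itself — is unchanged by the prescribed decomposition, which therefore plays no role in this particular bound. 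Writing $E(\gamma_t) := \int_0^1 \lrn{\del_s\Lambda(s,t)}^2\,ds$, one has $\dist\lrp{x,y}^2 = E(\gamma_0)$ and $\dist\lrp{\Exp_x(u),\Exp_y(v)}^2 \le E(\gamma_1)$ (Cauchy--Schwarz together with $\mathrm{length}(\gamma_1)\ge\dist\lrp{\Exp_x(u),\Exp_y(v)}$), so it suffices to estimate $E(\gamma_1)-E(\gamma_0) = \int_0^1 \frac{d}{dt}E(\gamma_t)\,dt$.

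Next I would use $\frac{d}{dt}E(\gamma_t) = \int_0^1 2\lin{\del_s\Lambda,\ D_t\del_s\Lambda}\,ds$ and split the integrand, exactly as in Lemma~\ref{l:discrete-approximate-synchronous-coupling}, into the three pieces (A) $\lin{\del_s\Lambda(s,0),\ D_t\del_s\Lambda(s,0)}$, (B) $\lin{\del_s\Lambda(s,0),\ [D_t\del_s\Lambda(s,t)]^{\to\Lambda(s,0)} - D_t\del_s\Lambda(s,0)}$, and (C) $\lin{\del_s\Lambda(s,t) - [\del_s\Lambda(s,0)]^{\to\Lambda(s,t)},\ D_t\del_s\Lambda(s,t)}$. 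Since $\del_s\Lambda(s,0) = \gamma'(s)$ and $D_t\del_s\Lambda(s,0) = v(s)-u(s)$ are parallel along $\gamma$, piece (A) is the constant $\lin{\gamma'(0),\ v(0)-u(0)}$, contributing $2\lin{\gamma'(0),\ v(0)-u(0)}$ after $\int_0^1(\cdot)\,dt$. For (C), I would expand $\del_s\Lambda(s,t) - [\del_s\Lambda(s,0)]^{\to} = t[D_t\del_s\Lambda(s,0)]^{\to} + e_C(s,t)$ and $D_t\del_s\Lambda(s,t) = [D_t\del_s\Lambda(s,0)]^{\to} + e_2(s,t)$ via the second and fifth estimates of Lemma~\ref{l:jacobi_field_norm_bound}; the leading term of the integrand is $t\lrn{v(s)-u(s)}^2 = t\lrn{v(0)-u(0)}^2$, which integrates to $\lrn{v(0)-u(0)}^2$, and the remaining cross terms $\lin{e_C,[D_t\del_s\Lambda(s,0)]^{\to}}$, $t\lin{[D_t\del_s\Lambda(s,0)]^{\to},e_2}$, $\lin{e_C,e_2}$ are each bounded — using $\lrn{\gamma'(s)}=\dist\lrp{x,y}$, $\lrn{v(s)-u(s)}=\lrn{v(0)-u(0)}$, and $\C_s\le\C$ — by products of $\dist\lrp{x,y}$ and $\lrn{v(0)-u(0)}$ with coefficients of order $\C^2 e^{2\C}$ or $\C^4 e^{2\C}$.

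The genuinely new step is piece (B): here I would invoke the \emph{refined} estimate of Lemma~\ref{l:jacobi_field_norm_bound} — the one requiring the bound $L_R'$ on the derivative of the curvature tensor — to write $[D_t\del_s\Lambda(s,t)]^{\to\Lambda(s,0)} - D_t\del_s\Lambda(s,0) = t\,D_tD_t\del_s\Lambda(s,0) + e_B(s,t)$ with $\lrn{e_B(s,t)} \le (\C'+\C^4)e^{\C}\lrn{\gamma'(s)} + (\C'+\C^2)e^{\C}\lrn{v(s)-u(s)}$, using $\lrn{\del_t\Lambda(s,0)}\le\lrn{u}+\lrn{v}$ so that $L_R'\lrn{\del_t\Lambda(s,0)}^3\le\C'$ and $\C_s\le\C$. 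The Jacobi equation gives $D_tD_t\del_s\Lambda(s,0) = -R\lrp{\gamma'(s),\,(1-s)u(s)+s\,v(s)}\lrp{(1-s)u(s)+s\,v(s)}$, so the leading part of (B), after $\int_0^1 2t\,dt = 1$ and the metric symmetry $\lin{a,b}=\lin{b,a}$, becomes exactly $-\int_0^1 \lin{R\lrp{\gamma'(s),(1-s)u(s)+s\,v(s)}(1-s)u(s)+s\,v(s),\ \gamma'(s)}\,ds$, while $\lin{\gamma'(s),e_B(s,t)}$ contributes an error of order $(\C'+\C^4)e^\C\dist\lrp{x,y}^2 + (\C'+\C^2)e^\C\dist\lrp{x,y}\lrn{v(0)-u(0)}$. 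Finally I would collect the error contributions from (B) and (C), split the mixed $\dist\lrp{x,y}\lrn{v(0)-u(0)}$ terms by Young's inequality where needed, and simplify $\cosh$, $\sinh$, $\sinh(\cdot)/(\cdot)$ via Lemma~\ref{l:sinh_bounds} (e.g. $\cosh\C-1\le\tfrac12\C^2 e^\C$, $\C\sinh\C\le\C^2 e^\C$) to match the stated coefficients $2\C^2 e^\C+18\C^4 e^{2\C}$, $18\C^4 e^{2\C}+4\C'$, and $4\C^2 e^{2\C}$. The main obstacle is not conceptual but the bookkeeping: propagating all the $\C^2$-, $\C^4$-, and $\C'$-order error terms through pieces (B) and (C) and the Young splittings so the constants line up, while keeping the curvature-tensor sign and symmetry conventions straight when identifying the Ricci-type term.
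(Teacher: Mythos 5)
Your proposal is correct in spirit but takes a genuinely different route from the paper. The paper does \emph{not} split $\frac{d}{dt}E(\gamma_t)$ into the three pieces (A)--(C) of Lemma~\ref{l:discrete-approximate-synchronous-coupling}; instead it Taylor-expands the energy itself to second order in $t$, writing $E(\gamma_1)-E(\gamma_0)=\at{\tfrac{d}{dt}E(\gamma_t)}{t=0}+\int_0^1\int_0^r\tfrac{d^2}{dt^2}E(\gamma_t)\,dt\,dr$, and computes $\tfrac{d^2}{dt^2}E(\gamma_t)=\int_0^1 2\lrn{D_t\del_s\Lambda}^2\,ds-\int_0^1 2\lin{R(\del_s\Lambda,\del_t\Lambda)\del_t\Lambda,\del_s\Lambda}\,ds$ via the Jacobi equation. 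The Ricci-type integrand then appears at general $t$, and $L_R'$ is used to move it back to $t=0$ (the $4L_R'\lrn{\del_t\Lambda(s,0)}^3\lrn{\del_s\Lambda(s,0)}^2$ correction), while the $\lrn{D_t\del_s\Lambda(s,t)}^2$ block only needs the \emph{first-order} estimates of Lemma~\ref{l:jacobi_field_norm_bound}. You instead stay with the first derivative of $E$ and push the second-order information into the Jacobi field, invoking the refined (last) estimate of Lemma~\ref{l:jacobi_field_norm_bound} inside piece (B) so that $t\,D_tD_t\del_s\Lambda(s,0)$ produces the curvature integral at $t=0$ directly. Both routes are valid and extract the same leading terms $2\lin{\gamma'(0),v(0)-u(0)}+\lrn{v(0)-u(0)}^2-\int_0^1\lin{R(\cdot,\cdot)\cdot,\gamma'(s)}\,ds$, and your observation that the $u=u_1+u_2$, $v=v_1+v_2$ decomposition plays no role in the bound matches the paper, which likewise never uses it.

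One caveat on the constants: your route will not reproduce the stated coefficients exactly. The refined Jacobi estimate carries an $e^{\C}$ factor and a term proportional to $\lrp{\C'+\C^2}e^{\C}\lrn{D_t\del_s\Lambda(s,0)}$, so after pairing with $\gamma'(s)$ your piece (B) error contains $2\lrp{\C'+\C^4}e^{\C}\dist\lrp{x,y}^2$ and a cross term $2\lrp{\C'+\C^2}e^{\C}\dist\lrp{x,y}\lrn{v(0)-u(0)}$; Young's inequality on the latter then produces a $\C'$-weighted $\lrn{v(0)-u(0)}^2$ contribution that is absent from the stated bound, and the $\C'\dist\lrp{x,y}^2$ coefficient picks up an $e^{\C}$ factor, whereas the paper's second-variation route yields the clean $4\C'\dist\lrp{x,y}^2$ with no exponential. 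This is a cosmetic weakening (all downstream uses only need polynomial control and the extra terms are higher order in the step size), but if you want the lemma verbatim you should either restate it with your slightly larger constants or handle the $\C'$ contribution as the paper does, via the second derivative of the energy.
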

    \begin{proof}

        The proof is similar to Lemma \ref{l:discrete-approximate-synchronous-coupling}. Let us consider the length function $E(\gamma) = \int_0^1 \lrn{\gamma'(s)}^2 ds$. We define a variation of geodesics $\Lambda(s,t)$:
        \begin{alignat*}{1}
            \Lambda(s,t):= \Exp_{\gamma(s)}\lrp{t\lrp{u(s) + s (v(s) - u(s))}}
        \end{alignat*}
        We verify that
        \begin{alignat*}{1}
            \del_s \Lambda(s,0) =& \gamma'(s)\\
            \del_t \Lambda(s,0) =& u(s) + s (v(s)-u(s))\\
            D_t \del_s \Lambda(s,0) =& v(s)-u(s)
        \end{alignat*}

        Consider a fixed $t$, and let $\gamma_t(s):= \Lambda(s,t)$ (so $\gamma_t'(s)$ is the velocity wrt $s$). 

        \begin{alignat*}{1}
            & \frac{d}{dt} E(\gamma_t)\\
            =& \frac{d}{dt}\int_0^1 \lrn{\gamma_t'(s)}^2 ds\\
            =& \int_0^1 2\lin{\gamma_t'(s), D_t \gamma_t'(s)} ds\\
            =& \int_0^1 2\lin{\del_s \Lambda(s,t), D_t \del_s \Lambda(s,t)} ds
        \end{alignat*}
        and
        \begin{alignat*}{1}
            & \frac{d^2}{dt^2} E(\gamma_t)\\
            =& \int_0^1 2\lin{D_t \del_s \Lambda(s,t), D_t \del_s \Lambda(s,t)} ds + \int_0^1 2\lin{\del_s \Lambda(s,t), D_t D_t \del_s \Lambda(s,t)} ds\\
            =& \int_0^1 2\lrn{D_t \del_s \Lambda(s,t)}^2 ds -\int_0^1 2\lin{R\lrp{\del_s(\Lambda(s,t)), \del_t \Lambda(s,t)}\del_t \Lambda(s,t),\del_s \Lambda(s,t)} ds\\
            =& \int_0^1 2\lrn{D_t \del_s \Lambda(s,t)}^2 ds - \int_0^1 2\lin{R\lrp{\del_s(\Lambda(s,t)), \party{\Lambda(s,0)}{\Lambda(s,t)}\del_t \Lambda(s,0)}\party{\Lambda(s,0)}{\Lambda(s,t)}\del_t \Lambda(s,t),\del_s \Lambda(s,t)} ds\\
            \leq& \int_0^1 2\lrn{D_t \del_s \Lambda(s,t)}^2 ds - \int_0^1 2\lin{R\lrp{\del_s(\Lambda(s,0)),\del_t \Lambda(s,0)}\del_t \Lambda(s,t),\del_s \Lambda(s,0)} ds \\
            &\quad + \int_0^1 4 L_R \lrn{\del_t \Lambda(s,0)}^2 \lrn{\del_s \Lambda(s,t)} \lrn{\del_s \Lambda(s,t) - \party{\Lambda(s,0)}{\Lambda(s,t)}\del_s \Lambda(s,0)} + 4 L_R' \lrn{\del_t \Lambda(s,0)}^3 \lrn{\del_s \Lambda(s,0)}^2 ds
        \end{alignat*}
        where the second equality uses the Jacobi equation.

        The Riemannian curvature tensor term can be simplified as
        \begin{alignat*}{1}
            & -\int_0^1 2\lin{R\lrp{\del_s(\Lambda(s,0)),\del_t \Lambda(s,0)}\del_t \Lambda(s,t),\del_s \Lambda(s,0)} ds\\
            =& -2\int_0^1 \lin{R\lrp{\gamma'(s),(1-s) u(s) + s v(s)}(1-s) u(s) + s v(s),\gamma'(s)} ds
        \end{alignat*}

        We further bound
        \begin{alignat*}{1}
            & \int_0^1 2\lrn{D_t \del_s \Lambda(s,t)}^2 ds \\
            \leq& \int_0^1 2\lrp{{\C}  \sinh\lrp{{\C}} \lrn{\del_s \Lambda(s,0)} + \cosh\lrp{{\C}} \lrn{D_t \del_s \Lambda(s,0)}}^2 ds \\
            \leq& 2\cosh\lrp{\C}^2 \int_0^1 \lrn{D_t \del_s \Lambda(s,0)}^2 ds\\
            &\quad + 2{\C}^2  \sinh\lrp{{\C}}^2 \int_0^1 \lrn{\del_s \Lambda(s,0)}^2 ds +  4 \C \int_0^1 \sinh\lrp{\C} \cosh\lrp{\C} \lrn{ \del_s \Lambda(s,0)}\lrn{D_t \del_s \Lambda(s,0)} ds\\
            =& 2\cosh\lrp{\C}^2 \lrn{v(0)-u(0)}^2\\
            &\quad + 2{\C}^2  \sinh\lrp{{\C}}^2 \dist\lrp{x,y}^2 +  4 \C \int_0^1 \sinh\lrp{\C} \cosh\lrp{\C} \lrn{ \del_s \Lambda(s,0)}\lrn{D_t \del_s \Lambda(s,0)} ds\\
            \leq& 2 \lrn{v(0) - u(0)}^2 + 2 \lrp{\C^2 e^{\C} + \C^4 e^{2\C}} \lrn{v(0) - u(0)}^2\\
            &\quad + 2\C^4 e^{2\C} \dist\lrp{x,y}^2 + 4 \C^2 e^{2\C} \dist\lrp{x,y} \lrn{v(0) - u(0)}
        \end{alignat*}
        where we use Lemma \ref{l:jacobi_field_norm_bound} and Lemma \ref{l:sinh_bounds}.

        We also bound
        \begin{alignat*}{1}
            & \int_0^1 4 L_R \lrn{\del_t \Lambda(s,0)}^2 \lrn{\del_s \Lambda(s,t)} \lrn{\del_s \Lambda(s,t) - \party{\Lambda(s,0)}{\Lambda(s,t)}\del_s \Lambda(s,0)} ds\\
            \leq& 4\C^2 \lrp{\cosh\lrp{\C} \dist\lrp{x,y} + \frac{\sinh\lrp{\C}}{\C} \lrn{u(0) - v(0)}}\lrp{\lrp{\cosh\lrp{\C}-1}\dist\lrp{x,y} + \lrp{\frac{\sinh\lrp{\C}}{\C}-1} \lrn{u(0)-v(0)}}\\
            \leq& 16\C^4e^{2\C} \dist\lrp{x,y}^2 + 16\C^4e^{2\C} \lrn{u(0) - v(0)}^2
        \end{alignat*}
        where we use Lemma \ref{l:jacobi_field_norm_bound} and Lemma \ref{l:sinh_bounds}.

        We finally bound
        \begin{alignat*}{1}
            & \int_0^1 4 L_R' \lrn{\del_t \Lambda(s,0)}^3 \lrn{\del_s \Lambda(s,0)}^2 ds
            \leq 4\C' \dist\lrp{x,y}^2
        \end{alignat*}
        by definition of $\C'$.

        Combining the above bounds,
        \begin{alignat*}{1}
          &E\lrp{\gamma_1}
          = \at{\frac{d}{dt} E\lrp{\gamma_t}}{t=0} + \int_0^1 \int_0^r \frac{d^2}{dt^2} E\lrp{\gamma_t} dt dr\\
          &\quad\leq\  2\lin{\gamma'(0), v(0) - u(0)} + \lrn{v(0) - u(0)}^2\\
          &\quad- \int_0^1 \lin{R\lrp{\gamma'(s),(1-s) u(s) + s v(s)}(1-s) u(s) + s v(s),\gamma'(s)} ds \\
          &\quad + 2 \lrp{\C^2 e^{\C} + \C^4 e^{2\C}} \lrn{v(0) - u(0)}^2 + 2\C^4 e^{2\C} \dist\lrp{x,y}^2 + 4 \C^2 e^{2\C} \dist\lrp{x,y} \lrn{v(0) - u(0)}\\
          &\quad + 16\C^4e^{2\C} \dist\lrp{x,y}^2 + 16\C^4e^{2\C} \lrn{u(0) - v(0)}^2 + 4\C' \dist\lrp{x,y}^2\\
          &\quad= 2\lin{\gamma'(0), v(0) - u(0)} + \lrn{v(0) - u(0)}^2\\
          &\quad- \int_0^1 \lin{R\lrp{\gamma'(s),(1-s) u(s) + s v(s)}(1-s) u(s) + s v(s),\gamma'(s)} ds \\
          &\quad + \lrp{2\C^2 e^{\C} + 18\C^4 e^{2\C}} \lrn{v(0) - u(0)}^2 + \lrp{18\C^4 e^{2\C} + 4\C'} \dist\lrp{x,y}^2 + 4 \C^2 e^{2\C} \dist\lrp{x,y} \lrn{v(0) - u(0)}
        \end{alignat*}
        Our conclusion follows as $\dist\lrp{\Exp_x(u), \Exp_y(v)}^2 \leq E(\gamma_1)$.
    \end{proof}

    \begin{lemma}\label{l:sinh_ode}
        Let $a_t, b_t : t \to \Re^+$ satisfy
        \begin{alignat*}{1}
            & \frac{d}{dt} a_t = b_t\\
            & \frac{d}{dt} b_t \leq C a_t
        \end{alignat*}
        with initial conditions $a_0, b_0$, then for all $t$,
        \begin{alignat*}{1}
            & a_t \leq {a_0} \cosh\lrp{\sqrt{C} t} + \frac{b_0}{\sqrt{C}} \sinh\lrp{\sqrt{C} t}\\
            & b_t \leq \sqrt{C} \lrp{{a_0} \sinh\lrp{\sqrt{C} t} + \frac{b_0}{\sqrt{C}} \cosh\lrp{\sqrt{C} t}}
        \end{alignat*}
    \end{lemma}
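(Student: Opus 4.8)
\textbf{Proof proposal for Lemma \ref{l:sinh_ode}.}

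The plan is to reduce the system of differential inequalities to a single second-order inequality and then apply a comparison (Gronwall-type) argument against the scalar ODE $\ddot{w} = C w$, whose solutions are exactly $\cosh(\sqrt{C}\,t)$ and $\sinh(\sqrt{C}\,t)/\sqrt{C}$. First I would differentiate the first relation: since $\frac{d}{dt}a_t = b_t$, we get $\frac{d^2}{dt^2} a_t = \frac{d}{dt} b_t \le C a_t$, with initial data $a_0$ and $\dot{a}_0 = b_0$. So it suffices to prove the following scalar comparison fact: if $a_t \ge 0$, $a_t$ is $C^2$, $\ddot{a}_t \le C a_t$, $a_0$ and $\dot a_0 = b_0$ are given, then $a_t \le \phi(t) := a_0 \cosh(\sqrt{C}\,t) + \tfrac{b_0}{\sqrt C}\sinh(\sqrt C\,t)$ for all $t \ge 0$, and likewise $b_t = \dot a_t \le \dot\phi(t) = \sqrt{C}\,\bigl(a_0 \sinh(\sqrt C\,t) + \tfrac{b_0}{\sqrt C}\cosh(\sqrt C\, t)\bigr)$.

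For the comparison I would introduce the error $e_t := \phi(t) - a_t$, which satisfies $e_0 = 0$, $\dot e_0 = 0$, and $\ddot e_t = C\phi(t) - \ddot a_t \ge C\phi(t) - C a_t = C e_t$. The cleanest way to conclude $e_t \ge 0$ is via the standard trick of multiplying by the appropriate integrating factor, or equivalently writing the second-order inequality in first-order system form and using that the fundamental matrix of $\dot{\mathbf{y}} = \begin{bmatrix} 0 & 1 \\ C & 0\end{bmatrix}\mathbf{y}$ has nonnegative entries for $t \ge 0$ (this matrix is exactly the $1$-dimensional instance of the block matrices handled in Lemma \ref{l:matrix-exponent-block-bounds}, so one could even cite that). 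Concretely: set $p_t := \dot e_t - \sqrt{C}\, e_t$. Then $\dot p_t = \ddot e_t - \sqrt C \dot e_t \ge C e_t - \sqrt C \dot e_t = -\sqrt C\, p_t$, so $\frac{d}{dt}\bigl(e^{\sqrt C t} p_t\bigr) \ge 0$; since $p_0 = 0$ we get $p_t \ge 0$, i.e. $\dot e_t \ge \sqrt C\, e_t$ for $t\ge 0$. Then $\frac{d}{dt}\bigl(e^{-\sqrt C t} e_t\bigr) \ge 0$ and $e_0 = 0$ give $e_t \ge 0$, which is the bound on $a_t$. The bound on $b_t$ then follows either by noting $\dot e_t \ge \sqrt C e_t \ge 0$ is not quite what we want — instead I would directly observe $b_t = \dot a_t = \dot\phi(t) - \dot e_t$ and use $\dot e_t \ge 0$ (which we just proved, since $\dot e_t \ge \sqrt C e_t \ge 0$) to conclude $b_t \le \dot\phi(t)$, which is precisely the claimed inequality.

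I do not expect any genuine obstacle here; the statement is a classical Sturm-type comparison. The only mild subtlety is that $\ddot a_t \le C a_t$ is an \emph{inequality}, not an equality, so one cannot just solve the ODE — hence the integrating-factor argument above, which handles the inequality directly and only uses $a_t \ge 0$ (equivalently $e_t$'s sign propagation) rather than uniqueness of ODE solutions. One should also note $C \ge 0$ is implicit (it arises as $L_R \|\partial_t\Lambda\|^2$ in all applications), so $\sqrt C$ is real; if $C = 0$ the bounds degenerate to $a_t \le a_0 + b_0 t$ and $b_t \le b_0$, which the same argument gives by taking $\sqrt C \to 0$ or treating it separately. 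This lemma is then the engine behind the $\cosh/\sinh$ bounds on Jacobi fields used throughout Lemma \ref{l:jacobi_field_norm_bound}.
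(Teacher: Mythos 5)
Your proposal is correct and follows essentially the same comparison idea as the paper: both compare $(a_t,b_t)$ against the exact solution $\bigl(\phi(t),\dot\phi(t)\bigr)$ of $\dot x = y$, $\dot y = Cx$ with the same initial data. The only difference is that the paper merely writes down the differential-inequality system satisfied by the differences $x_t - a_t$, $y_t - b_t$ and asserts nonnegativity, whereas your integrating-factor argument via $p_t = \dot e_t - \sqrt{C}\,e_t$ actually completes that sign-propagation step (and, as a minor correction to your own remark, it needs no positivity of $a_t$ at all).
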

    \begin{proof}
        Let $x_t := {a_0} \cosh\lrp{\sqrt{C} t} + \frac{b_0}{\sqrt{C}} \sinh\lrp{\sqrt{C} t}$ and $y_t := \sqrt{C} \lrp{{a_0} \sinh\lrp{\sqrt{C} t} + \frac{b_0}{\sqrt{C}} \cosh\lrp{\sqrt{C} t}}$. We verify that
        \begin{alignat*}{1}
            & \frac{d}{dt} x_t = \sqrt{C} \lrp{{a_0} \sinh\lrp{\sqrt{C} t} + \frac{b_0}{\sqrt{C}} \cosh\lrp{\sqrt{C} t}} = y_t\\
            & \frac{d}{dt} y_t = C \lrp{{a_0} \cosh\lrp{\sqrt{C} t} + \frac{b_0}{\sqrt{C}} \sinh\lrp{\sqrt{C} t}} = C x_t
        \end{alignat*}
        We further verify the initial conditions. Note that $\sinh(0) = 0$ and $cosh(0) = 1$. Thus
        \begin{alignat*}{1}
            & x_0 = a_0\\
            & y_0 = b_0
        \end{alignat*}
        Finally, we verify that $a_t \leq x_t$ and $b_t \leq y_t$ for all $t$:
        \begin{alignat*}{1}
            & \frac{d}{dt} x_t - a_t = y_t - b_t\\
            & \frac{d}{dt} y_t - b_t \geq C \lrp{x_t - a_t}
        \end{alignat*}
        
    \end{proof}

    \begin{lemma}\label{l:sinh_ode_with_offset}
        Let $a_t, b_t : t \to \Re^+$ satisfy
        \begin{alignat*}{1}
            & \frac{d}{dt} a_t = b_t\\
            & \frac{d}{dt} b_t \leq C a_t + Dt + E
        \end{alignat*}
        with initial conditions $a_0 = 0, b_0 = 0$, then for all $t$,
        \begin{alignat*}{1}
            & a_t \leq \frac{E}{C} \cosh(\sqrt{C} t) + \frac{D}{C^{3/2}} \sinh\lrp{\sqrt{C} t} - \frac{D}{C}t  - \frac{E}{C}\\
            & b_t \leq \frac{E}{\sqrt{C}} \sinh(\sqrt{C} t) + \frac{D}{C} \cosh\lrp{\sqrt{C} t} - \frac{D}{C}
        \end{alignat*}
    \end{lemma}
    \begin{proof}
        Let $x_t := \frac{E}{C} \cosh(\sqrt{C} t) + \frac{D}{C^{3/2}} \sinh\lrp{\sqrt{C} t} - \frac{D}{C}t  - \frac{E}{C}$ and \\
        $y_t :=  \frac{E}{\sqrt{C}} \sinh(\sqrt{C} t) + \frac{D}{C} \cosh\lrp{\sqrt{C} t} - \frac{D}{C}$.
        
        We verify that
        \begin{alignat*}{1}
            & \frac{d}{dt} x_t = \frac{E}{\sqrt{C}} \sinh(\sqrt{C} t) + \frac{D}{C} \cosh\lrp{\sqrt{C} t} - \frac{D}{C} = y_t\\
            & \frac{d}{dt} y_t = E \cosh(\sqrt{C} t) + \frac{D}{\sqrt{C}} \sinh\lrp{\sqrt{C} t} = C x_t + D t + E
        \end{alignat*}
        we also verify the initial conditions that $x_0 = 0$ and $y_0 = 0$.

    \end{proof}

    \begin{lemma}\label{l:identity_covariance_parallel_transport}
		Let $x,y\in M$, and let $E_1...E_d$ be an orthonormal basis at $T_xM$.
		Let $v\in T_x M$ be a random vector with $\E{\lin{E_i, v}\lin{E_j,v}} = \ind{i=j}$.
		Let $\gamma:[0,1] \to M$ be any smooth path between $x$ and $y$. Let $v(t)$ be the parallel transport of $v$ along $\gamma$. Then for any basis $E'_1...E'_d$ at $T_y M$,
		\begin{alignat*}{1}
			\E{\lin{v(t), E_i'}\lin{v(t), E_j'}} = \ind{i=j}
		\end{alignat*}
		In other words, if $v$ has identity covariance, then the parallel transport of $v$ has identity covariance.
	\end{lemma}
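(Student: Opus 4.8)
The plan is to reduce the statement to the single structural fact that parallel transport along a smooth curve is a linear isometry between tangent spaces; everything else is then a one-line change-of-basis computation. Recall that the Levi--Civita connection is metric compatible, so that if $X(t)$ and $Y(t)$ are any vector fields along $\gamma$ with $D_t X = D_t Y = 0$, then $\frac{d}{dt}\lin{X(t),Y(t)} = \lin{D_t X, Y} + \lin{X, D_t Y} = 0$; hence $\lin{X(t),Y(t)}$ is constant in $t$. This uses nothing about $\gamma$ being a geodesic, which is the one point worth keeping in mind below.

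First I would set $F_i(t) := \party{\gamma|_{[0,t]}}{} E_i$, the parallel transport of $E_i$ from $x = \gamma(0)$ to $\gamma(t)$ along $\gamma$, for $i = 1,\dots,d$. By the observation above, $\lin{F_i(t), F_j(t)} = \lin{E_i, E_j} = \ind{i=j}$, so $F_1(t),\dots,F_d(t)$ is an orthonormal basis of $T_{\gamma(t)}M$ for every $t$. Since parallel transport is linear in the vector being transported, from $v = \sum_i \lin{v,E_i} E_i$ we get $v(t) = \sum_i \lin{v,E_i} F_i(t)$, hence $\lin{v(t), F_k(t)} = \lin{v, E_k}$. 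Consequently $v(t)$ has identity covariance in the frame $F(t)$:
\begin{alignat*}{1}
  \E{\lin{v(t), F_i(t)}\lin{v(t),F_j(t)}} = \E{\lin{v,E_i}\lin{v,E_j}} = \ind{i=j}.
\end{alignat*}
For a general orthonormal basis $E_1',\dots,E_d'$ of $T_{\gamma(t)}M$ (in particular of $T_yM$ at $t=1$), write $E_i' = \sum_k c_{ik} F_k(t)$; orthonormality of both frames forces the matrix $C = (c_{ik})$ to satisfy $CC^\top = I$, i.e. $\sum_k c_{ik}c_{jk} = \ind{i=j}$, and then
\begin{alignat*}{1}
  \E{\lin{v(t),E_i'}\lin{v(t),E_j'}} = \sum_{k,l} c_{ik}c_{jl}\,\E{\lin{v(t),F_k(t)}\lin{v(t),F_l(t)}} = \sum_k c_{ik}c_{jk} = \ind{i=j},
\end{alignat*}
which is the claim.

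There is no serious obstacle here: the content is entirely that metric compatibility makes parallel transport an orthogonal map, so the identity-covariance property is carried along the curve. The only things I would be careful about are (i) invoking metric compatibility rather than any geodesic property of $\gamma$ (the curve is explicitly allowed to be non-geodesic), and (ii) the implicit assumption that $E_1',\dots,E_d'$ is an \emph{orthonormal} basis of the relevant tangent space; for a non-orthonormal $E'$ the right-hand side should read $\lin{E_i',E_j'}$ rather than $\ind{i=j}$, and the same computation gives that.
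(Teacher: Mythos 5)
Your proof is correct and follows essentially the same route as the paper's: express $v(t)$ in the parallel-transported frame $F_i(t)$, observe that the coordinates are constant along $\gamma$ by metric compatibility, and then change to the basis $E'$ via an orthogonal matrix. Your caveat (ii) that $E_1',\dots,E_d'$ must be orthonormal for the right-hand side to be $\ind{i=j}$ is also implicit in the paper's final step, which uses $\sum_k \alpha^i_k\alpha^j_k = \lin{E_i',E_j'} = \ind{i=j}$.
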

	\begin{proof}
		Let $E_i(t)$ be an orthonormal frame along $\gamma$ with $E_i(0) = E_i$.
		Under parallel transport, $\frac{d}{dt}\lin{v(t), E_i(t)}=0$. Thus for all $t$,
		\begin{alignat*}{1}
			\E{\lin{E_i(t), v}\lin{E_j(t),v}}
			= \E{\lin{E_i(0), v}\lin{E_j(0),v}} = \ind{i=j}
		\end{alignat*}
		Finally, consider any basis $E_i'$. Let $E_i' = \sum_k \alpha^i_k E_i(1)$, i.e. $\alpha^i_k = \lin{E_i', E_k(1)}$ Then
		\begin{alignat*}{1}
			& \lin{v, E_i'}\lin{v, E_j'}\\
			=& \sum_{k,l} \alpha^i_j \alpha^j_l \lin{v, E_k}\lin{v, E_l}\\
			=& \sum_{k,l} \alpha^i_j \alpha^j_l \ind{k=l}\\
			=& \sum_{k} \alpha^i_k \alpha^j_k\\
			=& \lin{E_i', E_j'}\\
			=& \ind{i=j}
		\end{alignat*}
	\end{proof}

    \begin{lemma}
        \label{l:symmetric_distribution_parallel_transport}
        Let $x,y\in M$, and let $E_1...E_d$ be an orthonormal basis at $T_xM$.

        Let $\alpha$ denote a spherically symmetric random variable in $\Re^d$, i.e. for any orthogonal matrix $G \in \Re^{d\times d}$
        \begin{alignat*}{1}
            \alpha \overset{d}{=} G \alpha
        \end{alignat*}
        Then for any $x\in M$, let $E_1... E_d$ and $E_1'...E_d'$ be two sets of orthonormal bases of $T_x M$. then
        \begin{alignat*}{1}
            \sum_{i=1}^d \alpha_i E_i \overset{d}{=} \sum_{i=1}^d \alpha_i E_i'
        \end{alignat*}

        Consequently, let $v\in T_x M:= \sum_{i=1}^d \alpha_i E_i$. Let $\gamma:[0,1] \to M$ be any differentiable path between $x$ and $y$. Let $v(t)$ be the parallel transport of $v$ along $\gamma$. Then for any orthogonal basis $E'_1...E'_d$ at $T_y M$,
		\begin{alignat*}{1}
			v(1) \overset{d}{=} \sum_{i=1}^d \alpha_i E_i'
		\end{alignat*}
        
    \end{lemma}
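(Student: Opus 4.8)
The plan is to reduce everything to a single statement about tangent spaces — that a spherically symmetric vector in $\Re^d$ induces a law on $T_xM$ that does not depend on the chosen orthonormal frame — and then push this law forward along $\gamma$ using the fact that parallel transport is a linear isometry.

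First I would prove the frame-independence claim. Fix $x \in M$ and two orthonormal bases $E_1,\dots,E_d$ and $E_1',\dots,E_d'$ of $T_xM$. There is a deterministic orthogonal matrix $G \in \Re^{d\times d}$ with $E_i' = \sum_{j=1}^d G_{ji} E_j$, whence
\begin{align*}
  \sum_{i=1}^d \alpha_i E_i' \;=\; \sum_{j=1}^d \Big(\sum_{i=1}^d G_{ji}\alpha_i\Big) E_j \;=\; \sum_{j=1}^d (G\alpha)_j E_j.
\end{align*}
Since $\alpha$ is spherically symmetric, $G\alpha \overset{d}{=} \alpha$; as the $E_j$ are deterministic, applying the linear map $w \mapsto \sum_j w_j E_j$ to both sides gives $\sum_j (G\alpha)_j E_j \overset{d}{=} \sum_j \alpha_j E_j$, which is the first assertion.

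Next I would obtain the parallel-transport conclusion. Let $P : T_xM \to T_yM$ denote parallel transport along $\gamma$; it is a linear isometry, so $F_i := P E_i$ ($i=1,\dots,d$) is an orthonormal basis of $T_yM$. By linearity of $P$ (and since only $\alpha$ is random, with $P$, $\gamma$, and the frames all fixed), $v(1) = P v = \sum_{i=1}^d \alpha_i F_i$ almost surely. Now apply the frame-independence claim just proved, this time at the point $y$, to the two orthonormal bases $F_1,\dots,F_d$ and $E_1',\dots,E_d'$ of $T_yM$: this yields $\sum_i \alpha_i F_i \overset{d}{=} \sum_i \alpha_i E_i'$, and chaining the two identities gives $v(1) \overset{d}{=} \sum_i \alpha_i E_i'$.

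I do not expect a real obstacle; the work is purely bookkeeping. The two points requiring a little care are: (i) recording that the transition matrix between two orthonormal frames is orthogonal, with the index placement consistent with its action on coordinate vectors; and (ii) keeping the sources of randomness straight — $\alpha$ is the only random object, so every ``$\overset{d}{=}$'' is an equality of pushforwards of the law of $\alpha$ under (deterministic) linear maps, and one should invoke spherical symmetry rather than, say, rotational invariance of a particular Gaussian, since no distributional assumption beyond spherical symmetry is available.
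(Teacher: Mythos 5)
Your proposal is correct and follows essentially the same route as the paper's proof: establish frame-independence of the law $\sum_i \alpha_i E_i$ at a single tangent space via an orthogonal change-of-basis matrix and spherical symmetry of $\alpha$, then note that parallel transport is a linear isometry carrying the frame $E_i$ to an orthonormal frame at $y$ while keeping the coordinates $\alpha_i$ fixed, and apply the first claim at $y$. No gaps.
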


    \begin{proof}
        First, we verify that if $\alpha$ is spherically symmetric, and $E_1.. E_d$, $E'_1...E'_d$ are two sets of orthonormal basis at some point $x$, then
        \begin{alignat*}{1}
            \sum_{i=1}^d \alpha_i E_i \overset{d}{=} \sum_{i=1}^d \alpha_i E'_i
        \end{alignat*}
    
        To see this, notice that there exists an orthogonal matrix $G$, with $G_{i,j} = \lin{E_i, E'_j}$, such that
        \begin{alignat*}{1}
            E_i = \sum_{j=1}^d G_{j,i} E'_j
        \end{alignat*}
        We further verify that $G_{i,j}$ is orthogonal. It suffices to verify that $G G^T = I$.
        \begin{alignat*}{1}
            \ind{j=k} = \lin{E'_i, E'_j}
            =& \lin{\sum_{k=1}^d \lin{E'_i, E_k} E_j, \sum_{\ell=1}^d \lin{E'_j, E_\ell} E_\ell}\\
            =& \sum_{k,\ell} \lin{E'_i, E_k} \lin{E'_j, E_\ell} \lin{E_j, E_\ell}\\
            =& \sum_{k} \lin{E'_i, E_k} \lin{E'_j, E_k} \\
            =& \lin{G_{i,\cdot}, G_{j,\cdot}}
        \end{alignat*}
        Note that the inner product on the last line is dot product over $\Re^d$, and the inner product on preceding lines are over $T_x M$. The above implies that
        \begin{alignat*}{1}
            G G^T = I
        \end{alignat*}
        i.e. $G$ is orthogonal.
        
        Now consider any arbitrary function $f: T_x M \to \Re$, then
        \begin{alignat*}{1}
            \E{f\lrp{\sum_i \alpha_i E_i}}
            =& \E{f\lrp{\sum_i \sum_j \alpha_i G_{i,j} E'_j}}\\
            :=& \E{f\lrp{\sum_j \beta_j E'_j}}
        \end{alignat*}
        where we defined $\beta_j := \sum_i \alpha_i G_{i,j}$. We finally verify that $\beta \overset{d}{=} \alpha$. This follows from the fact that $\beta = G^T \alpha$, where $G$ is an orthogonal matrix, and the definition of spherical symmetry for $\alpha$.

        Consider an arbitrary line $\gamma(t) : [0,1] \to M$, with $x := \gamma(0)$, $y := \gamma(1)$. Let $E_i$ be an orthonormal basis at $T_x M$, and $E_i(t)$ be an orthonormal basis at $T_{\gamma(t)} M$ obtained from parallel transport of $E_i$. This proves the first claim.

        To verify the second claim, let $v \in T_x M$ be a random vector, given by 
        \begin{alignat*}{1}
            v = \sum_{i=1}^d \alpha_i E_i
        \end{alignat*}
        where $\alpha$ is some spherically random vector in $\Re^d$. Let $v(t)$ be the parallel transport of $v$ along $\gamma$. Let $\alpha(t):= \lin{v(t), E_i(t)}$. Then by definition of parallel transport, for all $i$,
        \begin{alignat*}{1}
            \frac{d}{dt} \lin{v(t), E_i(t)} = 0
        \end{alignat*}
        so that for all $t\in[0,1]$,
        \begin{alignat*}{1}
            \alpha(t) := \alpha
        \end{alignat*}
        the second claim then follows from the first claim.

    \end{proof}

    \section{Matrix ODE}\label{s:matrix_exponent}

    In this section we provide Gronwall-style inequality for matrix ODE. The results in this section are necessary for analyzing Jacobi Equation, whose coordinates with respect to some orthonormal frame can be viewed as an ODE in $\Re^d$. In particular, Lemma \ref{l:discrete-approximate-synchronous-coupling} and Lemma \ref{l:discrete-approximate-synchronous-coupling-ricci} rely on results in this section.

\begin{lemma}[Formal Matrix Exponent]\label{l:formal-matrix-exponent}
    Given $\MM(t): \Re^+ \to \Re^{d\times d}$, define $\emat\lrp{t;\MM}: \Re^+ \to \Re^{d\times d}$ as the solution to the matrix ODE
    \begin{alignat*}{1}
        \emat\lrp{0;\MM} =& I\\
        \ddt \emat\lrp{t;\MM} =& \MM(t) \emat\lrp{t;\MM}
    \end{alignat*}

    Then
    \begin{enumerate}
        \item Let $\xx(t)$ be the solution to the ODE $\ddt \xx(t) = \MM(t) \xx(t)$, for some $\MM$, then
        \begin{alignat*}{1}
            \xx(t) = \emat\lrp{t;\MM} \xx(0)
        \end{alignat*}
        \item Let $\zz(t)$ be the solution to $\ddt \zz(t) = \MM(t) \zz(t) + \vv(t)$, for some $\MM$, $\vv$, then
        \begin{alignat*}{1}
            \zz(T) = \int_0^T \emat\lrp{T-s;\NN_s} \vv(s) ds + \emat\lrp{T;\MM} \zz(0)
        \end{alignat*}
        where for any $s,t$, $\NN_s(t) := \MM(s+t)$.
    \end{enumerate}
\end{lemma}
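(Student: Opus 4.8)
The plan is to treat this as the matrix-valued variation-of-constants (Duhamel) identity and reduce everything to two ingredients: existence/uniqueness for linear ODEs and the Leibniz rule for differentiating under an integral sign with a moving upper limit. First I would make explicit the (mild) regularity implicit in the setup: $\MM(\cdot)$, and in claim 2 also $\vv(\cdot)$, is continuous on $\Re^+$ (local boundedness and measurability would already suffice). Under this assumption the linear matrix ODE $\ddt\emat\lrp{t;\MM} = \MM(t)\emat\lrp{t;\MM}$, $\emat\lrp{0;\MM} = I$, has a unique global solution by Picard--Lindel\"of, since $X \mapsto \MM(t)X$ is Lipschitz in $X$ with constant $\lrn{\MM(t)}_2$; so $\emat\lrp{\cdot;\MM}$ is well defined. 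The same theorem gives uniqueness for the vector ODEs $\ddt\xx = \MM(t)\xx$ and $\ddt\zz = \MM(t)\zz + \vv(t)$ with prescribed initial value, which is the tool I will use to identify the candidate formulas.

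For claim 1, I would set $\yy(t) := \emat\lrp{t;\MM}\,\xx(0)$, note $\yy(0) = \xx(0)$, and differentiate: $\ddt\yy(t) = \bigl(\ddt\emat\lrp{t;\MM}\bigr)\xx(0) = \MM(t)\emat\lrp{t;\MM}\xx(0) = \MM(t)\yy(t)$. Hence $\yy$ and $\xx$ solve the same homogeneous linear ODE from the same initial point, so $\xx(t) = \yy(t) = \emat\lrp{t;\MM}\xx(0)$ by uniqueness.

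For claim 2, the key preliminary observation is that, for each fixed $s$, the curve $t \mapsto \emat\lrp{t-s;\NN_s}$ is the propagator of $\MM$ started at time $s$: by the very definition of $\emat$ applied to the shifted coefficient field $\NN_s(\cdot) = \MM(s+\cdot)$, it equals $I$ at $t=s$ and satisfies $\del_t\,\emat\lrp{t-s;\NN_s} = \NN_s(t-s)\,\emat\lrp{t-s;\NN_s} = \MM(t)\,\emat\lrp{t-s;\NN_s}$. I would then define the candidate $\hat{\zz}(T) := \emat\lrp{T;\MM}\zz(0) + \int_0^T \emat\lrp{T-s;\NN_s}\,\vv(s)\,ds$ and differentiate in $T$. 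The Leibniz rule produces three contributions: the moving-endpoint term $\emat\lrp{0;\NN_T}\vv(T) = \vv(T)$; the term $\MM(T)\emat\lrp{T;\MM}\zz(0)$ from the first summand (using the identity established in claim 1); and $\int_0^T \MM(T)\,\emat\lrp{T-s;\NN_s}\vv(s)\,ds$ from differentiating inside the integral. Summing, $\ddt\hat{\zz}(T) = \MM(T)\hat{\zz}(T) + \vv(T)$, and $\hat{\zz}(0) = \zz(0)$, so uniqueness for the inhomogeneous linear ODE gives $\zz(T) = \hat{\zz}(T)$, which is exactly the asserted formula.

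There is no real obstacle; the content is classical. The only places to be careful are (i) fixing enough regularity on $\MM$ and $\vv$ that all the ODEs have unique solutions and that the Leibniz rule is legitimate with both a parameter-dependent integrand and a variable upper limit (continuity of the integrand and of its $T$-partial on $0 \le s \le T$ is what I would verify), and (ii) keeping the $\NN_s$-bookkeeping straight, i.e. confirming that $t\mapsto\emat\lrp{t-s;\NN_s}$ really is the two-parameter propagator $\Phi(t,s)$ with $\Phi(s,s)=I$ and $\del_t\Phi(t,s) = \MM(t)\Phi(t,s)$. Both are routine.
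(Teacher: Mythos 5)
Your proposal is correct and follows essentially the same route as the paper's proof: claim 1 by checking that $\emat\lrp{t;\MM}\xx(0)$ satisfies the same ODE and initial condition, and claim 2 by differentiating the Duhamel candidate with the Leibniz rule and using that $\del_t\,\emat\lrp{t-s;\NN_s} = \MM(t)\,\emat\lrp{t-s;\NN_s}$. You are merely more explicit than the paper about the regularity hypotheses and the uniqueness theorem being invoked, which is a reasonable tightening rather than a different argument.
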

\begin{proof}[Proof of Lemma \ref{l:formal-matrix-exponent}]
    Let $\yy_t := \emat\lrp{t;\MM} \xx(0)$. We verify that
    \begin{alignat*}{1}
        \yy(0) =& 0\\
        \ddt \yy(t) =& \lrp{\ddt \emat\lrp{t;\MM}} \xx(0) = \MM(t) \yy(t)
    \end{alignat*}
    Given the same dynamics and initial conditions, we conclude that $\xx(t) = \yy(t)$ for all $t$.

    To verify the second claim, note that 
    \begin{alignat*}{1}
        & \ddt  \int_0^t \emat\lrp{t-s;\NN_s} \vv(s) ds\\
        =& \emat\lrp{0;\NN_t} \vv(s) + \int_0^t \lrp{\frac{d}{dt} \emat\lrp{t-s;\NN_s}} \vv(s) ds\\
        =& \vv(s) + \int_0^t \NN_s(t-s) \emat\lrp{t-s;\NN_s} \vv(s) dt\\
        =& \vv(s) + \MM(t) \int_0^t \emat\lrp{t-s;\NN_s} \vv(s) ds
    \end{alignat*}

    Additionally, $\ddt  \emat\lrp{t;\MM} \zz(0) = \MM(t)  \emat\lrp{t;\MM} \zz(0)$, summing,
    \begin{alignat*}{1}
        \ddt \zz(t)
        =& \ddt \int_0^t \emat\lrp{t-s;\NN_s} \vv(s) ds + \emat\lrp{t;\MM} \zz(0) \\
        =& \vv(s) + \MM(t) \int_0^t \emat\lrp{t-s;\NN_s} \vv(s) ds + \MM(t) \emat\lrp{t;\MM} \zz(0)\\
        =& \vv(s) + \MM(t) \zz(t)
    \end{alignat*}
\end{proof}

\begin{lemma}\label{l:matrix-exponent-block-bounds}
    Let $\emat$ be as defined in Lemma \ref{l:formal-matrix-exponent}. Let 
    \begin{alignat*}{1}
        \bmat{\AA(t) & \BB(t) \\ \CC(t) & \DD(t)} := \emat\lrp{t; \bmat{0 & I \\ \MM(t) & 0}}
    \end{alignat*}
    for some $\MM(t)$. Assume $\lrn{\MM(t)}_2 \leq L_{\MM}$ for all $t$. Then for all $t$,
    \begin{alignat*}{1}
        & \lrn{\AA(t)}_2 \leq \cosh\lrp{\sqrt{L_{\MM}} t}\\
        & \lrn{\BB(t)}_2 \leq \frac{1}{\sqrt{L_{\MM}}} \sinh\lrp{\sqrt{L_{\MM}} t}\\
        & \lrn{\CC(t)}_2 \leq \sqrt{L_{\MM}} \sinh\lrp{\sqrt{L_{\MM}} t}\\
        & \lrn{\DD(t)}_2 \leq \cosh\lrp{\sqrt{L_{\MM}}t}
    \end{alignat*}

    and
    \begin{alignat*}{1}
        & \lrn{\AA(t) - I}_2 \leq \cosh(\sqrt{L_{\MM}} t) - 1\\
        & \lrn{\BB(t) - tI}_2 \leq \frac{1}{\sqrt{L_{\MM}}} \sinh\lrp{\sqrt{L_{\MM}} t} - t\\
        & \lrn{\DD(t) - I}_2 \leq \cosh\lrp{\sqrt{L_{\MM}} t} - 1
    \end{alignat*}

    \begin{alignat*}{1}
        \lrn{\AA(t) - I}_2 \leq& \frac{1}{2} L_{\MM}e^{L_{\MM}}\\
        \lrn{\BB(t) - tI}_2 \leq& \frac{1}{6} L_{\MM} e^{L{\MM}}\\
        \lrn{\DD(t) - I}_2 \leq& \frac{1}{2} L_{\MM}e^{L_{\MM}}\\
        \lrn{\CC(t) }_2 \leq& L_{\MM}e^{L_{\MM}}\\
    \end{alignat*}
\end{lemma}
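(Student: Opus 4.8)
The plan is to prove Lemma~\ref{l:matrix-exponent-block-bounds} by reducing the matrix ODE to a scalar comparison problem. First I would set $\AA(t),\BB(t),\CC(t),\DD(t)$ to be the four $d\times d$ blocks of $\emat\lrp{t; \bmat{0 & I \\ \MM(t) & 0}}$; unpacking the block ODE from Lemma~\ref{l:formal-matrix-exponent}, we get $\dot\AA = \BB$, $\dot\CC = \DD$, $\dot\BB = \MM(t)\AA$, $\dot\DD = \MM(t)\CC$, with $\AA(0)=\DD(0)=I$, $\BB(0)=\CC(0)=0$. So $\AA$ (resp.\ $\DD$) satisfies a second-order ODE $\ddot\AA = \MM(t)\AA$, and likewise for the pair $(\BB,\CC)$. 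The strategy is: for each column $\ee_j$ of the identity, let $\xx(t) := \AA(t)\ee_j$, which solves $\ddot\xx(t) = \MM(t)\xx(t)$ with $\xx(0)=\ee_j$, $\dot\xx(0)=0$; then control $\lrn{\xx(t)}$ and $\lrn{\dot\xx(t)}$ via Lemma~\ref{l:sinh_ode} applied to $a_t := \lrn{\xx(t)}$, $b_t := \lrn{\dot\xx(t)}$ with the constant $C = L_{\MM}$. Indeed $\frac{d}{dt}\lrn{\xx(t)} \le \lrn{\dot\xx(t)}$ and $\frac{d}{dt}\lrn{\dot\xx(t)} \le \lrn{\ddot\xx(t)} = \lrn{\MM(t)\xx(t)} \le L_{\MM}\lrn{\xx(t)}$, so Lemma~\ref{l:sinh_ode} gives $\lrn{\xx(t)} \le \cosh(\sqrt{L_{\MM}}t)$ and $\lrn{\dot\xx(t)} \le \sqrt{L_{\MM}}\sinh(\sqrt{L_{\MM}}t)$ (the offsets $a_0=1$, $b_0=0$ handle $\AA$; for $\DD$ the same, and for $\BB,\CC$ we use $a_0=0$, $b_0=1$, which by Lemma~\ref{l:sinh_ode} gives $\lrn{\BB(t)\ee_j}\le \frac{1}{\sqrt{L_{\MM}}}\sinh(\sqrt{L_{\MM}}t)$, $\lrn{\CC(t)\ee_j}\le \sqrt{L_{\MM}}\sinh(\sqrt{L_{\MM}}t)$). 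Since these column bounds are uniform in $j$, I would conclude the operator-norm bounds $\lrn{\AA(t)}_2 \le \cosh(\sqrt{L_{\MM}}t)$, etc.\ (being slightly careful: the cleanest route is actually to bound the $\ell_2\to\ell_2$ operator norm by bounding $\lrn{\AA(t)\vv}$ for arbitrary unit $\vv$, running the same comparison with $\xx(t):=\AA(t)\vv$; that avoids a $\sqrt d$ loss from summing over columns).

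Next, for the ``minus identity'' bounds, I would write $\AA(t) - I = \int_0^t \BB(s)\,ds$ (from $\dot\AA=\BB$, $\AA(0)=I$) and hence $\lrn{\AA(t)-I}_2 \le \int_0^t \lrn{\BB(s)}_2\,ds \le \int_0^t \frac{1}{\sqrt{L_{\MM}}}\sinh(\sqrt{L_{\MM}}s)\,ds = \frac{1}{L_{\MM}}(\cosh(\sqrt{L_{\MM}}t) - 1) \le \cosh(\sqrt{L_{\MM}}t)-1$ using $L_{\MM}\ge \dots$; wait, more directly $\frac1{L_{\MM}}(\cosh-1)\le\cosh-1$ fails in general, so instead I would get $\lrn{\AA(t)-I}_2$ directly from the comparison: running Lemma~\ref{l:sinh_ode} on $\tilde a_t := \lrn{\xx(t)-\xx(0)}$ is awkward, so the cleanest is $\lrn{\AA(t)-I}_2 \le \int_0^t\lrn{\BB(s)}_2 ds$ combined with the already-established $\lrn{\BB(s)}_2 \le \frac1{\sqrt{L_{\MM}}}\sinh(\sqrt{L_{\MM}}s)$, giving exactly $\frac1{L_{\MM}}(\cosh(\sqrt{L_{\MM}}t)-1)$; and since the statement claims the weaker bound $\cosh(\sqrt{L_{\MM}}t)-1$, the claim follows provided $L_{\MM}\ge 1$ — so I should check whether the lemma implicitly assumes this or whether I should simply report the sharper $\frac1{L_{\MM}}(\cosh-1)$ bound, which dominates the claimed one whenever $L_{\MM}\le 1$ too... no: $\frac1{L_{\MM}}(\cosh-1) \le \cosh-1 \iff L_{\MM}\ge 1$. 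The honest fix is: bound $\lrn{\AA(t)-I}_2 \le \int_0^t \lrn{\BB(s)}_2 ds \le \int_0^t s\cdot\frac{\sinh(\sqrt{L_{\MM}}s)}{\sqrt{L_{\MM}}s} ds$ and use $\frac{\sinh r}{r}\le \cosh r$ to get $\le \int_0^t s\cosh(\sqrt{L_{\MM}}t)ds = \frac{t^2}{2}\cosh(\sqrt{L_{\MM}}t)$, then compare $\frac{t^2}{2}\cosh(\sqrt{L_{\MM}}t)$ with $\cosh(\sqrt{L_{\MM}}t)-1$; for $t\le 1$ (the regime of use) these follow from elementary inequalities. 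Similarly $\BB(t) - tI = \int_0^t(\AA(s)-I)\,ds$ and $\DD(t)-I = \int_0^t \CC(s)\,ds$.

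For the final four bounds (the ones with $L_{\MM}e^{L_{\MM}}$), I would specialize to $t\in[0,1]$ and use elementary estimates: $\cosh r - 1 \le \frac{r^2}{2}e^r$, $\frac{\sinh r}{r} - 1 \le \frac{r^2}{6}e^r$, $\sinh r \le r e^r$. Substituting $r = \sqrt{L_{\MM}}\,t \le \sqrt{L_{\MM}}$ (noting these are used with $t\le 1$) yields $\lrn{\AA(t)-I}_2 \le \frac1{L_{\MM}}(\cosh(\sqrt{L_{\MM}})-1) \le \frac1{L_{\MM}}\cdot\frac{L_{\MM}}{2}e^{\sqrt{L_{\MM}}}\le \frac12 L_{\MM}e^{L_{\MM}}$ provided $L_{\MM}\ge$ (something) or simply $e^{\sqrt{L_{\MM}}}\le L_{\MM}e^{L_{\MM}}$, which I would verify casewise. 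These are exactly the kind of ``Lemma~\ref{l:sinh_bounds}''-style bounds the paper uses elsewhere, so I would invoke that lemma rather than re-derive them.

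The main obstacle I anticipate is bookkeeping rather than conceptual: getting the operator-norm bounds right (not losing a $\sqrt d$) requires running the scalar comparison against a generic unit vector $\vv$ rather than against the standard basis and then using that $\lrn{\MM(t)\xx}\le L_{\MM}\lrn{\xx}$; and reconciling the various claimed forms of the ``$-I$'' bounds with what the comparison actually produces (the sharp output is $\frac1{L_{\MM}}(\cosh-1)$, and one must check this implies the stated $\cosh-1$ bound, presumably under the implicit normalization $L_{\MM}\ge 1$ or in the $t\le1$ regime in which the lemma is applied). Once the comparison lemma (Lemma~\ref{l:sinh_ode}) is set up correctly and the monotonicity/elementary inequalities for $\cosh,\sinh,\frac{\sinh r}{r}$ are in hand (Lemma~\ref{l:sinh_bounds}), everything else is routine integration.
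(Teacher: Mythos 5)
Your overall strategy for the first four bounds — reduce the block ODE to the scalar comparison system $\frac{d}{dt}\lrn{\xx}\le\lrn{\yy}$, $\frac{d}{dt}\lrn{\yy}\le L_{\MM}\lrn{\xx}$ and invoke Lemma~\ref{l:sinh_ode} — is exactly the paper's, and your remark about testing against an arbitrary unit vector rather than columns is the right way to avoid a $\sqrt d$ loss. However, there is a concrete error at the very first step: multiplying out $\bmat{0 & I\\ \MM & 0}\bmat{\AA & \BB\\ \CC & \DD}$ gives $\dot\AA=\CC$, $\dot\BB=\DD$, $\dot\CC=\MM\AA$, $\dot\DD=\MM\BB$ — the coupled pairs are $(\AA,\CC)$ and $(\BB,\DD)$, i.e.\ the block \emph{columns} — not $\dot\AA=\BB$, $\dot\CC=\DD$, $\dot\BB=\MM\AA$, $\dot\DD=\MM\CC$ as you wrote. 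Under your pairing, the comparison lemma would yield $\lrn{\BB}_2\le\sqrt{L_{\MM}}\sinh(\sqrt{L_{\MM}}t)$ and $\lrn{\CC}_2\le\frac{1}{\sqrt{L_{\MM}}}\sinh(\sqrt{L_{\MM}}t)$, which is the transpose of the claim; you state the correct final bounds, but they do not follow from the setup you describe.

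This mislabeling is also the sole source of the difficulty you wrestle with in the ``$-I$'' bounds. With the correct unpacking, $\AA(t)-I=\int_0^t\CC(s)\,ds$, so $\lrn{\AA(t)-I}_2\le\int_0^t\sqrt{L_{\MM}}\sinh(\sqrt{L_{\MM}}s)\,ds=\cosh(\sqrt{L_{\MM}}t)-1$ exactly as claimed, with no spurious $1/L_{\MM}$ factor; similarly $\BB(t)-tI=\int_0^t(\DD(s)-I)\,ds$ and $\DD(t)-I=\int_0^t\MM(s)\BB(s)\,ds$ integrate directly to the stated bounds. Your two attempted patches are both false when $L_{\MM}<1$: $\frac{t^2}{2}\cosh(\sqrt{L_{\MM}}t)\le\cosh(\sqrt{L_{\MM}}t)-1$ fails (the right side is $\approx\frac{L_{\MM}t^2}{2}$ for small arguments), and $e^{\sqrt{L_{\MM}}}\le L_{\MM}e^{L_{\MM}}$ fails (try $L_{\MM}=0.1$). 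So you should fix the block identification rather than the inequalities. Note the paper itself handles the second set differently — it applies Lemma~\ref{l:sinh_ode_with_offset} to the coupled shifted system $\lrp{\xx(t)-\xx(0)-t\yy(0),\ \yy(t)-\yy(0)}$ — but your ``integrate the derivative block'' route is equally valid once the pairing is corrected. For the final four bounds, both you and the paper are implicitly restricting to $t\le1$; the clean way to land on $\frac12 L_{\MM}e^{L_{\MM}}$ for all $L_{\MM}>0$ is via $\cosh(r)-1\le\frac{r^2}{2}e^{r^2}$ with $r^2=L_{\MM}t^2\le L_{\MM}$, not via $e^{\sqrt{L_{\MM}}}\le e^{L_{\MM}}$ (which needs $L_{\MM}\ge1$).
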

\begin{proof}[Proof of Lemma \ref{l:matrix-exponent-block-bounds}]
    We first verify the first part of the lemma. Consider the ODE given by
    \begin{alignat*}{1}
        \ddt \cvec{\xx}{\yy}(t) = \bmat{0 & I \\ \MM(t) & 0} \cvec{\xx(t)}{\yy(t)}
    \end{alignat*}
    By Lemma \ref{l:formal-matrix-exponent}, $\bmat{\AA(t) & \BB(t) \\ \CC(t) & \DD(t)}$ satisfies
    \begin{alignat*}{1}
        \cvec{\xx(t)}{\yy(t)} = \bmat{\AA(t) & \BB(t) \\ \CC(t) & \DD(t)} \cvec{\xx(0)}{\yy(0)}
    \end{alignat*}
    By Cauchy Schwarz,
    \begin{alignat*}{1}
        & \frac{d}{dt} \lrn{\xx(t)}_2 \leq \lrn{\yy(t)}_2\\
        & \frac{d}{dt} \lrn{\yy(t)}_2 \leq L_{\MM}\lrn{\xx(t)}_2
    \end{alignat*}
    We apply Lemma \ref{l:sinh_ode}, with $a_t := \lrn{\xx(t)}_2$ and $b_t := \lrn{\yy(t)}_2$, $C := L_{\MM}$. Then
    \begin{alignat*}{1}
        & \lrn{\xx_t}_2 \leq {\lrn{\xx_0}_2} \cosh\lrp{\sqrt{L_{\MM}} t} + \frac{\lrn{\yy_0}_2}{\sqrt{L_{\MM}}} \sinh\lrp{\sqrt{L_{\MM}} t}\\
        & \lrn{\yy_t}_2 \leq \sqrt{L_{\MM}} \lrp{{\lrn{\xx_0}_2} \sinh\lrp{\sqrt{L_{\MM}} t} + \frac{\lrn{\yy_0}_2}{\sqrt{L_{\MM}}} \cosh\lrp{\sqrt{L_{\MM}} t}}
    \end{alignat*}
    This immediately implies that
    \begin{alignat*}{1}
        & \lrn{\AA(t)}_2 \leq \cosh\lrp{\sqrt{L_{\MM}} t}\\
        & \lrn{\BB(t)}_2 \leq \frac{1}{\sqrt{L_{\MM}}} \sinh\lrp{\sqrt{L_{\MM}} t}\\
        & \lrn{\CC(t)}_2 \leq \sqrt{L_{\MM}} \sinh\lrp{\sqrt{L_{\MM}} t}\\
        & \lrn{\DD(t)}_2 \leq \cosh\lrp{\sqrt{L_{\MM}}t}
    \end{alignat*}
    This proves the first claim of the Lemma.

    We now prove the second claim. We verify that 
    \begin{alignat*}{1}
        \ddt \cvec{\xx(t) - \xx(0) - t \yy(0)}{\yy(t) - \yy(0)} = \cvec{\yy(t) - \yy(0)}{\MM(t) \xx(t)}
    \end{alignat*}

    \begin{alignat*}{1}
        \ddt \cvec{\xx(t) - \xx(0) - t \yy(0)}{\yy(t) - \yy(0)} = \cvec{\yy(t) - \yy(0)}{\MM(t) \xx(t)}
        =& \cvec{\yy(t) - \yy(0)}{\MM(t) \xx(t)}\\
        =& \cvec{\yy(t) - \yy(0)}{\MM(t) \lrp{\xx(t) - \xx(0) - t\yy(0)}} + \cvec{0}{\MM(t) \lrp{\xx(0) + t\yy(0)}}
    \end{alignat*}
    Thus
    \begin{alignat*}{1}
        \ddt \lrn{\xx(t) - \xx(0) - t \yy(0)}_2 
        \leq& \lrn{\yy(t) - \yy(0)}_2\\
        \ddt \lrn{\yy(t) - \yy(0)}_2 
        \leq& L_{\MM} \lrn{\xx(t) - \xx(0) - t \yy(0)}_2 + L_{\MM} \lrp{\lrn{\xx(0)}_2 + t \lrn{\yy(0)}_2}
    \end{alignat*}

    We verify that
    \begin{alignat*}{1}
        \lrn{\yy(t) - \yy(0)}_2 \leq L_{\MM} \int_0^t \lrn{\xx(s) - s\yy(0)}_2 ds + \frac{t^2}{2} L_{\MM}
    \end{alignat*}
    Let us apply Lemma \ref{l:sinh_ode_with_offset} with $a_t = \lrn{\xx_t - \xx(0) - t\yy(0)}_2$, $b_t = \lrn{\yy(t) - \yy(0)}_2$, $C = L_{\MM}$, $D = L_{\MM}\lrn{\yy(0)}_2$ and $E = L_{\MM}\lrn{\xx(0)}_2$
    \begin{alignat*}{1}
        \lrn{\xx_t - \xx(0) - t\yy(0)}_2 
        \leq& \frac{L_{\MM}\lrn{\xx(0)}_2}{L_{\MM}} \cosh(\sqrt{L_{\MM}} t) + \frac{L_{\MM}\lrn{\yy(0)}_2}{L_{\MM}^{3/2}} \sinh\lrp{\sqrt{L_{\MM}} t}\\
        &\qquad  - \frac{L_{\MM}\lrn{\yy(0)}_2}{L_{\MM}}t  - \frac{L_{\MM}\lrn{\xx(0)}_2}{L_{\MM}}\\
        =& \lrn{\xx(0)}_2 \lrp{\cosh(\sqrt{L_{\MM}} t) - 1}+ \lrn{\yy(0)}_2 \lrp{\frac{1}{\sqrt{L_{\MM}}} \sinh\lrp{\sqrt{L_{\MM}} t} - t}\\
        \lrn{\yy(t) - \yy(0)}_2 
        \leq& \frac{L_{\MM}\lrn{\xx(0)}_2}{\sqrt{L_{\MM}}} \sinh(\sqrt{L_{\MM}} t) + \frac{L_{\MM}\lrn{\yy(0)}_2}{L_{\MM}} \cosh\lrp{\sqrt{L_{\MM}} t} - \frac{L_{\MM}\lrn{\yy(0)}_2}{L_{\MM}}\\
        =& \lrn{\xx(0)}_2 \sqrt{L_{\MM}}  \sinh(\sqrt{L_{\MM}} t) + \lrn{\yy(0)}_2 \lrp{\cosh\lrp{\sqrt{L_{\MM}} t} - 1}
        \elb{e:t:ksnlqdk}
    \end{alignat*}

    Again from Lemma \ref{l:formal-matrix-exponent}, we know that
    \begin{alignat*}{1}
        \cvec{\xx(t)}{\yy(t)} = \bmat{\AA(t) & \BB(t) \\ \CC(t) & \DD(t)} \cvec{\xx(0)}{\yy(0)}
    \end{alignat*}
    thus
    \begin{alignat*}{1}
        \cvec{\xx(t) - \xx(0) - t\yy(0)}{\yy(t) - \yy(0)} = \bmat{\AA(t) - I & \BB(t) - t I \\ \CC(t) & \DD(t) - I} \cvec{\xx(0)}{\yy(0)}
    \end{alignat*}
    combined with \eqref{e:t:ksnlqdk}, and using the fact that the above hold for all $\yy(0)$ and $\xx(0)$, we can bound
    \begin{alignat*}{1}
        & \lrn{\AA(t) - I}_2 \leq \cosh(\sqrt{L_{\MM}} t) - 1\\
        & \lrn{\BB(t) - tI}_2 \leq \frac{1}{\sqrt{L_{\MM}}} \sinh\lrp{\sqrt{L_{\MM}} t} - t\\
        & \lrn{\DD(t) - I}_2 \leq \cosh\lrp{\sqrt{L_{\MM}} t} - 1
    \end{alignat*}

    Finally, to prove the third claim, 
    \begin{alignat*}{1}
        & \frac{d}{dt} \cvec{\xx(t) - t \yy(0) - \frac{t^2}{2} \MM(0) \xx(0)}{\yy(t) - \yy(0) - t \MM(0) \xx(0)} \\
        =& \bmat{0 & I \\ \MM(t) & 0} \cvec{\xx(t)}{\yy(t)} - \bmat{0 & I \\ \MM(0) & 0} \cvec{\yy(0)}{\vv}\\
    \end{alignat*}

\end{proof}

\begin{lemma}\label{l:matrix-exponent-block-bounds-second-order}
    Let $\emat$ be as defined in Lemma \ref{l:formal-matrix-exponent}. Let 
    \begin{alignat*}{1}
        \bmat{\AA(t) & \BB(t) \\ \CC(t) & \DD(t)} := \emat\lrp{t; \bmat{0 & I \\ \MM(t) & 0}}
    \end{alignat*}
    for some $\MM(t)$. Assume $\lrn{\MM(t)}_2 \leq L_{\MM}$ for all $t$. Then for all $t$,
    \begin{alignat*}{1}
            \lrn{\CC(t) - t \MM(0)}_2 \leq \frac{\lrp{L_\MM'+\frac{1}{2} L_\MM^2}}{\sqrt{L_\MM}} \sinh(\sqrt{L_\MM} t)
    \end{alignat*}
\end{lemma}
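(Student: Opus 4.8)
The plan is to reduce the statement to the variation-of-constants description of the block matrix exponential in Lemma~\ref{l:formal-matrix-exponent}, and then estimate the resulting integral using the block bounds of Lemma~\ref{l:matrix-exponent-block-bounds} together with elementary $\sinh$/$\cosh$ inequalities. First I would make explicit — as is done in Lemma~\ref{l:jacobi_field_norm_bound} — that $L_\MM'$ is meant to be any constant with $L_\MM' \ge \max_{t\in[0,1]}\lrn{\MM(t)-\MM(0)}_2$. Then I would differentiate the defining ODE $\ddt\,\emat\lrp{t;\bmat{0 & I \\ \MM(t) & 0}} = \bmat{0 & I \\ \MM(t) & 0}\,\emat\lrp{t;\bmat{0 & I \\ \MM(t) & 0}}$: multiplying out the block product gives $\AA'(t)=\CC(t)$, $\BB'(t)=\DD(t)$, $\CC'(t)=\MM(t)\AA(t)$, $\DD'(t)=\MM(t)\BB(t)$, with $\AA(0)=\DD(0)=I$ and $\BB(0)=\CC(0)=0$; in particular $\CC(t)=\int_0^t \MM(s)\AA(s)\,ds$.

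Next I would write $\MM(s)\AA(s)-\MM(0) = \lrp{\MM(s)-\MM(0)}\AA(s) + \MM(0)\lrp{\AA(s)-I}$, so that
\[
\CC(t)-t\MM(0) = \int_0^t \lrp{\MM(s)-\MM(0)}\AA(s)\,ds + \int_0^t \MM(0)\lrp{\AA(s)-I}\,ds .
\]
Taking operator norms and invoking the bounds $\lrn{\AA(s)}_2 \le \cosh\lrp{\sqrt{L_\MM}\,s}$ and $\lrn{\AA(s)-I}_2 \le \cosh\lrp{\sqrt{L_\MM}\,s}-1$ from Lemma~\ref{l:matrix-exponent-block-bounds}, the first integral is at most $L_\MM'\int_0^t\cosh\lrp{\sqrt{L_\MM}\,s}\,ds = \tfrac{L_\MM'}{\sqrt{L_\MM}}\sinh\lrp{\sqrt{L_\MM}\,t}$ — exactly the first piece of the claimed bound — and the second is at most $L_\MM\int_0^t\lrp{\cosh\lrp{\sqrt{L_\MM}\,s}-1}\,ds = \sqrt{L_\MM}\,\sinh\lrp{\sqrt{L_\MM}\,t} - L_\MM t$.

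It then remains to show $\sqrt{L_\MM}\,\sinh\lrp{\sqrt{L_\MM}\,t} - L_\MM t \le \tfrac12 L_\MM^{3/2}\sinh\lrp{\sqrt{L_\MM}\,t}$, equivalently $\sinh r\,\lrp{1-\tfrac12 L_\MM} \le r$ with $r:=\sqrt{L_\MM}\,t$. For $L_\MM \ge 2$ the left side is nonpositive; for $L_\MM < 2$, since $r\mapsto \sinh r / r$ is increasing and (in the regime of interest $t\in[0,1]$) $r \le \sqrt{L_\MM}$, it suffices to verify $\sinh\lrp{\sqrt{L_\MM}}\,\lrp{1-\tfrac12 L_\MM} \le \sqrt{L_\MM}$ for $L_\MM\in[0,2]$, a one-variable estimate. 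I expect this last elementary comparison — and, relatedly, pinning down the range of $t$ over which the stated constant $\tfrac12 L_\MM^2$ is correct — to be the only real obstacle; everything else is bookkeeping entirely parallel to the proof of Lemma~\ref{l:matrix-exponent-block-bounds}. An alternative that avoids splitting into cases is to bound $\cosh r - 1 \le \tfrac12 r\sinh r$ pointwise and integrate by parts, which leads to the same conclusion after a comparison of the same flavor.
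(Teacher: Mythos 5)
Your argument is correct, and it reaches the stated bound by a genuinely different route from the paper. The paper never writes $\CC(t)$ as an integral; instead it runs a second-order comparison argument: it tracks the pair $a_t = \lrn{\xx(t)-\xx(0)-\tfrac{t^2}{2}\MM(0)\xx(0)}_2$, $b_t = \lrn{\yy(t)-t\MM(0)\xx(0)}_2$ for a solution with $\yy(0)=0$, shows $\frac{d}{dt}a_t\le b_t$ and $\frac{d}{dt}b_t\le L_\MM a_t + \lrp{L_\MM'+\tfrac12 L_\MM^2}\lrn{\xx(0)}_2$, and invokes the hyperbolic comparison ODE (Lemma~\ref{l:sinh_ode_with_offset}) to conclude. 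The error sources are the same in both proofs — the drift $\MM(t)-\MM(0)$ contributing $L_\MM'$, and a curvature-of-the-flow term contributing $\tfrac12 L_\MM^2$ — but the paper absorbs the $\tfrac12 L_\MM^2$ directly into the forcing term $E$ of the comparison ODE (via $\lrn{\tfrac{t^2}{2}\MM(t)\MM(0)}_2\le\tfrac12 L_\MM^2$ for $t\le1$), so the constant pops out with no further work, whereas your route via $\CC(t)=\int_0^t\MM(s)\AA(s)\,ds$ and the bound $\lrn{\AA(s)-I}_2\le\cosh\lrp{\sqrt{L_\MM}\,s}-1$ from Lemma~\ref{l:matrix-exponent-block-bounds} leaves you the residual comparison $\sinh(r)\lrp{1-\tfrac12 L_\MM}\le r$. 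That last inequality does close: with $s=\sqrt{L_\MM}$ the power series of $\sinh(s)\lrp{1-\tfrac{s^2}{2}}$ is $s$ minus a series with all nonnegative coefficients (the coefficient of $s^{2k+3}$ is $\tfrac{1}{(2k+3)!}-\tfrac{1}{2(2k+1)!}<0$), so $\sinh(s)\lrp{1-\tfrac{s^2}{2}}\le s$ for all $s\ge0$, and monotonicity of $\sinh(r)/r$ finishes it for $r\le\sqrt{L_\MM}$. Your two side remarks are also both on target: $L_\MM'$ is indeed undefined in the lemma statement and must be read as a bound on $\max_t\lrn{\MM(t)-\MM(0)}_2$, and both proofs (the paper's included) really only establish the claim for $t\in[0,1]$ despite the ``for all $t$'' phrasing — the paper uses $t\le1$ in exactly the step that produces the $\tfrac12 L_\MM^2$. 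What your approach buys is reuse of the already-proved first-order block bounds and a cleaner view of where the restriction on $t$ enters; what the paper's buys is avoiding the extra elementary $\sinh$ comparison.
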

\begin{proof}
    The proof is similar to Lemma \ref{l:matrix-exponent-block-bounds}. Consider the ODE given by
    \begin{alignat*}{1}
        \ddt \cvec{\xx}{\yy}(t) = \bmat{0 & I \\ \MM(t) & 0} \cvec{\xx(t)}{\yy(t)}
    \end{alignat*}
    with initial condition $\yy(0) = 0$.

    By Lemma \ref{l:formal-matrix-exponent}, $\bmat{\AA(t) & \BB(t) \\ \CC(t) & \DD(t)}$ satisfies
    \begin{alignat*}{1}
        \cvec{\xx(t)}{\yy(t)} = \bmat{\AA(t) & \BB(t) \\ \CC(t) & \DD(t)} \cvec{\xx(0)}{\yy(0)}
    \end{alignat*}
    \begin{alignat*}{1}
        & \ddt \cvec{\xx(t) - \xx(0) - \frac{t^2}{2} \MM(0) \xx(0)}{\yy(t) - t\MM(0) \xx(0)}\\
        =& \cvec{y(t) - t\MM(0) \xx(0)}{\MM(t) \xx(t) - \MM(0) \xx(0)}\\
        =& \cvec{y(t) - t\MM(0) \xx(0)}{\MM(t) \xx(t) - \MM(0) \xx(0)}\\
        =& \cvec{\yy(t)- t \MM(0) \xx(0)}{\MM(t) \lrp{\xx(t) - \xx(0) - \frac{t^2}{2} \MM(0)}} + \cvec{0}{\lrp{\MM(t) - \MM(0)} \xx(0)} + \cvec{0}{\frac{t^2}{2}\MM(t) \MM(0) \xx(0)}
    \end{alignat*}

    By Cauchy Schwarz, for all $t\leq 1$,
    \begin{alignat*}{1}
        & \frac{d}{dt} \lrn{\xx(t) - \xx(0) - \frac{t^2}{2} \MM(0) \xx(0)}_2 \leq \lrn{\yy(t)- t \MM(0) \xx(0)}_2\\
        & \frac{d}{dt} \lrn{\yy(t) - t\MM(0) \xx(0)}_2 \leq L_\MM \lrn{\xx(t) - \xx(0) - \frac{t^2}{2} \MM(0) \xx(0)}_2 + \lrp{L_\MM'+\frac{1}{2} L_\MM^2}\lrn{\xx(0)}_2
    \end{alignat*}

    Apply Lemma \ref{l:sinh_ode_with_offset} with $a_t = \lrn{\xx(t) - \xx(0) - \frac{t^2}{2} \MM(0) \xx(0)}_2$, $b_t = \lrn{\yy(t) - t\MM(0) \xx(0)}_2$, $C = L_{\MM}$, $D = 0$ and $E = \lrp{L_\MM'+\frac{1}{2} L_\MM^2}\lrn{\xx(0)}_2$ to get
    \begin{alignat*}{1}
        & a_t \leq \frac{\lrp{L_\MM'+\frac{1}{2} L_\MM^2}\lrn{\xx(0)}_2}{L_\MM} \lrp{\cosh(\sqrt{L_\MM} t) - 1}\\
        & b_t \leq \frac{\lrp{L_\MM'+\frac{1}{2} L_\MM^2}\lrn{\xx(0)}_2}{\sqrt{L_\MM}} \sinh(\sqrt{L_\MM} t)
    \end{alignat*}

    Finally, recall that
    \begin{alignat*}{1}
        \yy(t) - t \MM(0) \xx(0) = \lrp{\CC(t) - t \MM(0)} \xx(0)
    \end{alignat*}
    Since we have shown that $\lrn{\yy(t) - t \MM(0) \xx(0)}_2 \leq \frac{\lrp{L_\MM'+\frac{1}{2} L_\MM^2}\lrn{\xx(0)}_2}{\sqrt{L_\MM}} \sinh(\sqrt{L_\MM} t)$ for all $\xx(0)$, it follows that
    \begin{alignat*}{1}
        \lrn{\CC(t) - t \MM(0)}_2 \leq \frac{\lrp{L_\MM'+\frac{1}{2} L_\MM^2}}{\sqrt{L_\MM}} \sinh(\sqrt{L_\MM} t)
    \end{alignat*}
\end{proof}

\section{Miscellaneous Lemmas}
\begin{lemma}
    \label{l:useful_xlogx}
    Let $c\in \Re^+$ be such that $c \geq 3$. For any $x$ satisfying $x\geq 3 c \log c$, we have that 
    \begin{alignat*}{1}
        \frac{x} {\log x} \geq c
    \end{alignat*}
\end{lemma}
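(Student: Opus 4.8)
The plan is to exploit the fact that $x\mapsto x/\log x$ is eventually increasing. First I would set $x_0 := 3c\log c$ and record two elementary facts valid whenever $c\geq 3$: (i) $x_0 \geq 9\log 3 > e$, so $\log x_0 > 1$; and (ii) the function $\phi(x) := x/\log x$ has $\phi'(x) = (\log x - 1)/(\log x)^2 \geq 0$ for all $x \geq e$, hence $\phi$ is nondecreasing on $[x_0,\infty)$. Consequently, for every $x \geq x_0$ we have $\phi(x) \geq \phi(x_0)$, and it suffices to prove the single-point estimate $\phi(x_0) \geq c$, i.e. $x_0 \geq c\log x_0$.

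The second step is the estimate of $\log x_0$. Write $\log x_0 = \log 3 + \log c + \log\log c$. Since $c \geq 3$, we have $\log 3 \leq \log c$. Since $c > 1$ we have $\log c > 0$, and applying the elementary inequality $\log t \leq t$ with $t = \log c$ gives $\log\log c \leq \log c$. Adding these two bounds yields $\log x_0 \leq 3\log c$, and therefore $c\log x_0 \leq 3c\log c = x_0$, which is precisely the required single-point estimate. Combining with the monotonicity from the first step, $\phi(x) \geq \phi(x_0) \geq c$ for all $x \geq x_0$, completing the proof.

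There is no genuine obstacle here; the proof is a short two-line computation once the setup is in place. The only points that require a moment's care are verifying that the threshold $x_0$ actually lies in the region $x \geq e$ where $\phi$ is increasing (this uses $c\geq 3$, since $3c\log c$ is increasing in $c$ and exceeds $e$ already at $c=3$), and checking that all the logarithms being compared are of arguments larger than $1$, so that the monotone bounds $\log 3 \leq \log c$ and $\log\log c \leq \log c$ are legitimate — again both are immediate from $c \geq 3$.
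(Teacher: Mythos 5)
Your proof is correct: the monotonicity of $x/\log x$ on $[e,\infty)$ reduces the claim to the single-point bound $3c\log c \geq c\log(3c\log c)$, and your estimate $\log(3c\log c) = \log 3 + \log c + \log\log c \leq 3\log c$ (using $c\geq 3$ and $\log t\leq t$) closes it. The paper states this lemma without proof, so there is nothing to compare against; your argument is the standard one and fills the gap cleanly.
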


The following Lemma is taken from \cite{sun2019escaping}:
    \begin{lemma}\label{l:triangle_distortion}
        For any $x\in M$, $a,y\in T_x M$
        \begin{alignat*}{1}
            \dist\lrp{\Exp_x(y+a), \Exp_{\Exp_x(a)} \lrp{\party{x}{\Exp_x(a)}y}}
            \leq& L_R \lrn{a}\lrn{y}\lrp{\lrn{a} + \lrn{y}} e^{\sqrt{L_R} \lrp{\lrn{a}+\lrn{y}}} 
        \end{alignat*}
    \end{lemma}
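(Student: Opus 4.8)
Since this statement is quoted verbatim from \citet{sun2019escaping}, I would simply cite it; if a self-contained proof were wanted, here is the route I would take.

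The plan is to write both points as exponential maps based at the common nearby point $p:=\Exp_x(a)$ and to bound the distance between them by comparing the two ``logarithm'' vectors at $p$. Set $w_q:=\party{x}{p}y$, so the second point is $\Exp_p(w_q)$, and $w_r:=\Exp_p^{-1}\lrp{\Exp_x(a+y)}$, so the first point is $\Exp_p(w_r)$. A preliminary application of Lemma~\ref{l:jacobi_field_divergence} along the geodesic $u\mapsto\Exp_x(ua)$ shows $\dist\lrp{p,\Exp_x(a+y)}\le e^{\sqrt{L_R}(\lrn a+\lrn y)}\lrn y$, so both $w_q$ and $w_r$ have norm $O(\lrn y)$. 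Then Lemma~\ref{l:jacobi_field_divergence}, now applied at $p$, gives
\begin{alignat*}{1}
  \dist\lrp{\Exp_x(a+y),\,\Exp_p\lrp{\party{x}{p}y}}
  \;\le\; \frac{\sinh\lrp{\sqrt{L_R}\lrp{\lrn{w_r}+\lrn{w_q}}}}{\sqrt{L_R}\lrp{\lrn{w_r}+\lrn{w_q}}}\;\lrn{\,w_r-w_q\,},
\end{alignat*}
so it remains to bound $\lrn{w_r-w_q}$; the leading fraction is then absorbed into the $e^{\sqrt{L_R}(\lrn a+\lrn y)}$ factor since $\lrn{w_r}+\lrn{w_q}=O(\lrn y)$.

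The heart of the matter is the estimate $\lrn{w_r-\party{x}{p}y}\le L_R\lrn a\lrn y\lrp{\lrn a+\lrn y}e^{\sqrt{L_R}(\lrn a+\lrn y)}$. Here I would introduce the geodesic variation $\Phi(\sigma,u):=\Exp_x\lrp{u(a+\sigma y)}$, $\sigma,u\in[0,1]$, whose $\sigma$-variation field $W_\sigma(u):=\del_\sigma\Phi(\sigma,u)$ is a Jacobi field along the geodesic $u\mapsto\Phi(\sigma,u)$ (of speed $\lrn{a+\sigma y}\le\lrn a+\lrn y$) with $W_\sigma(0)=0$ and $D_uW_\sigma(0)=y$. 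Since $\Phi(\sigma,1)=\Exp_x(a+\sigma y)$ and $\Exp_p^{-1}$ has differential the identity at $p$, one has $\del_\sigma w_r\big|_{\sigma=0}=W_0(1)$, and the second displayed inequality of Lemma~\ref{l:jacobi_field_norm_bound} (with $\C:=\sqrt{L_R}\lrn a$, $\del_s\Lambda(s,0)=0$, $D_t\del_s\Lambda(s,0)=y$) gives
\begin{alignat*}{1}
  \lrn{\,W_0(1)-\party{x}{p}y\,}\;\le\;\lrp{\tfrac{\sinh\C}{\C}-1}\lrn{y}\;\le\;\tfrac12\,L_R\lrn a^2\,e^{\sqrt{L_R}\lrn a}\,\lrn{y},
\end{alignat*}
the last step by the $\sinh$/$\cosh$ bounds of Lemma~\ref{l:sinh_bounds}. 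Integrating $\sigma$ from $0$ to $1$ and bounding the correction coming from differentiating $\Exp_p^{-1}$ at the moving point $\Exp_x(a+\sigma y)$ (which lies within $O(\lrn y)$ of $p$) rather than at $p$ — a term of order $L_R\lrn a\lrn y^2 e^{\sqrt{L_R}(\lrn a+\lrn y)}$, where the extra factor $\lrn a$ appears because this correction, like $w_r-\party{x}{p}y$ itself, vanishes identically when $p=x$ — yields the required bound, and hence the lemma.

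The main obstacle is obtaining the \emph{cubic} bound $L_R\lrn a\lrn y(\lrn a+\lrn y)$ rather than the cruder $L_R\lrn a(\lrn a+\lrn y)^2$ that a single naive geodesic variation directly between the two points produces: the crude bound fails to vanish as $\lrn y\to0$ (or as $\lrn a\to0$), whereas the true quantity does, because $w_r=\party{x}{p}y$ identically in flat space. Exploiting this cancellation is what forces one to base the whole comparison at $p$, to treat $y$ as a small perturbation of a geodesic of length $\lrn a$, and to carry one further order of Taylor expansion; consolidating the various $\sinh$/$\cosh$ factors into the single clean exponential $e^{\sqrt{L_R}(\lrn a+\lrn y)}$ via Lemma~\ref{l:sinh_bounds} is then routine.
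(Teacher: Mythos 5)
Your primary answer---cite \citet{sun2019escaping}---is exactly what the paper does: its ``proof'' imports the displayed integral inequality from the proof of Lemma~3 of that reference (itself a refinement of Karcher) and then reduces the integrand to $L_R\lrn{a}\lrn{y}\lrp{\lrn a+\lrn y}e^{\sqrt{L_R}(\lrn a+\lrn y)}$ using the elementary estimates of Lemma~\ref{l:sinh_bounds}. To that extent the proposal matches the paper.

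Your self-contained sketch, however, takes a genuinely different variation and has a gap precisely at the step you flag as the heart of the matter. The Karcher/Sun argument varies the \emph{base point} along $t\mapsto\Exp_x(ta)$, comparing the curves $t\mapsto\Exp_{\Exp_x(ta)}\lrp{\party{x}{\Exp_x(ta)}(y+(1-t)a)}$; the resulting Jacobi field along each $s$-geodesic has $J(0)=v$ and $D_sJ(0)=-v$ with $\lrn v=\lrn a$, so the flat-space contribution $(1-s)v$ vanishes at $s=1$ \emph{identically}, and the curvature remainder already carries the factor $\lrn a$ (and, after the refinement, $\lrn y/\lrn{y+(1-t)a}$) with no delicate cancellation between separately bounded terms. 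Your variation $\Phi(\sigma,u)=\Exp_x(u(a+\sigma y))$ instead moves in the $y$-direction: at $\sigma>0$ the Jacobi bound you invoke reads $\lrp{\sinh(\sqrt{L_R}\lrn{a+\sigma y})/(\sqrt{L_R}\lrn{a+\sigma y})-1}\lrn y\approx\frac{L_R}{6}\lrp{\lrn a+\lrn y}^2\lrn y$, which does \emph{not} vanish as $\lrn a\to0$; the $L_R\lrn y^3$ piece must be cancelled exactly by the correction from $d\Exp_p^{-1}$ at the moving point (when $a=0$ the two distortions cancel identically, since $d(\Exp_x^{-1})\lrb{W_\sigma(1)}=y$ exactly). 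Your sketch bounds these two contributions separately and asserts, by a degeneracy argument, that the total carries a factor of $\lrn a$; that assertion is the entire difficulty, and as decomposed the two bounds do not individually supply it. If you want a self-contained proof, vary in the $a$-direction as Karcher does, so that the cancellation is built into the initial conditions of the Jacobi field rather than extracted a posteriori.
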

    \begin{proof}
        From the proof of Lemma 3 from \cite{sun2019escaping} (which is in turn a refinement of the proof from \cite{karcher1977riemannian})
        \begin{alignat*}{1}
            &\dist\lrp{\Exp_x(y+a), \Exp_{\Exp_x(a)} \lrp{\party{x}{\Exp_x(a)}y}} \\
            \leq& \int_0^1 \frac{\cosh(\sqrt{L_R}\lrn{y + (1-t) a})-\frac{\sinh(\sqrt{L_R}\lrn{y + (1-t) a})}{\sqrt{L_R}\lrn{y + (1-t) a}}}{\lrn{y + (1-t) a}} dt \cdot \lrn{a}\lrn{y}\\
            \leq& \sqrt{L_R} \int_0^1 \sqrt{L_R} \lrn{y + (1-t) a} e^{\sqrt{L_R} \lrn{y + (1-t) a}} dt \cdot \lrn{a}\lrn{y}\\
            \leq& L_R \lrn{a}\lrn{y}\lrp{\lrn{a} + \lrn{y}} e^{\sqrt{L_R} \lrp{\lrn{a}+\lrn{y}}} 
        \end{alignat*}
        where we use the fact from Lemma \ref{l:sinh_bounds} that for all $r \geq 0$,
        \begin{alignat*}{1}
            \frac{\cosh(r)}{r} - \frac{\sinh(r)}{r^2} \leq r e^r
        \end{alignat*}
    \end{proof}

    \begin{lemma}\label{l:sinh_bounds}
        For all $r \geq 0$,
        \begin{alignat*}{1}
            & \sinh(r) \leq r e^r\\
            & \cosh(r) - 1 \leq \frac{r^2}{2} e^r\\
            & \frac{\cosh(r)}{r} - \frac{\sinh(r)}{r^2} \leq r e^r\\
            & \cosh(r) \leq e^r\\
            & \frac{\sinh(r)}{r} - 1 \leq r^2 e^3
        \end{alignat*}
    
    \end{lemma}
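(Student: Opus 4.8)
The plan is to reduce all five inequalities to two elementary facts: that $\cosh r \le e^r$ for $r\ge 0$, and a coefficient-wise comparison of Maclaurin series with nonnegative coefficients. I would first establish $\cosh r = \tfrac12(e^r+e^{-r}) \le \tfrac12(e^r+e^r) = e^r$ using $e^{-r}\le e^r$; this settles the fourth inequality and serves as the workhorse for the others, since every right-hand side above carries an $e^r$ (or $e^3$) factor that it suffices to replace by $\cosh r$ first.

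Next I would handle $\sinh r \le re^r$ either by the integral identity $\sinh r = \int_0^r \cosh t\,dt \le \int_0^r e^t\,dt = e^r - 1 \le re^r$ (the last step being $1-e^{-r}\le r$ for $r\ge0$), or by the series bound $\sinh r = \sum_{k\ge0}\tfrac{r^{2k+1}}{(2k+1)!} \le \sum_{k\ge0}\tfrac{r^{2k+1}}{(2k)!} = r\cosh r \le re^r$. For $\cosh r - 1 \le \tfrac{r^2}{2}e^r$ the series route is cleanest: $\cosh r - 1 = \sum_{k\ge1}\tfrac{r^{2k}}{(2k)!} \le \sum_{k\ge1}\tfrac{r^{2k}}{2(2k-2)!} = \tfrac{r^2}{2}\cosh r \le \tfrac{r^2}{2}e^r$, where the termwise step uses $(2k)(2k-1)\ge 2$ for $k\ge1$.

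For the third inequality I would expand $r\cosh r - \sinh r = \sum_{k\ge0} r^{2k+1}\bigl(\tfrac1{(2k)!}-\tfrac1{(2k+1)!}\bigr) = \sum_{k\ge1}\tfrac{2k\,r^{2k+1}}{(2k+1)!}$ (the $k=0$ term vanishes), divide by $r^2$, and use $\tfrac{2k}{(2k+1)!}\le\tfrac1{(2k-2)!}$ (equivalently $(2k+1)(2k-1)\ge1$) to get $\tfrac{\cosh r}{r}-\tfrac{\sinh r}{r^2} \le \sum_{k\ge1}\tfrac{r^{2k-1}}{(2k-2)!} = r\cosh r \le re^r$. The fifth inequality is obtained by the same comparison: $\tfrac{\sinh r}{r}-1 = \sum_{k\ge1}\tfrac{r^{2k}}{(2k+1)!} \le r^2\sum_{k\ge1}\tfrac{r^{2k-2}}{(2k-2)!} = r^2\cosh r$, which gives the stated bound on the bounded parameter ranges in which the lemma is actually invoked (and, verbatim and uniformly in $r$, the cleaner form $r^2 e^r$).

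There is no substantive obstacle here: the lemma is a bundle of routine calculus facts, and the only judgment call is, per inequality, whether the integral route (differentiation relating $\sinh$, $\cosh$, $e^r$) or the term-by-term series comparison reads more cleanly — the series comparison works uniformly and is the safest single strategy, so I would use it throughout and derive the $e^r$ factor at the end from $\cosh r \le e^r$. The one point worth flagging is that the sharpest uniform form of the last inequality is $\tfrac{\sinh r}{r}-1 \le r^2\cosh r \le r^2 e^r$; the constant $e^3$ in the statement is only correct on the bounded ranges of $r$ used downstream.
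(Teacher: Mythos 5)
Your proof is correct and matches the paper's approach exactly: the paper's entire proof is ``Elementary computation from power series,'' and your term-by-term Maclaurin comparisons (reducing each bound to $\cosh r\le e^r$ via factorial inequalities like $(2k)(2k-1)\ge 2$ and $(2k+1)(2k-1)\ge 1$) are precisely the elementary computation being waved at, and they all check out. Your flag on the fifth inequality is also a genuine catch, not a quibble: as literally stated, $\tfrac{\sinh r}{r}-1\le r^2e^3$ fails for large $r$ (e.g.\ at $r=20$ the left side is of order $e^{20}/40\approx 10^{7}$ while the right side is about $8\times 10^3$), since the left side grows like $e^r/(2r)$; the correct uniform statement is $\tfrac{\sinh r}{r}-1\le \tfrac{r^2}{6}\cosh r\le \tfrac{r^2}{6}e^r$ (the same termwise comparison with $(2k+1)(2k)(2k-1)\ge 6$), and the ``$e^3$'' is presumably a typo for a factor of $e^r/3!$ or similar; the stated form only holds for $r\lesssim 3+\ln 6$, which covers the small-$\C$ regimes in which the lemma is invoked downstream.
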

    \begin{proof}
        Elementary computation from power series.
    \end{proof}

\end{document}